\documentclass[12pt,reqno]{amsart}
\usepackage{}
\usepackage{mathrsfs}
\usepackage{amssymb}
\usepackage{amsthm}
\usepackage{mathrsfs}
\usepackage[centertags]{amsmath}
\usepackage{amsfonts}

\usepackage{color}
\usepackage{extarrows}
\usepackage{fullpage}

\usepackage{CJK}
\usepackage[colorlinks=true, pdfstartview=Fitv,linkcolor=blue, citecolor=blue]{hyperref}
\input amssym.def
\input amssym.tex

\textwidth=158 true mm
\textheight=240 true mm
\topmargin=-15 true mm
\oddsidemargin=10 true mm
\evensidemargin=10 true mm
\date{}

\newtheorem{Theorem}{Theorem}[section]

\newtheorem{Lemma}{Lemma}[section]



\newcommand\R{\mbox{\bf R}}

\newcommand\Z{\mbox{\bf Z}}
\newcommand\z{\mbox{\bf z}}
\newcommand\SR{\mbox{\scriptsize\bf R}}

\newcommand{\definition}{{\lower .5ex
  \hbox{$\>\>\stackrel{\triangle}{=}\>\>$} }}
\newcommand\supp{\mathop{\rm supp}}


\begin{document}

\baselineskip=22pt
\thispagestyle{empty}

\begin{center}
{\Large \bf Strichartz estimates for orthonormal functions and convergence problem
of density functions of Boussinesq operator on manifolds }\\[1ex]

{ Xiangqian Yan\footnote{Email:yanxiangqian213@126.com}$^{a}$,\,
Yongsheng Li\footnote{Email:  yshli@scut.edu.cn}$^{a}$,\,Wei Yan$^{b*}$\footnote{Email:  011133@htu.edu.cn}, \,Xin Liu$^{b*}$}\\[1ex]

{$^a$School of Mathematics,
 South China University of Technology,
 Guangzhou, Guangdong 510640, P. R. China}\\[1ex]

{$^b$School of Mathematics and Information Science, Henan
Normal University,}\\
{Xinxiang, Henan 453007,   China}\\[1ex]

\end{center}
\noindent{\bf Abstract.}
This  paper  is  devoted  to studying the maximal-in-time estimates
 and  Strichartz estimates for orthonormal functions and convergence problem
  of density functions related to Boussinesq operator on manifolds. Firstly,
   we present the pointwise convergence of density function related to Boussinesq
   operator with $\gamma_{0}\in\mathfrak{S}^{\beta}(\dot{H}^{\frac{1}{4}}(\mathbf{R}))(\beta<2)$
    with the aid of the maximal-in-time estimates
  related to Boussinesq operator with orthonormal function on $\R$. Secondly,
  we present the pointwise convergence of density function related to Boussinesq operator with $\gamma_{0}\in\mathfrak{S}^{\beta}(\dot{H}^{s})(\frac{d}{4}\leq s<\frac{d}{2},\,
  0<\alpha\leq d, 1\leq\beta<\frac{\alpha}{d-2s})$ with the aid of the maximal-in-time estimates
  related to Boussinesq operator with orthonormal function on the unit ball
  $\mathbf{B}^{d}(d\geq1)$ established in this paper; we also present the Hausdorff
  dimension of the divergence set of density function related to Boussinesq
  operator $dim_{H}D(\gamma_{0})\leq (d-2s)\beta$.  Thirdly,  we show the Strichartz
  estimates for orthonormal functions and Schatten bound with space-time norms related
   to Boussinesq operator on $\mathbf{T}$ with the aid of   the noncommutative-commutative
    interpolation theorems established in this paper, which are just Lemmas 3.1-3.4 in this
     paper; we also prove that Theorems 1.5, 1.6 are optimal. Finally, by using full randomization,
      we present the probabilistic convergence of density function related to Boussinesq operator
       on $\R$, $\mathbf{T}$ and $\Theta=\{x\in\R^{3}:|x|<1\}$ with $\gamma_{0}\in\mathfrak{S}^{2}$.

 \bigskip

\noindent {\bf Keywords}: Boussinesq operator;  Maximal estimates for orthonormal
 functions; Strichartz estimates for orthonormal functions; Noncommutative-commutative
  interpolation theorems; Convergence problem of density functions
\medskip

\medskip
\noindent {\bf Corresponding Author:} Wei Yan

\medskip
\noindent {\bf Email Address:}011133@htu.edu.cn

\medskip
\noindent {\bf MSC2020-Mathematics Subject Classification}: Primary-35Q41, 35B45, 42B20; Secondary-35B65, 42B37

\leftskip 0 true cm \rightskip 0 true cm

\newpage

\baselineskip=20pt

\bigskip
\bigskip
\tableofcontents

\section{Introduction and the main results }
\bigskip

\setcounter{Theorem}{0} \setcounter{Lemma}{0}\setcounter{Definition}{0}\setcounter{Proposition}{1}

\setcounter{section}{1}
\subsection {Model and synopsis of result}
In this paper,  we  investigate  the infinite system of Boussinesq type equation  in the following
\begin{eqnarray}
&&i\frac{du_{j}}{dt}=-\sqrt{\partial_{x}^{4}-\partial_{x}^{2}}u_{j},
\,j\in \mathbb{N}^{+}, x\in\R,\,t\in I\subset\R,\label{1.01}\\
&&u_{j}(0,x)=f_{j}(x),\label{1.02}
\end{eqnarray}
where $(f_j)_j$ is  an orthonormal system in $L^{2}(\mathbf{R})$. By using elementary
 calculation, we get the operator valued form of \eqref{1.01} as follows
\begin{eqnarray}
&&i\frac{d\gamma(t)}{dt}=\left[-\sqrt{\partial_{x}^{4}-\partial_{x}^{2}}, \gamma\right], \label{1.03}\\
&&\gamma(0)=\gamma_{0},\label{1.04}
\end{eqnarray}
When  $\gamma_{0}=\sum\limits_{j=1}^{\infty}\lambda_{j}|f_{j}\rangle\langle f_{j}|$,
obviously,
\begin{eqnarray}
&\gamma(t):=e^{it\sqrt{\partial_{x}^{4}-\partial_{x}^{2}}}\gamma_{0}
e^{-it\sqrt{\partial_{x}^{4}-\partial_{x}^{2}}}=\sum\limits_{j=1}^{\infty}\lambda_{j}
\left|e^{it\sqrt{\partial_{x}^{4}-\partial_{x}^{2}}}f_{j}\right\rangle\left\langle e^{it\sqrt{\partial_{x}^{4}-\partial_{x}^{2}}}f_{j}\right|,\label{1.05}
\end{eqnarray}
is the solution to (\ref{1.03})-(\ref{1.04}).
Here $|u\rangle\langle v| f\rightarrow\langle v,f\rangle u=u\int_{I}\overline{v}f dx$,
 $I\subset\R$, $\lambda_{j}$ is the eigenvalue of the operator $\gamma_{0}$. By using a
  elementary calculation, we derive the kernel of $\gamma(t)$ as follows:
\begin{eqnarray}
&K(x,y,t)=\sum\limits_{j=1}^{\infty}\lambda_{j}
\left(e^{it\sqrt{\partial_{x}^{4}-\partial_{x}^{2}}}f_{j}\right)(t,x)\overline
{\left(e^{it\sqrt{\partial_{x}^{4}-\partial_{x}^{2}}}f_{j}\right)}(t,y).\label{1.06}
\end{eqnarray}
It is follows from \eqref{1.06} that the density function $\rho_{\gamma(t)}
=K(x,x,t)=\sum\limits_{j=1}^{\infty}\lambda_{j}
\left|e^{it\sqrt{\partial_{x}^{4}-\partial_{x}^{2}}}f_{j}\right|^{2}$, thus, we have
\begin{align}
Trace(\gamma(t))=\int_{\SR}K(x,x,t)dx&=
\sum\limits_{j=1}^{\infty}\lambda_{j}\int_{\SR}
\left|e^{it\sqrt{\partial_{x}^{4}-\partial_{x}^{2}}}f_{j}\right|^{2}dx\nonumber\\
&=\sum\limits_{j=1}^{\infty}\lambda_{j}\|f_{j}\|_{L^{2}}^{2}
=\sum\limits_{j=1}^{\infty}\lambda_{j}.\label{1.07}
\end{align}

The study of generalizing some classical inequalities of single functions to
systems of orthogonal functions goes back to Lieb-Thirring \cite{LT1975}.
Frank et al. \cite{FLLS2014} established the following Strichartz estimates for
 orthonormal functions
\begin{eqnarray}
&\left\|\sum\limits_{j=1}^{+\infty}\lambda_{j}
\left|e^{it\Delta}f_{j}\right|^{2}\right\|_{L_{t}^{p}L_{x}^{q}(\SR\times\SR^{d})}\leq C\left(\sum\limits_{j=1}^{\infty}|\lambda_{j}|^{\beta}\right)^{\frac{1}{\beta}},\,
1\leq \beta\leq \frac{2q}{q+1},\,q\leq1+\frac{2}{d},d\geq1,\label{1.08}
\end{eqnarray}
which generalized Strichartz estimates of \cite{C2003,KPV1991}
\begin{eqnarray}
&\left\|e^{it\Delta}f\right\|_{L_{t}^{p}L_{x}^{q}(\SR\times\SR^{d})}\leq C\|f\|_{L^{2}}, p,q\geq2,\, \frac{2}{p}+\frac{d}{q}=\frac{d}{2},\,(d,p,q)\neq(2,\infty,2).\label{1.09}
\end{eqnarray}
\eqref{1.08} is used to study the nonlinear evolution and scattering of quantum systems
 with an infinite number of particles\cite{CHP2017,CHP2018,LS2014,LS2015}.
 Frank and Sabin \cite{FS2017} proved that \eqref{1.08} holds except
  the endpoints. Moreover, by  using  the duality principle related to Schatten norms and
    the properties of Dirac function,  they
  presented the Schatten bound with space-time norms.
   For more research results about \eqref{1.08} on $\R\times\R^{d}$, we can refer the readers
    to \cite{BHLNS2019,BLN2020,BLS2021,FS2017}. For $(t, x)\in\mathbf{T}\times\mathbf{T}^{d}$ and
    $d\geq1$, by using the duality principle related to Schatten norms and the idea
     of Vega \cite{V1990},    Nakamura \cite{N2020} proved
\begin{eqnarray}
&\left\|\sum\limits_{j=1}^{+\infty}\lambda_{j}\left|e^{it\Delta}P_{\leq N}f_{j}\right|^{2}\right\|_{L_{t}^{p}L_{x}^{q}(\mathbf{T}^{d}\times\mathbf{T})}\leq CN^{\frac{1}{p}}\left(\sum\limits_{j=1}^{+\infty}|\lambda_{j}|^{\beta}\right)^{\frac{1}{\beta}},
\,(\frac{1}{q},\frac{1}{p})\in (A,B]\label{1.010}
\end{eqnarray}
 where $\beta\leq\frac{2q}{q+1}$, $A=\left(\frac{d-1}{d+1},\frac{d}{d+1}\right)$,
$B=(1,0)$ and $(A,B]$ represents the open line segment combining $A, B$. Moreover,
for $(t, x)\in\mathbf{T}\times\mathbf{T}$ and  $\beta\leq 2$, Nakamura \cite{N2020} established the
 following estimates,
\begin{eqnarray}
&\left\|\sum\limits_{j=1}^{+\infty}\lambda_{j}\left|e^{it\Delta}P_{\leq N}f_{j}\right|^{2}\right\|_{L_{t}^{2}L_{x}^{\infty}(\mathbf{T}\times\mathbf{T})}\leq CN^{\frac{1}{2}}\left(\sum\limits_{j=1}^{+\infty}|\lambda_{j}|^{\beta}\right)^{\frac{1}{\beta}}\label{1.011}.
\end{eqnarray}
Nakamura \cite{N2020} also constructed the counterexample to show that
   $\beta\leq\frac{2q}{q+1}$ is necessary condition for  \eqref{1.010} and $\beta\leq2$
    is necessary condition for  \eqref{1.011}.

The pointwise convergence problem was initiated by Carleson \cite{C1979}
 and was further investigated by some authors in higher dimensions
 \cite{B1995,B2016,CLV2012,C1982,DK1982,DG,D2017,DGL2017,DZ2019,DGLZ2018,L2006,
 LR,LR2017,MV2008,S,V1988,Z2014}. Recently, by using randomized initial data method introduced
  by Lebowitz-Rose-Speer  \cite{LRS1988} in statistical physics
  and Bourgain  \cite{B1994,B1996} in mathematics and developed by Burq-Tzvetkov
  \cite{BT2007,BT2008I,BT2008II} with the aid of Khinchine type inequalities,
  Compaan et al. \cite{CLS2021} investigated the  pointwise convergence
   of the Schr\"{o}dinger flow with random data. Moreover, by using random data
    technique, Yan et al. \cite{YZY2022} and Wang et al. \cite{WYY2021}  investigated the pointwise
 convergence problem of Schr\"odinger equation on $\mathbf{R}^{d}$, $\mathbf{T}^{d}$
  and $\Theta=\{x:|x|<1\}$, respectively.
By using  Frostman's  lemma which can be found in Theorem 8.8, page 112 of
\cite{M1995}),    Barcel$\acute{o}$ et al.\cite{BBCR} gave the Hausdorff dimension
 of divergence sets of dispersive
equations. Recently,  Li and Li \cite{LL2023} studied the pointwise convergence
 and the Hausdorff dimension of the divergence set
   of the Boussinesq operator on $\R$.

For the orthogonal system $(f_j)_j$, Bez et al. \cite{BLN2020} studied the pointwise
 convergence  problem of the density function of the following operator equation
\begin{eqnarray*}
&&i\frac{d\gamma(t)}{dt}=\left[-\partial_{x}^{2}, \gamma\right], \\
&&\gamma(0)=\gamma_{0}=\sum\limits_{j=1}^{+\infty}\lambda_{j}|f_{j}\rangle\langle f_{j}|
\end{eqnarray*}
Firstly, by using the duality principle related to Schatten norms and interpolation
 techniques,
which are Lemmas 2.15-2.17 in this paper, Bez et al. \cite{BLN2020} established
the following maximal-in-time estimates
related to Schr\"odinger operator with orthonormal function on $\R$
\begin{eqnarray}
&\left\|\sum\limits_{j}\lambda_{j}\left|e^{it\Delta}f_{j}\right|^{2}
\right\|_{L_{x}^{2,\infty}L_{t}^{\infty}(\mathbf{R}\times I)}\leq C
\|\lambda\|_{\ell^{\beta}},\label{1.012}
\end{eqnarray}
where $\beta<2$, $\lambda=(\lambda_{j})_{j}\in \ell^{\beta}$
and $(f_j)_j$ is  an orthonormal
 system in $\dot{H}^{\frac{1}{4}}(\mathbf{R})$.
By using \eqref{1.012}, Bez et al. \cite{BLN2020} proved that
\begin{eqnarray}
&\lim\limits_{t\longrightarrow 0}\rho_{\gamma(t)}(x)=
\rho_{\gamma_{0}}(x),\,\,a.e.x\in\mathbf{R},\label{1.013}
\end{eqnarray}
where $\gamma(t)=\sum\limits_{j=1}^{+\infty}\lambda_{j}|
e^{it\partial_{x}^{2}}f_{j}\rangle\langle e^{it\partial_{x}^{2}}f_{j}|$ ,
$\rho_{\gamma(t)},\, \rho_{\gamma_{0}}$ denote the density function of
  $\gamma(t), \,\gamma_{0}$, respectively. Recently, by using the maximal
function estimates for orthonormal functions related to $\alpha$ dimension
 Borel measure supported on unit ball,
Bez et al. \cite{BKS2023} studied the
    pointwise convergence of density function $\sum\limits_{j=1}^{\infty}\lambda_{j}
    |e^{it(-\Delta)^{m/2}}f_{j}|^{2}$ related to
   operator $\sum\limits_{j=1}^{\infty}\lambda_{j}|e^{it(-\Delta)^{m/2}}
   f_{j}\rangle \langle e^{it(-\Delta)^{m/2}}f_{j}|$ with the aid of $\alpha$
   dimension Borel measure
    and presented the Hausdorff dimension of the divergence set
related to density function $\sum\limits_{j=1}^{\infty}\lambda_{j}|e^{it(-\Delta)^{m/2}}f_{j}|^{2}$  of
   operator $\sum\limits_{j=1}^{\infty}\lambda_{j}|e^{it(-\Delta)^{m/2}}f_{j}\rangle
   \langle e^{it(-\Delta)^{m/2}}f_{j}| $ with the aid of Frostman's  Lemma.

\subsection{Motivation}The motivation of this paper is
fourfold.
Firstly, Bez et al. \cite{BLN2020}    employed  an idea of \cite{KPV1991} that has the
 effect of switching the roles of space and time at the cost of replacing the classical
 Schr\"odinger propagator with the fractional Schr\"odinger propagator of order $1/2$
to establish the maximal-in-time estimates;  by using the maximal-in-time estimates,
Bez et al. \cite{BLN2020} proved that
 \begin{eqnarray*}
 \lim\limits_{t\rightarrow0}\sum\limits_{j=1}^{+\infty}\lambda_{j}|e^{it\partial_{x}^{2}}f_{j}|^{2}
 =\sum\limits_{j=1}^{+\infty}\lambda_{j}|f_{j}|^{2}, a.e. x\in \R.
 \end{eqnarray*}
 Here  $\sum\limits_{j=1}^{+\infty}\lambda_{j}|e^{it\partial_{x}^{2}}f_{j}|^{2}$
 and $\sum\limits_{j=1}^{+\infty}\lambda_{j}|f_{j}|^{2}$
 are density functions of
 $\sum\limits_{j=1}^{+\infty}\lambda_{j}|e^{it\partial_{x}^{2}}f_{j}\rangle
  \langle e^{it\partial_{x}^{2}}f_{j}|$
and $\sum\limits_{j=1}^{+\infty}\lambda_{j}|f_{j}\rangle \langle f_{j}|$, respectively.
Thus, a natural question appears:  is it possible to find
a slightly different method that is used to establish
maximal-in-time estimates related to $e^{it\sqrt{\partial_{x}^{4}-\partial_{x}^{2}}}$? Moreover,
if the maximal-in-time estimates are established, is
 \begin{eqnarray*}
 \lim\limits_{t\rightarrow0}\sum\limits_{j=1}^{+\infty}\lambda_{j}|e^{it\sqrt{\partial_{x}^{4}-\partial_{x}^{2}}}f_{j}|^{2}
 =\sum\limits_{j=1}^{+\infty}\lambda_{j}|f_{j}|^{2},a.e. x\in \R
 \end{eqnarray*}
 valid?
 Here  $\sum\limits_{j=1}^{+\infty}\lambda_{j}|e^{it\sqrt{\partial_{x}^{4}-\partial_{x}^{2}}}f_{j}|^{2}$
 and $\sum\limits_{j=1}^{+\infty}\lambda_{j}|f_{j}|^{2}$
 denote the density functions of
 $$\sum\limits_{j=1}^{+\infty}\lambda_{j}|e^{it\sqrt{\partial_{x}^{4}-\partial_{x}^{2}}}f_{j}\rangle \langle e^{it\sqrt{\partial_{x}^{4}-\partial_{x}^{2}}}f_{j}|,\sum\limits_{j=1}^{+\infty}
 \lambda_{j}|f_{j}\rangle \langle f_{j}|,$$ respectively.
Secondly,  Bez et al. \cite{BKS2023} established the Strichartz estimates
for orthonormal functions and pointwise convergence of density function
$\sum\limits_{j=1}^{+\infty}\lambda_{j}|e^{-it(-\Delta)^{m/2}}f_{j}|^{2}$ of
operator $\sum\limits_{j=1}^{+\infty}\lambda_{j}|e^{-it(-\Delta)^{m/2}}f_{j}\rangle
\langle e^{-it(-\Delta)^{m/2}}f_{j}|(\mu-a.e. x\in\mathbf{B^{d}})$, where $\mu$
 is $\alpha$ dimension Borel measure; moreover,
Bez et al. \cite{BKS2023} presented the upper bound of Hausdorff dimension of
$D(\sum\limits_{j=1}^{+\infty}\lambda_{j}|f_{j}|^{2})=\{x\in \mathbf{B}^{d}:\lim\limits_{t\longrightarrow 0}
\sum\limits_{j=1}^{+\infty}\lambda_{j}|e^{-it(-\Delta)^{m/2}}f_{j}|^{2}
\neq\sum\limits_{j=1}^{+\infty}\lambda_{j}|f_{j}|^{2}\}$.
Thus, a natural question appears:  is it possible to  establish
maximal-in-time estimates for  orthonormal functions  related to $e^{it\sqrt{\partial_{x}^{4}-\partial_{x}^{2}}}$? Moreover,
if the maximal-in-time estimates are established, is
 \begin{eqnarray*}
 \lim\limits_{t\rightarrow0}\sum\limits_{j=1}^{+\infty}\lambda_{j}|e^{it\sqrt{\partial_{x}^{4}-\partial_{x}^{2}}}f_{j}|^{2}
 =\sum\limits_{j=1}^{+\infty}\lambda_{j}|f_{j}|^{2},\mu-a.e. x\in \R
 \end{eqnarray*}
 valid? Thirdly, Nakamura \cite{N2020} established the Strichartz estimates for orthonormal functions
  on $\mathbf{T^{d}}$ related to $e^{it \Delta}$ and presented
 the optimality of the validity of the Strichartz estimates for orthonormal functions on $\mathbf{T^{d}}$.
Thus, a natural question appears:  is it possible to  establish
Strichartz estimates  for  orthonormal functions  related to $e^{it\sqrt{\partial_{x}^{4}-\partial_{x}^{2}}}$
 on $\mathbf{T}$ and prove the optimality?
Finally,  by using full randomization of compact operator,
      Yan et al.\cite{YDHXY2024} presented the probabilistic convergence of density function related to $e^{it\Delta}$
       on $\R$, $\mathbf{T}$ and $\Theta=\{x\in\R^{3}:|x|<1\}$ with $\sum\limits_{j=1}^{+\infty}\lambda_{j}|f_{j}\rangle \langle f_{j}|\in\mathfrak{S}^{2}$.
Thus, a natural question appears:  is it possible to  establish
 the probabilistic convergence of density function
 of $ e^{it\sqrt{\partial_{x}^{4}-\partial_{x}^{2}}}$ on $\R$, $\mathbf{T}$ and $\Theta=\{x\in\R^{3}:|x|<1\}$ with $\sum\limits_{j=1}^{+\infty}\lambda_{j}|f_{j}\rangle \langle f_{j}|\in\mathfrak{S}^{2}$.

\subsection{Main contribution}
In this paper we investigate the maximal-in-time estimates and  Strichartz
 estimates for orthonormal functions and convergence problem of density functions
  related to $e^{it\sqrt{\partial_{x}^{4}-\partial_{x}^{2}}}$ on manifolds.

Firstly, we use a approach slightly different  from \cite{BLN2020}
to establish the maximal-in-time estimates
for orthonormal functions  related to $e^{it\sqrt{\partial_{x}^{4}-\partial_{x}^{2}}}$
 on $\R$. More precisely,   for $\beta<2$,
$\lambda=(\lambda_{j})_{j}\in \ell^{\beta}$,  by using homogeneous dyadic
decomposition instead of inhomogeneous dyadic decomposition and a direct proof instead of
switching the roles of space and time, we have
\begin{eqnarray}
&&\Big\|\sum_{j}\lambda_{j}|e^{it\sqrt{-\partial_{x}^{2}+\partial_{x}^{4}}}f_{j}|^{2}
\Big\|_{L_{x}^{2,\infty}L_{t}^{\infty}(\mathbf{R}\times I)}\leq C\|\lambda\|_{\ell^{\beta}}.\label{1.014}
\end{eqnarray}
Here $(f_j)_j$ is
an orthonormal system in $\dot{H}^{\frac{1}{4}}(\mathbf{R})$.
Then, by using the maximal-in-time estimates \eqref{1.014}, we present the pointwise
convergence of density function related to $e^{it\sqrt{\partial_{x}^{4}-\partial_{x}^{2}}}$.  For $\beta<2$ and $\gamma_{0}\in \mathfrak{S}^{\beta}(\dot{H}^{\frac{1}{4}}(\mathbf{R}))$ which is self-adjoint,  we have
\begin{eqnarray}
&\lim\limits_{t\longrightarrow 0}\rho_{\gamma(t)}(x)=\rho_{\gamma_{0}}(x), \,\,a.e.x\in\mathbf{R},\label{1.015}
\end{eqnarray}
where $\rho_{\gamma(t)},\, \rho_{\gamma_{0}}$ denote the density function of
$\gamma(t)=\sum\limits_{j=1}^{\infty}\lambda_{j}
\left|e^{it\sqrt{\partial_{x}^{4}-\partial_{x}^{2}}}f_{j}\right\rangle\left\langle
e^{it\sqrt{\partial_{x}^{4}-\partial_{x}^{2}}}f_{j}\right|$, $\gamma_{0}=
\sum\limits_{j=1}^{\infty}\lambda_{j}|f_{j}\rangle\langle f_{j}|$, respectively.
 \eqref{1.015} generalizes the results of \cite{LL2023}, since if we choose
  $\lambda_{1}=1$ and $\lambda_{j}=0(j\neq 1)$, by using \eqref{1.015}, we have
\begin{eqnarray*}
&\lim\limits_{t\longrightarrow 0}e^{i\sqrt{-\partial_{x}^{2}+\partial_{x}^{4}}}f_{1}=f_{1},\,\,a.e.x\in\mathbf{R}.
\end{eqnarray*}

Secondly, inspired by \cite{BKS2023}, we establish the maximal-in-time estimates for orthonormal functions
  related to $e^{it\sqrt{\partial_{x}^{4}-\partial_{x}^{2}}}$. More precisely,  for $d\geq1$, $\frac{d}{4}\leq s<\frac{d}{2}$,
   $0<\alpha\leq d$, $1\leq\beta<\frac{\alpha}{d-2s}$ and $\mu\in\mathcal{M}^{\alpha}
   (\mathbf{B}^{d})$,  we have
\begin{eqnarray}
&&\left\|\sum\limits_{j}\lambda_{j}|e^{it\sqrt{-\partial_{x}^{2}+
\partial_{x}^{4}}}f_{j}|^{2}\right\|_{L_{x}^{1}(\mathbf{B}^{d},d\mu)L_{t}^{\infty}(0,1)}\leq C\|\lambda\|_{\ell^{\beta}}.\label{1.016}
\end{eqnarray}
Here $\mathcal{M}^{\alpha}$ denotes the collection of all $\alpha$-dimensional
 probability measures supported on the unit ball $\mathbf{B^{d}}$.
Then, by using \eqref{1.016}, we present the pointwise convergence of density function
 related to $e^{it\sqrt{-\partial_{x}^{2}+
\partial_{x}^{4}}}f_{j}$. More precisely,  for $d\geq1$, $\frac{d}{4}\leq s<\frac{d}{2}$,
 $0<\alpha\leq d$ and $\gamma_{0}\in \mathfrak{S}^{\beta}(\dot{H}^{s})$ which is self-adjoint
  with $1\leq\beta<\frac{\alpha}{d-2s}$,  we have
\begin{eqnarray}
&\lim\limits_{t\longrightarrow 0}\rho_{\gamma(t)}(x)=\rho_{\gamma_{0}}(x), \,\,\mu-a.e.x\in \mathbf{B}^{d},\label{1.017}
\end{eqnarray}
 where $\rho_{\gamma(t)}, \rho_{\gamma_{0}}$ denote the density function of
 $\gamma(t)=\sum\limits_{j=1}^{\infty}\lambda_{j}
\left|e^{it\sqrt{\partial_{x}^{4}-\partial_{x}^{2}}}f_{j}\right\rangle\left\langle
e^{it\sqrt{\partial_{x}^{4}-\partial_{x}^{2}}}f_{j}\right|$, $\gamma_{0}=
\sum\limits_{j=1}^{\infty}\lambda_{j}|f_{j}\rangle\langle f_{j}|$,
respectively.
Moreover, we present the Hausdorff dimension of the divergence set of density
function related to $e^{it\sqrt{\partial_{x}^{4}-\partial_{x}^{2}}}f_{j}$. More precisely, we prove
\begin{eqnarray}
&&dim_{H}D(\gamma_{0})\leq (d-2s)\beta,\label{1.018}
\end{eqnarray}
where $D(\gamma_{0})=\{x\in \mathbf{B^{d}}:\lim\limits_{t\longrightarrow 0}\rho_{\gamma(t)}(x)\neq\rho_{\gamma_{0}}(x)\}$, $dim_{H}D(\gamma_{0})$
is the Hausdorff dimension of $D(\gamma_{0})$.

Thirdly, by using techniques of \cite{V1990} and \cite{N2020} and
 noncommutative commutative Stein interpolation theorem related to the mixed Lebesgue spaces, which
  is just Lemma 3.4 in this paper, we show the Schatten bound with space-time norms
   related to $e^{it\sqrt{\partial_{x}^{4}-\partial_{x}^{2}}}$ on $\mathbf{T}$. More precisely, for all $W_{1},W_{2}\in L_{t}^{2p^{\prime}}L_{x}^{2q^{\prime}}(I_{N}\times\mathbf{T})$,
we prove
\begin{eqnarray}
&\|W_{1}\mathscr{D}\mathscr{D}^{\ast}W_{2}\|_{\mathfrak{S}^{2q^{\prime}}(L^{2}(I_{N}\times\mathbf{T}))}\leq C\|W_{1}\|_{L_{t}^{2p^{\prime}}L_{x}^{2q^{\prime}}(I_{N}\times\mathbf{T})}
\|W_{2}\|_{L_{t}^{2p^{\prime}}L_{x}^{2q^{\prime}}(I_{N}\times\mathbf{T})}.\label{1.019}
\end{eqnarray}
Here $I_{N}=[-\frac{1}{2N},\frac{1}{2N}],\,S_{N}=\Z\cap[-N,N]$, $\frac{1}{p^{\prime}}
+\frac{1}{2q^{\prime}}=1$, $\frac{2}{3}<\frac{1}{q^{\prime}}<1$.
Moreover,  by using the duality principle related to Schatten norms and techniques of
\cite{V1990} and \cite{N2020}, we also prove Strichartz estimates for orthonormal functions
 related to $e^{it\sqrt{\partial_{x}^{4}-\partial_{x}^{2}}}$ on $\mathbf{T}$. More precisely,    we prove
\begin{eqnarray}
&\left\|\sum\limits_{j=1}^{\infty}\lambda_{j}\left|e^{it\sqrt{\partial_{x}^{4}-\partial_{x}^{2}}}P_{\leq N}f_{j}\right|^{2}\right\|_{L_{t}^{p}L_{x}^{q}(\mathbf{T}\times\mathbf{T})}\leq CN^{\frac{1}{p}}\|\lambda\|_{\ell^{\beta}},\beta\leq2,\label{1.020}
\end{eqnarray}
and
\begin{eqnarray}
&\left\|\sum\limits_{j=1}^{\infty}\lambda_{j}\left|e^{it\sqrt{\partial_{x}^{4}-\partial_{x}^{2}}}P_{\leq N}f_{j}\right|^{2}\right\|_{L_{t}^{2}L_{x}^{\infty}(\mathbf{T}\times\mathbf{T})}\leq CN^{\frac{1}{2}}\|\lambda\|_{\ell^{\beta}}.\label{1.021}
\end{eqnarray}
Here $N\geq1$, $(\frac{1}{q},\frac{1}{p})\in (A,B],\,A=(0,\frac{1}{2}),\,B=(1,0)$,
 $\lambda=(\lambda_{j})_{j=1}^{+\infty}\in\ell^{\beta}
 (\beta\leq\frac{2q}{q+1})$.
We can construct the counterexample to show that  $\beta\leq\frac{2q}{q+1}$ is necessary
condition for \eqref{1.020} and $\beta\leq2$ is necessary condition for \eqref{1.021}.

Fourthly, by using full randomization of compact operator, we present the probabilistic convergence of density
 function related to $e^{it\sqrt{\partial_{x}^{4}-\partial_{x}^{2}}}$ on $\R$
  with $\gamma_{0}\in\mathfrak{S}^{2}$. More precisely, for any  $\epsilon>0$,
  $\exists0<\delta<\frac{\epsilon^{2}}{M_{1}\sqrt{1+M_{1}^{2}}}$, when $|t|<\delta$, we prove
\begin{eqnarray*}
&&\left\|\sum_{j=1}^{\infty}\lambda_{j}g_{j}^{(2)}(\widetilde{\omega})A_{1}\right\|_{L_{\omega,\widetilde{\omega}}^{r}
(\Omega\times\widetilde{\Omega})}
\leq Cr^{\frac{3}{2}}\epsilon^{2}(\|\gamma_{0}\|_{\mathfrak{S}^{2}}+1)
\end{eqnarray*}
and
\begin{eqnarray*}
&&\lim\limits_{t \rightarrow 0}(\mathbb{P}\times \widetilde{\mathbb{P}})\left(\left\{(\omega \times \widetilde{\omega}) \in(\Omega \times \widetilde{\Omega}) || F(t, \omega, \widetilde{\omega})|>C\alpha_{1}\right\}\right)=0.
\end{eqnarray*}
Here $r \in[2, \infty)$, $A_{1}=\left(|f_{j}^{\omega}|^{2}-|e^{it\sqrt{\partial_{x}^{4}-\partial_{x}^{2}}}f_{j}^{\omega}|^{2}\right)$,
$(f_{j})_{j=1}^{\infty}$ is an orthonormal system in
 $L^{2}\left(\R\right)$ and  $f_{j}^{\omega}$ is the randomization of $f_{j}$  defined as in
 \eqref{1.026}.
\begin{eqnarray*}
&&F(t, \omega, \widetilde{\omega})=\sum\limits_{j=1}^{\infty} \lambda_{j} g_{j}^{(2)}(\widetilde{\omega})A_{1},\alpha_{1}=\left(\left\|\gamma_0\right\|_{\mathfrak{S}^2}+1\right)
e \epsilon^{\frac{1}{2}}\left(\epsilon \ln \frac{1}{\epsilon}\right)^{\frac{3}{2}},\\
&&
\rho_{\gamma_{0}^{\omega, \tilde{\omega}}}=\sum_{j=1}^{\infty} \lambda_j g_{j}^{(2)}(\widetilde{\omega})
\left|f_{j}^{\omega}\right|^2, \\ &&\rho_{e^{it\sqrt{\partial_{x}^{4}-\partial_{x}^{2}}} \gamma_{0}^{\omega, \widetilde{\omega}}e^{-it\sqrt{\partial_{x}^{4}-\partial_{x}^{2}}}}=\sum_{j=1}^{\infty} \lambda_{j} g_{j}^{(2)}(\widetilde{\omega})\left|e^{it\sqrt{\partial_{x}^{4}-\partial_{x}^{2}}} f_{j}^{\omega}\right|^{2},
\end{eqnarray*}
$\rho_{\gamma_{0}^{\omega, \tilde{\omega}}},\rho_{e^{it\sqrt{\partial_{x}^{4}-\partial_{x}^{2}}} \gamma_{0}^{\omega, \widetilde{\omega}}e^{-it\sqrt{\partial_{x}^{4}-\partial_{x}^{2}}}} $ denote the density function
 of $\gamma_{0}^{\omega, \tilde{\omega}},e^{it\sqrt{\partial_{x}^{4}-\partial_{x}^{2}}} \gamma_{0}^{\omega, \widetilde{\omega}}e^{-it\sqrt{\partial_{x}^{4}-\partial_{x}^{2}}},$ respectively.

Fifthly, by using full randomization of compact operator, we present the probabilistic convergence of density
 function related to $e^{it\sqrt{\partial_{x}^{4}-\partial_{x}^{2}}}$ on $\mathbf{T}$. More precisely,  for any  $\epsilon>0$, $\exists0<\delta<\frac{\epsilon^{2}}{M_{1}\sqrt{1+M_{1}^{2}}}$, when $|t|<\delta$,
   we prove
\begin{eqnarray*}
&&\left\|\sum_{j=1}^{\infty}\lambda_{j}g_{j}^{(2)}(\widetilde{\omega})A_{2}\right\|_{L_{\omega,\widetilde{\omega}}^{r}
(\Omega\times\widetilde{\Omega})}
\leq Cr^{\frac{3}{2}}\epsilon^{2}(\|\gamma_{0}\|_{\mathfrak{S}^{2}}+1)
\end{eqnarray*}
and
\begin{eqnarray*}
&&\lim\limits_{t \rightarrow 0}(\mathbb{P}\times \widetilde{\mathbb{P}})
\left(\left\{(\omega \times \widetilde{\omega}) \in(\Omega \times \widetilde{\Omega})|| F(t, \omega, \widetilde{\omega})|>C\alpha_{2}\right\}\right)=0.
\end{eqnarray*}
Here $r \in[2, \infty)$, $A_{2}=\left(|f_{j}^{\omega}|^{2}-|e^{it\sqrt{\partial_{x}^{4}-\partial_{x}^{2}}}f_{j}^{\omega}|^{2}\right)$, $(f_{j})_{j=1}^{\infty}$
 is an orthonormal system in $L^{2}(\mathbf{T})$ and $f_{j}^{\omega}$ is the randomization of
  $f_{j}$  defined as in \eqref{1.027}.
\begin{eqnarray*}
&&F(t, \omega, \widetilde{\omega})=\sum\limits_{j=1}^{\infty} \lambda_{j} g_{j}^{(2)}(\widetilde{\omega})A_{2},\alpha_{2}=\left(\left\|\gamma_0\right\|_{\mathfrak{S}^2}+1\right)
 e \epsilon^{\frac{1}{2}}\left(\epsilon \ln \frac{1}{\epsilon}\right)^{\frac{3}{2}},\\
&&
\rho_{\gamma_{0}^{\omega, \tilde{\omega}}}=\sum_{j=1}^{\infty} \lambda_j g_{j}^{(2)}
(\widetilde{\omega})\left|f_{j}^{\omega}\right|^2,\\ &&\rho_{e^{it\sqrt{\partial_{x}^{4}-\partial_{x}^{2}}} \gamma_{0}^{\omega, \widetilde{\omega}}e^{-it\sqrt{\partial_{x}^{4}-\partial_{x}^{2}}}}=\sum_{j=1}^{\infty} \lambda_{j} g_{j}^{(2)}(\widetilde{\omega})\left|e^{it\sqrt{\partial_{x}^{4}-\partial_{x}^{2}}} f_{j}^{\omega}\right|^{2},
\end{eqnarray*}
$\rho_{\gamma_{0}^{\omega, \tilde{\omega}}},\rho_{e^{it\sqrt{\partial_{x}^{4}-\partial_{x}^{2}}} \gamma_{0}^{\omega, \widetilde{\omega}}e^{-it\sqrt{\partial_{x}^{4}-\partial_{x}^{2}}}} $ denote
 the density function of $\gamma_{0}^{\omega, \tilde{\omega}},e^{it\sqrt{\partial_{x}^{4}-\partial_{x}^{2}}} \gamma_{0}^{\omega, \widetilde{\omega}}e^{-it\sqrt{\partial_{x}^{4}-\partial_{x}^{2}}},$  respectively.

Finally, by using full randomization of compact operator, we present the probabilistic convergence of density
 function related to $e^{it\sqrt{\partial_{x}^{4}-\partial_{x}^{2}}}$ on $\Theta=\{x\in\R^{3}:|x|<1\}$
  with $\gamma_{0}\in\mathfrak{S}^{2}$. More precisely,  for any  $\epsilon>0$,
   $\exists0<\delta<\frac{\epsilon^{2}}{M_{1}\sqrt{1+M_{1}^{2}}}$, when $|t|<\delta$, we prove
\begin{eqnarray*}
&&\left\|\sum_{j=1}^{\infty}\lambda_{j}g_{j}^{(2)}(\widetilde{\omega})A_{3}\right\|_{L_{\omega_{1},\widetilde{\omega}}^{r}
(\Omega_{1}\times\widetilde{\Omega})}
\leq Cr^{\frac{3}{2}}\epsilon^{2}(\|\gamma_{0}\|_{\mathfrak{S}^{2}}+1)
\end{eqnarray*}
and
\begin{eqnarray*}
&&\lim\limits_{t \rightarrow 0}(\mathbb{P}\times \widetilde{\mathbb{P}})
\left(\left\{(\omega_{1} \times \widetilde{\omega}) \in(\Omega_{1}\times
\widetilde{\Omega}) || F(t, \omega_{1}, \widetilde{\omega})|>C\alpha_{3}\right\}\right)=0.
\end{eqnarray*}
Here $r \in[2, \infty)$, $A_{3}=\left(|f_{j}^{\omega_{1}}|^{2}-|e^{it\sqrt{\partial_{x}^{4}-
\partial_{x}^{2}}}f_{j}^{\omega_{1}}|^{2}\right)$, $f_{j}^{\omega_{1}}$ is the
 randomization of $f_{j}$  defined
  as in \eqref{1.028}
\begin{eqnarray*}
&&F(t, \omega_{1}, \widetilde{\omega})=\sum\limits_{j=1}^{\infty}
\lambda_{n} g_{j}^{(2)}
(\widetilde{\omega})A_{3},\alpha_{3}=
\left(\left\|\gamma_0\right\|_{\mathfrak{S}^2}+1\right) e \epsilon^{\frac{1}{2}}
\left(\epsilon \ln \frac{1}{\epsilon}\right)^{\frac{3}{2}},\\
&&
\rho_{\gamma_{0}^{\omega_{1}, \tilde{\omega}}}=\sum_{j=1}^{\infty} \lambda_{j}g_{j}^{(2)}
(\widetilde{\omega})\left|f_{j}^{\omega_{1}}\right|^2,\\ &&
\rho_{e^{it\sqrt{\partial_{x}^{4}-\partial_{x}^{2}}} \gamma_{0}^{\omega_{1},
\widetilde{\omega}}e^{-it\sqrt{\partial_{x}^{4}-\partial_{x}^{2}}}}=
\sum_{j=1}^{\infty} \lambda_{j} g_{j}^{(2)}(\widetilde{\omega})
\left|e^{it\sqrt{\partial_{x}^{4}-\partial_{x}^{2}}} f_{j}^{\omega_{1}}\right|^{2},
\end{eqnarray*}
$\rho_{\gamma_{0}^{\omega_{1},\tilde{\omega}}},\rho_{e^{it\sqrt{\partial_{x}^{4}
-\partial_{x}^{2}}} \gamma_{0}^{\omega_{1}, \widetilde{\omega}}
e^{-it\sqrt{\partial_{x}^{4}-\partial_{x}^{2}}}}$
denote the density function of $\gamma_{0}^{\omega_{1}, \tilde{\omega}},
e^{it\sqrt{\partial_{x}^{4}-\partial_{x}^{2}}} \gamma_{0}^{\omega_{1},\widetilde{\omega}}
e^{-it\sqrt{\partial_{x}^{4}-\partial_{x}^{2}}},$ respectively.

\subsection{Introduction to notation and Spaces}
We present some notation before stating the main results.
$\frac{1}{p}+\frac{1}{p^{\prime}}=1$. When $x\in\R$, we have
\begin{eqnarray*}
&&\mathscr{F}_{x}f(\xi)=\frac{1}{2\pi}\int_{\SR}e^{-ix\xi}f(x)dx,\,\, \mathscr{F}_{x}^{-1}f(x)=\frac{1}{2\pi}\int_{\SR}e^{ix\xi}\mathscr{F}_{x}f(\xi)d\xi.
\end{eqnarray*}
When $x\in\mathbf{T}=[0,2\pi)$, we have
\begin{eqnarray*}
&&\mathscr{F}_{x}f(k)=\frac{1}{2\pi}\int_{\mathbf{T}}e^{-ixk}f(x)dx,\,\, \mathscr{F}_{x}^{-1}f(x)=\frac{1}{2\pi}\sum\limits_{k=-\infty}^{+\infty}e^{ixk}\mathscr{F}_{x}f(k).
\end{eqnarray*}
For given $\alpha\in(0,d]$, the Borel measure $\mu$ on $\R^{d}$ is said to be $\alpha$-dimensional if
\begin{eqnarray*}
&&\sup\limits_{x\in\SR^{d},r\geq0}\frac{\mu(B(x,r))}{r^{\alpha}}<\infty,
\end{eqnarray*}
and we shall use $\mathcal{M}^{\alpha}(\mathbf{B^{d}})$ to denote the collection of all $\alpha$-dimensional
 probability measures supported on the unit ball $\mathbf{B^{d}}$.

The Schatten space $\mathfrak{S}^{\alpha}=\mathfrak{S}^{\alpha}(L^{2})$, $1\leq\alpha<\infty$,
 be defined as the set of all compact operators $\gamma$ on $L^{2}$ such that the sequence of
  eigenvalues $(\lambda_{j}^{2})_{j}$ of $\gamma^{\ast}\gamma$ belongs to $\ell^{\alpha}(\mathbb{C})$,
  in this case we define
\begin{eqnarray*}
&\|\gamma\|_{\mathfrak{S}^{\alpha}}=\|\lambda\|_{\ell^{\alpha}}=\left(\sum\limits_{j}
|\lambda_{j}|^{\alpha}\right)^{\frac{1}{\alpha}}.
\end{eqnarray*}
When $\alpha=2$, this coincides with the Hilbert-Schmidt norm and, when the operator is given
by an integral kernel, this coincides with the $L^{2}$ norm of the kernel. Moreover, when
$\alpha=\infty$, we define $\|\gamma\|_{\mathfrak{S}^{\infty}}$ to be the operator norm
of $\gamma$ on $L^{2}$.
We denote $L^{p,r}=L^{p,r}(\R)$ for the Lorentz space of measurable functions $f$ on $\R$
with $\|f\|_{L^{p,r}}<\infty$, where
\begin{equation}
\|f\|_{L^{p,r}}=\left\{
\begin{array}{lr}
\left(\int_{0}^{\infty}(t^{\frac{1}{p}}f^{\ast}(t))^{r}\frac{dt}{t}\right)^{\frac{1}{r}},
 &  p, r\in [1,\infty)\nonumber\\
\sup\limits_{t>0}t^{\frac{1}{p}}f^{\ast}(t),  & p\in[1,\infty],\, r=\infty.
\end{array}
\right.
\end{equation}
Here $f^{\ast}$ is the decreasing rearrangement of $f$ be defined as
\begin{eqnarray*}
&f^{\ast}(t)=\inf\{\mu\geq0:\alpha_{f}(s)\leq t\},
\end{eqnarray*}
where $\alpha_{f}$ is the distribution function of $f$ given by
\begin{eqnarray*}
&\alpha_{f}(s)=|\{x\in\R:|f(x)|>s\}|,
\end{eqnarray*}
from which, we have
\begin{equation}
\|f\|_{L^{p,r}}=\left\{
\begin{array}{lr}
p^{\frac{1}{r}}\left(\int_{0}^{\infty}(s\alpha_{f}(s)^{\frac{1}{p}})^{r}\frac{ds}{s}\right)^{\frac{1}{r}},
 &  p, r\in [1,\infty)\nonumber\\
\sup\limits_{s>0}s\alpha_{f}(s)^{\frac{1}{p}},  & p\in[1,\infty],\, r=\infty.
\end{array}
\right.
\end{equation}
In particularly, when $p=r$, we have $L^{p,p}=L^{p}$.
For a general Banach space $X$ and sequence $(g_{k})_{k}\subset X$, the norm is given by
\begin{eqnarray*}
&&\|(g_{k})_{k}\|_{\ell_{\mu}^{p}(X)}=\left(\sum\limits_{k}2^{p\mu k}\|g_{k}\|_{X}^{p}\right)^{\frac{1}{p}}
\end{eqnarray*}
for $p<\infty$, and $\|(g_{k})_{k}\|_{\ell_{\mu}^{\infty}(X)}=\sup\limits_{k}2^{\mu k}\|g_{k}\|_{X}$.

\subsection{Randomization of the data on $\R$, $\mathbf{T}$ and $\Theta$}
By using the method in \cite{BOP-2015, BOP2015, ZF2011, ZF2012}, we consider the following
randomization process. We assume that $\psi\in C_{c}^{\infty}(\R)$  is a real-valued,
even, non-negative bump function with $\supp\psi\subset [-1,1]$, such that
\begin{eqnarray}
&&\sum\limits_{k\in \z}\psi(\xi-k)=1,\label{1.022}
\end{eqnarray}
which is known as  Wiener decomposition of  the frequency space. For $\forall k\in \Z$,
 we define the function $\psi(D-k)f:\R\rightarrow\mathbb{C}$ by
\begin{eqnarray}
&&(\psi(D-k)f)(x)=\mathscr{F}^{-1}\big(\psi(\xi-k)\mathscr{F}f\big)(x),x\in \R\label{1.023}.
\end{eqnarray}
Suppose that $\{g_{k}^{(1)}(\omega)\}_{k\in \z}$ and
 $\{g_{j}^{(2)}(\widetilde{\omega})\}_{j\in \mathbb{N}^{+}}$
 are independent sequences of zero-mean real-valued random
  variables with probability
  distributions $\mu_{k}^{1}(k\in\Z)$ and $\mu_{j}^{2}(j\in\mathbb{N}^{+})$ on the
   probability Spaces $(\Omega,\mathcal{A}, \mathbb{P})$
    and $(\widetilde{\Omega},\widetilde{\mathcal{A}},
     \widetilde{\mathbb{P}})$, respectively.
   Assume that $\mu_{k}^{1}$ and $\mu_{j}^{2}$  satisfy
    the following property:
   for $\forall \gamma\in \R,\, \forall k\in\Z,\, \forall j\in\mathbb{N}^{+}$, $\exists c>0$
\begin{eqnarray}
&&\Big|\int_{-\infty}^{+\infty}e^{\gamma_{k} x}d\mu_{k}^{1}(x)\Big|\leq e^{c\gamma_{k}^2},\label{1.024}\\
&&\Big|\int_{-\infty}^{+\infty}e^{\gamma_{j} x}d\mu_{j}^{2}(x)\Big|\leq e^{c\gamma_{j}^2}.\label{1.025}
\end{eqnarray}
We define
\begin{eqnarray}
&&f_{j}^{\omega}=\sum_{k\in\z}g_{k}^{(1)}(\omega)\psi(D-k)f_{j}\label{1.026}
\end{eqnarray}
as the randomization of $f_{j}\in L^2(\R)(j\in\mathbb{N}^{+})$.

\noindent For $f_{j}\in L^{2}(\mathbf{T})(j\in\mathbb{N}^{+})$, we define
 its randomization as
\begin{eqnarray}
&&f_{j}^{\omega}=\sum_{k\in\z}g_{k}^{(1)}(\omega)e^{ixk}\mathscr{F}_{x}f_{j}(k).\label{1.027}
\end{eqnarray}
Let $f_{j}\in L^{2}(\Theta)(j\in \mathbb{N}^{+})$ and $\Theta=\{x\in\R^{3}:|x|<1\}$ be an
orthonormal system and be radial, then, we have $f_{n}=\sum\limits_{m=1}^{\infty}c_{n,m}e_{m}$.
Inspired by \cite{BT2008II, YDHXY2024}, for any $\omega_{1}\in \Omega_{1}$, we define its randomization by
\begin{eqnarray}
&&f_{j}^{\omega_{1}}=\sum_{m=1}^{\infty}g_{m}^{(1)}(\omega_{1})\frac{c_{j,m}}{m\pi}e_{m},\label{1.028}
\end{eqnarray}
where $e_{m}=\frac{\sin m\pi|x|}{(2\pi)^{\frac{1}{2}}|x|}$, $c_{j,m}=\int_{\Theta}f_{j}e_{m}dx$.
Moreover, we define
\begin{eqnarray}
\|f\|_{L_{\omega,\widetilde{\omega}}^{p}(\Omega\times\widetilde{\Omega})}=
\left(\int_{\Omega\times\widetilde{\Omega}}|f(\omega,\widetilde{\omega})|^{p}
d(\mathbb{P}\times \widetilde{\mathbb{P}})\right)^{\frac{1}{p}}=\left(\int_{\Omega}\int_{\widetilde{\Omega}}
|f(\omega,\widetilde{\omega})|^{p}d\mathbb{P}(\omega)
d\widetilde{\mathbb{P}}(\widetilde{\omega})
\right)^{\frac{1}{p}}.\label{1.029}
\end{eqnarray}
\subsection{Full randomization of compact operator on $\R$, $\mathbf{T}$ and $\Theta$}
Inspired by \cite{HY,YDHXY2024}, now we introduce complete randomization of compact operator.
 Suppose that $\gamma_{0}$ is a compact operator. Obviously, we have the following singular value decomposition
\begin{eqnarray}
&&\gamma_{0}=\sum_{j=1}^{\infty}\lambda_{j}|f_{j}\rangle\langle f_{j}|,\label{1.030}
\end{eqnarray}
where $\lambda_{j}\in\mathbb{C}, (f_j)_{j=1}^{\infty} $ is orthonormal system in  $L^{2}(\R)$,
$L^{2}(\mathbf{T})$ and $L^{2}(\Theta)$.

\noindent For any $(\omega, \widetilde{\omega})\in \Omega\times \widetilde{\Omega}$, we define the full randomization
 of compact operator $\gamma_{0}=\sum\limits_{j=1}^{\infty}\lambda_{j}|f_{j}\rangle\langle f_{j}|$ by
\begin{eqnarray}
&&\gamma_{0}^{\omega,\widetilde{\omega}}=\sum\limits_{j=1}^{\infty}\lambda_{j}g_{j}^{(2)}
(\widetilde{\omega})|f_{j}^{\omega}\rangle\langle f_{j}^{\omega}|,\label{1.031}
\end{eqnarray}
where $(f_{j})_{j=1}^{\infty}$
 is an orthonormal system in $L^{2}(\mathbf{R})$ and $f_{j}^{\omega}$ is the randomization of
  $f_{j}$  defined as in \eqref{1.026}.

\noindent For any $(\omega, \widetilde{\omega})\in \Omega\times \widetilde{\Omega}$, we define the full randomization
 of compact operator $\gamma_{0}=\sum\limits_{j=1}^{\infty}\lambda_{j}|f_{j}\rangle\langle f_{j}|$ by
\begin{eqnarray}
&&\gamma_{0}^{\omega,\widetilde{\omega}}=\sum\limits_{j=1}^{\infty}\lambda_{j}g_{j}^{(2)}
(\widetilde{\omega})|f_{j}^{\omega}\rangle\langle f_{j}^{\omega}|,\label{1.032}
\end{eqnarray}
where $(f_{j})_{j=1}^{\infty}$
 is an orthonormal system in $L^{2}(\mathbf{T})$ and $f_{j}^{\omega}$ is the randomization of
  $f_{j}$  defined as in \eqref{1.027}.

\noindent For any $(\omega_{1}, \widetilde{\omega})\in \Omega_{1}\times \widetilde{\Omega}$, we define the full randomization
 of compact operator $\gamma_{0}=\sum\limits_{j=1}^{\infty}\lambda_{j}|f_{j}\rangle\langle f_{j}|$ by
\begin{eqnarray}
&&\gamma_{0}^{\omega_{1},\widetilde{\omega}}=\sum\limits_{j=1}^{\infty}\lambda_{j}g_{j}^{(2)}
(\widetilde{\omega})|f_{j}^{\omega_{1}}\rangle\langle f_{j}^{\omega_{1}}|,\label{1.033}
\end{eqnarray}
where $(f_{j})_{j=1}^{\infty}$
 is an orthonormal system in $L^{2}(\mathbf{\Theta})$ and $f_{j}^{\omega}$ is the randomization of
  $f_{j}$  defined as in \eqref{1.028}.

\noindent From \eqref{1.031}, by using the simple calculation, we have
\begin{eqnarray}
&&\rho_{\gamma_{0}^{\omega,\widetilde{\omega}}}
=\sum_{j=1}^{\infty}\lambda_{j}g_{j}^{(2)}(\widetilde{\omega})\left|e^{it\sqrt{\partial_{x}^{4}
-\partial_{x}^{2}}}f_{j}^{\omega}\right|^{2}.\label{1.034}
\end{eqnarray}

\noindent From \eqref{1.032}, by using the simple calculation, we have
\begin{eqnarray}
&&\rho_{\gamma_{0}^{\omega,\widetilde{\omega}}}
=\sum_{j=1}^{\infty}\lambda_{j}g_{j}^{(2)}(\widetilde{\omega})\left|e^{it\sqrt{\partial_{x}^{4}
-\partial_{x}^{2}}}f_{j}^{\omega}\right|^{2}.\label{1.035}
\end{eqnarray}

\noindent From \eqref{1.033}, by using the simple calculation, we have
\begin{eqnarray}
&&\rho_{\gamma_{0}^{\omega_{1},\widetilde{\omega}}}=\sum_{j=1}^{\infty}\lambda_{j}g_{j}^{(2)}(\widetilde{\omega})\left|e^{it\sqrt{\partial_{x}^{4}
-\partial_{x}^{2}}}f_{j}^{\omega_{1}}\right|^{2}.\label{1.036}
\end{eqnarray}

\subsection{The main results }

\begin{Theorem}(Maximal-in-time estimates for orthonormal functions related to $e^{it\sqrt{\partial_{x}^{4}
-\partial_{x}^{2}}}$  on $\mathbf{R}$)\label{Theorem1}
Let $\beta<2$, $\lambda=(\lambda_{j})_{j}\in \ell^{\beta}$  and $(f_j)_j$ be
 an orthonormal system in $\dot{H}^{\frac{1}{4}}(\mathbf{R})$.  Then,  the following inequality
\begin{eqnarray}
&&\Big\|\sum_{j}\lambda_{j}|e^{it\sqrt{-\partial_{x}^{2}+\partial_{x}^{4}}}f_{j}|^{2}
\Big\|_{L_{x}^{2,\infty}L_{t}^{\infty}(\mathbf{R}\times I)}\leq C\|\lambda\|_{\ell^{\beta}}\label{1.037}
\end{eqnarray}
is valid.
\end{Theorem}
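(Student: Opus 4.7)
The plan is to combine the Frank--Sabin duality principle (Lemmas 2.15--2.17 cited earlier in the paper) with a homogeneous Littlewood--Paley decomposition, estimating the Boussinesq propagator scale by scale and then assembling via real interpolation on the Lorentz side. First I would normalise: since $(f_j)_j$ is orthonormal in $\dot{H}^{\frac{1}{4}}(\mathbf{R})$, the family $g_j := |D|^{\frac{1}{4}} f_j$ is orthonormal in $L^2(\mathbf{R})$, and if I set
\[
 U g := e^{it\sqrt{-\partial_x^2+\partial_x^4}}\,|D|^{-\frac{1}{4}}\, g,
\]
the desired estimate \eqref{1.037} becomes $\|\sum_j \lambda_j |Ug_j|^2\|_{L_x^{2,\infty}L_t^\infty} \le C\|\lambda\|_{\ell^\beta}$. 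By the Frank--Sabin duality this is equivalent to a Schatten estimate of the form $\|W\, U U^\ast\, \overline{W}\|_{\mathfrak{S}^{\beta'}}\le C\|W\|_{Y}^{2}$, where $Y$ is the appropriate pre-dual mixed Lorentz space, and it is this dualised statement I would attack.

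Next I would decompose $U = \sum_{k\in\Z} U_k$ using homogeneous Littlewood--Paley projections $P_k$ at frequency $|\xi|\sim 2^k$. The kernel of $U_k U_k^\ast$ takes the oscillatory form
\[
 K_k((t,x),(s,y)) \;=\; \int e^{\,i(t-s)\phi(\xi)+i(x-y)\xi}\, |\xi|^{-\frac{1}{2}}\, \chi(2^{-k}\xi)^2\, d\xi,
\]
with phase $\phi(\xi)=|\xi|\sqrt{1+\xi^2}$ and a suitable bump $\chi$. For each fixed $k$, one-dimensional stationary phase in $\xi$ yields a pointwise dispersive decay bound on $K_k$, which combined with the trivial identity bound $\|U_k U_k^\ast\|_{\mathfrak{S}^\infty}\le 1$ furnishes two endpoints; complex interpolation then provides a Schatten bound in $\mathfrak{S}^{\beta'}$ for $W U_k U_k^\ast \overline{W}$ with constant independent of $k$, the natural scaling having been absorbed by the $|D|^{-\frac{1}{4}}$ weight.

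Summing over $k\in\Z$ is the final step, and this is where the strict condition $\beta<2$ plays its role: the weak-Lorentz norm $L_x^{2,\infty}$ on the primal side (equivalently, its pre-dual $L_x^{2,1}$) supplies the extra room needed to square-sum the dyadic pieces through a Bessel-type argument, which would not close at the endpoint $\beta=2$. This is precisely why the authors employ a homogeneous decomposition and a direct argument, rather than the space--time swap used in Bez--Lee--Nakamura that reroutes through a half-order fractional Schr\"odinger propagator: the homogeneous partition treats low and high frequencies uniformly within a single framework.

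The main obstacle I anticipate is the change of regime in $\phi$: at high frequency $\phi(\xi)\sim\xi^2$ with curvature $\phi''(\xi)\sim 2$, so the analysis is Schr\"odinger-like and familiar, while at low frequency $\phi(\xi)\sim|\xi|$ with $\phi''(\xi)\sim\xi$, so the curvature degenerates as $|\xi|\to 0$ and the dispersive bound weakens. One must verify that the weight $|D|^{-\frac{1}{4}}$ exactly compensates this degeneracy, so that the dyadic Schatten estimates are genuinely uniform over all $k\in\Z$ and not merely for $k\ge 0$. Once this low-frequency balance is secured, the dyadic sum, the Schatten duality, and the Lorentz interpolation combine to yield \eqref{1.037} on the full line.
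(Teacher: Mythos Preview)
Your overall architecture (duality to a Schatten bound, dyadic decomposition, interpolation, then summation) matches the paper, but both the decomposition and the summation mechanism you describe differ from what the paper actually does, and your version has a genuine gap. The paper does \emph{not} perform a frequency Littlewood--Paley decomposition of $U$. Instead it decomposes the kernel $K(t,x)=\int e^{ix\xi+it\phi(\xi)}|\xi|^{-1/2}\,d\xi$ of $UU^\ast$ dyadically in the \emph{physical} variable, writing $K_k(t,x)=\psi(2^k x)K(t,x)$ and $T_kF=K_k\ast F$. The key input is the pointwise bound $|K(t,x)|\le C|x|^{-1/2}$ uniformly for $t\in I$ (Lemma~2.3), i.e.\ decay in $x$ rather than the $|t|$-dispersive decay your stationary-phase step would produce; this is exactly the direction of decay needed for an $L_x L_t^\infty$ maximal estimate. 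From this the paper obtains two endpoint estimates for $W_1 T_k W_2$ (Lemma~5.1): a $\mathfrak{S}^2$ bound uniform in $k$ with weights in $L_x^4 L_t^2$, and a $\mathfrak{S}^4$ bound, proved via an explicit quadrilinear form $Z_k$, carrying the factor $2^{(\frac{1}{p_1}+\frac{1}{p_2}-\frac{1}{2})k}$ with weights in $L_x^{p_1}L_t^2\times L_x^{p_2}L_t^2$. Complex interpolation (their Lemma~3.2) then gives $\mathfrak{S}^{\beta'}$ bounds for each $\beta'\in(2,4)$, still carrying the exponent-dependent factor $2^{\mu(p_1,p_2)k}$.

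The summation over $k$ is not a Bessel/orthogonality argument. Because $\mu(p_1,p_2)=\frac{1}{2}-\frac{1}{p_1}-\frac{1}{p_2}$ changes sign as $(p_1,p_2)$ crosses $(4,4)$, the paper applies bilinear real interpolation (Lemmas~2.15--2.17, packaged as Lemma~5.2) between three such $\ell_\mu^\infty(\mathfrak{S}^{\beta'})$ estimates to land in $\ell_0^{1}(\mathfrak{S}^{\beta'})$ with weights in the Lorentz space $L_x^{4,2}L_t^2$; it is this interpolation step, not almost-orthogonality, that forces $\beta'>2$ (equivalently $\beta<2$) and produces the Lorentz exponent. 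Your proposal, by contrast, asserts Schatten bounds on $W U_k U_k^\ast \overline{W}$ that are \emph{uniform} in $k$, and these cannot be summed over $k\in\Z$ by any Bessel-type device: multiplication by $W$ destroys the frequency block-diagonality of $\sum_k U_k U_k^\ast$, so neither the triangle inequality nor $\ell^2$-orthogonality closes the sum. If you want a frequency decomposition to succeed here you would need, as in the paper's physical-space argument, $k$-dependent weights whose exponent can be perturbed to either sign and a real-interpolation summation; the uniform-plus-Bessel route as stated does not go through.
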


\noindent{\bf Remark 1:} Theorem 1.1 is used to prove Theorem 1.2.

\begin{Theorem}(Pointwise convergence of density function related to $e^{it\sqrt{\partial_{x}^{4}
-\partial_{x}^{2}}}$)\label{Theorem2}
Let $\beta<2$ and $\gamma_{0}\in \mathfrak{S}^{\beta}(\dot{H}^{\frac{1}{4}}(\mathbf{R}))$ be self-adjoint. Then, we have
\begin{eqnarray}
&\lim\limits_{t\longrightarrow 0}\rho_{\gamma(t)}(x)=\rho_{\gamma_{0}}(x), \,\,a.e.x\in\mathbf{R},\label{1.038}
\end{eqnarray}
where $\rho_{\gamma(t)}, \rho_{\gamma_{0}}$ denote the density function of
$\gamma(t)=\sum\limits_{j=1}^{\infty}\lambda_{j}
\left|e^{it\sqrt{\partial_{x}^{4}-\partial_{x}^{2}}}f_{j}\right\rangle\left\langle
e^{it\sqrt{\partial_{x}^{4}-\partial_{x}^{2}}}f_{j}\right|$, $\gamma_{0}=
\sum\limits_{j=1}^{\infty}\lambda_{j}|f_{j}\rangle\langle f_{j}|$, respectively.
\end{Theorem}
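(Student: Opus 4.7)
The plan is to reduce Theorem 1.2 to Theorem 1.1 by a standard density plus maximal inequality scheme. Fix the spectral decomposition $\gamma_{0}=\sum_{j\geq 1}\lambda_{j}|f_{j}\rangle\langle f_{j}|$ with $(f_{j})$ orthonormal in $\dot{H}^{1/4}(\R)$ and $(\lambda_{j})\in\ell^{\beta}$, and introduce the finite-rank truncation $\gamma_{0}^{(n)}=\sum_{j=1}^{n}\lambda_{j}|f_{j}\rangle\langle f_{j}|$. Two structural facts drive the argument: (i) the density map $\gamma\mapsto\rho_{\gamma}$ is linear in $\gamma$, so $\rho_{\gamma(t)}-\rho_{\gamma^{(n)}(t)}=\rho_{(\gamma-\gamma^{(n)})(t)}$; and (ii) the tail $\gamma_{0}-\gamma_{0}^{(n)}=\sum_{j>n}\lambda_{j}|f_{j}\rangle\langle f_{j}|$ is itself a spectral decomposition with $(f_{j})_{j>n}$ orthonormal in $\dot{H}^{1/4}$, so Theorem 1.1 applies to the tail with coefficients $(\lambda_{j})_{j>n}\in\ell^{\beta}$ whose norm tends to $0$ as $n\to\infty$.

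Splitting the error by the triangle inequality gives
\begin{eqnarray*}
|\rho_{\gamma(t)}(x)-\rho_{\gamma_{0}}(x)|
\leq \sup_{s\in I}\left|\rho_{(\gamma-\gamma^{(n)})(s)}(x)\right|
+\left|\rho_{\gamma^{(n)}(t)}(x)-\rho_{\gamma_{0}^{(n)}}(x)\right|
+\left|\rho_{\gamma_{0}-\gamma_{0}^{(n)}}(x)\right|.
\end{eqnarray*}
The middle term tends to $0$ as $t\to 0$ for a.e.\ $x$ because $\gamma_{0}^{(n)}$ is a finite sum of $n$ rank-one pieces, so it suffices that $|e^{it\sqrt{-\partial_{x}^{2}+\partial_{x}^{4}}}f_{j}(x)|^{2}\to|f_{j}(x)|^{2}$ a.e.\ for each $j=1,\dots,n$; this is the Carleson-type pointwise convergence for a single datum $f_{j}\in\dot{H}^{1/4}(\R)$ established by Li--Li \cite{LL2023} (and can also be bootstrapped from the $\lambda=(1,0,\dots)$ case of Theorem 1.1 by a Schwartz-density argument in $\dot{H}^{1/4}$). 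For the first and third terms, Theorem 1.1 applied to the tail yields
\begin{eqnarray*}
\Big\|\sup_{s\in I}|\rho_{(\gamma-\gamma^{(n)})(s)}|\Big\|_{L_{x}^{2,\infty}(\R)}
+\|\rho_{\gamma_{0}-\gamma_{0}^{(n)}}\|_{L_{x}^{2,\infty}(\R)}
\leq C\|(\lambda_{j})_{j>n}\|_{\ell^{\beta}}\longrightarrow 0,
\end{eqnarray*}
where the second summand is just the $t=0$ specialisation of the maximal estimate.

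To close the argument, fix $\varepsilon>0$ and consider the exceptional set $E_{\varepsilon}=\{x\in\R:\limsup_{t\to 0}|\rho_{\gamma(t)}(x)-\rho_{\gamma_{0}}(x)|>\varepsilon\}$. Combining the three bounds above with the quasi-triangle inequality for $L^{2,\infty}$, one obtains $|E_{\varepsilon}|\leq C\varepsilon^{-2}\|(\lambda_{j})_{j>n}\|_{\ell^{\beta}}^{2}$ for every $n\geq 1$, and letting $n\to\infty$ gives $|E_{\varepsilon}|=0$. Taking $\varepsilon\to 0$ along a countable sequence then yields \eqref{1.038}.

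The only analytically non-trivial ingredient is the single-datum pointwise convergence used to kill the middle error term; everything else is bookkeeping based on linearity of $\rho_{\gamma}$, the preservation of orthonormality under spectral truncation, and the maximal estimate of Theorem 1.1. The endpoint regularity $s=1/4$ enters precisely at this single-datum step, which is why the hypothesis $\dot{H}^{1/4}(\R)$ appears in the theorem. The main obstacle is therefore not in the reduction itself but in ensuring that the single-function Carleson-type statement is available at the critical regularity $\dot{H}^{1/4}$, which is exactly the content of \cite{LL2023}.
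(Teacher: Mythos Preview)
Your proposal is correct and follows essentially the same approach as the paper's proof in Section~6: finite-rank truncation of $\gamma_{0}$, the maximal-in-time estimate of Theorem~1.1 applied to the tail $\sum_{j>n}\lambda_{j}|f_{j}\rangle\langle f_{j}|$, and the single-function pointwise convergence of Lemma~2.1 (from \cite{LL2023}) to handle the finite-rank piece, combined via the triangle inequality and a Chebyshev-type measure bound on the exceptional set. The only cosmetic difference is that the paper phrases the conclusion as $\|\limsup_{t\to 0}|\rho_{\gamma(t)}-\rho_{\gamma_{0}}|\|_{L_{x}^{2,\infty}}=0$ rather than $|E_{\varepsilon}|=0$, but these are equivalent.
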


\begin{Theorem}(Maximal-in-time estimates for orthonormal functions related to
 $e^{it\sqrt{\partial_{x}^{4}
-\partial_{x}^{2}}}$ on $\mathbf{B}^{d}$)\label{Theorem3}
Let $d\geq1$, $\frac{d}{4}\leq s<\frac{d}{2}$, $0<\alpha\leq d$, $1\leq\beta<\frac{\alpha}{d-2s}$
 and $\mu\in\mathcal{M}^{\alpha}(\mathbf{B}^{d})$. Then, we have
\begin{eqnarray}
&&\left\|\sum\limits_{j}\lambda_{j}|e^{it\sqrt{-\partial_{x}^{2}+\partial_{x}^{4}}}f_{j}|^{2}
\right\|_{L_{x}^{1}(\mathbf{B}^{d},d\mu)L_{t}^{\infty}(0,1)}\leq C\|\lambda\|_{\ell^{\beta}},\label{1.039}
\end{eqnarray}
where $\lambda=(\lambda_{j})_{j}\in\ell^{\beta}$ and    $(f_{j})_{j}$ are orthonormal systems in
  $\dot{H}^{s}(\R^{d})$.
\end{Theorem}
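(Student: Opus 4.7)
The strategy follows the duality/interpolation philosophy of \cite{BKS2023}, adapted to the Boussinesq symbol $\phi(\xi) = \sqrt{|\xi|^2 + |\xi|^4}$. First normalize by writing $g_j := (-\Delta)^{s/2} f_j$, so that $(g_j)_j$ is orthonormal in $L^2(\R^d)$, and define
$$Ug(t,x) := e^{it\sqrt{\partial_x^4 - \partial_x^2}}(-\Delta)^{-s/2}g(x).$$
A measurable-selection argument replaces the $L^\infty_t(0,1)$ on the left-hand side of \eqref{1.039} by evaluation at a measurable function $\tau: \mathbf{B}^d \to (0,1)$; setting $U_\tau g(x) := Ug(\tau(x),x)$, the Frank--Sabin duality principle (the version encoded in Lemmas 2.15--2.17 in Section 2) reduces \eqref{1.039} to showing, uniformly in $\tau$,
$$\|U_\tau^{*}\, V\, U_\tau\|_{\mathfrak{S}^{\beta'}(L^2(\SR^d))} \le C \|V\|_{L^\infty_x(d\mu)}, \qquad \beta' = \tfrac{\beta}{\beta-1},$$
for all test functions $V$ on $\mathbf{B}^d$.

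\textbf{Stein interpolation between two endpoints.} I would establish this Schatten bound by Stein's complex interpolation applied to the analytic family $T_z V := U_\tau^{*}(|V|^{z} \operatorname{sgn} V) U_\tau$. At $\operatorname{Re} z = 0$ a trivial $\mathfrak{S}^\infty$ bound follows from $L^2_x \to L^2(d\mu)$-boundedness of $U_\tau$, which in turn uses the Sobolev embedding $\dot H^s(\R^d) \hookrightarrow L^{2d/(d-2s)}$ together with the $\alpha$-dimensional hypothesis on $\mu$. At $\operatorname{Re} z = 1$ the task is to prove the Hilbert--Schmidt bound
$$\int_{\mathbf{B}^d}\!\!\int_{\mathbf{B}^d} |K_\tau(x,y)|^{2} |V(x)||V(y)|\,d\mu(x)d\mu(y) \lesssim \|V\|^{2}_{L^\infty_x(d\mu)},$$
where $K_\tau(x,y) = \int_{\SR^d} e^{i((x-y)\cdot\xi + (\tau(x)-\tau(y))\phi(\xi))} |\xi|^{-2s} d\xi$. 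A dyadic Littlewood--Paley decomposition in $\xi$ together with stationary phase (at high frequencies $\phi \sim |\xi|^2$ has non-degenerate Hessian, while low frequencies are controlled by the weight $|\xi|^{-2s}$ and the compactness of $\mathbf{B}^d$) yields $|K_\tau(x,y)| \lesssim |x-y|^{-(d-2s)}$. Pairing this with $\mu(B(x,r)) \lesssim r^\alpha$ through a Stein--Weiss-type inequality produces the $\mathfrak{S}^{2}$ estimate that saturates precisely at $\beta = \alpha/(d-2s)$; interpolating with the $\mathfrak{S}^\infty$ endpoint then delivers the required $\mathfrak{S}^{\beta'}$ bound for all $1\le\beta < \alpha/(d-2s)$.

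\textbf{Main obstacle.} The principal difficulty is the Hilbert--Schmidt step: the Boussinesq phase interpolates between wave-type ($\sim|\xi|$) behaviour at low frequencies and Schr\"odinger-type ($\sim|\xi|^2$) behaviour at high frequencies, so the stationary-phase analysis of $K_\tau$ must be performed scale-by-scale and the resulting dyadic pieces summed. The strict inequality $\beta < \alpha/(d-2s)$ provides exactly the slack needed to sum the dyadic contributions after interpolation, and ensuring that the final Schatten bound is independent of the measurable selection $\tau$ is the technical heart of the argument.
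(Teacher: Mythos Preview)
Your reduction via measurable selection and Frank--Sabin duality is fine, and the kernel bound $|K_\tau(x,y)|\lesssim |x-y|^{-(d-2s)}$ (obtained dyadically, with the endpoint $s=d/4$ handled by the direct estimate in Lemma~2.5) is correct. The gap is in the interpolation step, and it occurs in \emph{both} regimes of $\alpha/(d-2s)$.

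First, interpolating a global $\mathfrak{S}^\infty$ bound with a global $\mathfrak{S}^2$ bound can only produce $\mathfrak{S}^{\beta'}$ with $\beta'\ge 2$, i.e.\ $\beta\le 2$. When $\alpha/(d-2s)>2$ you must reach $\beta>2$, which requires $\beta'<2$; this forces an $\mathfrak{S}^1$-type input, not merely $\mathfrak{S}^\infty$. Second, when $\alpha/(d-2s)\le 2$ your global Hilbert--Schmidt endpoint fails outright: the bound $\int\!\!\int |x-y|^{-2(d-2s)}\,d\mu(x)d\mu(y)<\infty$ for $\mu\in\mathcal{M}^\alpha$ holds precisely when $2(d-2s)<\alpha$, i.e.\ $\alpha/(d-2s)>2$, so in this regime you have no second endpoint to interpolate against. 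Your phrase ``saturates precisely at $\beta=\alpha/(d-2s)$'' conflates the Schatten index with the threshold for finiteness of the energy integral; these are different things.

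The paper's remedy is to refuse to sum before interpolating. It introduces a \emph{double} dyadic decomposition $T_{k,l}$ (frequency scale $2^k$, spatial separation $|x-x'|\sim 2^{-l}$) and proves, on each piece, the three bounds of Lemma~7.2: an $\mathfrak{S}^1$ bound of size $2^{(d-2s)k}$, an $\mathfrak{S}^2$ bound of size $2^{(d-2s)k}2^{-\alpha l/2}(1+2^{k-l})^{-d/2}$, and an $\mathfrak{S}^\infty$ bound of size $2^{(d-2s)k}2^{-\alpha l}(1+2^{k-l})^{-d/2}$ (the last two using Lemma~7.1 to exploit $\mu\in\mathcal{M}^\alpha$). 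Then one interpolates piece-by-piece --- between $\mathfrak{S}^1$ and $\mathfrak{S}^2$ when $\alpha/(d-2s)>2$, between $\mathfrak{S}^2$ and $\mathfrak{S}^\infty$ when $\alpha/(d-2s)\le 2$ --- and only afterwards sums in $k$ and $l$; the strict inequality $\beta<\alpha/(d-2s)$ is exactly what makes that sum converge. Your plan is missing both the $\mathfrak{S}^1$ input and this interpolate-then-sum structure.
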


\noindent{\bf Remark 2:} Theorem 1.3 is used to prove Theorem 1.4.

\begin{Theorem}(Pointwise convergence with respect to $d \mu$ and the Hausdorff dimension of
 the divergence set)\label{Theorem4}
Let $d\geq1$, $\frac{d}{4}\leq s<\frac{d}{2}$, $0<\alpha\leq d$ and $\gamma_{0}\in
\mathfrak{S}^{\beta}(\dot{H}^{s})$ be self-adjoint with
$1\leq\beta<\frac{\alpha}{d-2s}$. Then, the following equality
\begin{eqnarray}
&\lim\limits_{t\longrightarrow 0}\rho_{\gamma(t)}(x)=\rho_{\gamma_{0}}(x), \,\,\mu-a.e.x\in
 \mathbf{B}^{d}\label{1.040}
\end{eqnarray}
is valid, where $\rho_{\gamma(t)}, \rho_{\gamma_{0}}$ denote the density function of
\,$\gamma(t)=\sum\limits_{j=1}^{\infty}\lambda_{j}
\left|e^{it\sqrt{\partial_{x}^{4}-\partial_{x}^{2}}}f_{j}\right\rangle\left\langle
e^{it\sqrt{\partial_{x}^{4}-\partial_{x}^{2}}}f_{j}\right|$, $\gamma_{0}=
\sum\limits_{j=1}^{\infty}\lambda_{j}|f_{j}\rangle\langle f_{j}|$, respectively.
 Moreover, we have
\begin{eqnarray}
&&dim_{H}D(\gamma_{0})\leq (d-2s)\beta,\label{1.041}
\end{eqnarray}
where  $D(\gamma_{0})=
\{x\in \mathbf{B}^{d}:\lim\limits_{t\longrightarrow 0}\rho_{\gamma(t)}(x)
\neq\rho_{\gamma_{0}}(x)\}$, $dim_{H}D(\gamma_{0})$ is the Hausdorff
dimension of $D(\gamma_{0})$.
\end{Theorem}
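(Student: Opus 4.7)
The plan is to deduce Theorem~1.4 from the maximal estimate in Theorem~1.3 through the standard two-step procedure (maximal inequality plus density), and then to upgrade the $\mu$-a.e.\ convergence to a Hausdorff dimensional statement via Frostman's lemma.

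\textbf{Step 1 (Maximal estimate for the oscillation).} I would define
\begin{equation*}
S\gamma_{0}(x):=\sup_{t\in(0,1)}\bigl|\rho_{\gamma(t)}(x)-\rho_{\gamma_{0}}(x)\bigr|,\qquad x\in\mathbf{B}^{d}.
\end{equation*}
Since $\rho_{\gamma_{0}}$ is the $t=0$ value of the propagated density, the triangle inequality gives $S\gamma_{0}(x)\leq 2\sup_{t\in[0,1)}|\rho_{\gamma(t)}(x)|$, and Theorem~1.3 applied to the spectral decomposition $\gamma_{0}=\sum_{j}\lambda_{j}|f_{j}\rangle\langle f_{j}|$ of $\gamma_{0}\in\mathfrak{S}^{\beta}(\dot{H}^{s})$ yields
\begin{equation*}
\|S\gamma_{0}\|_{L^{1}(\mathbf{B}^{d},d\mu)}\leq C\,\|\gamma_{0}\|_{\mathfrak{S}^{\beta}(\dot{H}^{s})}.
\end{equation*}

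\textbf{Step 2 (Density argument).} I would select a sequence of finite-rank self-adjoint approximations $\gamma_{0}^{(n)}=\sum_{j}\lambda_{j}^{(n)}|f_{j}^{(n)}\rangle\langle f_{j}^{(n)}|$ with $f_{j}^{(n)}\in\mathscr{S}(\R^{d})$ orthonormal in $\dot{H}^{s}$, obtained by truncating the spectral expansion of $\gamma_{0}$, smoothly mollifying the eigenfunctions and re-orthonormalizing in $\dot{H}^{s}$ by Gram--Schmidt; this gives $\gamma_{0}^{(n)}\to\gamma_{0}$ in $\mathfrak{S}^{\beta}(\dot{H}^{s})$. For each such $\gamma_{0}^{(n)}$, the finite sum $\rho_{\gamma^{(n)}(t)}=\sum_{j}\lambda_{j}^{(n)}|e^{it\sqrt{-\partial_{x}^{2}+\partial_{x}^{4}}}f_{j}^{(n)}|^{2}$ tends to $\rho_{\gamma_{0}^{(n)}}$ pointwise as $t\to 0$ by dominated convergence on the Fourier side. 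Since $\gamma_{0}-\gamma_{0}^{(n)}$ is self-adjoint, the spectral theorem supplies a $\dot{H}^{s}$-orthonormal decomposition to which Step~1 applies, giving
\begin{equation*}
\|S(\gamma_{0}-\gamma_{0}^{(n)})\|_{L^{1}(d\mu)}+\|\rho_{\gamma_{0}}-\rho_{\gamma_{0}^{(n)}}\|_{L^{1}(d\mu)}\leq C\,\|\gamma_{0}-\gamma_{0}^{(n)}\|_{\mathfrak{S}^{\beta}(\dot{H}^{s})}\longrightarrow 0.
\end{equation*}
For any $\varepsilon>0$, the triangle inequality together with Chebyshev's inequality yields
\begin{equation*}
\mu\Bigl(\Bigl\{x\in\mathbf{B}^{d}:\limsup_{t\to 0}|\rho_{\gamma(t)}(x)-\rho_{\gamma_{0}}(x)|>\varepsilon\Bigr\}\Bigr)\leq \tfrac{C}{\varepsilon}\,\|\gamma_{0}-\gamma_{0}^{(n)}\|_{\mathfrak{S}^{\beta}(\dot{H}^{s})},
\end{equation*}
and sending first $n\to\infty$ and then $\varepsilon\to 0$ produces \eqref{1.040}.

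\textbf{Step 3 (Hausdorff dimension via Frostman).} For any $\alpha$ with $(d-2s)\beta<\alpha\leq d$, the hypothesis $\beta<\alpha/(d-2s)$ holds, so Step~2 yields $\mu(D(\gamma_{0}))=0$ for every $\mu\in\mathcal{M}^{\alpha}(\mathbf{B}^{d})$. If one had $\dim_{H}D(\gamma_{0})>\alpha$, Frostman's lemma would furnish a probability measure supported on a compact subset of $D(\gamma_{0})\cap\mathbf{B}^{d}$ with $\mu(B(x,r))\lesssim r^{\alpha}$, i.e.\ $\mu\in\mathcal{M}^{\alpha}(\mathbf{B}^{d})$, contradicting $\mu(D(\gamma_{0}))=0$. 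Therefore $\dim_{H}D(\gamma_{0})\leq\alpha$ for every $\alpha>(d-2s)\beta$, which is precisely \eqref{1.041}.

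\textbf{Main obstacle.} The only non-routine ingredient is the approximation in Step~2: one must produce $\gamma_{0}^{(n)}\to\gamma_{0}$ in $\mathfrak{S}^{\beta}(\dot{H}^{s})$ whose eigenfunctions are Schwartz \emph{and} orthonormal in $\dot{H}^{s}$, so that both the trivial pointwise convergence of $\rho_{\gamma^{(n)}(t)}$ and the applicability of Theorem~1.3 to $\gamma_{0}-\gamma_{0}^{(n)}$ hold. This requires a careful finite-rank truncation (controlled by the $\ell^{\beta}$-tail of the singular values), followed by mollification and $\dot{H}^{s}$-Gram--Schmidt, with a stability estimate showing that small perturbations of an orthonormal system in $\dot{H}^{s}$ remain $\mathfrak{S}^{\beta}(\dot{H}^{s})$-close after re-orthonormalization. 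Once this approximation is secured, the remainder of the argument reduces to Theorem~1.3 and Frostman's lemma.
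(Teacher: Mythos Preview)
Your proposal is correct and follows the same three–step scheme as the paper: maximal estimate (Theorem~1.3) $\Rightarrow$ density argument $\Rightarrow$ Frostman's lemma. The paper's own Section~8 is in fact terser than yours; it simply cites that the density step can be carried out ``by a proof similar to Corollary~2.3 of \cite{BKS2023}'' to obtain $\mu(D(\gamma_{0}))=0$ for every $\mu\in\mathcal{M}^{\alpha}(\mathbf{B}^{d})$ with $\alpha>(d-2s)\beta$, and then invokes Frostman exactly as you do.

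The one point worth flagging is that your ``main obstacle'' is self-imposed. You propose to approximate $\gamma_{0}$ by finite-rank operators whose eigenfunctions are Schwartz \emph{and} $\dot H^{s}$-orthonormal, which forces you into a Gram--Schmidt stability argument. The paper's route (made explicit in Section~6 for the $d=1$ case, and implicit in the reference to \cite{BKS2023} here) is simpler: just truncate the given spectral decomposition, $\gamma_{0}^{N}=\sum_{j\le N}\lambda_{j}|f_{j}\rangle\langle f_{j}|$. Then $\gamma_{0}-\gamma_{0}^{N}=\sum_{j>N}\lambda_{j}|f_{j}\rangle\langle f_{j}|$ is already diagonal in the same orthonormal system, so Theorem~1.3 applies directly with $\|\gamma_{0}-\gamma_{0}^{N}\|_{\mathfrak S^{\beta}}=(\sum_{j>N}|\lambda_{j}|^{\beta})^{1/\beta}\to 0$, and no re-orthonormalisation is needed. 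The pointwise convergence of the finite-rank piece $\rho_{\gamma^{N}(t)}\to\rho_{\gamma_{0}^{N}}$ reduces to the single-function statement for each $f_{j}\in\dot H^{s}$, which in turn follows from the $\beta=1$ case of Theorem~1.3 together with the density of Schwartz functions in $\dot H^{s}$ (the mollification then lives at the scalar level, where orthonormality is irrelevant). Either organisation works, but separating the operator-level truncation from the function-level Schwartz approximation avoids the stability estimate you were worried about.
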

\begin{Theorem}\label{Theorem5} (Strichartz estimates for orthonormal functions related to
 $e^{it\sqrt{\partial_{x}^{4}
-\partial_{x}^{2}}}$ on $\mathbf{T}$)Let $\beta\leq\frac{2q}{q+1}$,
  $N\geq1,\,S_{N}=\Z\cap[-N,N]$,
 $(\frac{1}{q},\frac{1}{p})\in (A,B],\,A=(0,\frac{1}{2}),\,B=(1,0)$,
 $\lambda=(\lambda_{j})_{j=1}^{+\infty}\in\ell^{\beta}$ and orthonormal
 system $(f_{j})_{j=1}^{+\infty}$ in $L^{2}(\mathbf{T})$. Then, we have
\begin{eqnarray}
&\left\|\sum\limits_{j=1}^{\infty}\lambda_{j}\left|\mathscr{D}_{N}f_{j}\right|^{2}
\right\|_{L_{t}^{p}L_{x}^{q}(\mathbf{T}\times\mathbf{T})}\leq CN^{\frac{1}{p}}
\|\lambda\|_{\ell^{\beta}},\label{1.042}
\end{eqnarray}
where $\mathbf{T}=[0,2\pi)$ and
\begin{eqnarray*}
&&\mathscr{D}_{N}f_{j}=e^{it\sqrt{\partial_{x}^{4}-\partial_{x}^{2}}}P_{\leq N}f_{j}
=\frac{1}{2\pi}\sum\limits_{n\in S_{N}}\mathscr{F}_{x}f(n)e^{i(xn+t\sqrt{n^{2}+n^{4}})}.
\end{eqnarray*}
\end{Theorem}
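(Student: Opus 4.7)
The plan is to deduce the orthonormal Strichartz estimate (1.042) from the Schatten-norm bound (1.019) by combining the Frank--Sabin duality principle (Lemmas 2.15--2.17), a time-translation covering argument, and interpolation with the trivial endpoint at $B=(1,0)$.

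First, by the duality principle for orthonormal Strichartz inequalities, the Schatten bound (1.019) on $L^{2}(I_{N}\times\mathbf{T})$ is equivalent to the short-time orthonormal inequality
\begin{equation*}
\Bigl\|\sum_{j=1}^{\infty}\lambda_{j}|\mathscr{D}_{N}f_{j}|^{2}\Bigr\|_{L_{t}^{p}L_{x}^{q}(I_{N}\times\mathbf{T})} \leq C\,\|\lambda\|_{\ell^{\beta}},\qquad \beta=\tfrac{2q}{q+1},
\end{equation*}
valid in the range of $(1/q,1/p)$ near $A=(0,1/2)$ where $\tfrac{2}{3}<\tfrac{1}{q'}<1$. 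Next, partition $\mathbf{T}$ (in time) into $\sim N$ translates of $I_{N}$. Since $\mathscr{D}_{N}$ commutes with time translation---the corresponding phase modulation $e^{i(k/N)\sqrt{n^{2}+n^{4}}}$ acts unitarily on each Fourier mode and hence preserves $L^{2}$-orthonormality of $(f_{j})_{j}$---the short-time estimate applies with the same constant on each translate, and summing the $p$-th powers over the $\sim N$ translates produces the factor $N^{1/p}$. This yields (1.042) with $\beta=\tfrac{2q}{q+1}$ in the portion of $(A,B]$ near $A$.

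The remainder of the open segment $(A,B]$ is obtained by real interpolation for orthonormal inequalities between the portion just proved and the endpoint $B=(1,0)$, at which
\begin{equation*}
\Bigl\|\sum_{j}\lambda_{j}|\mathscr{D}_{N}f_{j}|^{2}\Bigr\|_{L_{t}^{\infty}L_{x}^{1}(\mathbf{T}\times\mathbf{T})}\leq \sum_{j}|\lambda_{j}|\,\|P_{\leq N}f_{j}\|_{L^{2}}^{2}\leq\|\lambda\|_{\ell^{1}}
\end{equation*}
is trivial and $\ell^{1}$ matches $\ell^{2q/(q+1)}$ at $q=1$. The case of general $\beta\leq\tfrac{2q}{q+1}$ then follows from the monotonicity of $\ell^{\beta}$-norms.

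The principal obstacle is the Schatten bound (1.019) that feeds the short-time estimate; it is established separately in the paper via the noncommutative--commutative Stein interpolation of Lemma~3.4 applied to an analytic family built from $\mathscr{D}_{N}\mathscr{D}_{N}^{*}$ in the spirit of Vega's phase-space argument adapted to the Boussinesq dispersion $\sqrt{n^{2}+n^{4}}$. For the present deduction, the only delicate point is verifying that the translation covering is compatible with the orthonormal structure, which is ensured by the observation that the time-translation modulation is unitary and hence preserves $L^{2}$-orthonormality, so the constant in the short-time estimate is independent of the translate index $k$.
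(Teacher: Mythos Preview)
Your proposal is correct and follows essentially the same route as the paper. The paper proves the Schatten bound on $I_{N}\times\mathbf{T}$ (Lemma~9.1), applies the duality principle to obtain the short-time orthonormal inequality (Lemma~9.2), and then simply invokes Theorem~1.5 of Nakamura~[N2020] to pass from $I_{N}$ to the full torus; the content of that citation is precisely the time-translation tiling and interpolation-with-$B$ argument that you spell out explicitly. One small correction: the duality principle you use is Lemma~2.14 in this paper, not Lemmas~2.15--2.17 (those are interpolation lemmas, not the Frank--Sabin duality).
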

\begin{Theorem}\label{Theorem6} (Maximal-in-space estimates for orthonormal functions
related to  $e^{it\sqrt{\partial_{x}^{4}
-\partial_{x}^{2}}}$ on $\mathbf{T}$)Let $\beta\leq2$,
$N\geq1,\, \mathbf{T}=[0,2\pi)$,
\,$S_{N}=\Z\cap[-N,N]$,$(\frac{1}{q},\frac{1}{p})=A=(0,\frac{1}{2})$,
$\lambda=(\lambda_{j})_{j=1}^{+\infty}\in\ell^{\beta}$ and orthonormal system
$(f_{j})_{j=1}^{+\infty}$ in $L^{2}(\mathbf{T})$. Then, we have
\begin{eqnarray}
&\left\|\sum\limits_{j=1}^{\infty}\lambda_{j}\left|\mathscr{D}_{N}f_{j}\right|^{2}
\right\|_{L_{t}^{2}L_{x}^{\infty}(\mathbf{T}\times\mathbf{T})}\leq CN^{\frac{1}{2}}
\|\lambda\|_{\ell^{\beta}},\label{1.043}
\end{eqnarray}
where
\begin{eqnarray*}
&&\mathscr{D}_{N}f_{j}=e^{it\sqrt{\partial_{x}^{4}-\partial_{x}^{2}}}P_{\leq N}f_{j}
=\frac{1}{2\pi}\sum\limits_{n\in S_{N}}\mathscr{F}_{x}f(n)e^{i(xn+t\sqrt{n^{2}+n^{4}})}.
\end{eqnarray*}
\end{Theorem}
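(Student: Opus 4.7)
The plan is to apply Frank--Sabin duality at its Hilbert--Schmidt endpoint and then verify the resulting kernel estimate by a direct Fourier computation exploiting the convexity of the Boussinesq dispersion relation. Observe first that Theorem~\ref{Theorem6} corresponds to $(\tfrac{1}{q},\tfrac{1}{p}) = A = (0,\tfrac{1}{2})$ and $\beta=2$, and lies at the excluded endpoint $\tfrac{1}{q'}=1$ of the Schatten bound \eqref{1.019} that underlies Theorem~\ref{Theorem5}. A separate argument is therefore required; the Hilbert--Schmidt endpoint $\mathfrak{S}^{2}$ is the natural substitute, since $\beta'=2$.

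Since $\beta'=2$, the $\mathfrak{S}^{2}$ specialization of Frank--Sabin duality reduces \eqref{1.043} to
\[
\bigl\|\mathscr{D}_{N}^{*} V \mathscr{D}_{N}\bigr\|_{\mathfrak{S}^{2}(L^{2}(\mathbf{T}))} \le C N^{1/2} \|V\|_{L_{t}^{2}L_{x}^{1}(\mathbf{T}\times \mathbf{T})}, \qquad V \in L_{t}^{2}L_{x}^{1}.
\]
Setting $\phi(n):=\sqrt{n^{2}+n^{4}}$ and $\widetilde{V}(k,\tau) := \int_{\mathbf{T}\times \mathbf{T}} V(t,x)\,e^{-ikx-i\tau t}\,dx\,dt$, a direct computation of the integral kernel of $\mathscr{D}_{N}^{*}V\mathscr{D}_{N}$ followed by Plancherel in $(y,y') \in \mathbf{T}^{2}$ shows that this Hilbert--Schmidt bound is equivalent to the discrete inequality
\[
\sum_{n,m\in S_{N}} \bigl|\widetilde{V}\bigl(n-m,\,\phi(n)-\phi(m)\bigr)\bigr|^{2} \le C N \|V\|_{L_{t}^{2}L_{x}^{1}}^{2}.
\]

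I would establish the latter by splitting the sum along $k := n-m$. The diagonal $k=0$ contributes $(2N+1)|\widetilde{V}(0,0)|^{2} \le CN\|V\|_{L_{t}^{2}L_{x}^{1}}^{2}$ via Cauchy--Schwarz in $t$. For each fixed nonzero $k$ with $|k|\le 2N$, the sequence $\tau_{m}(k) := \phi(m+k)-\phi(m)$ is strictly monotone in $m$ (by strict convexity of $\phi$ on $(0,\infty)$ together with its evenness), and the successive gaps $\tau_{m+1}(k)-\tau_{m}(k) = \int_{m}^{m+1}\bigl[\phi'(s+k)-\phi'(s)\bigr]\,ds$ are bounded below by an absolute constant $\delta>0$ uniformly in $|k|\ge 1$ and $m \in \mathbb{Z}$. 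This separation makes $\{e^{i\tau_{m}(k)t}\}_{m}$ a Bessel sequence in $L^{2}([0,2\pi])$, so a Plancherel--P\'olya / Ingham-type inequality yields
\[
\sum_{m} \bigl|\widetilde{V}(k,\tau_{m}(k))\bigr|^{2} \le C\,\|V^{x}(\cdot,k)\|_{L_{t}^{2}}^{2} \le C\|V\|_{L_{t}^{2}L_{x}^{1}}^{2},
\]
where $V^{x}(t,k) := \int V(t,x)e^{-ikx}\,dx$ satisfies $|V^{x}(t,k)|\le \|V(t,\cdot)\|_{L_{x}^{1}}$. Summing over the $O(N)$ nonzero $k \in [-2N,2N]$ produces the required $CN\|V\|_{L_{t}^{2}L_{x}^{1}}^{2}$.

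The main obstacle will be the uniform separation $\tau_{m+1}(k)-\tau_{m}(k)\ge \delta$ at low frequencies: unlike the Schr\"odinger phase $\phi_S(n)=n^2$ for which the gap equals exactly $2|k|$, the Boussinesq second derivative $\phi''(x) = (3x+2x^{3})/(1+x^{2})^{3/2}$ vanishes at the origin and so the convexity of $\phi$ degenerates there. I expect to resolve this by combining the explicit formula for $\phi''$ with a short case analysis over the finitely many low-$(m,k)$ configurations, after which the quadratic asymptotic $\phi(x) \sim x^{2}$ at infinity gives the desired gap automatically.
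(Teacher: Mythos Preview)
Your argument is correct, and it takes a genuinely different route from the paper. Both proofs start from the $\beta'=2$ duality principle (Lemma~2.14), but the paper's Lemma~10.1 bounds $\|W_1\mathscr{D}_N\mathscr{D}_N^*W_2\|_{\mathfrak{S}^2}$ by a \emph{perturbation off the Schr\"odinger case}: it expands $|D_N|^2$, splits off the phase $e^{i(t-t')(k_1^2-k_2^2)}$ and quotes Nakamura's $e^{it\Delta}$ result on $\mathbf{T}$ for that piece, while the remainder---coming from $\sqrt{k^4+k^2}-(k^2+\tfrac12)=O(k^{-2})$---is handled by the crude bound $|E_1|\le CN$ and H\"older. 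Your proof instead passes to the equivalent form $\|\mathscr{D}_N^*V\mathscr{D}_N\|_{\mathfrak{S}^2}$, computes it as $\sum_{n,m}|\widetilde V(n-m,\phi(n)-\phi(m))|^2$ via Parseval, and controls each $k$-slice by a Plancherel--P\'olya/Ingham inequality using only the convexity of $\phi$ itself. This is self-contained (no appeal to the Schr\"odinger theorem) and would transfer verbatim to any dispersion relation whose integer second differences are uniformly positive; the paper's trick, by contrast, is specific to phases that are $\ell^1$-close to $n^2$.

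One small correction to your ``main obstacle'': there is in fact no degeneration. The function $\phi(x)=\sqrt{x^2+x^4}=|x|\sqrt{1+x^2}$ has a corner at the origin with $\phi'(0^\pm)=\pm1$, so distributionally $\phi''\ge 2\delta_0$ and $\phi$ is globally convex on $\mathbb{R}$, not just on $(0,\infty)$. The discrete second differences $\phi(m{+}2)-2\phi(m{+}1)+\phi(m)$ are therefore uniformly positive for every $m\in\mathbb{Z}$; a direct check shows the minimum is $2\sqrt5-2\sqrt2>1.6$ at $m=0$ (and $m=-2$), with the corner contributing the even larger value $2\sqrt2$ at $m=-1$. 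Hence $\tau_{m+1}(k)-\tau_m(k)\ge|k|\,(2\sqrt5-2\sqrt2)$ for all $m\in\mathbb{Z}$ and $|k|\ge1$, and no low-frequency case analysis is needed.
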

\begin{Theorem}\label{Theorem7} (Optimality of Theorems 1.5, 1.6)If $\beta>\frac{2q}{q+1}$,
 then we have that \eqref{1.042} is not valid. If $\beta>2$, then we have that
  \eqref{1.043} is not valid.
\end{Theorem}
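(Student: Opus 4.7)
The plan is to construct an explicit plane-wave counterexample, in the spirit of the scaling test used by Nakamura \cite{N2020} for the Schr\"odinger operator on $\mathbf{T}$. The key observation is that pure Fourier modes on $\mathbf{T}$ are eigenfunctions of the Boussinesq propagator $e^{it\sqrt{\partial_x^4-\partial_x^2}}$, so the dispersion relation $\sqrt{n^2+n^4}$ enters only as a unimodular phase and drops out of $|\mathscr{D}_N f_j|^2$. Consequently a single family of initial data will defeat both \eqref{1.042} and \eqref{1.043} simultaneously.

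Concretely, I would take $f_j(x) = \tfrac{1}{\sqrt{2\pi}}\, e^{ijx}$ for $j \in S_N = \Z \cap [-N,N]$, which yields an $L^2(\mathbf{T})$-orthonormal family of cardinality $|S_N| = 2N+1$ whose frequencies all lie inside the window $|n| \le N$, so that $P_{\leq N} f_j = f_j$. A direct computation then gives
\begin{equation*}
\mathscr{D}_N f_j(t,x) = \tfrac{1}{\sqrt{2\pi}}\, e^{i(jx + t\sqrt{j^2+j^4})}, \qquad |\mathscr{D}_N f_j(t,x)|^2 = \tfrac{1}{2\pi}.
\end{equation*}
Choosing the trivial coefficients $\lambda_j = 1$ for $j \in S_N$ and $\lambda_j = 0$ otherwise, the density $\sum_j \lambda_j |\mathscr{D}_N f_j|^2$ becomes the spacetime constant $\tfrac{2N+1}{2\pi}$ on $\mathbf{T}\times\mathbf{T}$, while $\|\lambda\|_{\ell^\beta} = (2N+1)^{1/\beta}$.

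With these choices, the left side of \eqref{1.042} equals $\tfrac{2N+1}{2\pi}\,(2\pi)^{1/p + 1/q} \simeq N$, whereas the right side is $CN^{1/p}(2N+1)^{1/\beta} \simeq N^{1/p + 1/\beta}$. Sending $N \to \infty$, \eqref{1.042} can hold only if $1 \le \tfrac{1}{p} + \tfrac{1}{\beta}$, i.e., $\beta \le p'$. Since $(1/q, 1/p) \in (A,B]$ satisfies $\tfrac{2}{p} = 1 - \tfrac{1}{q}$, a short computation gives $p' = \tfrac{2q}{q+1}$, so $\beta > \tfrac{2q}{q+1}$ forces \eqref{1.042} to fail. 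Applying the same family and the same scaling test to \eqref{1.043} at the endpoint $(1/q,1/p) = A = (0,1/2)$ (where $p' = 2$) produces LHS $\simeq N$ and RHS $\simeq N^{1/2 + 1/\beta}$, so $\beta > 2$ must likewise fail.

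No serious obstacle arises here: the whole point is that orthonormal plane waves reduce the matter to a counting/scaling comparison, and the Boussinesq symbol contributes only a modulus-one factor. The only items worth verifying carefully are that the chosen $f_j$ are genuinely an orthonormal system (immediate from orthogonality of characters on $\mathbf{T}$) and that $P_{\leq N}$ acts as the identity on these frequencies (true by construction).
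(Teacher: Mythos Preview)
Your proposal is correct and follows essentially the same plane-wave counterexample as the paper's own proof in Section~11: both use the Fourier modes $e^{ijx}$ as the orthonormal system, observe that $|\mathscr{D}_N f_j|^2$ is constant in $(t,x)$, and compare the resulting powers of $N$ on the two sides. Your presentation is in fact slightly cleaner than the paper's (you normalize $f_j$ correctly and make the relation $p'=\tfrac{2q}{q+1}$ on the segment $(A,B]$ explicit), but the underlying argument is identical.
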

\begin{Theorem}\label{Theorem8} (Stochastic continuity at zero related to Schatten norm on $\R$)
Suppose that $r \in[2, \infty)$, $(f_{j})_{j=1}^{\infty}$ is an orthonormal system in
 $L^{2}\left(\R\right)$ and $f_{j}^{\omega}$  the randomization of $f_{j}$  defined as in
 \eqref{1.026}. Then,  $\forall \epsilon>0$,
  $\exists0<\delta<\frac{\epsilon^{2}}{M_{1}\sqrt{1+M_{1}^{2}}}$, when $|t|<\delta$, we derive
\begin{eqnarray}
&&\left\|\sum_{j=1}^{\infty}\lambda_{j}g_{j}^{(2)}
(\widetilde{\omega})A_{1}\right\|_{L_{\omega,\widetilde{\omega}}^{r}
(\Omega\times\widetilde{\Omega})}
\leq Cr^{\frac{3}{2}}\epsilon^{2}(\|\gamma_{0}\|_{\mathfrak{S}^{2}}+1),\label{1.044}
\end{eqnarray}
where $A_{1}=\left(|f_{j}^{\omega}|^{2}-|e^{it\sqrt{\partial_{x}^{4}
-\partial_{x}^{2}}}f_{j}^{\omega}|^{2}\right)$ and  $\gamma_{0} \in \mathfrak{S}^{2}$ and.
Moreover, we have
\begin{eqnarray}
&&\lim\limits_{t \rightarrow 0}(\mathbb{P}\times \widetilde{\mathbb{P}})
\left(\left\{(\omega \times \widetilde{\omega}) \in(\Omega \times \widetilde{\Omega}) || F(t, \omega, \widetilde{\omega})|>C\alpha_{1}\right\}\right)=0.\label{1.045}
\end{eqnarray}
Here
\begin{eqnarray*}
&&F(t, \omega, \widetilde{\omega})=\sum\limits_{j=1}^{\infty} \lambda_{j} g_{j}^{(2)}(\widetilde{\omega})A_{1},\alpha_{1}=\left(\left\|\gamma_0\right\|_{\mathfrak{S}^2}+1\right)
e \epsilon^{\frac{1}{2}}\left(\epsilon \ln \frac{1}{\epsilon}\right)^{\frac{3}{2}},\\
&&
\rho_{\gamma_{0}^{\omega, \tilde{\omega}}}=\sum_{j=1}^{\infty} \lambda_j g_{j}^{(2)}(\widetilde{\omega})
\left|f_{j}^{\omega}\right|^2, \\ &&\rho_{e^{it\sqrt{\partial_{x}^{4}-\partial_{x}^{2}}} \gamma_{0}^{\omega, \widetilde{\omega}}e^{-it\sqrt{\partial_{x}^{4}-\partial_{x}^{2}}}}=\sum_{j=1}^{\infty} \lambda_{j} g_{j}^{(2)}(\widetilde{\omega})\left|e^{it\sqrt{\partial_{x}^{4}-\partial_{x}^{2}}} f_{j}^{\omega}\right|^{2},
\end{eqnarray*}
$\rho_{\gamma_{0}^{\omega, \tilde{\omega}}},\rho_{e^{it\sqrt{\partial_{x}^{4}-\partial_{x}^{2}}} \gamma_{0}^{\omega, \widetilde{\omega}}e^{-it\sqrt{\partial_{x}^{4}-\partial_{x}^{2}}}} $ denote the density function
 of $\gamma_{0}^{\omega, \tilde{\omega}},e^{it\sqrt{\partial_{x}^{4}-\partial_{x}^{2}}} \gamma_{0}^{\omega, \widetilde{\omega}}e^{-it\sqrt{\partial_{x}^{4}-\partial_{x}^{2}}},$ respectively.

\end{Theorem}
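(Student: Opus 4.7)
The plan is to combine Khinchine-type large-deviation inequalities (justified by the sub-Gaussian moment bounds \eqref{1.024} and \eqref{1.025}) with the smoothing of the Boussinesq phase on frequency-localized data. Writing $U(t) := e^{it\sqrt{-\partial_{x}^{2}+\partial_{x}^{4}}}$, I would use the algebraic identity
\begin{eqnarray*}
A_{1} = |f_{j}^{\omega}|^{2} - |U(t) f_{j}^{\omega}|^{2} = \bigl((I-U(t))f_{j}^{\omega}\bigr)\overline{f_{j}^{\omega}} + U(t)f_{j}^{\omega}\,\overline{(I-U(t))f_{j}^{\omega}}
\end{eqnarray*}
to factor out the difference $(I-U(t))$, which is the source of the $\epsilon^{2}$ smallness.

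The estimate \eqref{1.044} would then be obtained in two randomization layers. First, applying a Khinchine inequality in $\widetilde{\omega}$ to the sum $\sum_{j}\lambda_{j}g_{j}^{(2)}(\widetilde{\omega})A_{1}$ converts the $L^{r}_{\widetilde{\omega}}$ norm into $\sqrt{r}\,\bigl(\sum_{j}|\lambda_{j}|^{2}|A_{1}|^{2}\bigr)^{1/2}$, contributing one factor of $r^{1/2}$. Second, since $A_{1}$ is bilinear in $f_{j}^{\omega}$ and, via the Wiener decomposition $f_{j}^{\omega}=\sum_{k}g_{k}^{(1)}(\omega)\psi(D-k)f_{j}$, becomes a Gaussian chaos of order $2$ in $\omega$, two successive applications of Khinchine in $\omega$ pick up an additional factor $r$, producing the total $r^{3/2}$ stated in the theorem. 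The resulting deterministic expression is dominated by square-sums of $\|(I-U(t))\psi(D-k)f_{j}\|$ against $\|\psi(D-k')f_{j}\|$ summed over $j$ with weights $|\lambda_{j}|^{2}$.

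Third, I would extract the $\epsilon^{2}$ smallness as follows. On the support of $\psi(\cdot-k)$, the multiplier $1-e^{it\sqrt{\xi^{2}+\xi^{4}}}$ obeys the two-sided bound $|1-e^{it\sqrt{\xi^{2}+\xi^{4}}}| \leq \min\bigl(2,\,|t|\sqrt{k^{2}+k^{4}}\,\bigr)$. Splitting the sum over $k$ at the threshold $|k|^{2}\sim 1/|t|$ and using the sub-Gaussian relation $|t|<\delta<\epsilon^{2}/(M_{1}\sqrt{1+M_{1}^{2}})$ balances the two bounds to yield an overall $\epsilon^{2}$ factor. The global sum over $j$ is closed by the Hilbert-Schmidt hypothesis $\|\gamma_{0}\|_{\mathfrak{S}^{2}}^{2}=\sum_{j}|\lambda_{j}|^{2}<\infty$, modulo the additive $+1$ absorbing the bilinear lower-order terms from Plancherel on each $\psi(D-k)f_{j}$.

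Finally, the probabilistic limit \eqref{1.045} follows from Chebyshev's inequality applied to \eqref{1.044}:
\begin{eqnarray*}
(\mathbb{P}\times\widetilde{\mathbb{P}})\bigl(|F|>C\alpha_{1}\bigr) \leq \Bigl(\frac{Cr^{3/2}\epsilon^{2}(\|\gamma_{0}\|_{\mathfrak{S}^{2}}+1)}{C\alpha_{1}}\Bigr)^{r} = \Bigl(\frac{r^{3/2}}{e(\ln(1/\epsilon))^{3/2}}\Bigr)^{r},
\end{eqnarray*}
and the optimal choice $r=\ln(1/\epsilon)$ (permissible for $r\geq 2$ once $\epsilon$ is small) collapses the right-hand side to $e^{-r}=\epsilon$, which tends to zero as $t\to 0$. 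The main obstacle is the third step: one must distribute the factor $|t|$ across the high- and low-frequency pieces of the Wiener decomposition without losing factors of $k$ coming from $\sqrt{1+k^{2}}$, while simultaneously retaining the $\ell^{2}_{j}$ structure dictated by $\gamma_{0}\in\mathfrak{S}^{2}$; the frequency cutoff at $|k|^{2}\sim 1/|t|$ is precisely what makes the two contributions comparable and accounts for the particular scaling in $\delta$ and $\alpha_{1}$.
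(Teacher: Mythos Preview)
Your overall architecture matches the paper: the same bilinear splitting of $A_{1}$, one Khinchine layer in $\widetilde{\omega}$ followed by H\"older in $\omega$ and two further Khinchine bounds on the linear factors (this is how the paper gets $r^{3/2}$; cf.\ Lemmas 4.1, 4.3, 4.6 and Lemmas 12.1--12.2), and the Chebyshev-plus-optimization argument with $r=\ln(1/\epsilon)$ for \eqref{1.045} (Lemma 4.2).

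The gap is in your third step. Splitting the $k$-sum at $|k|^{2}\sim 1/|t|$ does not produce $\epsilon^{2}$: on the low-frequency block the bound $|t|\sqrt{k^{2}+k^{4}}$ is only $O(1)$ at the threshold, and on the high-frequency block the bound $2$ leaves $\sum_{|k|\gtrsim|t|^{-1/2}}\|\psi(D-k)f_{j}\|_{L^{2}}^{2}$, which tends to $0$ as $t\to0$ but \emph{not uniformly in $j$} since the $f_{j}$ are merely in $L^{2}$. Your appeal to $|t|<\epsilon^{2}/(M_{1}\sqrt{1+M_{1}^{2}})$ is inconsistent with a $t$-dependent threshold $M_{1}\sim|t|^{-1/2}$ (it would force $1<\epsilon^{2}$). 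The paper resolves this by a \emph{double truncation}: first pick $M$ so that $\sum_{j>M}|\lambda_{j}|^{2}<\epsilon^{4}/4$ (this tail, bounded trivially by $|e^{it\phi}-1|\le2$, is the source of the ``$+1$'' in the final estimate); then for the finitely many $1\le j\le M$ pick a single fixed $M_{1}$ so that $\sum_{|k|>M_{1}}\|\psi(\xi-k)\widehat{f_{j}}\|_{L^{2}}^{2}<\epsilon^{4}/4$ for each of them; finally, on $|k|\le M_{1}$ use $|e^{it\phi}-1|\le|t|M_{1}\sqrt{1+M_{1}^{2}}$ and take $\delta<\epsilon^{2}/(M_{1}\sqrt{1+M_{1}^{2}})$. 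The frequency cutoff is thus $\epsilon$-dependent and fixed, not $t$-dependent, and the prior $j$-truncation is what makes the high-frequency tail controllable.
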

\begin{Theorem}\label{Theorem9} (Stochastic continuity at zero related
to Schatten norm on $\mathbf{T}$) Suppose that $r \in[2, \infty)$, $(f_{j})_{j=1}^{\infty}$
 is an orthonormal system in $L^{2}(\mathbf{T})$ and $f_{j}^{\omega}$  is the randomization of
  $f_{j}$  defined as in \eqref{1.027}. Then, $\forall \epsilon>0$,
  $\exists0<\delta<\frac{\epsilon^{2}}{M_{1}\sqrt{1+M_{1}^{2}}}$, when $|t|<\delta$,
   we have
\begin{eqnarray}
&&\left\|\sum_{j=1}^{\infty}\lambda_{j}g_{j}^{(2)}(\widetilde{\omega})A_{2}\right\|_{L_{\omega,\widetilde{\omega}}^{r}
(\Omega\times\widetilde{\Omega})}
\leq Cr^{\frac{3}{2}}\epsilon^{2}(\|\gamma_{0}\|_{\mathfrak{S}^{2}}+1),\label{1.046}
\end{eqnarray}
where $A_{2}=\left(|f_{j}^{\omega}|^{2}-|e^{it\sqrt{\partial_{x}^{4}-\partial_{x}^{2}}}f_{j}^{\omega}|^{2}\right)$ and $\gamma_{0} \in \mathfrak{S}^{2}$.
Moreover, we have
\begin{eqnarray}
&&\lim\limits_{t \rightarrow 0}(\mathbb{P}\times \widetilde{\mathbb{P}})
\left(\left\{(\omega \times \widetilde{\omega}) \in(\Omega \times \widetilde{\Omega})|| F(t, \omega, \widetilde{\omega})|>C\alpha_{2}\right\}\right)=0.\label{1.047}
\end{eqnarray}
Here
\begin{eqnarray*}
&&F(t, \omega, \widetilde{\omega})=\sum\limits_{j=1}^{\infty} \lambda_{j} g_{j}^{(2)}(\widetilde{\omega})A_{2},\alpha_{2}=\left(\left\|\gamma_0\right\|_{\mathfrak{S}^2}+1\right)
 e \epsilon^{\frac{1}{2}}\left(\epsilon \ln \frac{1}{\epsilon}\right)^{\frac{3}{2}},\\
&&
\rho_{\gamma_{0}^{\omega, \tilde{\omega}}}=\sum_{j=1}^{\infty} \lambda_j g_{j}^{(2)}
(\widetilde{\omega})\left|f_{j}^{\omega}\right|^2,\\ &&\rho_{e^{it\sqrt{\partial_{x}^{4}-\partial_{x}^{2}}} \gamma_{0}^{\omega, \widetilde{\omega}}e^{-it\sqrt{\partial_{x}^{4}-\partial_{x}^{2}}}}=\sum_{j=1}^{\infty} \lambda_{j} g_{j}^{(2)}(\widetilde{\omega})\left|e^{it\sqrt{\partial_{x}^{4}-\partial_{x}^{2}}} f_{j}^{\omega}\right|^{2},
\end{eqnarray*}
$\rho_{\gamma_{0}^{\omega, \tilde{\omega}}},\rho_{e^{it\sqrt{\partial_{x}^{4}-\partial_{x}^{2}}} \gamma_{0}^{\omega, \widetilde{\omega}}e^{-it\sqrt{\partial_{x}^{4}-\partial_{x}^{2}}}} $ denote
 the density function of $\gamma_{0}^{\omega, \tilde{\omega}},e^{it\sqrt{\partial_{x}^{4}-\partial_{x}^{2}}} \gamma_{0}^{\omega, \widetilde{\omega}}e^{-it\sqrt{\partial_{x}^{4}-\partial_{x}^{2}}},$  respectively.
\end{Theorem}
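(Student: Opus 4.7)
The proof follows the blueprint of Theorem 1.8 on $\R$, with the one-dimensional torus replacing the line so that Fourier series take the place of Fourier integrals. Starting from the left-hand side of \eqref{1.046}, I first apply a sub-Gaussian Khinchine-type inequality in $\widetilde{\omega}$ (hypothesis \eqref{1.025}) and then invoke Minkowski's integral inequality (valid since $r\ge 2$) to obtain
\[
\Big\|\sum_{j=1}^{\infty}\lambda_{j}g_{j}^{(2)}(\widetilde{\omega})A_{2}\Big\|_{L_{\omega,\widetilde{\omega}}^{r}}\le C\sqrt{r}\,\Big(\sum_{j=1}^{\infty}|\lambda_{j}|^{2}\|A_{2}\|_{L_{\omega}^{r}}^{2}\Big)^{1/2},
\]
reducing matters to a uniform control of $\|A_{2}\|_{L_{\omega}^{r}}$.

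\textbf{Factorization and second Khinchine.} Using the algebraic identity
\[
A_{2}=(f_{j}^{\omega}-e^{it\sqrt{\partial_{x}^{4}-\partial_{x}^{2}}}f_{j}^{\omega})\overline{f_{j}^{\omega}}+e^{it\sqrt{\partial_{x}^{4}-\partial_{x}^{2}}}f_{j}^{\omega}\,\overline{(f_{j}^{\omega}-e^{it\sqrt{\partial_{x}^{4}-\partial_{x}^{2}}}f_{j}^{\omega})},
\]
Cauchy-Schwarz in $\omega$ at exponent $2r$, and a second Khinchine inequality (using \eqref{1.024} for $\{g_{k}^{(1)}\}$), I reduce matters to bounding three factors: $\|f_{j}^{\omega}\|_{L_{\omega}^{2r}}$ and $\|e^{it\sqrt{\partial_{x}^{4}-\partial_{x}^{2}}}f_{j}^{\omega}\|_{L_{\omega}^{2r}}$, each of size $C\sqrt{r}\|f_{j}\|_{L^{2}(\mathbf{T})}=C\sqrt{r}$ by Parseval on $\mathbf{T}$, and the difference
\[
\|f_{j}^{\omega}-e^{it\sqrt{\partial_{x}^{4}-\partial_{x}^{2}}}f_{j}^{\omega}\|_{L_{\omega}^{2r}}\lesssim\sqrt{r}\,\Big(\sum_{k\in\Z}|1-e^{it\sqrt{k^{2}+k^{4}}}|^{2}|\mathscr{F}_{x}f_{j}(k)|^{2}\Big)^{1/2}.
\]

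\textbf{The main obstacle: coordinating $\epsilon$, $\delta$, $M_{1}$.} The heart of the argument is estimating $\sum_{j}|\lambda_{j}|^{2}\sum_{k}|1-e^{it\sqrt{k^{2}+k^{4}}}|^{2}|\mathscr{F}_{x}f_{j}(k)|^{2}$. The Hilbert-Schmidt hypothesis $\gamma_{0}\in\mathfrak{S}^{2}$ gives $\sum_{j}|\lambda_{j}|^{2}\sum_{k}|\mathscr{F}_{x}f_{j}(k)|^{2}=\|\gamma_{0}\|_{\mathfrak{S}^{2}}^{2}<\infty$, so dominated convergence selects a threshold $M_{1}=M_{1}(\gamma_{0},\epsilon)$ with $\sum_{j}|\lambda_{j}|^{2}\sum_{|k|>M_{1}}|\mathscr{F}_{x}f_{j}(k)|^{2}<\epsilon^{4}$; this controls the high-frequency tail. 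For the low frequencies $|k|\le M_{1}$ I use the elementary estimate $|1-e^{it\sqrt{k^{2}+k^{4}}}|\le M_{1}\sqrt{1+M_{1}^{2}}\,|t|$, contributing $\le\epsilon^{4}\|\gamma_{0}\|_{\mathfrak{S}^{2}}^{2}$ provided $|t|<\delta<\epsilon^{2}/(M_{1}\sqrt{1+M_{1}^{2}})$. Tracking the powers of $r$ through the three Khinchine applications produces $\|F\|_{L_{\omega,\widetilde{\omega}}^{r}}\le Cr^{3/2}\epsilon^{2}(\|\gamma_{0}\|_{\mathfrak{S}^{2}}+1)$, which is \eqref{1.046}. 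This three-parameter balancing is the delicate step: the $\mathfrak{S}^{2}$ structure is precisely what allows the uniform-in-$j$ tail selection of $M_{1}$, and $\delta$ must then be taken on the scale $\epsilon^{2}/(M_{1}\sqrt{1+M_{1}^{2}})$ to absorb the low-frequency modulus of continuity, exactly matching the relation in the statement.

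\textbf{Probabilistic corollary.} To deduce \eqref{1.047} from \eqref{1.046}, apply Markov's inequality and the standard sub-Gaussian concentration scheme:
\[
(\mathbb{P}\times\widetilde{\mathbb{P}})(|F(t,\omega,\widetilde{\omega})|>C\alpha_{2})\le\Big(\frac{Cr^{3/2}\epsilon^{2}(\|\gamma_{0}\|_{\mathfrak{S}^{2}}+1)}{C\alpha_{2}}\Big)^{r}.
\]
With $\alpha_{2}=(\|\gamma_{0}\|_{\mathfrak{S}^{2}}+1)e\epsilon^{1/2}(\epsilon\ln\tfrac{1}{\epsilon})^{3/2}$, the bracket simplifies to $r^{3/2}/(e(\ln\tfrac{1}{\epsilon})^{3/2})$; the optimal choice $r=\ln\tfrac{1}{\epsilon}$ reduces it to $1/e$, so the probability is bounded by $e^{-r}=\epsilon$, which vanishes as $\epsilon\to 0$ (equivalently $t\to 0$), finishing the proof.
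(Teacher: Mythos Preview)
Your proposal is correct and follows essentially the same approach as the paper: Khinchine in $\widetilde{\omega}$ (Lemma 4.1), Minkowski to swap $L_{\omega}^{r}$ and $\ell_{j}^{2}$, an algebraic factorization of $A_{2}$ followed by H\"older at exponent $2r$ and a second Khinchine in $\omega$ (Lemmas 4.4 and 4.7), then the frequency-truncation argument with threshold $M_{1}$ and finally Lemma 4.2 (Markov plus optimization in $r$) for \eqref{1.047}. The only cosmetic differences are that the paper uses the decomposition $|e^{it\sqrt{\partial_{x}^{4}-\partial_{x}^{2}}}f_{j}^{\omega}|^{2}-|f_{j}^{\omega}|^{2}=H_{1}+H_{2}$ with $H_{1}=(e^{it\sqrt{\partial_{x}^{4}-\partial_{x}^{2}}}f_{j}^{\omega}-f_{j}^{\omega})\,e^{-it\sqrt{\partial_{x}^{4}-\partial_{x}^{2}}}\overline{f}_{j}^{\omega}$, $H_{2}=f_{j}^{\omega}(e^{-it\sqrt{\partial_{x}^{4}-\partial_{x}^{2}}}\overline{f}_{j}^{\omega}-\overline{f}_{j}^{\omega})$ (Lemmas 13.1--13.2) and selects $M_{1}$ via a two-step truncation (first in $j$, then in $k$), whereas your one-step selection of $M_{1}$ from the double sum $\sum_{j}|\lambda_{j}|^{2}\sum_{k}|\mathscr{F}_{x}f_{j}(k)|^{2}=\|\gamma_{0}\|_{\mathfrak{S}^{2}}^{2}$ is a slight streamlining of the same idea.
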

\begin{Theorem}\label{Theorem10} (Stochastic continuity at zero related to
 Schatten norm
 on $\Theta=\{x\in\R^{3}:|x|<1\}$) Let $r \in[2, \infty)$,  $f_{j}^{\omega_{1}}$ be the randomization of $f_{j}$  defined
  as in \eqref{1.028}. For  $\forall \epsilon>0$, $\exists0<\delta<\frac{\epsilon^{2}}{M_{1}\sqrt{1+M_{1}^{2}}}$, when $|t|<\delta$,  the following inequality is valid:
\begin{eqnarray}
&&\left\|\sum_{j=1}^{\infty}\lambda_{j}g_{j}^{(2)}(\widetilde{\omega})A_{3}\right\|_{L_{\omega_{1},\widetilde{\omega}}^{r}
(\Omega_{1}\times\widetilde{\Omega})}
\leq Cr^{\frac{3}{2}}\epsilon^{2}(\|\gamma_{0}\|_{\mathfrak{S}^{2}}+1),\label{1.048}
\end{eqnarray}
where $A_{3}=\left(|f_{j}^{\omega_{1}}|^{2}-|e^{it\sqrt{\partial_{x}^{4}-
\partial_{x}^{2}}}f_{j}^{\omega_{1}}|^{2}\right)$ and  and $\gamma_{0} \in \mathfrak{S}^{2}$.
Moreover, we have
\begin{eqnarray}
&&\lim\limits_{t \rightarrow 0}(\mathbb{P}\times \widetilde{\mathbb{P}})\left(\left\{(\omega_{1} \times \widetilde{\omega}) \in(\Omega_{1}\times \widetilde{\Omega}) || F(t, \omega_{1}, \widetilde{\omega})|>C\alpha_{3}\right\}\right)=0.\label{1.049}
\end{eqnarray}
Here
\begin{eqnarray*}
&&F(t, \omega_{1}, \widetilde{\omega})=\sum\limits_{j=1}^{\infty} \lambda_{n} g_{j}^{(2)}(\widetilde{\omega})A_{3},\alpha_{3}=\left(\left\|\gamma_0\right\|_{\mathfrak{S}^2}+1\right) e \epsilon^{\frac{1}{2}}\left(\epsilon \ln \frac{1}{\epsilon}\right)^{\frac{3}{2}},\\
&&
\rho_{\gamma_{0}^{\omega_{1}, \tilde{\omega}}}=\sum_{j=1}^{\infty} \lambda_{j}g_{j}^{(2)}(\widetilde{\omega})\left|f_{j}^{\omega_{1}}\right|^2,\\ &&\rho_{e^{it\sqrt{\partial_{x}^{4}-\partial_{x}^{2}}} \gamma_{0}^{\omega_{1}, \widetilde{\omega}}e^{-it\sqrt{\partial_{x}^{4}-\partial_{x}^{2}}}}=\sum_{j=1}^{\infty} \lambda_{j} g_{j}^{(2)}(\widetilde{\omega})\left|e^{it\sqrt{\partial_{x}^{4}-\partial_{x}^{2}}} f_{j}^{\omega_{1}}\right|^{2},
\end{eqnarray*}
$\rho_{\gamma_{0}^{\omega_{1},\tilde{\omega}}},\rho_{e^{it\sqrt{\partial_{x}^{4}-\partial_{x}^{2}}} \gamma_{0}^{\omega_{1}, \widetilde{\omega}}e^{-it\sqrt{\partial_{x}^{4}-\partial_{x}^{2}}}}$ denote the density function of $\gamma_{0}^{\omega_{1}, \tilde{\omega}},e^{it\sqrt{\partial_{x}^{4}-\partial_{x}^{2}}} \gamma_{0}^{\omega_{1},\widetilde{\omega}}e^{-it\sqrt{\partial_{x}^{4}-\partial_{x}^{2}}},$ respectively.
\end{Theorem}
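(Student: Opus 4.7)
The plan is to follow closely the template developed for Theorems 1.8 and 1.9 on $\R$ and $\mathbf{T}$, adapting it to the radial Dirichlet eigenbasis $e_{m}(x)=\frac{\sin(m\pi|x|)}{(2\pi)^{1/2}|x|}$ of $-\Delta$ on $\Theta$, whose eigenvalues are $(m\pi)^{2}$. The central observation is that this is simultaneously an eigenbasis of the Boussinesq propagator, so
\begin{eqnarray*}
e^{it\sqrt{\partial_{x}^{4}-\partial_{x}^{2}}}e_{m}=e^{it\sqrt{(m\pi)^{4}+(m\pi)^{2}}}\,e_{m};
\end{eqnarray*}
inserting the randomization $f_{j}^{\omega_{1}}=\sum_{m}g_{m}^{(1)}(\omega_{1})\frac{c_{j,m}}{m\pi}e_{m}$ one obtains
\begin{eqnarray*}
A_{3}=\sum_{m_{1},m_{2}=1}^{\infty}g_{m_{1}}^{(1)}(\omega_{1})\overline{g_{m_{2}}^{(1)}(\omega_{1})}\frac{c_{j,m_{1}}\overline{c_{j,m_{2}}}}{m_{1}m_{2}\pi^{2}}\,e_{m_{1}}(x)\overline{e_{m_{2}}(x)}\,(1-e^{it\phi_{m_{1},m_{2}}}),
\end{eqnarray*}
where $\phi_{m_{1},m_{2}}=\sqrt{(m_{1}\pi)^{4}+(m_{1}\pi)^{2}}-\sqrt{(m_{2}\pi)^{4}+(m_{2}\pi)^{2}}$; note that the diagonal $m_{1}=m_{2}$ contribution vanishes identically, and that pointwise in $x\in\Theta$ one has the uniform bound $|e_{m}(x)|/(m\pi)\leq(2\pi)^{-1/2}$.

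Next, I split the inner double sum at a cutoff $M_{1}=M_{1}(\epsilon)$ chosen large enough that the Hilbert--Schmidt tail $\sum_{j}|\lambda_{j}|^{2}\sum_{m>M_{1}}|c_{j,m}|^{2}/(m\pi)^{2}$ is at most $\epsilon^{2}$. On the low-frequency piece ($m_{1},m_{2}\leq M_{1}$) I use the elementary estimate $|1-e^{it\phi_{m_{1},m_{2}}}|\leq|t|\,|\phi_{m_{1},m_{2}}|\leq C|t|M_{1}\sqrt{1+M_{1}^{2}}$ together with the hypothesis $|t|<\delta<\epsilon^{2}/(M_{1}\sqrt{1+M_{1}^{2}})$ to extract an overall factor $\epsilon^{2}$; on the high-frequency piece I use only the trivial bound $|1-e^{it\phi_{m_{1},m_{2}}}|\leq 2$ and absorb the smallness into the $\mathfrak{S}^{2}$ tail. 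To pass from these pointwise-in-$x$ bounds to the $L^{r}_{\omega_{1},\widetilde{\omega}}$ norm, I apply Minkowski's inequality and then the Khintchine--Kahane inequality three times: once for $g_{j}^{(2)}(\widetilde{\omega})$ and twice for the bilinear expression in $g_{m_{1}}^{(1)},g_{m_{2}}^{(1)}$, which together produce the announced growth $Cr^{3/2}$; the residual deterministic sum reassembles into a Hilbert--Schmidt quantity controlled by $\|\gamma_{0}\|_{\mathfrak{S}^{2}}+1$, completing \eqref{1.048}.

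The probabilistic statement \eqref{1.049} then follows by a standard Markov-type argument: apply Markov's inequality to \eqref{1.048} with the optimal choice $r=\ln(1/\epsilon)$; the value $\alpha_{3}=(\|\gamma_{0}\|_{\mathfrak{S}^{2}}+1)\,e\,\epsilon^{1/2}(\epsilon\ln(1/\epsilon))^{3/2}$ is precisely calibrated so that $(Cr^{3/2}\epsilon^{2}/\alpha_{3})^{r}=O(\epsilon)\to 0$ as $\epsilon\to 0^{+}$. The main technical obstacle is the uniform-in-$x$ control of the products $e_{m_{1}}(x)\overline{e_{m_{2}}(x)}/(m_{1}m_{2}\pi^{2})$ near the origin of the ball: one must exploit the explicit $\frac{\sin m\pi|x|}{|x|}$ asymptotics of $e_{m}$ to cancel the $\frac{1}{m\pi}$ factor supplied by the randomization, thereby ensuring that the pointwise bound survives the double sum. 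Once this uniform control is in hand the remainder is a clean repetition of the probabilistic scheme already used in the proofs of Theorems 1.8 and 1.9.
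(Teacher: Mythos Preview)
Your approach is correct but takes a genuinely different route from the paper. The paper does not expand $A_{3}$ as an order-two chaos in $(g_{m}^{(1)})_{m}$; instead it writes
\[
|e^{it\sqrt{\partial_{x}^{4}-\partial_{x}^{2}}}f_{j}^{\omega_{1}}|^{2}-|f_{j}^{\omega_{1}}|^{2}=J_{1}+J_{2},
\]
where $J_{1}=(U(t)f_{j}^{\omega_{1}}-f_{j}^{\omega_{1}})\,\overline{U(t)f_{j}^{\omega_{1}}}$ and $J_{2}=f_{j}^{\omega_{1}}\bigl(\overline{U(t)f_{j}^{\omega_{1}}}-\overline{f_{j}^{\omega_{1}}}\bigr)$. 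After one application of Khinchine in $\widetilde{\omega}$ and Minkowski, H\"older in $L^{r}_{\omega_{1}}$ splits each $J_{k}$ into a product of two factors, each of which is a \emph{linear} sum in the $g_{m}^{(1)}$; ordinary Khinchine (encapsulated in Lemmas~4.5 and~4.8) then applies to each factor separately, and the three factors of $r^{1/2}$ again combine to $r^{3/2}$. This is more elementary than your route: your ``apply Khinchine twice to the bilinear expression'' implicitly requires a decoupling or hypercontractivity inequality for off-diagonal order-two chaos in the same family $(g_{m}^{(1)})_{m}$, which is true but is an extra ingredient the paper avoids. The paper also employs a double cutoff (first in $j$ at some $M$ using $(\lambda_{j})\in\ell^{2}$, then in $m$ at $M_{1}=M_{1}(M,\epsilon)$; see Lemma~4.5) rather than your single frequency cutoff uniform in $j$, though both schemes are legitimate given $\gamma_{0}\in\mathfrak{S}^{2}$. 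The uniform bound $|e_{m}(x)|/(m\pi)\le(2\pi)^{-1/2}$ that you single out is exactly what the paper uses, and your derivation of \eqref{1.049} from \eqref{1.048} via Markov with $r\sim\ln(1/\epsilon)$ coincides with the paper's Lemma~4.2.
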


The rest of the paper is arranged as follows.

In Section 2,  firstly, we present the pointwise convergence result related to Boussinesq equation;
secondly, by using the result of Zhang \cite{Z2014} and
polar  coordinate transformation and asymptotic expansion of Fourier transform on the spherical surface as well as Lemma 3.20 of \cite{YDHXY2024}, we present
some oscillatory integral estimates;  by using Van der Corput method in analytic number theory, we prove that
 \begin{eqnarray*}
 \left|\sum\limits_{|k|\leq N}e^{itk\sqrt{k^{2}+1}+ikx}\right|\leq C|t|^{-1/2};
 \end{eqnarray*}
thirdly, we present some Strichartz estimates for orthonormal functions and Schatten bound  with space-time norms;
fourthly, we present some interpolation theorems. Finally, we present the three line theorem.

In Section 3,  we present noncommutative-commutative Riesz-Throin interpolation theorem   related to  Lebesgue spaces, noncommutative-commutative Riesz-Throin interpolation theorem related to mixed Lebesgue spaces, noncommutative-commutative Stein interpolation theorem, noncommutative-commutative Stein interpolation theorem related to  mixed Lebesgue spaces, which are Lemma 3.1, Lemma 3.2, Lemma 3.3, Lemma 3.4, respectively.

In Section 4, we present  probabilistic estimates of some random series and some estimates related to $p$-th moment of random series.

 In Section 5,
 we prove
 Theorem 1.1. More precisely, we present the maximal function estimates for orthonormal functions on $\R.$

 In Section 6, we prove
 Theorems 1.2. More precisely, we present the pointwise convergence of density function of $\sum\limits_{j=1}^{\infty}\lambda_{j}|e^{it(-\Delta)^{m/2}}f_{j}\rangle
   \langle e^{it(-\Delta)^{m/2}}f_{j}| $.

 In Section 7, we prove
 Theorem 1.3. More precisely, we present the maximal function estimates for orthonormal functions on $\mathbf{B}^{d}(d\geq1)$.

 In Section 8, we prove
 Theorem 1.4. More precisely, we present  the Hausdorff dimension of the divergence set
related to density function $\sum\limits_{j=1}^{\infty}\lambda_{j}|e^{it(-\Delta)^{m/2}}f_{j}|^{2}$  of
   operator $\sum\limits_{j=1}^{\infty}\lambda_{j}|e^{it(-\Delta)^{m/2}}f_{j}\rangle
   \langle e^{it(-\Delta)^{m/2}}f_{j}| $ with the aid of Frostman's  Lemma.

 In Section 9,
 we prove
 Theorem 1.5. More precisely, we present  the Schatten bound with space-time norms on $\mathbf{T}$ and Strichartz estimates for orthonormal functions.

 In Section 10, we prove
 Theorems 1.6. More precisely, we present  the Schatten bound with space-time norms on $\mathbf{T}$ and maximal function  estimates for orthonormal functions with the space vatiables.

 In Section 11, we prove
 Theorem 1.7. More precisely, we present the optimality of Theorems 1.5, 1.6.

 In Section 12, we prove
 Theorem 1.8. More precisely, we present the stochastic continuity at zero related to Schatten norm on $\mathbf{R}$.

 In Section 13, we prove
 Theorem 1.9. More precisely, we present the stochastic continuity at zero related to Schatten norm on $\mathbf{T}$.

 In Section 14, we prove
 Theorem 1.10. More precisely, we present the  stochastic continuity at zero related to Schatten norm on $\Theta=\{x\in\R^{3}:|x|<1\}$.

\section{ Preliminaries }

\setcounter{equation}{0}

\setcounter{Theorem}{0}

\setcounter{Lemma}{0}

\setcounter{section}{2}

\begin{Lemma}\label{lem2.1}
For $f\in \dot{H}^{\frac{1}{4}}(\R)$, we have
\begin{eqnarray}
&\lim\limits_{t\longrightarrow 0}U(t)f(x)=f(x),\quad almost \quad  everywhere \,\,x\in\mathbf{R}, \label{2.01}
\end{eqnarray}
where $U(t)f=\int_{\SR}e^{ix\xi}e^{it\sqrt{\xi^{4}+\xi^{2}}}\mathscr{F}_{x}f(\xi)d\xi.$
\end{Lemma}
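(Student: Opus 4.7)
The goal is to establish $U(t)f(x)\to f(x)$ almost everywhere as $t\to 0$ for every $f\in\dot H^{1/4}(\R)$. My plan follows the classical Carleson scheme: first prove a local maximal-in-time estimate of the form
\[
\Big\|\sup_{0<t<1}|U(t)f|\Big\|_{L^{2}_{x}(B(0,R))} \leq C_{R}\,\|f\|_{\dot H^{1/4}(\R)}
\]
for every $R>0$, and then conclude a.e.\ convergence by a standard Chebyshev--density argument: for Schwartz $g$, the convergence $U(t)g\to g$ as $t\to 0$ is immediate from dominated convergence in frequency, and Schwartz functions are dense in $\dot H^{1/4}(\R)$, so the maximal estimate upgrades pointwise convergence from a dense subclass to all of $\dot H^{1/4}$.

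To prove the maximal estimate I would decompose $\widehat f = \widehat f_{\mathrm{low}} + \widehat f_{\mathrm{high}}$ via a smooth cutoff at $|\xi|=1$. The low-frequency piece is trivial: since $\sqrt{\xi^{4}+\xi^{2}}$ is smooth and bounded on $\{|\xi|\leq 1\}$, a direct Cauchy--Schwarz against the weight $|\xi|^{1/2}$ on this compact set yields the pointwise bound $\sup_{0<t<1}|U(t)f_{\mathrm{low}}(x)|\leq C\|f\|_{\dot H^{1/4}(\R)}$. For the high-frequency piece, the key observation is the asymptotic expansion
\[
\sqrt{\xi^{4}+\xi^{2}} = \xi^{2}\sqrt{1+\xi^{-2}} = \xi^{2} + \tfrac{1}{2} + r(\xi), \qquad |\xi|>1,
\]
where $r$ and its derivatives decay as $|\xi|\to\infty$. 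Factoring $e^{it\sqrt{\xi^{4}+\xi^{2}}} = e^{it/2}\,e^{itr(\xi)}\,e^{it\xi^{2}}$ represents the Boussinesq propagator as a zero-order smooth multiplier (bounded uniformly in $t\in[0,1]$) composed with the Schr\"odinger propagator, so a $TT^{*}$ argument reduces the estimate to the classical one-dimensional Carleson--Kenig--Ponce--Vega maximal bound
\[
\Big\|\sup_{0<t<1}|e^{it\partial_{x}^{2}}g|\Big\|_{L^{2}_{x}(B(0,R))} \leq C_{R}\,\|g\|_{\dot H^{1/4}(\R)}.
\]

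The main obstacle is controlling the $TT^{*}$ oscillatory kernel with phase $(x-y)\xi + (t-s)\sqrt{\xi^{4}+\xi^{2}}$ so that it enjoys the same dispersive decay as the Schr\"odinger kernel, uniformly at intermediate frequencies $|\xi|\sim 1$ where the two symbols differ most. This is precisely where the strict convexity and monotonicity of $\xi\mapsto\sqrt{\xi^{4}+\xi^{2}}$ on $\{\xi>0\}$ becomes crucial, allowing stationary-phase and Van der Corput-type bounds to go through in parallel with the Schr\"odinger case. This program has been carried out in Li--Li \cite{LL2023} for the Boussinesq operator on $\R$, and the lemma follows directly from that work together with the standard density argument sketched above.
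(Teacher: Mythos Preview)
Your proposal is correct and aligns with the paper's treatment: the paper does not give an independent proof of this lemma but simply refers the reader to Li--Li \cite{LL2023}, and your argument likewise ultimately defers to that reference. The additional sketch you provide (low/high frequency split, asymptotic comparison with the Schr\"odinger propagator, and the standard maximal-estimate-plus-density scheme) accurately reflects the strategy in \cite{LL2023}, so there is nothing to correct.
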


For the proof of  Lemma 2.1, we refer  the readers  to  \cite{LL2023}.

\begin{Lemma}\label{lem2.2}
Let $d\geq1$. Then,   we have
\begin{eqnarray*}
\Big|\int_{\SR^{d}}e^{ix \xi+it|\xi|^{2}}\frac{d\xi}{|\xi|^{\frac{d}{2}}}\Big|\leq C|x|^{-\frac{d}{2}}.
\end{eqnarray*}
\end{Lemma}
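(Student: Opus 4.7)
The plan is to exploit the scaling symmetry of the integrand and reduce the bound to a universal one-dimensional oscillatory integral. For $x\neq 0$, I substitute $\xi=\eta/|x|$ and use rotational symmetry to align $x/|x|$ with $e_{1}$. Writing $s:=t/|x|^{2}$, a direct computation gives
\begin{equation*}
\int_{\mathbb{R}^{d}}e^{ix\cdot\xi+it|\xi|^{2}}\frac{d\xi}{|\xi|^{d/2}}
=|x|^{-d/2}\int_{\mathbb{R}^{d}}e^{i\eta_{1}+is|\eta|^{2}}\frac{d\eta}{|\eta|^{d/2}}
=|x|^{-d/2}\,J(s),
\end{equation*}
so the lemma is reduced to showing $|J(s)|\leq C$ uniformly in $s\in\mathbb{R}$, where $J(s):=\int_{\mathbb{R}^{d}}e^{i\eta_{1}+is|\eta|^{2}}|\eta|^{-d/2}d\eta$.

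To bound $J(s)$, I would use the Gamma-function identity
\begin{equation*}
|\eta|^{-d/2}=\frac{1}{\Gamma(d/4)}\int_{0}^{\infty}r^{d/4-1}e^{-r|\eta|^{2}}\,dr,
\end{equation*}
insert it into $J(s)$, exchange the order of integration (justified by a standard $e^{-\varepsilon|\eta|^{2}}$ regularization letting $\varepsilon\downarrow 0$), and compute the resulting Gaussian integral in $\eta$. This yields the closed form
\begin{equation*}
J(s)=\frac{\pi^{d/2}}{\Gamma(d/4)}\int_{0}^{\infty}r^{d/4-1}(r-is)^{-d/2}\exp\!\Bigl(-\frac{1}{4(r-is)}\Bigr)dr,
\end{equation*}
which turns the $d$-dimensional oscillatory integral into a one-dimensional one whose integrand is explicit.

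Passing to absolute values and splitting the $r$-integral at $r=|s|$, on the outer piece $r\geq|s|$ I would use $|r-is|\geq r$ and $|\exp(-1/(4(r-is)))|\leq 1$; on the inner piece $0<r\leq |s|$ I would substitute $r=|s|u$, so that the decaying factor $\exp(-r/(4(r^{2}+s^{2})))=\exp(-u/(4|s|(u^{2}+1)))$ becomes the decisive damping. A further rescaling $u=4|s|v$ near $u=0$ (and a reciprocal one near $u=\infty$) converts the inner piece into a convergent $\Gamma$-type integral whose size exactly offsets the apparent $|s|^{-d/4}$ coming from the outer piece. The anticipated main obstacle is precisely this cancellation: taken separately, each piece of the naive modular bound scales like $|s|^{-d/4}$ and blows up as $|s|\to 0$, and only the explicit Gaussian exponential from the Gamma representation allows us to recover a uniform constant. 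A conceptual sanity check is that the integral equals, up to constants, $(e^{it\Delta})(|y|^{-d/2})(x)$, and $|y|^{-d/2}$ has the $L^{2}$-critical homogeneity $-d/2$; the Schrödinger scaling therefore forces the integral to factor as $|x|^{-d/2}v(t/|x|^{2})$ for a single function $v$ of one real variable, whose boundedness follows from an explicit value at $\tau=0$ (Fourier transform of the Riesz kernel), stationary-phase decay as $|\tau|\to\infty$, and continuity in between.
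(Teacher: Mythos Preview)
Your argument is correct. The scaling reduction to $J(s)$ is exact, the Gamma subordination and Gaussian computation are standard, and your estimate of the resulting one-dimensional integral goes through: after the substitution $r=|s|u$ the modular bound becomes
\[
|s|^{-d/4}\int_0^\infty u^{d/4-1}(u^2+1)^{-d/4}\exp\!\Bigl(-\frac{u}{4|s|(u^2+1)}\Bigr)du,
\]
and on both $\{u\le 1\}$ and $\{u\ge 1\}$ (using $u/(u^2+1)\ge u/2$ on the first piece and $u/(u^2+1)\ge 1/(2u)$ on the second) the exponential factor produces exactly the $|s|^{d/4}$ needed to cancel the prefactor, uniformly in $s$.

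The paper itself does not prove this lemma but cites Zhang~\cite{Z2014}. Judging from the paper's own proof of the closely related Lemma~2.5 (where the analogous estimate for the Boussinesq phase $\sqrt{|\xi|^4+|\xi|^2}$ is established by reducing to Lemma~2.2), the cited argument proceeds by Littlewood--Paley decomposition, passage to polar coordinates, the Bessel-type asymptotics $\widehat{d\sigma}(\rho x)\sim (\rho|x|)^{(1-d)/2}e^{\pm i\rho|x|}$, and dyadic summation. Your route is genuinely different: it exploits the exact homogeneity of $|\xi|^{-d/2}$ together with the Gaussian structure of $e^{it|\xi|^2}$ to collapse the problem to a single explicit one-variable integral, bypassing dyadic decomposition and stationary phase entirely. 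This is shorter and more self-contained for the specific phase $|\xi|^2$. The dyadic/stationary-phase method, on the other hand, is more robust: it does not rely on exact scaling or on an explicit fundamental solution, which is precisely why the paper can adapt it in Lemma~2.5 to the non-homogeneous phase $\sqrt{|\xi|^4+|\xi|^2}$, where your subordination trick would not apply directly.
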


For the proof of  Lemma 2.2, we refer  the readers  to Lemmas 2.1,  2.3 of \cite{Z2014}.

\begin{Lemma}\label{lem2.3}
Let $|t|\leq 1$, $\phi(\xi)=\sqrt{\xi^{2}+\xi^{4}}$. Then,  we have
\begin{eqnarray}
\Big|\int_{\SR}e^{ix \xi+it\phi(\xi)}\frac{d\xi}{|\xi|^{\frac{1}{2}}}\Big|\leq C|x|^{-\frac{1}{2}}\label{2.02}.
\end{eqnarray}
\end{Lemma}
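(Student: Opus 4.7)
The strategy is to scale out the factor $|x|^{-1/2}$ via the substitution $\xi = \eta/|x|$ and then bound the rescaled integral uniformly by combining Fresnel-type estimates, integration by parts and stationary phase.

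By the evenness $\phi(-\xi)=\phi(\xi)$, the integral reduces to bounding $J(x,t) := \int_0^{\infty} e^{i(x\xi+t\tilde\phi(\xi))}\xi^{-1/2}\,d\xi$, where $\tilde\phi(\xi) := \xi\sqrt{1+\xi^2}$. On $(0,\infty)$ one has $\tilde\phi'(\xi)\geq 1$, $\tilde\phi''(\xi)>0$, and $\tilde\phi''(\xi)\to 2$ as $\xi\to\infty$. In the regime $|x|\leq 1$ the claim $|J|\leq C|x|^{-1/2}$ reduces to $|J|\leq C$: absolute integrability handles the piece $\xi\in(0,2)$, and a dyadic Van der Corput estimate using $\tilde\phi''(\xi)\gtrsim 1$ controls the tail $\xi\geq 2$.

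For $|x|\geq 1$, substituting $\eta = |x|\xi$ yields $J(x,t) = |x|^{-1/2} K(\sigma,t,X)$ with $\sigma = \mathrm{sgn}(x)$, $X=|x|\geq 1$, and
\begin{eqnarray*}
K(\sigma,t,X) = \int_0^{\infty} e^{i(\sigma\eta + t\tilde\phi(\eta/X))}\,\eta^{-1/2}\,d\eta.
\end{eqnarray*}
It therefore suffices to show $|K|\leq C$ uniformly in $|t|\leq 1$ and $X\geq 1$. I split $K = K_1 + K_2$ at $\eta = X$. For $K_1$, I write $e^{it\tilde\phi(\eta/X)} = 1 + (e^{it\tilde\phi(\eta/X)}-1)$: the leading term is the truncated Fresnel integral $\int_0^X e^{i\sigma\eta}\eta^{-1/2}\,d\eta$, which is uniformly bounded in $X$; the remainder is controlled by a single integration by parts using the oscillatory factor $e^{i\sigma\eta}$ together with the pointwise bound $|e^{it\tilde\phi(\eta/X)}-1|\leq \sqrt{2}\,|t|\eta/X$ on $(0,X]$.

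For $K_2$, substituting $u = \eta/X$ gives $K_2 = X^{1/2}\int_1^{\infty} e^{i\Xi(u)}u^{-1/2}\,du$ with $\Xi(u) = \sigma X u + t\tilde\phi(u)$. When $\sigma t \geq 0$, $|\Xi'(u)|\geq X$ on $u\geq 1$ with $\Xi''$ of fixed sign, so the first-derivative Van der Corput estimate gives $\bigl|\int_1^\infty e^{i\Xi}u^{-1/2}du\bigr|\leq C/X$, and hence $K_2 = O(X^{-1/2})$. When $\sigma t<0$, there is a unique stationary point $u_0$ characterized by $\tilde\phi'(u_0) = X/|t|$, satisfying $u_0 \sim X/(2|t|)$ when $X/|t|$ is large; a local stationary-phase analysis near $u_0$ (using $|\Xi''(u_0)|\sim |t|$ and the amplitude $u_0^{-1/2}\sim\sqrt{|t|/X}$) combined with Van der Corput away from $u_0$ again yields $\bigl|\int_1^\infty e^{i\Xi}u^{-1/2}du\bigr|\leq CX^{-1/2}$, so $|K_2|\leq C$. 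The main obstacle will be the uniform control of the stationary-phase and Van der Corput constants in the opposite-sign case, where $u_0$ can lie arbitrarily far out and $|t|$ can be arbitrarily small; every other step reduces to standard oscillatory-integral arguments.
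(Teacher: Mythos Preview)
Your treatment of the case $|x|\leq 1$ contains a genuine error. You claim that the desired bound $|J|\leq C|x|^{-1/2}$ ``reduces to $|J|\leq C$'' and then argue for this uniform bound via absolute integrability on $(0,2)$ and a second-derivative Van der Corput estimate on the tail. But the uniform bound $|J|\leq C$ is simply false: at $t=0$ one has
\[
J(x,0)=\int_0^\infty e^{ix\xi}\xi^{-1/2}\,d\xi=c\,|x|^{-1/2},
\]
which blows up as $x\to 0$. The mechanism of failure in your argument is clear: the second derivative of the phase $x\xi+t\tilde\phi(\xi)$ is $t\tilde\phi''(\xi)$, not $\tilde\phi''(\xi)$, so your Van der Corput constant is $|t|^{-1/2}$, not $O(1)$. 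For small $|t|$ this gives no control. The $|x|\leq 1$ regime therefore genuinely requires an $|x|^{-1/2}$ loss, and cannot be handled by the crude argument you sketch; one needs either to carry the rescaling $\eta=|x|\xi$ uniformly through all $X>0$ (where the effective parameter becomes $t/X^2$, unbounded for small $X$), or to argue differently.

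It is worth noting that the paper's own route is entirely different from yours. Rather than a direct stationary-phase analysis, the paper (see the proof of the companion estimate \eqref{2.06} in Lemma~\ref{lem2.5}, and the reference to \cite{LL2023}) writes
\[
e^{it\sqrt{\xi^4+\xi^2}}=e^{it(\xi^2+\tfrac12)}+e^{it(\xi^2+\tfrac12)}\bigl(e^{it(\sqrt{\xi^4+\xi^2}-\xi^2-\tfrac12)}-1\bigr),
\]
reduces the main term to the known Schr\"odinger estimate of Lemma~\ref{lem2.2}, and controls the remainder perturbatively via Taylor expansion, exploiting $\bigl|\sqrt{\xi^4+\xi^2}-\xi^2-\tfrac12\bigr|\leq\tfrac14(\xi^2+\tfrac12)^{-1}$. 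This comparison-to-Schr\"odinger method sidesteps the delicate uniform stationary-phase analysis that you flag as the main obstacle in the opposite-sign case, at the cost of invoking the Schr\"odinger bound as a black box. Your direct approach, if the $|x|\leq 1$ case and the uniform stationary-phase constants were properly handled, would be self-contained; but as written the $|x|\leq 1$ step is incorrect and must be repaired.
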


By using a proof similar to Lemma 2.2 of \cite{LL2023}, we have that Lemma 2.3 is valid.

\begin{Lemma}\label{lem2.4}
Let $d\in\mathbb{N}^{+}$ and $\psi\in C_{0}^{\infty}$ be supported on $\{x\in\R:\frac{1}{2}<x<2\}$. For $m\in(0,\infty)\setminus\{1\}$, $k\in\Z$,  we have
\begin{eqnarray}
&&\sup\limits_{t\in\SR}\left|\int_{\SR^{d}}e^{i(x\xi+t|\xi|^{m})}\phi(2^{-k}|\xi|)d\xi\right|\leq C\frac{2^{dk}}{(1+2^{k}|x|)^{\frac{d}{2}}}.\label{2.03}
\end{eqnarray}
\end{Lemma}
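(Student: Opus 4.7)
The plan is first to reduce to the case $k=0$ by rescaling. Substituting $\xi = 2^k\eta$, $y = 2^k x$, $s = 2^{km} t$ gives
\begin{eqnarray*}
\int_{\SR^{d}} e^{i(x\cdot\xi + t|\xi|^{m})} \phi(2^{-k}|\xi|) d\xi = 2^{dk} \int_{\SR^{d}} e^{i(y\cdot\eta + s|\eta|^{m})} \phi(|\eta|) d\eta,
\end{eqnarray*}
so it suffices to prove $\sup_{s\in\SR} |I(y,s)| \leq C(1+|y|)^{-d/2}$, where $I(y,s)$ denotes the integral on the right; the claimed bound then follows after undoing the scaling. For $|y| \leq 1$ the trivial bound $|I(y,s)| \leq \int_{1/2 \leq |\eta| \leq 2} |\phi(|\eta|)| d\eta$ is enough, so throughout the remainder of the proof I assume $|y| > 1$.

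For $|y|>1$, I would switch to polar coordinates $\eta = r\omega$ with $r>0$ and $\omega \in S^{d-1}$, so that
\begin{eqnarray*}
I(y,s) = \int_{0}^{\infty} e^{isr^{m}} \phi(r) r^{d-1} \Big(\int_{S^{d-1}} e^{ir y\cdot\omega} d\sigma(\omega)\Big) dr.
\end{eqnarray*}
The Fourier transform of surface measure on $S^{d-1}$ is (a constant multiple of) the Bessel function $J_{d/2-1}(r|y|)/(r|y|)^{d/2-1}$, and the asymptotic expansion of $J_{d/2-1}$ at infinity (invoking the asymptotic-expansion lemma quoted in the preliminaries, e.g.\ Lemma 3.20 of \cite{YDHXY2024}) produces, for $r|y| \gtrsim 1$, the representation
\begin{eqnarray*}
\int_{S^{d-1}} e^{ir y\cdot\omega} d\sigma(\omega) = \frac{c_{+}e^{ir|y|} + c_{-}e^{-ir|y|}}{(r|y|)^{(d-1)/2}} + O\big((r|y|)^{-(d+1)/2}\big).
\end{eqnarray*}

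After plugging this into the radial integral, the error term is immediately better than required, and the problem reduces to bounding, for each choice of sign,
\begin{eqnarray*}
J_{\pm}(y,s) = |y|^{-(d-1)/2} \int_{0}^{\infty} e^{i\Psi_{\pm}(r)} \tilde\phi(r) dr, \qquad \Psi_{\pm}(r) = \pm r|y| + s r^{m},
\end{eqnarray*}
with $\tilde\phi(r) = r^{(d-1)/2}\phi(r)$ supported in $1/2 < r < 2$. Since $m \neq 1$, one has $\Psi''_{\pm}(r) = m(m-1) s r^{m-2}$, so $|\Psi''_{\pm}| \sim |s|$ uniformly on $\text{supp}\,\tilde\phi$ whenever $s \neq 0$. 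If $\Psi'_{\pm}$ has a critical point $r_{0} \in (1/2, 2)$, then $|y| = |ms| r_{0}^{m-1} \sim |s|$, and stationary phase (or van der Corput's lemma of order two) yields the extra gain $|\Psi''|^{-1/2} \sim |y|^{-1/2}$, so that $|J_{\pm}| \lesssim |y|^{-d/2}$. If instead $\Psi'_{\pm}$ has no zero on $\text{supp}\,\tilde\phi$, then $|\Psi'_{\pm}(r)| \gtrsim \max(|y|, |s|)$ there, and repeated integration by parts against $e^{i\Psi_{\pm}}$ gives $|J_{\pm}| \lesssim |y|^{-(d-1)/2}(1+|y|)^{-N}$ for any $N$, which is much stronger than needed.

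The main obstacle is making this dichotomy uniform in $s \in \SR$, especially in the transitional regime where the critical point enters or leaves the support of $\tilde\phi$; I would handle it by a standard smooth partition of unity in $r$ adapted to the size of $\Psi'_{\pm}$, matching the stationary-phase and non-stationary estimates in the overlap region, which is exactly the setting in which Lemma 2.2 (due to Zhang) was obtained for $m=2$ and extends without change to general $m \in (0,\infty)\setminus\{1\}$. Combining the two regimes and recombining the $\pm$ contributions gives $|I(y,s)| \lesssim |y|^{-d/2}$ for $|y|>1$; together with the trivial bound for $|y|\leq 1$ this proves $\sup_{s} |I(y,s)| \lesssim (1+|y|)^{-d/2}$, and reinserting the scaling factors yields the stated estimate $2^{dk}(1+2^{k}|x|)^{-d/2}$, uniform in $t \in \SR$ and $k \in \Z$.
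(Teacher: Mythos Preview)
Your argument is correct: the rescaling to $k=0$, the trivial bound for $|y|\le 1$, and the polar-coordinate/Bessel-asymptotic reduction to a one-dimensional oscillatory integral with phase $\Psi_\pm(r)=\pm r|y|+sr^m$ is the standard route, and the dichotomy you describe (van der Corput on $\Psi''$ when $|s|\gtrsim|y|$, integration by parts on $\Psi'$ when $|s|\ll|y|$) gives exactly the required $|y|^{-d/2}$ decay uniformly in $s$.

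Note that the paper itself does not prove this lemma: it simply refers the reader to \cite{BKS2023}. Your sketch is essentially the argument one would expect to find there, so there is no meaningful methodological difference to compare. One small citation slip: the asymptotic expansion of $\widehat{d\sigma}$ you invoke is not Lemma~3.20 of \cite{YDHXY2024} (that is Lemma~2.6 here, a different integral); the correct reference, as used in the proof of Lemma~2.5, is Stein \cite[pp.~338, 347]{S1993}. Also, your appeal to ``Lemma~2.2 (due to Zhang)'' is only heuristic, since that lemma treats the weighted, non-localized integral for $m=2$; but the underlying stationary-phase mechanism is indeed the same, and your self-contained argument already suffices without invoking it.
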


For the proof of  Lemma 2.4, we refer  the readers  to \cite{BKS2023}.

\begin{Lemma}\label{lem2.5}
Let $d\geq1$, $|t|\leq 1$, $|x|\leq 1$, $\frac{d}{4}<s<\frac{d}{2}$, $\psi\in C_{0}^{\infty}$ be supported on $\{x\in\R:\frac{1}{2}<x<2\}$. For $k\in\Z$,  we have
\begin{eqnarray}
&&\sup\limits_{|t|\leq 1}\left|\int_{\SR^{d}}e^{i(x\cdot\xi+t\sqrt{|\xi|^{4}+|\xi|^{2}})}\phi(2^{-k}|\xi|)d\xi\right|\leq C\frac{2^{dk}}{(1+2^{k}|x|)^{\frac{d}{2}}},\label{2.04}\\
&&\sup\limits_{|t|\leq 1}\left|\int_{\SR^{d}}e^{i(x\cdot\xi+t\sqrt{|\xi|^{4}+|\xi|^{2}})}\frac{\phi(2^{-k}|\xi|)^{2}}{|\xi|^{2s}}d\xi\right|\leq C\frac{2^{(d-2s)k}}{(1+2^{k}|x|)^{\frac{d}{2}}},\label{2.05}\\
&&\sup\limits_{|t|\leq 1}\left|\int_{\SR^{d}}e^{i(x\cdot\xi+t\sqrt{|\xi|^{4}+|\xi|^{2}})}\frac{1}{|\xi|^{\frac{d}{2}}}d\xi\right|\leq C|x|^{-\frac{d}{2}}.\label{2.06}
\end{eqnarray}
\end{Lemma}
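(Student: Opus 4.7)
The three estimates form a natural hierarchy: (2.05) is a direct corollary of (2.04), and (2.06) follows from (2.04) augmented with a sharper high-frequency control. So the bulk of the work lies in proving (2.04).

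For (2.04), the plan is to rescale and reduce to a Lemma 2.4-type stationary-phase bound uniformly in $k$. Setting $\xi = 2^k\eta$ and $y = 2^kx$, the integral becomes
$$\int_{\SR^d}e^{i(x\cdot\xi + t\sqrt{|\xi|^4+|\xi|^2})}\phi(2^{-k}|\xi|)\,d\xi = 2^{dk}\int_{\SR^d}e^{i(y\cdot\eta + t\Psi_k(\eta))}\phi(|\eta|)\,d\eta,$$
where $\Psi_k(\eta) = |\eta|\sqrt{2^{4k}|\eta|^2+2^{2k}}$. On the annulus $\tfrac12\le|\eta|\le 2$ the symbol $\Psi_k$ is smooth, and its Hessian is non-degenerate for every $k$; the phase interpolates between the wave-like regime $\Psi_k(\eta)\sim 2^k|\eta|$ (when $2^k\ll 1$) and the Schr\"odinger-like regime $\Psi_k(\eta)\sim 2^{2k}|\eta|^2$ (when $2^k\gg 1$). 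For $|y|\le 1$ the trivial bound gives $C\cdot 2^{dk}$ which is what is needed. For $|y|>1$, I would apply stationary-phase plus van der Corput in the spirit of the proof of Lemma 2.4 (splitting into angular pieces and estimating the radial oscillatory integral via the second derivative of the phase), obtaining the $|y|^{-d/2}$ decay.

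The main obstacle in this step is that the radial and tangential eigenvalues of $\nabla_\eta^2 \Psi_k$ scale differently, and their relative size depends on $2^k$; getting a clean $(1+2^k|x|)^{-d/2}$ bound requires tracking the stationary-phase constants uniformly in $k$, which I would handle by a mild case analysis ($2^k\le 1$ versus $2^k\ge 1$) and a uniform Littman-type estimate adapted to the one-parameter family of symbols $|\eta|\sqrt{a^2|\eta|^2+1}$ with $a = 2^k$. Once (2.04) is established, (2.05) is immediate: since $\phi(2^{-k}|\xi|)$ forces $|\xi|\sim 2^k$, one has $|\xi|^{-2s}\le C\,2^{-2ks}$ and $\phi(2^{-k}|\xi|)^2$ is a smooth bump on the same dyadic shell, so (2.04) applied to $\phi^2$ yields (2.05) with the extra factor $2^{-2ks}$.

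For (2.06), dyadically decompose $|\xi|^{-d/2} = \sum_{k\in\SZ} 2^{-kd/2}\widetilde\phi_k(\xi)$ with $\widetilde\phi_k(\xi) = (2^{-k}|\xi|)^{-d/2}\phi(2^{-k}|\xi|)$, and apply (2.04) to each piece to get
$$\left|\int_{\SR^d}e^{i(x\cdot\xi + t\sqrt{|\xi|^4+|\xi|^2})}\,\frac{d\xi}{|\xi|^{d/2}}\right|\le C\sum_{k\in\SZ}\frac{2^{kd/2}}{(1+2^k|x|)^{d/2}}.$$
The sum over $2^k\le|x|^{-1}$ is a geometric series bounded by $C|x|^{-d/2}$; the tail over $2^k\ge|x|^{-1}$ however is only term-wise $|x|^{-d/2}$ and does not converge without additional input. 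To tame the tail I would exploit that $\sqrt{|\xi|^4+|\xi|^2}-|\xi|^2$ is a smooth bounded function for $|\xi|\gtrsim 1$, factor $e^{it\sqrt{|\xi|^4+|\xi|^2}} = e^{it|\xi|^2}\cdot e^{it(\sqrt{|\xi|^4+|\xi|^2}-|\xi|^2)}$, expand the second factor as a convergent Fourier/Taylor series in smooth bounded symbols, and reduce to the Schr\"odinger-type bound of Lemma 2.2. Reconciling the wave-like behavior at low frequencies with the Schr\"odinger-like behavior at high frequencies inside a single $|x|^{-d/2}$ bound is the principal obstacle of this last step.
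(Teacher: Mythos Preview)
Your outline is sound, and for (2.05) and (2.06) it aligns closely with the paper (your (2.05) reduction is in fact a bit cleaner than the paper's Young-inequality argument; for (2.06) you propose exactly the Schr\"odinger factorization and Taylor expansion that the paper carries out). The one substantive divergence is in (2.04). You propose a direct, uniform-in-$k$ stationary-phase/Littman estimate for the rescaled phase $\Psi_k$, and you correctly flag the uniformity of constants as the main obstacle (note also that your claim ``Hessian non-degenerate for every $k$'' is not literally true as $k\to-\infty$, though the hypothesis $|x|\le 1$ forces $k\ge 0$ in the nontrivial regime $2^k|x|\ge 1$, so this is harmless). The paper sidesteps this uniformity issue entirely by applying to (2.04) the \emph{same} trick you reserve for (2.06): write
\[
e^{it\sqrt{|\xi|^4+|\xi|^2}}=e^{it(|\xi|^2+\frac12)}+e^{it(|\xi|^2+\frac12)}\bigl(e^{it(\sqrt{|\xi|^4+|\xi|^2}-|\xi|^2-\frac12)}-1\bigr),
\]
invoke Lemma~2.4 directly for the Schr\"odinger piece, and for the remainder Taylor-expand (the argument is $O(|\xi|^{-2})$), pass to polar coordinates, and use the standard asymptotic expansion of $\widehat{d\sigma}$. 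This buys you (2.04) without any new oscillatory-integral input beyond Lemmas~2.2 and~2.4, whereas your route would require proving a genuinely new uniform Littman-type bound for the one-parameter family $|\eta|\sqrt{a^2|\eta|^2+1}$; workable, but more labor than necessary given that you already have the factorization idea in hand for (2.06).
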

\begin{proof}
For $\eqref{2.04}$, we consider the following case $2^{k}|x|\leq 1$ and $2^{k}|x|\geq1$, respectively.
When $2^{k}|x|\leq 1$, we have $\frac{2^{dk}}{(1+2^{dk}|x|)^{\frac{1}{2}}}\sim 2^{dk}$, then \eqref{2.04} is valid.

\noindent When $2^{k}|x|\geq 1$, we have
\begin{align}
\left|\int_{\SR^{d}}e^{i(x\cdot\xi+t\sqrt{|\xi|^{4}+|\xi|^{2}})}\phi(2^{-k}|\xi|)d\xi\right|&\leq \left|\int_{\SR^{d}}e^{ix\cdot\xi}\left(e^{it\sqrt{|\xi|^{4}+|\xi|^{2}}}-e^{it(|\xi|^{2}+\frac{1}{2})}\right)\phi(2^{-k}|\xi|)d\xi\right|\nonumber\\
&\qquad+
\left|\int_{\SR^{d}}e^{ix\cdot\xi}e^{it(|\xi|^{2}+\frac{1}{2})}\phi(2^{-k}|\xi|)d\xi\right|\nonumber\\
&=I_{1}+I_{2}.\label{2.07}
\end{align}
For $I_{1}$, by using Taylor expansion, since $|t|\leq 1$, $|x|\leq 1$, we have
\begin{align}
I_{1}&=\left|\sum\limits_{m=1}^{\infty}\int_{\SR^{d}}e^{ix\cdot\xi}e^{it|\xi|^{2}}
\frac{(it(\sqrt{|\xi|^{4}+|\xi|^{2}}-(|\xi|^{2}+\frac{1}{2})))^{m}}{m!}\phi(2^{-k}|\xi|)d\xi\right|
\nonumber\\
&\leq\sum\limits_{m=1}^{\infty}\frac{|t|^{m}}{4^{m}m!}\left|\int_{\SR^{d}}e^{ix\cdot\xi}e^{it|\xi|^{2}}\left(\frac{1}{\sqrt{|\xi|^{4}+|\xi|^{2}}+|\xi|^{2}
+\frac{1}{2}}\right)^{m}\phi(2^{-k}|\xi|)d\xi\right|\nonumber\\
&\leq  \sum\limits_{m=1}^{\infty}\frac{1}{4^{m}m!}\left|\int_{\SR^{d}}e^{ix\cdot\xi}e^{it|\xi|^{2}}
\left(\frac{1}{\sqrt{|\xi|^{4}+|\xi|^{2}}+|\xi|^{2}+\frac{1}{2}}\right)^{m}\phi(2^{-k}|\xi|)
d\xi\right|\nonumber\\
&= \sum\limits_{m=1}^{\infty}\frac{1}{4^{m}m!} J_{m}.\label{2.08}
\end{align}
By using polar coordinate transformation and the asymptotic expansion of Fourier transform on a sphere, which can be found in  Page 338 and Page 347 of \cite{S1993}, we have
\begin{eqnarray}
&&J_{m}=\left|\int_{0}^{\infty}\widehat{d\sigma}(\rho x)e^{it\rho^{2}}\left(\frac{1}{\sqrt{\rho^{4}+\rho^{2}}+\rho^{2}+\frac{1}{2}}\right)^{m}\rho^{d-1}\phi(2^{-k}\rho)d\rho
\right|,\label{2.09}
\end{eqnarray}
where $\widehat{d\sigma}(\rho x)\sim e^{\pm i\rho|x|}\left(\sum\limits_{l=0}^{N-1}(\rho|x|)^{\frac{1-d}{2}-l}+O\left((\rho|x|)^{\frac{1-d}{2}-N}\right)\right)$.

\noindent By using \eqref{2.09}, we have
\begin{eqnarray}
&&J_{m}\leq \sum_{l=0}^{N-1}\left|\int_{0}^{\infty}e^{\pm i\rho|x|}e^{it\rho^{2}}(\rho|x|)^{\frac{1-d}{2}-l}
\left(\frac{1}{\sqrt{\rho^{4}+\rho^{2}}+\rho^{2}+\frac{1}{2}}\right)^{m}\rho^{d-1}\phi(2^{-k}\rho)d\rho
\right|\nonumber\\
&&+\left|\int_{0}^{\infty}e^{\pm i\rho|x|}e^{it\rho^{2}}O\left((\rho|x|)^{\frac{1-d}{2}-N}\right)
\left(\frac{1}{\sqrt{\rho^{4}+\rho^{2}}+\rho^{2}+\frac{1}{2}}\right)^{m}\rho^{d-1}\phi(2^{-k}\rho)d\rho
\right|\nonumber\\
&&=J_{m1}+J_{m2}.\label{2.010}
\end{eqnarray}
For $J_{m1}$, we have
\begin{align}
J_{m1}&\leq \sum_{l=0}^{N-1}|x|^{\frac{1-d}{2}-l}\int_{0}^{\infty}\rho^{\frac{d-1}{2}-l-2m}
|\phi(2^{-k}\rho)|d\rho\nonumber\\
&\leq C\sum_{l=0}^{N-1}4^{m}|x|^{\frac{1-d}{2}-l} 2^{(\frac{d+1}{2}-2m-l)k}.\label{2.011}
\end{align}
For $J_{m2}$, we have
\begin{align}
J_{m2}&\leq |x|^{\frac{1-d}{2}-N}\int_{0}^{\infty}\rho^{\frac{d-1}{2}-2m-N}
|\phi(2^{-k}\rho)|d\rho\nonumber\\
&\leq C4^{m}|x|^{\frac{1-d}{2}-N} 2^{(\frac{d+1}{2}-2m-N)k}.\label{2.012}
\end{align}
From \eqref{2.08}-\eqref{2.012}, since $2^{k}|x|\geq 1$, $|x|\leq 1$, we have
\begin{align}
I_{1}&\leq C\sum_{l=0}^{N-1}\sum_{m=1}^{\infty}\frac{4^{m}}{4^{m}m!}|x|^{\frac{1-d}{2}-l} 2^{(\frac{d+1}{2}-2m-l)k}+\sum_{m=1}^{\infty}\frac{4^{m}}{4^{m}m!}|x|^{\frac{1-d}{2}-N} 2^{(\frac{d+1}{2}-2m-N)k}\nonumber\\
&\leq C\sum_{l=0}^{N-1}\sum_{m=1}^{\infty}|x|^{\frac{1-d}{2}-l} 2^{(\frac{d+1}{2}-2m-l)k}+\sum_{m=1}^{\infty}|x|^{\frac{1-d}{2}-N} 2^{(\frac{d+1}{2}-2m-N)k}\nonumber\\
&\leq C\sum_{l=0}^{N-1}|x|^{\frac{1-d}{2}-l}2^{(\frac{d}{2}-l)k}+ |x|^{\frac{1-d}{2}-N} 2^{(\frac{d}{2}-N)k}\nonumber\\
&\leq C\sum_{l=0}^{N-1}|x|^{-\frac{d}{2}}2^{\frac{d}{2}k}2^{lk}2^{-lk}+|x|^{-\frac{d}{2}}2^{\frac{d}{2}k}2^{Nk}2^{-Nk}\nonumber\\
&\leq C|x|^{-\frac{d}{2}}2^{\frac{d}{2}k}.\label{2.013}
\end{align}
For $I_{2}$, by using Lemma 2.4, we have
\begin{eqnarray}
&&I_{2}\leq C\frac{2^{dk}}{(1+2^{k}|x|)^{\frac{d}{2}}}.\label{2.014}
\end{eqnarray}
From \eqref{2.013}-\eqref{2.014}, we have that \eqref{2.04} is valid.

\noindent For \eqref{2.05}, by using Young inequality and \eqref{2.04}, we have
\begin{eqnarray}
&&\left|\int_{\SR^{d}}e^{i(x\cdot\xi+t\sqrt{|\xi|^{4}+|\xi|^{2}})}\frac{\phi(2^{-k}|\xi|)^{2}}{|\xi|^{2s}}d\xi\right|\nonumber\\&&
\leq \left\|\mathscr{F}_{x}^{-1}\left(e^{it\sqrt{|\xi|^{4}+|\xi|^{2}}}\phi(2^{-k}|\xi|)\right)\right\|_{L^{\infty}}
\left\|\mathscr{F}_{x}^{-1}\left(\frac{\phi(2^{-k}|\xi|)}{|\xi|^{2s}}\right)\right\|_{L^{1}}\nonumber\\
&&\leq C\frac{2^{dk}}{(1+2^{k}|x|)^{\frac{d}{2}}}\left\|\mathscr{F}_{x}^{-1}\left(\frac{\phi(2^{-k}|\xi|)}{|\xi|^{2s}}
\right)\right\|_{L^{1}}.\label{2.015}
\end{eqnarray}
By using a change of the variable $\eta=2^{-k}\xi$, $y=2^{k}x$, then, we have
\begin{align}
\left\|\mathscr{F}_{x}^{-1}\left(\frac{\phi(2^{-k}|\xi|)}{|\xi|^{2s}}\right)\right\|_{L^{1}}&=\int_{\SR^{d}}
\left|\int_{\SR^{d}}e^{ix\cdot\xi}\frac{\phi(2^{-k}|\xi|)}{|\xi|^{2s}}d\xi\right|dx\nonumber\\
&=2^{-2sk}2^{dk}\int_{\SR^{d}}
\left|\int_{\SR^{d}}e^{i2^{k}x\cdot\eta}\frac{\phi(|\eta|)}{|\eta|^{2s}}d\eta\right|dx\nonumber\\
&=2^{-2sk}2^{dk}2^{-dk}\int_{\SR^{d}}
\left|\int_{\SR^{d}}e^{iy\cdot\eta}\frac{\phi(|\eta|)}{|\eta|^{2s}}d\eta\right|dy\nonumber\\
&\leq C2^{-2sk}.\label{2.016}
\end{align}
From \eqref{2.015}-\eqref{2.016}, we have
\begin{align}
\left|\int_{\SR^{d}}e^{i(x\cdot\xi+t\sqrt{|\xi|^{4}+|\xi|^{2}})}\frac{\phi(2^{-k}|\xi|)^{2}}{|\xi|^{2s}}d\xi\right|\leq C\frac{2^{(d-2s)k}}{(1+2^{k}|x|)^{\frac{d}{2}}}.\label{2.017}
\end{align}
From \eqref{2.017}, we have that \eqref{2.05} is valid.

\noindent For $\eqref{2.06}$, we consider the following case $2^{k}|x|\leq 1$ and $2^{k}|x|\geq1$, respectively.

\noindent Since
\begin{eqnarray}
&&\left|\int_{\SR^{d}}e^{i(x\cdot\xi+t\sqrt{|\xi|^{4}+|\xi|^{2}})}\frac{1}{|\xi|^{\frac{d}{2}}}d\xi\right|\leq \sum_{k\in\z}\left|\int_{\SR^{d}}e^{i(x\cdot\xi+t\sqrt{|\xi|^{4}+|\xi|^{2}})}\frac{\phi(2^{-k}|\xi|)}{|\xi|^{\frac{d}{2}}}d\xi\right|.\label{2.018}
\end{eqnarray}
When $2^{k}|x|\leq 1$, by using polar coordinate transformation and \eqref{2.018}, $|x|\leq 1$, we have
\begin{align}
\sum_{2^{k}\leq|x|^{-1}}\left|\int_{\SR^{d}}e^{i(x\cdot\xi+t\sqrt{|\xi|^{4}+|\xi|^{2}})}\frac{\phi(2^{-k}|\xi|)}{|\xi|^{\frac{d}{2}}}d\xi\right|
&\leq \sum_{[2^{k}\leq|x|^{-1}]+1}\int_{\SR^{d}}\frac{\phi(2^{-k}|\xi|)}{|\xi|^{\frac{d}{2}}}d\xi\nonumber\\
&\leq \sum_{k=-\infty}^{[-\log_{2}|x|]+1}\int_{0}^{+\infty}\rho^{\frac{d}{2}-1}\phi(2^{-k}\rho) d\rho\nonumber\\
&\leq C\sum_{k=-\infty}^{[-\log_{2}|x|]+1}2^{\frac{d}{2}k}\leq C|x|^{-\frac{d}{2}}.\label{2.019}
\end{align}
When $2^{k}|x|\geq 1$, since
\begin{align}
\left|\int_{\SR^{d}}e^{i(x\cdot\xi+t\sqrt{|\xi|^{4}+|\xi|^{2}})}\frac{\phi(2^{-k}|\xi|)}{|\xi|^{\frac{d}{2}}}d\xi\right|&\leq \left|\int_{\SR^{d}}e^{ix\cdot\xi}\left(e^{it\sqrt{|\xi|^{4}+|\xi|^{2}}}-e^{it(|\xi|^{2}+\frac{1}{2})}\right)\frac{\phi(2^{-k}|\xi|)}{|\xi|^{\frac{d}{2}}}
d\xi\right|\nonumber\\
&\qquad+
\left|\int_{\SR^{d}}e^{ix\cdot\xi}e^{it(|\xi|^{2}+\frac{1}{2})}\frac{\phi(2^{-k}|\xi|)}{|\xi|^{\frac{d}{2}}}d\xi\right|\nonumber\\
&=I_{3}+I_{4}.\label{2.020}
\end{align}
For $I_{3}$, by using Taylor expansion, since $|t|\leq 1$, $|x|\leq 1$, we have
\begin{align}
I_{3}&=\left|\sum\limits_{m=1}^{\infty}\int_{\SR^{d}}e^{ix\cdot\xi}e^{it|\xi|^{2}}\frac{(it(\sqrt{|\xi|^{4}+|\xi|^{2}}-(|\xi|^{2}+\frac{1}{2})))^{m}}{m!}
\frac{\phi(2^{-k}|\xi|)}{|\xi|^{\frac{d}{2}}}d\xi\right|
\nonumber\\
&\leq\sum\limits_{m=1}^{\infty}\frac{|t|^{m}}{4^{m}m!}\left|\int_{\SR^{d}}e^{ix\cdot\xi}e^{it|\xi|^{2}}
\left(\frac{1}{\sqrt{|\xi|^{4}+|\xi|^{2}}+|\xi|^{2}
+\frac{1}{2}}\right)^{m}\frac{\phi(2^{-k}|\xi|)}{|\xi|^{\frac{d}{2}}}d\xi\right|\nonumber\\
&\leq  \sum\limits_{m=1}^{\infty}\frac{1}{4^{m}m!}\left|\int_{\SR^{d}}e^{ix\cdot\xi}e^{it|\xi|^{2}}
\left(\frac{1}{\sqrt{|\xi|^{4}+|\xi|^{2}}+|\xi|^{2}+\frac{1}{2}}\right)^{m}
\frac{\phi(2^{-k}|\xi|)}{|\xi|^{\frac{d}{2}}}
d\xi\right|\nonumber\\
&= \sum\limits_{m=1}^{\infty}\frac{1}{4^{m}m!} L_{m}.\label{2.021}
\end{align}
By using polar coordinate transformation and the asymptotic expansion of Fourier transform on a sphere, which can be found in  Page 338 and Page 347 of \cite{S1993}, we have
\begin{eqnarray}
&&L_{m}=\left|\int_{0}^{\infty}\widehat{d\sigma}(\rho x)e^{it\rho^{2}}
\left(\frac{1}{\sqrt{\rho^{4}+\rho^{2}}+\rho^{2}+\frac{1}{2}}\right)^{m}\rho^{d-1}
\frac{\phi(2^{-k}\rho)}{\rho^{\frac{d}{2}}}d\rho
\right|,\label{2.022}
\end{eqnarray}
where $\widehat{d\sigma}(\rho x)\sim e^{\pm i\rho|x|}\left(\sum\limits_{l=0}^{N-1}(\rho|x|)^{\frac{1-d}{2}-l}+O\left((\rho|x|)^{\frac{1-d}{2}-N}\right)\right)$.

\noindent By using \eqref{2.022}, we have
\begin{eqnarray}
&&L_{m}\leq \sum_{l=0}^{N-1}\left|\int_{0}^{\infty}e^{\pm i\rho|x|}e^{it\rho^{2}}(\rho|x|)^{\frac{1-d}{2}-l}
\left(\frac{1}{\sqrt{\rho^{4}+\rho^{2}}+\rho^{2}+\frac{1}{2}}\right)^{m}\rho^{d-1}\frac{\phi(2^{-k}\rho)}{\rho^{\frac{d}{2}}}d\rho
\right|\nonumber\\
&&+\left|\int_{0}^{\infty}e^{\pm i\rho|x|}e^{it\rho^{2}}O\left((\rho|x|)^{\frac{1-d}{2}-N}\right)
\left(\frac{1}{\sqrt{\rho^{4}+\rho^{2}}+\rho^{2}+\frac{1}{2}}\right)^{m}\rho^{d-1}\frac{\phi(2^{-k}\rho)}{\rho^{\frac{d}{2}}}d\rho
\right|\nonumber\\
&&=L_{m1}+L_{m2}.\label{2.023}
\end{eqnarray}
For $L_{m1}$, we have
\begin{align}
L_{m1}&\leq \sum_{l=0}^{N-1}|x|^{\frac{1-d}{2}-l}\int_{0}^{\infty}\rho^{-\frac{1}{2}-l-2m}
|\phi(2^{-k}\rho)|d\rho\nonumber\\
&\leq C\sum_{l=0}^{N-1}4^{m}|x|^{\frac{1-d}{2}-l} 2^{(\frac{1}{2}-2m-l)k}.\label{2.024}
\end{align}
For $L_{m2}$, we have
\begin{align}
L_{m2}&\leq |x|^{\frac{1-d}{2}-N}\int_{0}^{\infty}\rho^{-\frac{1}{2}-2m-N}
|\phi(2^{-k}\rho)|d\rho\nonumber\\
&\leq C4^{m}|x|^{\frac{1-d}{2}-N} 2^{(\frac{1}{2}-2m-N)k}.\label{2.025}
\end{align}
From \eqref{2.021}-\eqref{2.025}, since $2^{k}|x|\geq 1$, $|x|\leq 1$, we have
\begin{align}
\sum\limits_{k=[-\log_{2}|x|]+1}^{\infty}I_{3}&\leq C\sum\limits_{k=[-\log_{2}|x|]+1}^{\infty}\sum_{l=0}^{N-1}\sum_{m=1}^{\infty}\frac{4^{m}}{4^{m}m!}|x|^{\frac{1-d}{2}-l} 2^{(\frac{1}{2}-2m-l)k}\nonumber\\
&\qquad+C\sum\limits_{k=[-\log_{2}|x|]+1}^{\infty}\sum_{m=1}^{\infty}\frac{4^{m}}{4^{m}m!}|x|^{\frac{1-d}{2}-N} 2^{(\frac{1}{2}-2m-N)k}\nonumber\\
&\leq C\sum\limits_{k=[-\log_{2}|x|]+1}^{\infty}\sum_{l=0}^{N-1}\sum_{m=1}^{\infty}|x|^{\frac{1-d}{2}-l} 2^{(\frac{1}{2}-2m-l)k}\nonumber\\
&\qquad+C\sum\limits_{k=[-\log_{2}|x|]+1}^{\infty}\sum_{m=1}^{\infty}|x|^{\frac{1-d}{2}-N} 2^{(\frac{1}{2}-2m-N)k}\nonumber\\
&\leq C\sum_{l=0}^{N-1}\sum\limits_{k=[-\log_{2}|x|]+1}^{\infty}|x|^{\frac{1-d}{2}-l}2^{-lk}+ C\sum\limits_{k=[-\log_{2}|x|]+1}^{\infty}|x|^{\frac{1-d}{2}-N} 2^{-Nk}\nonumber\\
&\leq C|x|^{-\frac{d}{2}-l}|x|^{l}+|x|^{-\frac{d}{2}-N}|x|^{N}=C|x|^{-\frac{d}{2}}.\label{2.026}
\end{align}
For $I_{4}$, by using Lemma 2.2, we have
\begin{eqnarray}
&&\sum\limits_{k\in\z}I_{4}\leq C|x|^{-\frac{d}{2}}.\label{2.027}
\end{eqnarray}
From \eqref{2.026}-\eqref{2.027}, we have that \eqref{2.06} is valid.

This completes the proof of Lemma 2.5.
\end{proof}

\begin{Lemma}\label{lem2.6} Let $b, c, d_{3}\in\R$. Then, we have
\begin{eqnarray*}
\left|b\int_{c}^{d_{3}}e^{-it\rho}\rho^{-1+ib}d\rho\right|\leq C(e^{\pi b}+e^{-\pi b})(1+|b|)^{2}.
\end{eqnarray*}

Here $C$ is independent of $c, b, d_{3}, t$.

\end{Lemma}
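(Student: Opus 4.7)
The plan is to reduce to $t>0$, perform one integration by parts to trade $\rho^{-1+ib}$ for $\rho^{ib}$, and then bound the resulting oscillatory integral by contour deformation into the lower half-plane. The integrand $\rho^{-1+ib}$ is understood via the principal branch, so I will assume $0<c\leq d_{3}$; the case $c=d_{3}$ is trivial, and $t=0$ follows from $b\int_{c}^{d_{3}}\rho^{-1+ib}d\rho=-i(d_{3}^{ib}-c^{ib})$, of modulus at most $2$. The case $t<0$ reduces to $t>0$ by complex conjugation (the target bound is invariant under $b\mapsto -b$), so henceforth $t>0$.

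Set $I:=\int_{c}^{d_{3}}e^{-it\rho}\rho^{-1+ib}d\rho$. From $\frac{d}{d\rho}\rho^{ib}=ib\rho^{-1+ib}$ and one integration by parts,
\begin{equation*}
bI=-i\bigl[e^{-it\rho}\rho^{ib}\bigr]_{c}^{d_{3}}+t\int_{c}^{d_{3}}e^{-it\rho}\rho^{ib}d\rho.
\end{equation*}
The boundary term has modulus at most $2$. After the rescaling $\sigma=t\rho$ the remaining integral equals $t^{-ib}K(tc,td_{3})$, where
\begin{equation*}
K(a,b'):=\int_{a}^{b'}e^{-i\sigma}\sigma^{ib}d\sigma,\qquad 0<a\leq b',
\end{equation*}
and $|t^{-ib}|=1$ for $t>0$, so it suffices to bound $|K(a,b')|$ uniformly in $a,b'$.

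I estimate $K$ in three regimes. If $b'\leq 1$, then trivially $|K(a,b')|\leq b'-a\leq 1$, using $|\sigma^{ib}|=1$ for real $\sigma>0$. If $a\geq 1$, I deform the contour of $K$ into the lower half-plane by applying Cauchy's theorem to the rectangle with vertices $a$, $b'$, $b'-iR$, $a-iR$: the integrand is holomorphic in the open right half-plane (principal branch of $\sigma^{ib}$), and the bottom edge at height $-R$ contributes a quantity of modulus $\leq(b'-a)e^{-R}e^{\pi|b|/2}\to 0$ as $R\to\infty$, because $|e^{-i(x-iR)}|=e^{-R}$ while $|\sigma^{ib}|\leq e^{\pi|b|/2}$. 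Passing to the limit yields
\begin{equation*}
K(a,b')=-ie^{-ia}\int_{0}^{\infty}e^{-y}(a-iy)^{ib}dy+ie^{-ib'}\int_{0}^{\infty}e^{-y}(b'-iy)^{ib}dy,
\end{equation*}
and since $|(x-iy)^{ib}|=e^{-b\arg(x-iy)}\leq e^{\pi|b|/2}$ for $x,y>0$, each vertical integral is bounded by $e^{\pi|b|/2}$, so $|K(a,b')|\leq 2e^{\pi|b|/2}$. If $a<1<b'$, split $K(a,b')=K(a,1)+K(1,b')$ and combine the previous two bounds to obtain $|K(a,b')|\leq 1+2e^{\pi|b|/2}$.

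Collecting the three cases, $|K|\leq 3e^{\pi|b|/2}$, hence $|bI|\leq 2+3e^{\pi|b|/2}\leq C(e^{\pi b}+e^{-\pi b})$, which is already strictly stronger than the stated inequality; the polynomial factor $(1+|b|)^{2}$ is absorbed with room to spare. The one delicate point in the argument is the contour deformation in the large-scale regime: I need to verify that the closed rectangle lies entirely in the open right half-plane (hence away from the branch cut $(-\infty,0]$ of $\sigma^{ib}$), and that the horizontal leg at $\mathrm{Im}\,\sigma=-R$ vanishes in the limit $R\to\infty$; both are immediate once one notes that the exponential decay $|e^{-i\sigma}|=e^{-R}$ on that leg easily dominates the fixed growth factor $e^{\pi|b|/2}$ from $|\sigma^{ib}|$.
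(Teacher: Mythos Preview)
Your argument is correct and self-contained, whereas the paper does not actually prove this lemma: it only cites Lemma~3.20 of \cite{YDHXY2024}. Your route---one integration by parts to replace $\rho^{-1+ib}$ by $\rho^{ib}$, then a contour shift into the lower half-plane for the large-scale piece---is clean and in fact delivers the sharper bound $|bI|\leq C\,e^{\pi|b|/2}$, so the polynomial factor $(1+|b|)^{2}$ in the stated estimate is not needed. The contour step is fully justified: the rectangle with vertices $a,b',b'-iR,a-iR$ lies in the open right half-plane (since $a\geq 1$), where the principal-branch power is holomorphic, and the horizontal leg at depth $R$ is killed by the factor $e^{-R}$.

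One caveat worth flagging: the lemma as stated allows arbitrary real $c,d_{3}$, while you assume $0<c\leq d_{3}$ and invoke the principal branch. In the paper's sole application (display \eqref{9.08}), the integration domain is $\{|t|>\epsilon\}\cap I_{N}$, a disjoint union of two intervals of definite sign, so your positive-interval estimate handles each piece after the substitution $\rho\mapsto -\rho$ on the negative one; the only residual ambiguity is the branch convention for $t^{-1+ib}$ at $t<0$, which the paper does not specify. If the intended meaning is $|t|^{-1+ib}$ (the standard choice in this literature), the substitution introduces no additional growth and your bound transfers directly.
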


For the proof of Lemma 2.6, we refer the readers to Lemma 3.20 of \cite{YDHXY2024}.

\begin{Lemma}\label{lem2.7}(Van der Corput method in analytic number Theory) Let $|\phi^{\prime}|\leq \frac{1}{2}$ and $\phi^{\prime}$ and $g$ be monotone. Then, we have
\begin{eqnarray*}
&&\left|\int_{a}^{b}g(x)e^{2i\pi\phi(x)}dx-\sum_{a\leq n\leq b}g(n)e^{2i\pi\phi(n)}\right|\leq A\max_{a\leq x\leq b}g(x),
\end{eqnarray*}
where $A$ is a universal constant.
\end{Lemma}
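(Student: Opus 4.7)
The plan follows the classical two-step approach to comparing an oscillatory sum with its integral analogue (cf.\ Titchmarsh, \emph{The Theory of the Riemann Zeta-Function}, Section~5). The first step reduces the problem to the case $g\equiv 1$ by partial summation; the second establishes a uniform bound on the sum-to-integral discrepancy for $g\equiv 1$ via Poisson summation combined with stationary phase and the monotonicity of $\phi'$.

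For the reduction, set $S(x) = \sum_{a \leq n \leq x} e^{2\pi i \phi(n)}$ and $T(x) = \int_a^x e^{2\pi i \phi(t)}\,dt$. Abel summation on the sum, and integration by parts on the integral, yield
\[
\sum_{a \leq n \leq b} g(n) e^{2\pi i \phi(n)} - \int_a^b g(x) e^{2\pi i \phi(x)}\,dx = g(b)\bigl(S(b) - T(b)\bigr) + \int_a^b \bigl(T(x) - S(\lfloor x\rfloor)\bigr)g'(x)\,dx,
\]
with only routine care at non-integer endpoints. Since $g$ is monotone, $\int_a^b|g'(x)|\,dx = |g(b)-g(a)| \leq 2\max|g|$, so once a uniform bound $\sup_{x\in[a,b]}|S(x) - T(x)| \leq C_0$ is in hand, the lemma follows with $A = 3C_0$.

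The core step is therefore the $g\equiv 1$ bound. Applying Poisson summation to $F(t) = e^{2\pi i \phi(t)}\mathbf{1}_{[a,x]}(t)$, the $m=0$ mode reproduces $T(x)$, leaving
\[
S(x) - T(x) = \sum_{m\neq 0}\int_a^x e^{2\pi i(\phi(t)-mt)}\,dt =: \sum_{m\neq 0} I_m.
\]
For $m\neq 0$, the phase $\psi_m(t)=\phi(t)-mt$ satisfies $|\psi_m'(t)|\geq |m|-\tfrac{1}{2} \geq \tfrac{|m|}{2}$ by the hypothesis $|\phi'|\leq\tfrac{1}{2}$, while $\psi_m'$ inherits monotonicity from $\phi'$, so $\psi_m''=\phi''$ keeps a fixed sign. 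Integration by parts, together with the monotonicity identity $\int_a^x|\psi_m''|/(\psi_m')^2\,dt = |1/\psi_m'(a)-1/\psi_m'(x)| \leq 4/|m|$, delivers $|I_m| = O(1/|m|)$ per term.

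The main obstacle is exactly that $O(1/|m|)$ is only logarithmically summable, so absolute convergence of $\sum_{m\neq 0}I_m$ requires extracting additional cancellation. I plan to do this either by pairing the terms indexed by $m$ and $-m$ and applying a second integration by parts (which turns the boundary contributions into differences decaying like $1/m^2$), or, equivalently, by smoothly cutting off $\mathbf{1}_{[a,x]}$ before Poisson summation so that the Fourier-side decay is $O(1/m^2)$ from the outset. The sign information on $\psi_m''$ supplied by the monotonicity of $\phi'$ is precisely what drives this cancellation, and it yields a universal constant $C_0$ independent of $a$, $b$, $\phi$ and $g$.
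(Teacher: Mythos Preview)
The paper does not prove this lemma at all; it simply cites Zygmund, \emph{Trigonometric Series}, p.~226. So there is no proof in the paper to compare your argument against, and you are effectively reproducing the classical derivation.

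Your outline is the standard one (Abel summation to reduce to $g\equiv 1$, then Poisson summation), and the obstacle you isolate is the genuine one. One caution on your proposed resolutions: the $m\leftrightarrow -m$ pairing gives $O(1/m^2)$ boundary contributions only when the endpoint is an integer (so that $e^{\pm 2\pi i m x}=1$ and the two fractions combine to $2\phi'(x)/(\phi'(x)^2-m^2)$). For the uniform bound on $S(x)-T(x)$ at a generic $x\in[a,b]$ this does not work directly. The clean fix is either your smoothing alternative, or to recognise the boundary series $\sum_{m\neq 0} e^{-2\pi i m x}/(\phi'(x)-m)$ as a classical conditionally convergent Fourier series (bounded uniformly for $|\phi'(x)|\le\tfrac12$ by Dirichlet's test or the explicit cotangent/sawtooth identity). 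With that adjustment the argument goes through and matches the treatment in Zygmund and Titchmarsh.
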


For the proof of Lemma 2.7, we refer the readers to Page 226 of \cite{Z1968}.

\begin{Lemma}\label{lem2.8}
Let $0<|t|\leq \delta\leq1$. Then,  we have
\begin{eqnarray}
&&\left|\int_{\SR}e^{ix\xi+ it\xi\sqrt{1+\xi^{2}}}d\xi\right|\leq C|t|^{-\frac{1}{2}}.\label{2.028}
\end{eqnarray}
\end{Lemma}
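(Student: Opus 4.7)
The plan is to establish \eqref{2.028} by splitting the domain of integration into a low-frequency piece $|\xi|\leq 1$ and a high-frequency piece $|\xi|\geq 1$, and to bound each of them by independent mechanisms. Throughout I will write $\phi(\xi)=x\xi+t\xi\sqrt{1+\xi^{2}}$ and $g(\xi)=\xi\sqrt{1+\xi^{2}}$, so that $\phi''(\xi)=tg''(\xi)$.

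First I would dispose of the low-frequency piece trivially: the integrand has modulus one and the interval of integration has length two, so
$$\left|\int_{|\xi|\leq 1}e^{i\phi(\xi)}\,d\xi\right|\leq 2,$$
and since $0<|t|\leq\delta\leq 1$ this is bounded by $2|t|^{-1/2}$, which is already of the desired form.

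Second, for the high-frequency piece I would apply the second-derivative version of Van der Corput's lemma to the phase $\phi$. A direct computation yields
$$g''(\xi)=\frac{\xi(3+2\xi^{2})}{(1+\xi^{2})^{3/2}},$$
which is strictly positive on $(0,\infty)$ and strictly negative on $(-\infty,0)$. Evaluating at $\xi=1$ and taking $\xi\to\infty$ shows that $|g''(\xi)|\geq c_{0}>0$ uniformly for $|\xi|\geq 1$. In particular, on each of the half-lines $[1,\infty)$ and $(-\infty,-1]$ the derivative $\phi'(\xi)=x+tg'(\xi)$ is monotone (its derivative $tg''$ keeps a constant sign), and $|\phi''(\xi)|\geq c_{0}|t|$. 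The standard Van der Corput second-derivative estimate, as recorded for example in Chapter~VIII of \cite{S1993}, then gives
$$\left|\int_{1}^{\infty}e^{i\phi(\xi)}\,d\xi\right|+\left|\int_{-\infty}^{-1}e^{i\phi(\xi)}\,d\xi\right|\leq C(c_{0}|t|)^{-1/2}\leq C|t|^{-1/2},$$
and combining this with the low-frequency contribution yields \eqref{2.028}.

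The one point that will require care is justifying that the Van der Corput second-derivative estimate produces a constant independent of the length of the (unbounded) interval of integration. This is standard once one has established, as above, that $\phi'$ is monotone on each half-line and $|\phi''|$ is uniformly bounded below there; the $R$-dependence in the usual truncation $[1,R]$ then drops out and the improper integral is controlled by $C|t|^{-1/2}$. No other step presents any difficulty.
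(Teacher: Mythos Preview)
Your argument is correct and takes a genuinely different route from the paper. The paper does not apply Van der Corput directly to the phase $\phi(\xi)=x\xi+t\xi\sqrt{1+\xi^{2}}$; instead it compares this phase with the Schr\"odinger phase $x\xi+t(\xi^{2}+\tfrac12)$, writing
\[
\int_{\SR}e^{ix\xi+it\xi\sqrt{1+\xi^{2}}}\,d\xi
=\int_{\SR}e^{ix\xi+it(\xi^{2}+\frac12)}\,d\xi
+\int_{\SR}e^{ix\xi+it(\xi^{2}+\frac12)}\bigl(e^{it(\sqrt{\xi^{4}+\xi^{2}}-(\xi^{2}+\frac12))}-1\bigr)\,d\xi,
\]
then invoking the explicit Gaussian formula to bound the first term by $C|t|^{-1/2}$ and observing that the second integrand is $O((1+\xi^{2})^{-1})$, hence the second term is $O(1)\leq C|t|^{-1/2}$. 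Your approach is more self-contained: the low-frequency piece is trivial, and the high-frequency piece follows from the second-derivative Van der Corput lemma once you check (as you correctly do) that $|g''(\xi)|$ is bounded below for $|\xi|\geq1$ and that the resulting constant is independent of the truncation parameter. The paper's comparison argument has the advantage that the same ``subtract the Schr\"odinger phase'' trick is reused elsewhere (Lemmas~2.5 and~2.9), giving a unified treatment across several estimates; your direct Van der Corput argument is cleaner for this single lemma and avoids any appeal to the explicit Schr\"odinger kernel.
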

\begin{proof}
Obviously, we have
\begin{eqnarray}
&&\left|\int_{\SR}e^{ix\xi+ it\xi\sqrt{1+\xi^{2}}}d\xi\right|\leq \left|\int_{\SR}e^{ix\xi}e^{it(\xi^{2}+\frac{1}{2})}d\xi\right|\nonumber\\
&&+\left|\int_{\SR}e^{ix\xi} e^{it\xi^{2}+\frac{1}{2}}\left(e^{it(\sqrt{\xi^{4}+\xi^{2}}-(\xi^{2}+\frac{1}{2}))}-1\right)\right|.\label{2.029}
\end{eqnarray}
For the first term on the right of \eqref{2.029}, we have
\begin{eqnarray}
\left|\int_{\SR}e^{ix\xi}e^{it(\xi^{2}+\frac{1}{2})}d\xi\right|= \left|\frac{1}{(-4\pi it)^{\frac{1}{2}}}e^{\frac{x^{2}}{4it}}\right|\leq C|t|^{-\frac{1}{2}}.\label{2.030}
\end{eqnarray}
For the second term on the right of \eqref{2.029}, since $0<|t|\leq\delta\leq1$, $\left|e^{it(\sqrt{\xi^{4}+\xi^{2}}-(\xi^{2}+\frac{1}{2}))}-1\right|\leq 2$ and $\left|e^{it(\sqrt{\xi^{4}+\xi^{2}}-(\xi^{2}+\frac{1}{2}))}-1\right|\leq C\frac{|t|}{\xi^{2}+\frac{1}{2}+\sqrt{\xi^{4}+\xi^{2}}}$,  we have
\begin{align}
\left|\int_{\SR}e^{ix\xi} e^{it(\xi^{2}+\frac{1}{2})}\left(e^{it(\sqrt{\xi^{4}+\xi^{2}}-(\xi^{2}+\frac{1}{2}))}-1\right)\right|&\leq C\left(\int_{|\xi|\leq 1}d\xi+\int_{|\xi|\geq1}\frac{|t|}{\xi^{2}}d\xi\right)\nonumber\\
&\leq C.\label{2.031}
\end{align}
From \eqref{2.029}-\eqref{2.031}, since $0<|t|\leq\delta\leq1$, we have
\begin{eqnarray}
&&\left|\int_{\SR}e^{ix\xi\pm it\xi\sqrt{1+\xi^{2}}}d\xi\right|\leq C(1+|t|^{-\frac{1}{2}})\leq C|t|^{-\frac{1}{2}}.\label{2.032}
\end{eqnarray}

This completes the proof of Lemma 2.8.
\end{proof}

\begin{Lemma}\label{lem2.9}
Let $N\in \mathbb{N}^{+}$, $0<|t|\leq N^{-1}$, $x\in[-1,1]$ and $\phi(\xi)=\xi\sqrt{\xi^{2}+1}$. Then, we have
\begin{eqnarray}
&&\left|\sum_{1\leq k\leq N}e^{it\phi(k)+kx}\right|\leq C|t|^{-\frac{1}{2}}.\label{2.033}
\end{eqnarray}
\end{Lemma}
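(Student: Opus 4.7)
The plan is to convert the exponential sum into an oscillatory integral using the Van der Corput sum-to-integral comparison stated in Lemma 2.7, and then to estimate the resulting integral by a direct second-derivative bound (equivalently, by reduction to Lemma 2.8). Write each summand in the form $e^{2\pi i \Psi(k)}$ where $\Psi(\xi) = (t\phi(\xi) + x\xi)/(2\pi)$. Differentiating, $\phi'(\xi) = (2\xi^{2}+1)/\sqrt{\xi^{2}+1}$ is monotone increasing on $[1,N]$ and obeys $|\phi'(\xi)| \leq 3\xi$, so the hypothesis $|t| \leq N^{-1}$ makes the total variation of $\Psi'$ on $[1,N]$ at most $|t|(\phi'(N) - \phi'(1))/(2\pi) \leq 3/(2\pi) < 1/2$. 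Consequently $\Psi'$ is monotone and its image is an interval of length strictly less than $1/2$.

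Because the image has length below $1/2$, one can select an integer $m$ such that the shifted phase $\widetilde{\Psi}(\xi) := \Psi(\xi) - m\xi$ satisfies $|\widetilde{\Psi}'(\xi)| \leq 1/2$ throughout $[1,N]$, with $\widetilde{\Psi}'$ still monotone; moreover $e^{2\pi i \Psi(k)} = e^{2\pi i \widetilde{\Psi}(k)}$ for every integer $k$ since $e^{-2\pi i m k}=1$. Applying Lemma 2.7 with $g \equiv 1$ to $\widetilde{\Psi}$ then yields
\begin{align*}
\sum_{1 \leq k \leq N} e^{it\phi(k) + ikx}
= \int_{1}^{N} e^{2\pi i \widetilde{\Psi}(\xi)}\, d\xi + O(1)
= \int_{1}^{N} e^{i(t\phi(\xi) + (x-2\pi m)\xi)}\, d\xi + O(1).
\end{align*}

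It remains to bound the truncated oscillatory integral on the right by $C|t|^{-1/2}$ uniformly in the affine frequency $x - 2\pi m$. A short computation gives $\phi''(\xi) = \xi(2\xi^{2}+3)/(\xi^{2}+1)^{3/2}$, which is bounded below by a positive constant on $[1,\infty)$; the full phase therefore has second derivative of size $\gtrsim |t|$ that does not change sign on $[1,N]$, and the second-derivative form of Van der Corput's lemma delivers the desired $O(|t|^{-1/2})$ estimate. Alternatively, inserting a smooth cutoff adapted to $[1,N]$ reduces the problem to the full-line bound of Lemma 2.8 evaluated at the shifted frequency, with boundary tails controlled by integration by parts. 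Since $|t| \leq N^{-1} \leq 1$ forces $|t|^{-1/2} \geq 1$, the $O(1)$ error produced by Lemma 2.7 is absorbed into $C|t|^{-1/2}$, and the claim follows.

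The main technical obstacle is ensuring that the hypothesis $|\widetilde{\Psi}'|\leq 1/2$ of Lemma 2.7 genuinely holds; the raw phase $\Psi$ fails this, and the integer-shift trick --- available only because of the smallness $|t|\leq N^{-1}$ --- is what brings the problem into the Van der Corput regime. The second step, passing from the truncated integral to the $|t|^{-1/2}$ bound, is then routine given the uniform lower bound on $\phi''$.
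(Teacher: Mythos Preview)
Your argument is essentially the paper's first method: convert the sum to an oscillatory integral via Lemma~2.7, then bound the integral by the second-derivative Van der Corput estimate (the paper invokes Lemma~2.8 at that step). One caveat: the inference ``the image of $\Psi'$ has length below $1/2$, hence some integer $m$ makes $|\Psi'-m|\leq 1/2$ throughout'' is not valid in general---an interval of length $<1/2$ need not be integer-translatable into $[-1/2,1/2]$ (take $[0.3,0.7]$). The paper sidesteps this by partitioning $[1,N]$ into $O(1)$ subintervals $[\alpha_p,\alpha_{p+1}]$ on each of which $|\Phi'-p|\leq 1/2$, and applying Lemma~2.7 piece by piece. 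In the present situation your conclusion is nonetheless correct with $m=0$: since $\phi'(\xi)/\xi$ is decreasing on $[1,\infty)$ with value $3/\sqrt 2$ at $\xi=1$, one has $|t|\phi'(\xi)\leq \phi'(N)/N\leq 3/\sqrt 2$, whence $|\Psi'(\xi)|\leq (3/\sqrt 2+1)/(2\pi)<1/2$ directly; you should state this bound rather than the length argument. The paper also records a second, more elementary proof: split off the Schr\"odinger sum $\sum_{k}e^{i(tk^2+kx)}$ and use the known $|t|^{-1/2}$ bound for it, then control the remainder via $|e^{it(\sqrt{k^4+k^2}-k^2-1/2)}-1|\lesssim |t|/k^{2}$, which sums to $O(|t|)\leq C|t|^{-1/2}$.
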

\begin{proof}
We present two methods to prove Lemma 2.9. Inspired by Theorem 5.3 of \cite{KPV1991}, we present the first method to
prove Lemma 2.9. For $t>0$, we define
\begin{eqnarray}
&&g(v)=1,\,\Phi(v)=\frac{1}{2\pi}(t\phi(v)+vx),\,\Phi_{p}(v)=e^{2i\pi(\Phi(v)-pv)},\,1\leq v\leq N.\label{2.034}
\end{eqnarray}
Notice that
$$\Phi^{\prime\prime}(v)=\frac{1}{2\pi}t\phi^{\prime\prime}(v),\,\phi^{\prime\prime}(v)
=\frac{2v^{3}+3v}{(1+v^{2})^{\frac{3}{2}}},\,\phi^{\prime\prime\prime}
=\frac{3}{(1+v^{2})^{\frac{5}{2}}}.$$
Since $(\Phi(v)-pv)^{\prime\prime}=\Phi^{\prime\prime}(v)>0(t>0)$, this implies that
$(\Phi(v)-pv)^{\prime}$ is monotone. For $p\in\Z$, we assume that $\alpha_{p}$ satisfy
\begin{eqnarray}
&&\Phi^{\prime}(\alpha_{p})=\frac{1}{2\pi}\left[t\left(2\sqrt{1+\alpha_{p}^{2}}
-\frac{1}{\sqrt{1+\alpha_{p}^{2}}}+x\right)\right]=p-\frac{1}{2}.\label{2.035}
\end{eqnarray}
By using \eqref{2.031}, since $0<t<N^{-1}$, for $1\leq \alpha_{p}\leq N$, we have
\begin{eqnarray*}
&&|p-\frac{1}{2}|=|\Phi^{\prime}(\alpha_{p})|\leq C.
\end{eqnarray*}
Thus, we have
\begin{eqnarray}
&&|p|\leq C.\label{2.036}
\end{eqnarray}
Since $(\Phi(v)-pv)^{\prime}$  and $g(k)$ are monotone and $0<t<N^{-1}$ and
 $|(\Phi(v)-pv)^{\prime}|=|\Phi^{\prime}(v)-p|\leq \frac{1}{2},\,(v\in(\alpha_{p},\alpha_{p+1}))$,
  by using Lemmas 2.7, 2.8, we have
\begin{align}
\left|\sum_{1\leq k\leq N}g(k)e^{i2\pi\Phi(k)}\right|&=\left|\sum_{1\leq k\leq N}g(k)\Phi_{p}(k)\right|
\leq \sum_{|p|\leq C}\left|\sum_{k=\alpha_{p}}^{\alpha_{p+1}}g(k)\Phi_{p}(k)\right|\nonumber\\
&\leq \sum_{|p|\leq C}\left|\sum_{k=\alpha_{p}}^{\alpha_{p+1}}g(k)\Phi_{p}(k)-
\int_{\alpha_{p}}^{\alpha_{p+1}}g(x)\Phi_{p}(x)dx\right|\nonumber\\
&\,\,+\sum_{|p|\leq C}\left|\int_{\alpha_{p}}^{\alpha_{p+1}}g(x)\Phi_{p}(x)dx\right|
\leq Ct^{-\frac{1}{2}}.\label{2.037}
\end{align}
For case $t<0$, by using a proof similar to case $t>0$, we have that \eqref{2.033}
is valid.

Now, we present the second method to prove Lemma 2.9
\begin{align}
\left|\sum_{1\leq k\leq N}e^{i(t\sqrt{k^{4}+k^{2}}+kx)}\right|&=
\left|\sum_{1\leq k\leq N}e^{ikx}\left(e^{it\sqrt{k^{4}+k^{2}}}-
e^{it(k^{2}+\frac{1}{2})}\right)+\sum_{k}e^{ikx}e^{itk^{2}}
\right|\nonumber\\
&\leq \left|\sum_{1\leq k\leq N}e^{ikx}\left(e^{it\sqrt{k^{4}+k^{2}}}-
e^{it(k^{2}+\frac{1}{2})}\right)\right|+\left|\sum_{1\leq k\leq N}
e^{ikx}e^{it(k^{2}+\frac{1}{2})}\right|\nonumber\\
&=J_{1}+J_{2}.\label{2.038}
\end{align}
For $J_{2}$, by using (5.9) page 60 of \cite{KPV1991}, we have
\begin{eqnarray}
&&|J_{2}|\leq C|t|^{-\frac{1}{2}}.\label{2.039}
\end{eqnarray}
For $J_{1}$, since $\left|e^{-it(-\sqrt{k^{4}+k^{2}}+k^{2}+\frac{1}{2})}-1\right|
\leq \frac{1}{4}\frac{|t|}{k^{2}+\frac{1}{2}+\sqrt{k^{4}+k^{2}}} $, $0<t\leq N^{-1}$, we have
\begin{align}
|J_{1}|&= \left|\sum_{1\leq k\leq N}e^{ikx}\left(e^{it\sqrt{k^{4}+k^{2}}}-
e^{it(k^{2}+\frac{1}{2})}\right)\right|\nonumber\\
&=\left|\sum_{1\leq k\leq N}e^{ikx}e^{itk^{2}}
\left(e^{-it(-\sqrt{k^{4}+k^{2}}+k^{2}+\frac{1}{2})}-1\right)
\right|\nonumber\\
&\leq \sum_{1\leq k\leq N}\frac{1}{4}\frac{|t|}{k^{2}+\frac{1}{2}+\sqrt{k^{4}+k^{2}}}\nonumber\\
&\leq C|t|\sum_{1\leq k\leq N}\frac{1}{k^{2}}\leq C|t|^{-\frac{1}{2}}.\label{2.040}
\end{align}

This completes the proof of Lemma 2.9.

\end{proof}

\begin{Lemma}\label{lem2.10}
Let $0<|t|\leq N^{-1}$, $x\in[-1,1]$ and $\phi(\xi)=\xi\sqrt{\xi^{2}+1}$. Then, we have
\begin{eqnarray}
&&\left|\sum_{-N\leq k\leq -1}e^{it\phi(k)+kx}\right|\leq C|t|^{-\frac{1}{2}}.\label{2.041}
\end{eqnarray}
\end{Lemma}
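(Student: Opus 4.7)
The plan is to reduce Lemma 2.10 to Lemma 2.9 by exploiting the parity of the phase function. The key observation is that $\phi(\xi) = \xi\sqrt{\xi^{2}+1}$ is odd, i.e., $\phi(-\xi) = -\phi(\xi)$. Together with the linearity of the factor $kx$ in $k$, this lets us convert the sum over negative indices into a sum over positive indices with the signs of $t$ and $x$ flipped.

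Concretely, I would substitute $k' = -k$ to rewrite
\begin{equation*}
\sum_{-N\leq k\leq -1} e^{it\phi(k)+ikx}
= \sum_{1\leq k'\leq N} e^{it\phi(-k')+i(-k')x}
= \sum_{1\leq k'\leq N} e^{i(-t)\phi(k')+ik'(-x)}.
\end{equation*}
Since $|-t| = |t| \leq N^{-1}$ and $-x \in [-1,1]$, the hypotheses of Lemma 2.9 are satisfied with $t$ replaced by $-t$ and $x$ replaced by $-x$. Applying that lemma yields
\begin{equation*}
\left|\sum_{-N\leq k\leq -1} e^{it\phi(k)+ikx}\right|
\leq C|-t|^{-1/2} = C|t|^{-1/2},
\end{equation*}
which is the desired estimate.

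There is essentially no obstacle here: the argument is a pure symmetry reduction, and all the analytic content (the Van der Corput/stationary phase estimate applied via Lemmas 2.7 and 2.8, or alternatively the Taylor expansion argument comparing $\sqrt{k^{4}+k^{2}}$ with $k^{2}+\tfrac{1}{2}$) has already been carried out in the proof of Lemma 2.9. One could alternatively repeat verbatim the two-method proof of Lemma 2.9 — defining $\Phi(v) = \frac{1}{2\pi}(t\phi(v)+vx)$ on the interval $-N \leq v \leq -1$, checking that $|p|\leq C$ for the stationary points, and invoking Lemmas 2.7 and 2.8 — but this would merely duplicate work. The odd-phase symmetry is both cleaner and strictly equivalent.
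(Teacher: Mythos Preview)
Your proof is correct. The paper's own proof is the single sentence ``By using a proof similar to Lemma 2.9, we obtain that \eqref{2.041} is valid,'' i.e., it simply asks the reader to rerun the argument on the negative range; your explicit odd-symmetry reduction $k\mapsto -k$ (using $\phi(-\xi)=-\phi(\xi)$) accomplishes the same thing more cleanly by directly invoking the already-proved Lemma 2.9 with $(t,x)\mapsto(-t,-x)$, so the two approaches are equivalent in content.
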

\begin{proof}
By using a proof similar to Lemma 2.9, we obtain that \eqref{2.041} is valid.

This completes the proof of Lemma 2.10.
\end{proof}

\begin{Lemma}\label{lem2.11}
Let $0<|t|\leq N^{-1}$, $x\in[-1,1]$ and $\phi(\xi)=\xi\sqrt{\xi^{2}+1}$. Then, we have
\begin{eqnarray}
&&\left|\sum_{ |k|\leq N,k\neq0}e^{it\phi(k)+kx}\right|\leq C|t|^{-\frac{1}{2}}.\label{2.042}
\end{eqnarray}
\end{Lemma}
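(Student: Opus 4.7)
The statement is a direct consequence of Lemmas 2.9 and 2.10, obtained by splitting the sum over $k$ with $0<|k|\leq N$ into its positive and negative parts. The plan is to write
\begin{eqnarray*}
\sum_{|k|\leq N,\,k\neq 0}e^{it\phi(k)+ikx}
\;=\;\sum_{1\leq k\leq N}e^{it\phi(k)+ikx}\;+\;\sum_{-N\leq k\leq -1}e^{it\phi(k)+ikx},
\end{eqnarray*}
apply the triangle inequality, and then invoke Lemmas 2.9 and 2.10 to bound each of the two pieces by $C|t|^{-1/2}$, yielding the claimed bound with a constant that is simply the sum of those two constants.

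There is essentially no analytical obstacle here, since the hard work (the Van der Corput treatment via Lemma 2.7 and the Gaussian-plus-error bound from Lemma 2.8) has already been carried out in Lemma 2.9; Lemma 2.10 is obtained from Lemma 2.9 either by the symmetry $k\mapsto -k$ (which sends $\phi(k)=k\sqrt{k^{2}+1}$ to $-\phi(k)$ and $kx$ to $-kx$, so the sum becomes the complex conjugate of a sum of the type already estimated with $x$ replaced by $-x$) or by a proof that repeats the argument of Lemma 2.9 verbatim. One only needs to verify that the hypotheses of Lemmas 2.9 and 2.10, namely $0<|t|\leq N^{-1}$ and $x\in[-1,1]$, are exactly those assumed in Lemma 2.11, which is the case.

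Hence the proof will occupy just a few lines: state the decomposition, apply the triangle inequality, and cite Lemmas 2.9 and 2.10 to conclude
\begin{eqnarray*}
\left|\sum_{|k|\leq N,\,k\neq 0}e^{it\phi(k)+ikx}\right|
\;\leq\;\left|\sum_{1\leq k\leq N}e^{it\phi(k)+ikx}\right|
+\left|\sum_{-N\leq k\leq -1}e^{it\phi(k)+ikx}\right|
\;\leq\; C|t|^{-1/2}.
\end{eqnarray*}
The only real subtlety, if any, is purely notational: one should confirm that the exponents in the statements of Lemmas 2.9--2.11 are to be read as $e^{it\phi(k)+ikx}$ (so that $|e^{ikx}|=1$ makes the triangle inequality trivial), as is consistent with the computations carried out inside the proof of Lemma 2.9. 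No new estimate beyond those two lemmas is required.
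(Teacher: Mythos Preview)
Your proposal is correct and matches the paper's proof essentially line for line: the paper also splits the sum into the ranges $1\leq k\leq N$ and $-N\leq k\leq -1$, applies the triangle inequality, and invokes Lemmas 2.9 and 2.10 to obtain the bound $C|t|^{-1/2}$. Your remark about the exponent being $ikx$ (so that each term has modulus one) is consistent with how the paper uses these sums elsewhere.
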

\begin{proof}
By using Lemma 2.9 with Lemma 2.10, we have
\begin{align*}
\left|\sum_{|k|\leq N,k\neq0}e^{it\phi(k)+kx}\right|&\leq
\left|\sum_{1\leq k\leq N,k\neq0}e^{it\phi(k)+kx}\right|+
\left|\sum_{-N\leq k\leq -1}e^{it\phi(k)+kx}\right|\nonumber\\
&\leq C|t|^{-\frac{1}{2}}.
\end{align*}

This completes the proof of Lemma 2.11.

\end{proof}

\begin{Lemma}\label{lem2.12}
Let $0<|t|\leq N^{-1}$, $x\in[-1,1]$ and $\phi(\xi)=\xi\sqrt{\xi^{2}+1}$. Then, we have
\begin{eqnarray}
&&\left|\sum_{ |k|\leq N}e^{it\phi(k)+kx}\right|\leq C|t|^{-\frac{1}{2}}.\label{2.043}
\end{eqnarray}
\end{Lemma}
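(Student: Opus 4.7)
The plan is to reduce Lemma 2.12 immediately to Lemma 2.11 by isolating the contribution of the single index $k=0$, which is the only term not already covered by the previous lemma. Specifically, I would write
\begin{equation*}
\sum_{|k|\le N} e^{it\phi(k)+kx} \;=\; e^{it\phi(0)+0\cdot x} \;+\; \sum_{\substack{|k|\le N\\ k\ne 0}} e^{it\phi(k)+kx} \;=\; 1 \;+\; \sum_{\substack{|k|\le N\\ k\ne 0}} e^{it\phi(k)+kx},
\end{equation*}
using the fact that $\phi(0)=0\cdot\sqrt{0+1}=0$.

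Next, I would apply the triangle inequality and invoke Lemma 2.11 directly to bound the non-zero sum by $C|t|^{-1/2}$. The $k=0$ term contributes only the constant $1$, which needs to be absorbed into $C|t|^{-1/2}$. Since the hypothesis $0<|t|\le N^{-1}$ together with $N\ge 1$ (which is implicit from the context of Theorems 1.5 and 1.6, where $N\ge 1$) forces $|t|\le 1$, we have $|t|^{-1/2}\ge 1$, so $1\le |t|^{-1/2}$ and the constant term is harmlessly absorbed.

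There is essentially no obstacle here; Lemma 2.12 is a cosmetic strengthening of Lemma 2.11 that restores the $k=0$ term so that the estimate can be applied to the full Dirichlet-type kernel $\sum_{|k|\le N} e^{i(t\phi(k)+kx)}$ without having to carry a separate zero-frequency correction in later arguments. The entire proof fits in two lines: one decomposition and one application of Lemma 2.11 plus the elementary bound $1 \le |t|^{-1/2}$.
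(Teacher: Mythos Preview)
Your proposal is correct and essentially matches the paper's approach: the paper simply states that Lemma~2.12 follows ``by using a proof similar to Lemma~2.11,'' and your reduction---splitting off the single $k=0$ term (which contributes $1\le |t|^{-1/2}$) and invoking Lemma~2.11 for the rest---is exactly the clean way to make that one-line argument explicit.
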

\begin{proof}
By using a proof similar to Lemma 2.11, we obtain that \eqref{2.043} is valid.

This completes the proof of Lemma 2.12.
\end{proof}

\begin{Lemma}\label{lem2.13}
Let $\lambda=(\lambda_{j})_{j}\in \ell^{\beta}$, $(f_j)_j$ be an orthonormal
system in $L^{2}(\R)$ and  $A:L^{2}(\mathbf{R})\rightarrow L_{x}^{q,\infty}L_{t}^{r}(\R^{2})$
 be a bounded linear operator for some $q > 2$ and $r\geq2$ and  $\beta\geq1$. Then,
\begin{eqnarray*}
&&\Big\|\sum_{j}\lambda_{j}|Af_{j}|^{2}
\Big\|_{L_{x}^{\frac{q}{2},\infty}L_{t}^{\frac{r}{2}}(\mathbf{R}^{2})}
\leq C\|\lambda\|_{\ell^{\beta}}
\end{eqnarray*}
hold if and only if  the following inequality
\begin{eqnarray*}
&&\|WAA^{\ast}\overline{W}\|_{\mathfrak{S}^{\beta^{\prime}}}\leq
 C\|W\|_{L_{x}^{q_{0},2}L_{t}^{r_{0}}}^{2}
\end{eqnarray*}
is valid, where $\frac{1}{q}+\frac{1}{q_0}=\frac{1}{2}$ and
 $\frac{1}{r}+\frac{1}{r_0}=\frac{1}{2}$ and  $W\in L_{t}^{q_{0},2}L_{x}^{r_{0}}(\R^{2})$.
\end{Lemma}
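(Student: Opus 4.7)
\medskip

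\noindent\textbf{Proof plan for Lemma 2.13.}

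The plan is to carry out the standard Frank--Sabin duality principle (as in \cite{FS2017,BLN2020}), adapted to the mixed Lorentz setting $L_x^{q/2,\infty}L_t^{r/2}$. The bridge between the two inequalities is a single trace identity: if $\gamma_{0}=\sum_{j}\lambda_{j}|f_{j}\rangle\langle f_{j}|$ is the spectral decomposition, then $A\gamma_{0}A^{\ast}=\sum_{j}\lambda_{j}|Af_{j}\rangle\langle Af_{j}|$ has density $\rho_{A\gamma_{0}A^{\ast}}(t,x)=\sum_{j}\lambda_{j}|Af_{j}(t,x)|^{2}$, so that for any multiplier $V$ on $\mathbf{R}^{2}$,
\[
\int_{\SR^{2}} V(t,x)\Big(\sum_{j}\lambda_{j}|Af_{j}(t,x)|^{2}\Big)\,dt\,dx \;=\; \mathrm{Tr}\bigl(A^{\ast}VA\,\gamma_{0}\bigr).
\]
First I would record the Lorentz duality $(L_{x}^{q/2,\infty}L_{t}^{r/2})^{\ast}=L_{x}^{(q/2)^{\prime},1}L_{t}^{(r/2)^{\prime}}=L_{x}^{q_{0}/2,1}L_{t}^{r_{0}/2}$ (which uses $q>2$, $r\geq 2$ and the defining relations $\tfrac{1}{q}+\tfrac{1}{q_{0}}=\tfrac{1}{r}+\tfrac{1}{r_{0}}=\tfrac{1}{2}$), together with the Schatten duality $\|T\|_{\mathfrak{S}^{\beta^{\prime}}}=\sup\{|\mathrm{Tr}(T\gamma_{0})|:\|\gamma_{0}\|_{\mathfrak{S}^{\beta}}\leq 1\}$.

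Next I would reduce everything to a statement about $A^{\ast}VA$ by factorizing $V$. For $V\geq 0$, set $W:=V^{1/2}$, so that as multiplication operators $\overline{W}W=V$, $A^{\ast}VA=(WA)^{\ast}(WA)$, and $WAA^{\ast}\overline{W}=(WA)(WA)^{\ast}$; in particular
\[
\|A^{\ast}VA\|_{\mathfrak{S}^{\beta^{\prime}}}=\|WA\|_{\mathfrak{S}^{2\beta^{\prime}}}^{2}=\|WAA^{\ast}\overline{W}\|_{\mathfrak{S}^{\beta^{\prime}}}.
\]
The rearrangement identity $\||W|^{2}\|_{L^{p/2,1}}=\|W\|_{L^{p,2}}^{2}$ (and $\||W|^{2}\|_{L^{r_{0}/2}}=\|W\|_{L^{r_{0}}}^{2}$) applied in the iterated space--time norm yields $\|V\|_{L_{x}^{q_{0}/2,1}L_{t}^{r_{0}/2}}=\|W\|_{L_{x}^{q_{0},2}L_{t}^{r_{0}}}^{2}$. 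With these two identifications, the Strichartz bound paired against a test $V=|W|^{2}$ via the trace identity becomes exactly the Schatten bound, and conversely the Schatten bound combined with Lorentz/Schatten duality recovers the Strichartz inequality on the class of $\gamma_{0}$'s of the stated form.

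For general complex self-adjoint $\gamma_{0}$ the density $\sum_{j}\lambda_{j}|Af_{j}|^{2}$ is signed, so the dual test $V$ must be allowed to be complex. I would handle this by polarization: write $V=\overline{W_{1}}W_{2}$ with $|W_{1}|=|W_{2}|=|V|^{1/2}$, leading to the bilinear version $\|W_{1}AA^{\ast}\overline{W_{2}}\|_{\mathfrak{S}^{\beta^{\prime}}}\leq C\|W_{1}\|\|W_{2}\|$, which is equivalent to the stated single-weight bound by the Schatten H\"older inequality
$\|(W_{1}A)(W_{2}A)^{\ast}\|_{\mathfrak{S}^{\beta^{\prime}}}\leq \|W_{1}A\|_{\mathfrak{S}^{2\beta^{\prime}}}\|W_{2}A\|_{\mathfrak{S}^{2\beta^{\prime}}}$ together with $\|WA\|_{\mathfrak{S}^{2\beta^{\prime}}}^{2}=\|WAA^{\ast}\overline{W}\|_{\mathfrak{S}^{\beta^{\prime}}}$.

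The main obstacle I anticipate is the careful bookkeeping of the Lorentz indices in the \emph{mixed} norm. The weak Lorentz position on the spatial side forces the second Lorentz index $1$ (not $(q/2)^{\prime}$) in the predual, which in turn forces the index $2$ in $L_{x}^{q_{0},2}$ on the $W$ side; the strong Lebesgue index $r/2$ in time, by contrast, self-dualizes to $L_{t}^{r_{0}/2}$ and produces the strong $L_{t}^{r_{0}}$ for $W$. Once these mixed-norm identities and the rearrangement identity $\|W\|_{L^{p,2}}^{2}=\||W|^{2}\|_{L^{p/2,1}}$ are established in the iterated form, the rest of the argument is formal Schatten/Lorentz duality and polarization.
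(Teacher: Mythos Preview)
Your proposal is correct and follows the standard Frank--Sabin duality argument; the paper itself does not give a proof of this lemma but simply refers the reader to Proposition~2.1 of \cite{BHLNS2019} and Lemma~3 of \cite{FS2017}, whose arguments are precisely the trace identity plus Schatten/Lorentz duality and factorization $V=|W|^{2}$ that you have outlined.
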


For the proof of Lemma 2.13, we refer the readers to Proposition
2.1 of \cite{BHLNS2019} and Lemma 3 of \cite{FS2017}.

\begin{Lemma}\label{lem2.14}
Let $\beta\geq1$, $\lambda=(\lambda_{j})_{j}\in \ell^{\beta}$, $(f_j)_j$
be an orthonormal system in $L^{2}(\mathbf{T})$ and  $A:L^{2}(\mathbf{T})
\rightarrow L_{t}^{p}L_{x}^{q}(\mathbf{T}^{2})$ be a bounded linear operator. Then
\begin{eqnarray*}
&&\Big\|\sum_{j}\lambda_{j}|Af_{j}|^{2}\Big\|_{L_{t}^{p}L_{x}^{q}(\mathbf{T}^{2})}
\leq C\|\lambda\|_{\ell^{\beta}}
\end{eqnarray*}
hold if and only if  the following inequality
\begin{eqnarray*}
&&\|WAA^{\ast}\overline{W}\|_{\mathfrak{S}^{\beta^{\prime}}}\leq
C\|W\|_{L_{t}^{2p^{\prime}}L_{x}^{2q^{\prime}}}^{2}
\end{eqnarray*}
is valid, where $\frac{1}{q}+\frac{1}{q^{\prime}}=\frac{1}{p}+\frac{1}{p^{\prime}}=1$ and  $W\in L_{t}^{2p^{\prime}}L_{x}^{2q^{\prime}}(\mathbf{T}^{2})$.
\end{Lemma}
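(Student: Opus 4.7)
The plan is to mimic the standard duality argument of Frank--Sabin (Lemma 3 of \cite{FS2017}, applied in Lemma 2.13 above), which transfers verbatim to the periodic setting because it uses only the $L^p$--$L^{p'}$ and $\mathfrak{S}^\beta$--$\mathfrak{S}^{\beta'}$ duality pairings, both of which are insensitive to whether the underlying measure space is $\mathbf{R}^{d}$ or $\mathbf{T}^{d}$. I will therefore treat the lemma as a direct application of this general machinery, with the bookkeeping focused on turning $\|A^{\ast}VA\|_{\mathfrak{S}^{\beta'}}$ into the advertised $\|WAA^{\ast}\overline{W}\|_{\mathfrak{S}^{\beta'}}$.

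For the forward direction (Strichartz $\Rightarrow$ Schatten), set $\gamma=\sum_j\lambda_j|f_j\rangle\langle f_j|$, so that $\|\gamma\|_{\mathfrak{S}^\beta}=\|\lambda\|_{\ell^\beta}$ by orthonormality of $(f_j)$, and record the pointwise identity $\sum_j\lambda_j|Af_j|^2=\rho_{A\gamma A^{\ast}}$. By $L_t^pL_x^q$-duality on $\mathbf{T}^{2}$, the Strichartz bound is equivalent to
\begin{equation*}
\left|\int_{\mathbf{T}^2} V\,\sum_j\lambda_j|Af_j|^2\,dt\,dx\right|\leq C\|V\|_{L_t^{p'}L_x^{q'}}\|\lambda\|_{\ell^\beta}
\end{equation*}
for every $V$. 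The left-hand side equals $\mathrm{Tr}(A^{\ast}VA\gamma)$ by cyclicity of the trace, and taking the supremum over $\gamma$ in the unit ball of $\mathfrak{S}^\beta$ and invoking Schatten duality converts this into $\|A^{\ast}VA\|_{\mathfrak{S}^{\beta'}}\leq C\|V\|_{L_t^{p'}L_x^{q'}}$. Factor $V=W\overline{W}$ by setting $W=|V|^{1/2}e^{i(\arg V)/2}$; then $\|W\|_{L_t^{2p'}L_x^{2q'}}^{2}=\|V\|_{L_t^{p'}L_x^{q'}}$. Applying the Schatten identity $\|T^{\ast}T\|_{\mathfrak{S}^{\beta'}}=\|TT^{\ast}\|_{\mathfrak{S}^{\beta'}}$ with $T=\overline{W}A$ rewrites $\|A^{\ast}VA\|_{\mathfrak{S}^{\beta'}}=\|A^{\ast}W\overline{W}A\|_{\mathfrak{S}^{\beta'}}$ as $\|\overline{W}AA^{\ast}W\|_{\mathfrak{S}^{\beta'}}=\|WAA^{\ast}\overline{W}\|_{\mathfrak{S}^{\beta'}}$, the last equality holding since Schatten norms are preserved under adjoint.

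For the reverse direction, start from any $W\in L_t^{2p'}L_x^{2q'}$ and any $\gamma$ with $\|\gamma\|_{\mathfrak{S}^\beta}\leq 1$, write
\begin{equation*}
\int|W|^{2}\sum_j\lambda_j|Af_j|^2\,dt\,dx=\mathrm{Tr}(A^{\ast}|W|^{2}A\gamma),
\end{equation*}
bound by Schatten H\"older to get $\|A^{\ast}|W|^{2}A\|_{\mathfrak{S}^{\beta'}}\|\gamma\|_{\mathfrak{S}^\beta}$, convert $\|A^{\ast}|W|^{2}A\|_{\mathfrak{S}^{\beta'}}=\|WAA^{\ast}\overline{W}\|_{\mathfrak{S}^{\beta'}}$ via the same $T^{\ast}T$--$TT^{\ast}$ manoeuvre, and invoke the hypothesis. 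Taking the supremum over $W$ in the unit ball of $L_t^{2p'}L_x^{2q'}$ and applying $L^p$-duality recovers the Strichartz estimate. The only point that requires any care, and the step I expect to need the most attention, is the factorisation $V=W\overline{W}$ for a genuinely complex-valued $V$ together with the passage $\|A^{\ast}VA\|_{\mathfrak{S}^{\beta'}}=\|WAA^{\ast}\overline{W}\|_{\mathfrak{S}^{\beta'}}$; once this algebraic reduction is made explicit, the rest is a routine application of the Frank--Sabin duality principle.
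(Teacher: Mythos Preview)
Your proposal is correct and takes exactly the same route as the paper: the paper does not give its own argument but simply refers the reader to Lemma~3 of Frank--Sabin \cite{FS2017}, which is precisely the duality principle you reproduce. One minor logical slip worth fixing: in the forward direction you should start from an arbitrary $W$ and set $V=|W|^{2}$ (so that $A^{\ast}VA=(WA)^{\ast}(WA)$ and $\|V\|_{L_t^{p'}L_x^{q'}}=\|W\|_{L_t^{2p'}L_x^{2q'}}^{2}$), rather than start from $V$ and try to factor it as $W\overline{W}$---with your choice $W=|V|^{1/2}e^{i(\arg V)/2}$ one obtains $W\overline{W}=|V|$, not $V$---but this is a trivial reversal and the rest of the argument is sound.
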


For the proof of Lemma 2.14, we refer the readers to Lemma 3 of \cite{FS2017}.

\begin{Lemma}\label{lem2.15}
For $i=0,1$, let $A_{i}, B_{i}, C_{i}$ be Banach spaces and  $T$
be a bilinear operator such that
\begin{align*}
&T: \,A_{0}\times B_{0}\rightarrow C_{0},\\
&T:\, A_{0}\times B_{1}\rightarrow C_{1},\\
&T:\,A_{1}\times B_{1}\rightarrow C_{1}.
\end{align*}
Then,  for $\theta=\theta_{0}+\theta_{1}$ and $\frac{1}{p}+\frac{1}{q}\geq 1$, we have
$$T:\,(A_{0},A_{1})_{\theta_{0},pr}\times(B_{0},B_{1})_{\theta_{1},qr}\rightarrow (C_{0},C_{1})_{\theta,r}.$$
Here $0<\theta_{i}<\theta<1$ and $1\leq p,q,r\leq \infty.$
\end{Lemma}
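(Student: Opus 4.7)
The plan is to prove the lemma via the discrete J-method representation of real interpolation spaces, exploiting the fact that J-atoms simultaneously lie in the intersections $A_0 \cap A_1$ and $B_0 \cap B_1$. By the K--J equivalence theorem for real interpolation, I would first select representations $a = \sum_{k \in \mathbb{Z}} u_k$ and $b = \sum_{j \in \mathbb{Z}} v_j$ with $u_k \in A_0 \cap A_1$, $v_j \in B_0 \cap B_1$, and
$$\left\|(2^{-k\theta_0} J(2^k, u_k; A_0, A_1))_k\right\|_{\ell^{pr}} \lesssim \|a\|_{(A_0, A_1)_{\theta_0, pr}}, \quad \left\|(2^{-j\theta_1} J(2^j, v_j; B_0, B_1))_j\right\|_{\ell^{qr}} \lesssim \|b\|_{(B_0, B_1)_{\theta_1, qr}},$$
where $J(t, x; A_0, A_1) = \max(\|x\|_{A_0}, t\|x\|_{A_1})$ and similarly for $B$. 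The objective is then to produce a suitable J-representation for $T(a, b)$ witnessing $T(a, b) \in (C_0, C_1)_{\theta, r}$ with $\theta = \theta_0 + \theta_1$.

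Expanding $T(a, b) = \sum_{k, j} T(u_k, v_j)$ by bilinearity, the three hypotheses apply atom-by-atom (crucially, the ``missing'' case $A_1 \times B_0 \to \cdot$ is never invoked because each $u_k \in A_0$ and each $v_j \in B_1$):
\begin{align*}
\|T(u_k, v_j)\|_{C_0} &\leq C \|u_k\|_{A_0} \|v_j\|_{B_0} \leq C\, J(2^k, u_k)\, J(2^j, v_j), \\
\|T(u_k, v_j)\|_{C_1} &\leq C \|u_k\|_{A_1} \|v_j\|_{B_1} \leq C\, 2^{-(k + j)} J(2^k, u_k)\, J(2^j, v_j),
\end{align*}
so that $J(2^{k + j}, T(u_k, v_j); C_0, C_1) \lesssim J(2^k, u_k)\, J(2^j, v_j)$. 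Grouping $w_n := \sum_{k + j = n} T(u_k, v_j)$ gives $T(a, b) = \sum_n w_n$, and after setting $\alpha_k := 2^{-k\theta_0} J(2^k, u_k)$ and $\beta_j := 2^{-j\theta_1} J(2^j, v_j)$, the J-norm of $T(a, b)$ in $(C_0, C_1)_{\theta, r}$ reduces to bounding the weighted discrete convolution
$$\left\| \sum_{k + j = n} 2^{-k\theta_1 - j\theta_0}\, \alpha_k\, \beta_j \right\|_{\ell^r(n)} \lesssim \|\alpha\|_{\ell^{pr}} \|\beta\|_{\ell^{qr}}.$$

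The main obstacle is precisely controlling this weighted convolution: the extra factor $2^{-k\theta_1 - j\theta_0}$ arises because the natural grouping $n = k + j$ corresponds to the $(C_0, C_1)$-level of $T(u_k, v_j)$ but not to the asymmetric splitting of the interpolation weights $\theta_0, \theta_1$ between $a$ and $b$. To handle it, I would select the fundamental J-representation, so that $J(2^k, u_k) \sim K(2^k, a; A_0, A_1)$; the monotonicity of $K(t, a)$ in $t$ and of $t^{-1} K(t, a)$ in $t$ then forces $\alpha_k$ to decay appropriately on both tails, and similarly for $\beta_j$. Combining this quasi-monotonicity with a Hardy-type rearrangement to absorb the off-diagonal weights and the discrete Young inequality $\ell^{pr} \ast \ell^{qr} \hookrightarrow \ell^r$, whose validity requires exactly $\tfrac{1}{pr} + \tfrac{1}{qr} \geq \tfrac{1}{r}$ (equivalent to the hypothesis $\tfrac{1}{p} + \tfrac{1}{q} \geq 1$), yields the displayed bound and, by the J-method characterization of $(C_0, C_1)_{\theta, r}$, completes the proof.
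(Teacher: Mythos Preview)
The paper does not prove this lemma; it simply refers to Exercise~5, p.~76 of Bergh--L\"ofstr\"om. Note, however, that the third hypothesis as printed here ($T:A_1\times B_1\to C_1$) is a typo: both in Bergh--L\"ofstr\"om and in the paper's own application (Lemma~5.2) the third hypothesis is $T:A_1\times B_0\to C_1$. The version you are attempting to prove is in fact false: take $A_0=A_1=\mathbb{C}$, $B_0=C_0=L^1(\mathbb{R})$, $B_1=C_1=L^\infty(\mathbb{R})$, and $T(a,b)=ab$; all three stated hypotheses hold trivially, yet the conclusion would force $L^{1/(1-\theta_1),qr}\hookrightarrow L^{1/(1-\theta),r}$ with $\theta>\theta_1$, which fails on $\mathbb{R}$ since Lorentz spaces with different primary indices are incomparable.

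This is exactly why your argument cannot close. Your bound $J(2^{k+j},T(u_k,v_j))\lesssim J(2^k,u_k)\,J(2^j,v_j)$ uses only the first and third stated hypotheses, and grouping by $n=k+j$ produces the weight $2^{-k\theta_1-j\theta_0}=2^{-n\theta_0}2^{-k(\theta_1-\theta_0)}$, which is unbounded in $k$ along each diagonal $k+j=n$; K-functional monotonicity does not give enough decay on $\alpha_k$ to absorb this. Your invocation of Young's inequality is also misstated: the convolution bound $\ell^{pr}*\ell^{qr}\hookrightarrow\ell^{r}$ would require $\tfrac{1}{pr}+\tfrac{1}{qr}\geq 1+\tfrac{1}{r}$; the condition $\tfrac{1}{pr}+\tfrac{1}{qr}\geq\tfrac{1}{r}$ you wrote is the H\"older condition for the \emph{pointwise} product $\ell^{pr}\cdot\ell^{qr}\hookrightarrow\ell^{r}$. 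With the corrected hypothesis $T:A_1\times B_0\to C_1$, one combines the two $C_1$-bounds to get $\|T(u_k,v_j)\|_{C_1}\lesssim 2^{-\max(k,j)}J(2^k,u_k)\,J(2^j,v_j)$ and hence $J(2^{\max(k,j)},T(u_k,v_j))\lesssim J(2^k,u_k)\,J(2^j,v_j)$. Grouping by $m=\max(k,j)$ rather than $k+j$ then reduces the $(C_0,C_1)_{\theta,r}$ norm to a \emph{pointwise} product of $\alpha$ against an $\ell^1$-smoothed version of $\beta$ (plus the symmetric term), and H\"older together with the embedding $\ell^s\hookrightarrow\ell^r$ for $s\le r$ finishes under exactly the hypothesis $\tfrac{1}{p}+\tfrac{1}{q}\geq 1$.
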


For the proof of Lemma 2.15, we refer the readers to Exercise 5, Page 76 of \cite{BL1976}.

\begin{Lemma}\label{lem2.16}
Let $0<q_{0}\leq \infty, 0<q_{1}\leq \infty, s_{0}\neq s_{1}$, then for $\forall q\leq \infty$,\\
if $s_{0}\neq s_{1}$, we have
$$(\ell_{q_{0}}^{s_{0}},\ell_{q_{1}}^{s_{1}})_{\theta,q}=\ell_{q}^{s},$$
provided that $s=(1-\theta)s_{0}+\theta s_{1}$,\\
if $s_{0}=s_{1}=s$, we have
$$(\ell_{q_{0}}^{s_{0}},\ell_{q_{1}}^{s_{1}})_{\theta,q}=\ell_{q}^{s},$$
provided that $\frac{1}{q}=\frac{1-\theta}{q_{0}}+\frac{\theta}{q_{1}}$.

\end{Lemma}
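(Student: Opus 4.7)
The statement is the standard real interpolation identity for weighted $\ell^{q}$ spaces, and my plan is to establish it via the $K$-method. Recall that for a compatible pair $(A_{0},A_{1})$,
\[
\|a\|_{(A_{0},A_{1})_{\theta,q}} = \Bigl(\int_{0}^{\infty}\bigl(t^{-\theta}K(t,a)\bigr)^{q}\,\tfrac{dt}{t}\Bigr)^{1/q},\qquad K(t,a) = \inf_{a=a_{0}+a_{1}}\bigl(\|a_{0}\|_{A_{0}}+t\|a_{1}\|_{A_{1}}\bigr).
\]
The two cases of the lemma will be dealt with by two quite different devices, so I split the argument accordingly.

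\emph{Case $s_{0}=s_{1}=s$.} The map $(a_{k})\mapsto(2^{sk}a_{k})$ is an isometric isomorphism $\ell_{q_{i}}^{s}\to \ell^{q_{i}}$ for both $i=0,1$ simultaneously, and real interpolation commutes with such retractions. The problem therefore reduces to the classical Lions--Peetre identity $(\ell^{q_{0}},\ell^{q_{1}})_{\theta,q}=\ell^{q}$ with $\frac{1}{q}=\frac{1-\theta}{q_{0}}+\frac{\theta}{q_{1}}$, which is Theorem~5.3.1 of \cite{BL1976}.

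\emph{Case $s_{0}\neq s_{1}$.} Without loss of generality $s_{0}<s_{1}$. Here I compute $K(t,a)$ essentially coordinatewise: for each $k$, the optimal split of $a_{k}$ is determined by comparing the two weighted costs $2^{ks_{0}}$ and $t\,2^{ks_{1}}$, and the crossover occurs at the dyadic threshold $k^{\star}(t)\approx (\log_{2}t)/(s_{1}-s_{0})$. Up to constants depending only on $q_{0},q_{1}$, this produces
\[
K(t,a)\sim \Bigl\|\min\bigl(2^{ks_{0}},\,t\,2^{ks_{1}}\bigr)\,a_{k}\Bigr\|_{\ell^{q_{0}}+\ell^{q_{1}}}.
\]
Substituting into the definition of the interpolation norm, switching sum and integral by Fubini, and evaluating the $t$-integral $\int_{0}^{\infty}t^{-\theta q}\min(2^{ks_{0}},t\,2^{ks_{1}})^{q}\,\tfrac{dt}{t}$ (a convergent integral in each coordinate whose value is comparable to $2^{kq[(1-\theta)s_{0}+\theta s_{1}]}$) yields exactly the $\ell_{q}^{s}$-norm with $s=(1-\theta)s_{0}+\theta s_{1}$.

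\emph{Main obstacle.} The only genuinely delicate point is the estimate of $K(t,a)$ when $q_{0}\neq q_{1}$, since a purely coordinatewise split is not quite optimal. I would bypass this by passing to the equivalent discrete $K$-functional $\|a\|_{(A_{0},A_{1})_{\theta,q}}\sim \bigl(\sum_{n\in\Z}(2^{-\theta n}K(2^{n},a))^{q}\bigr)^{1/q}$ and exploiting the exponential separation of the two weights $2^{ks_{0}}$ and $2^{ks_{1}}$: for each dyadic level $n$, only $O(1)$ coordinates lie "near the threshold," so the sub-optimality of the coordinatewise split costs only a finite constant. Modulo this standard technicality, the argument is entirely classical and could alternatively be quoted from Theorem~5.6.1 of \cite{BL1976}.
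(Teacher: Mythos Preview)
Your proposal is correct. The paper does not give its own proof of this lemma but simply refers the reader to Theorem~5.6.1 of Bergh--L\"ofstr\"om \cite{BL1976}, which is exactly the reference you invoke at the end and whose argument is the $K$-functional computation you sketch.
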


For the proof of Lemma 2.16, we refer the readers to Theorem 5.6.1,
Page 122 of \cite{BL1976}.

\begin{Lemma}\label{lem2.17}
Suppose that $p_{0}, p_{1}, q_{0}, q_{1}$ and $q$ are positive,
possibly infinite,
 numbers and  $0<\theta<1$. Then,\\
if $p_{0}\neq p_{1}$, we have
$$(L^{p_{0},q_{0}},L^{p_{1},q_{1}})_{\theta,q}=L^{p,q},$$
provided that $\frac{1}{p}=\frac{1-\theta}{p_{0}}+\frac{\theta}{p_{1}}$,\\
if $p_{0}= p_{1}=p$, we have
$$(L^{p_{0},q_{0}},L^{p_{1},q_{1}})_{\theta,q}=L^{p,q},$$
provided that $\frac{1}{q}=\frac{1-\theta}{q_{0}}+\frac{\theta}{q_{1}}$.
\end{Lemma}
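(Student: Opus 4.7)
The plan is to deduce Lemma 2.17 from the classical identification of Lebesgue spaces as real interpolation spaces, together with the reiteration theorem, handling the two cases $p_0\neq p_1$ and $p_0=p_1$ separately. Throughout, I will use the Peetre $K$-method of real interpolation and Calder\'on's characterization of the $K$-functional for a Lebesgue couple.

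The starting point is the fundamental identity
$$
(L^{a_0},L^{a_1})_{\eta,r}=L^{p,r},\qquad \tfrac{1}{p}=\tfrac{1-\eta}{a_0}+\tfrac{\eta}{a_1},\quad 0<\eta<1,\ 1\le r\le\infty,
$$
which follows from Calder\'on's formula
$K(t,f;L^{a_0},L^{a_1})\sim \bigl(\int_0^{t^{\alpha}} f^{\ast}(s)^{a_0}ds\bigr)^{1/a_0}+t\bigl(\int_{t^{\alpha}}^{\infty}f^{\ast}(s)^{a_1}ds\bigr)^{1/a_1}$ (with an appropriate $\alpha$), combined with a change of variables in the defining integral of the $K$-norm and the equivalence $\|f\|_{L^{p,r}}\sim \|t^{1/p}f^{\ast}(t)\|_{L^r(dt/t)}$.

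For the first case $p_0\neq p_1$, I would choose auxiliary exponents $a_0<\min(p_0,p_1)$ and $a_1>\max(p_0,p_1)$ and write each endpoint space as $L^{p_i,q_i}=(L^{a_0},L^{a_1})_{\eta_i,q_i}$ with $\frac{1}{p_i}=\frac{1-\eta_i}{a_0}+\frac{\eta_i}{a_1}$. Holmstedt's reiteration theorem (Theorem 3.5.3 in Bergh--L\"ofstr\"om) then gives
$$
\bigl((L^{a_0},L^{a_1})_{\eta_0,q_0},(L^{a_0},L^{a_1})_{\eta_1,q_1}\bigr)_{\theta,q}=(L^{a_0},L^{a_1})_{\eta,q},\qquad \eta=(1-\theta)\eta_0+\theta\eta_1.
$$
A direct linear computation yields $\frac{1}{p}=\frac{1-\eta}{a_0}+\frac{\eta}{a_1}$, so the right-hand side equals $L^{p,q}$ as required.

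The second case $p_0=p_1=p$ is the main obstacle, because the reiteration theorem above requires the internal parameters $\eta_0,\eta_1$ to differ, while here they would coincide. I would instead exploit the retract structure furnished by the decreasing rearrangement: the map $R\colon f\mapsto t^{1/p}f^{\ast}(t)$ sends $L^{p,q_i}$ boundedly into $L^{q_i}(\mathbf{R}_+,dt/t)$, and one constructs a coretraction $S$ with $RS=\mathrm{Id}$ using signed rearrangements, so that $(L^{p,q_0},L^{p,q_1})$ is a retract of $(L^{q_0}(dt/t),L^{q_1}(dt/t))$. Real interpolation commutes with retracts, hence
$$
(L^{p,q_0},L^{p,q_1})_{\theta,q}\simeq (L^{q_0}(dt/t),L^{q_1}(dt/t))_{\theta,q},
$$
and the right-hand side equals $L^{q}(dt/t)$ by the same-parameter branch of Lemma 2.16 (the sequence-space statement transfers to the continuous case via the standard discretization of the $K$-functional). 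Pulling back through $R$ gives $L^{p,q}$. The technical subtleties to check are the subadditivity inequality $(f+g)^{\ast}(t_1+t_2)\le f^{\ast}(t_1)+g^{\ast}(t_2)$ used to verify that $R$ is bounded between the couples, and the fact that every non-negative decreasing function arises as a rearrangement, which provides the coretraction; once these are in place, the identification is immediate.
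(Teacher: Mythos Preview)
The paper does not give its own proof of Lemma~2.17; it simply refers the reader to Theorem~5.3.1 in Bergh--L\"ofstr\"om, \emph{Interpolation Spaces}. Your proposal therefore goes well beyond what the paper does, supplying an actual argument where the paper only cites a source.

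Your treatment of the case $p_0\neq p_1$ via the basic identity $(L^{a_0},L^{a_1})_{\eta,r}=L^{p,r}$ together with the reiteration theorem is standard and correct; this is in fact how Bergh--L\"ofstr\"om themselves derive the result.

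For the case $p_0=p_1=p$, the idea is right but the ``retract'' framing is problematic as written: the decreasing rearrangement $f\mapsto f^{\ast}$ is \emph{not linear}, so the map $R:f\mapsto t^{1/p}f^{\ast}(t)$ is not a bounded linear operator between the couples, and the usual retract machinery for interpolation requires linear maps. What actually works is a direct $K$-functional comparison: since $\|f\|_{L^{p,q}}=\|s^{1/p}f^{\ast}(s)\|_{L^{q}(ds/s)}$ depends only on $f^{\ast}$, and since the optimal decompositions in the $K$-functional for the weighted $L^{q}$ couple can be taken by truncation (hence preserving monotonicity), one obtains
\[
K(t,f;L^{p,q_0},L^{p,q_1})\sim K\bigl(t,\,s^{1/p}f^{\ast}(s);L^{q_0}(ds/s),L^{q_1}(ds/s)\bigr),
\]
from which the conclusion follows by the known interpolation of the $L^{q_i}(ds/s)$ couple. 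This is the content behind your argument, but you should recast it as a $K$-functional estimate rather than a retract, or else explain carefully how linearity is circumvented.
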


For the proof of Lemma 2.17, we refer the readers to Theorem 5.3.1,
 Page 113 of \cite{BL1976}.

\begin{Lemma}\label{lem2.18}(Three line Theorem)
Let $f(z)$ be a function, holomorphic in the strip $a\leq Re z\leq b$. If
\begin{eqnarray}
&&|f(a+iy)|\leq C_{1},\,\,|f(b+iy)|\leq C_{2},\,(-\infty<y<+\infty)\label{2.044}
\end{eqnarray}
and
\begin{eqnarray}
&&\ln|f(z)|\leq N exp(k|Im z|),\, (a< Re z<b),\label{2.045}
\end{eqnarray}
where $0\leq k<\frac{\pi}{b-a}$, then for any $(a<\theta<b),$ we have
\begin{eqnarray}
&&|f(\theta+iy)|\leq C_{1}^{1-t_{\theta}}C_{2}^{t_{\theta}}\,\,(-\infty<y<+\infty),
\label{2.046}
\end{eqnarray}
where $t_{\theta}=\frac{\theta-a}{b-a}$.
\end{Lemma}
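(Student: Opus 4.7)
The plan is to prove this Phragm\'en--Lindel\"of type refinement of Hadamard's three lines theorem in three steps: normalize the boundary bounds to one, introduce an auxiliary factor that decays faster than $f$ is allowed to grow, and then apply the maximum modulus principle on a tall rectangle.

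\textbf{Step 1 (Normalization).} First I would divide out the boundary constants by setting
$$g(z) = \frac{f(z)}{C_1^{(b-z)/(b-a)}\,C_2^{(z-a)/(b-a)}}.$$
This $g$ is holomorphic in the strip, and \eqref{2.044} gives $|g(a+iy)| \leq 1$ and $|g(b+iy)| \leq 1$, while the growth hypothesis \eqref{2.045} is preserved up to a harmless bounded factor (since the denominator has modulus $C_1^{(b-x)/(b-a)}\,C_2^{(x-a)/(b-a)}$, bounded on $[a,b]$). The target bound \eqref{2.046} then reduces to showing $|g(\theta+iy)| \leq 1$ for every $\theta \in (a,b)$ and $y \in \R$.

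\textbf{Step 2 (Auxiliary decay factor).} Since $k < \pi/(b-a)$, I can fix $\alpha$ with $k < \alpha < \pi/(b-a)$ and, letting $c = (a+b)/2$, consider the family
$$h_\varepsilon(z) = g(z)\,\exp\!\bigl(-2\varepsilon\cos(\alpha(z-c))\bigr), \quad \varepsilon > 0.$$
Writing $z = x+iy$, the real part of $\cos(\alpha(z-c))$ equals $\cos(\alpha(x-c))\cosh(\alpha y)$. The choice $\alpha(b-a)/2 < \pi/2$ guarantees $\cos(\alpha(x-c)) \geq \delta := \cos(\alpha(b-a)/2) > 0$ for every $x \in [a,b]$, so on the two boundary lines the extra factor has modulus at most one and hence $|h_\varepsilon(z)| \leq |g(z)| \leq 1$. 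In the interior, the auxiliary factor decays like $\exp(-\varepsilon\delta\, e^{\alpha|y|})$ as $|y|\to\infty$, which strictly dominates the allowed growth $\exp(Ne^{k|y|})$ from \eqref{2.045} because $\alpha > k$; therefore $|h_\varepsilon(z)| \to 0$ as $|y|\to\infty$, uniformly for $x \in [a,b]$.

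\textbf{Step 3 (Maximum modulus and limit).} For $Y$ large enough, $|h_\varepsilon(z)| \leq 1$ on all four sides of the rectangle $[a,b]\times[-Y,Y]$, so the maximum modulus principle yields $|h_\varepsilon(z)| \leq 1$ throughout the strip. Letting $\varepsilon \downarrow 0$ gives $|g(z)| \leq 1$ everywhere in the strip, and unwinding the normalization at $z = \theta + iy$ produces $|f(\theta+iy)| \leq C_1^{1-t_\theta}\,C_2^{t_\theta}$ with $t_\theta = (\theta-a)/(b-a)$, which is precisely \eqref{2.046}. The main obstacle is the design of the auxiliary function: it must remain bounded by one on the boundary while decaying faster than $\exp(Ne^{k|y|})$ in the interior, and the strict inequality $k < \pi/(b-a)$ is exactly what opens the gap of admissible exponents $\alpha$ for which both conditions can hold simultaneously, so the construction would break down if that strict inequality failed.
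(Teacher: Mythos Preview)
Your proof is correct and follows the standard Phragm\'en--Lindel\"of argument. The paper does not actually supply its own proof of this lemma; it simply refers the reader to page 136 of Gohberg--Kre\u{\i}n \cite{GK1965}, where essentially the same auxiliary-function argument appears. One minor point: your normalization in Step~1 tacitly assumes $C_1,C_2>0$, but the degenerate case where one of them vanishes is trivial (the conclusion forces $f\equiv 0$ by the identity principle), so no harm is done.
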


For the proof of Lemma 2.18, we refer the readers to Page 136 of \cite{GK1965}.

\begin{Lemma}\label{lem2.19}(Extension of Three line Theorem)
Let $f(z)$ be a continuous function on the strip $0\leq Re z\leq a$
 and is analytic on the strip $0< Re z< a$.  If
\begin{eqnarray}
&&\ln|f(z)|\leq N exp(k|Im z|),\, (0\leq Re z\leq a),\label{2.047}
\end{eqnarray}
where $0\leq k<\frac{\pi}{a}$, then for any $\theta\in (0,a)$, we have
\begin{eqnarray}
&&|f(\theta)|\leq exp\left\{\frac{\sin\pi \theta}{2}\int_{-\infty}^{\infty}
\left[\frac{\ln|f(iy)|}{\cosh\pi y-\cos\pi\theta }+
\frac{\ln|f(a+iy)|}{\cosh\pi y-\cos\pi\theta}\right]dy\right\}.\label{2.048}
\end{eqnarray}
\end{Lemma}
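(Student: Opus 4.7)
The plan is to recognize Lemma 2.19 as the Poisson integral representation on a vertical strip, applied to the subharmonic function $\log|f|$. By the change of variable $z\mapsto az$ it suffices to treat $a=1$, so the strip becomes $\{0<\operatorname{Re}z<1\}$ and the growth condition becomes $\ln|f(z)|\le N\exp(k|\operatorname{Im}z|)$ with $0\le k<\pi$. The crucial ingredient is the explicit formula for the Poisson kernel of this strip with respect to each of the two boundary lines $\operatorname{Re}z=0$ and $\operatorname{Re}z=1$, evaluated at a real point $\theta\in(0,1)$, namely
\begin{eqnarray*}
P(\theta,y)=\frac{\sin\pi\theta}{2(\cosh\pi y-\cos\pi\theta)}.
\end{eqnarray*}
This is obtained by pulling back the Poisson kernel of the upper half-plane through the conformal map $w=-i\cot(\pi z/2)$ (which sends the strip bijectively onto the upper half-plane), computing $|dw/dz|$ on each boundary line, and simplifying.

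Next, I would apply the Poisson bound to the subharmonic function $u(z):=\log|f(z)|$. The obstacle is that the hypothesis $u(z)\le Ne^{k|\operatorname{Im}z|}$ only controls $u$ from above, whereas the Poisson kernel $P(\theta,y)$ decays like $e^{-\pi|y|}$; since $k<\pi$, one expects the boundary integrals in the conclusion to be absolutely convergent, but the Poisson representation has to be justified via a regularization. The natural device is to introduce the multiplier
\begin{eqnarray*}
G_\epsilon(z):=\exp\bigl(-\epsilon\cosh(\lambda z)\bigr),\qquad \lambda\in(k,\pi),\ \epsilon>0,
\end{eqnarray*}
so that $fG_\epsilon$ is bounded on the closed strip (because $\operatorname{Re}\cosh(\lambda z)\ge c|\operatorname{Im}z|^{\!0}e^{\lambda|\operatorname{Im}z|}$ eventually dominates $Ne^{k|\operatorname{Im}z|}$). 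Standard Phragm\'en--Lindel\"of plus the boundedness of $\log|fG_\epsilon|$ on the strip then give the Poisson representation
\begin{eqnarray*}
\log|f(\theta)G_\epsilon(\theta)|\le\int_{-\infty}^{\infty}P(\theta,y)\Bigl[\log|f(iy)G_\epsilon(iy)|+\log|f(1+iy)G_\epsilon(1+iy)|\Bigr]\,dy.
\end{eqnarray*}

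Finally, I would send $\epsilon\to 0^+$. On the left side $\log|G_\epsilon(\theta)|=-\epsilon\cosh(\lambda\theta)\to 0$. On the right side, the contribution from $\log|G_\epsilon|$ on each boundary line is of order $\epsilon$ times $\int_{-\infty}^\infty P(\theta,y)\cosh(\lambda y)\,dy$, which is finite exactly because $\lambda<\pi$; hence this contribution tends to zero. The remaining terms converge by monotone/dominated convergence (using $u^+$ controlled by $Ne^{k|y|}$ with $k<\lambda<\pi$ against $P(\theta,y)\sim e^{-\pi|y|}$), yielding
\begin{eqnarray*}
\log|f(\theta)|\le\frac{\sin\pi\theta}{2}\int_{-\infty}^{\infty}\!\!\left[\frac{\ln|f(iy)|}{\cosh\pi y-\cos\pi\theta}+\frac{\ln|f(1+iy)|}{\cosh\pi y-\cos\pi\theta}\right]\!dy,
\end{eqnarray*}
which is the desired conclusion (after exponentiation and unscaling). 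The main obstacle is therefore the rigorous justification of the Poisson representation under the borderline growth condition $k<\pi$; choosing the regularizing weight $G_\epsilon$ with $\lambda\in(k,\pi)$ is what makes the boundary integrals legitimate and permits the limit $\epsilon\to 0^+$ to be taken by dominated convergence.
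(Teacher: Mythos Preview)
The paper does not supply its own proof of Lemma~2.19; it simply refers the reader to Lemma~4.2, p.~206 of Stein--Weiss \cite{SW1971}. Your outline---reduce to $a=1$, identify the Poisson kernel of the strip via a conformal map, regularize $f$ so as to force boundedness on the closed strip, apply the Poisson representation to the resulting bounded subharmonic function $\log|fG_\epsilon|$, and let $\epsilon\to0^+$ using $k<\pi$ against the $e^{-\pi|y|}$ decay of the kernel---is exactly the classical argument found in that reference, so your approach is the same as what the paper cites.

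One small slip: on the \emph{vertical} strip $\{0\le\operatorname{Re}z\le1\}$ one has $\operatorname{Re}\cosh(\lambda z)=\cosh(\lambda x)\cos(\lambda y)$, which oscillates in $y=\operatorname{Im}z$ and therefore does not furnish the decay you claim as $|\operatorname{Im}z|\to\infty$. The correct damping factor is, for instance, $G_\epsilon(z)=\exp\bigl(-\epsilon\cos(\lambda(z-\tfrac12))\bigr)$, since
\[
\operatorname{Re}\cos\bigl(\lambda(z-\tfrac12)\bigr)=\cos\bigl(\lambda(x-\tfrac12)\bigr)\cosh(\lambda y)\ge c_\lambda\,\cosh(\lambda y)
\]
for $x\in[0,1]$ whenever $\lambda<\pi$. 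With this correction your limiting argument goes through unchanged.
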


For the proof of Lemma 2.19, we refer the readers to Lemma 4.2, Page 206 of \cite{SW1971}.

\section{ Noncommutative-commutative interpolation theorems}
\setcounter{equation}{0}

\setcounter{Theorem}{0}

\setcounter{Lemma}{0}

\setcounter{section}{3}
In this section, we present noncommutative-commutative Riesz-Throin interpolation
 theorems and noncommutative-commutative Stein interpolation theorems. Lemma 3.2 is used to prove Theorem 1.1.
 Lemma 3.4 is used to prove Lemma 9.1.

\begin{Lemma}(Noncommutative-commutative Riesz-Throin interpolation theorem)\label{lem3.1}
Let  $\mathfrak{H}$ be a separable Hilbert space and $f\in L^{p_{1}}\cap L^{p_{2}}$,
$g\in L^{q_{1}}\cap L^{q_{2}}$, $T_{z}(z\in\mathbb{C})$
 be an operator-function holomorphic in the strip $a\leq Re z\leq b$
 and $fT_{z}g\in\mathfrak{R}$, where $\mathfrak{R}$ is the set of all bounded linear operators. If
\begin{eqnarray}
&&\|fT_{a+iy}g\|_{\mathfrak{S}^{r_{1}}}\leq M_{0}\|f\|_{L^{p_{1}}}\|g\|_{L^{q_{1}}},
\,(-\infty<y<+\infty),\label{3.01}\\
&&\|fT_{b+iy}g\|_{\mathfrak{S}^{r_{2}}}\leq M_{1}\|f\|_{L^{p_{2}}}\|g\|_{L^{q_{2}}},
\,(-\infty<y<+\infty),\label{3.02}
\end{eqnarray}
and for $h_{1},h_{2}\in \mathfrak{H}$
\begin{eqnarray}
&&\ln|(fT_{z}gh_{1},h_{2})|\leq N_{h_{1},h_{2}}exp(k_{h_{1},h_{2}}|Im z|),\, (a<Re z<b), \label{3.03}
\end{eqnarray}
where $0\leq k_{h_{1},h_{2}} <\frac{\pi}{b-a}$, $N_{h_{1},h_{2}}<\infty$.
Then, we have
\begin{eqnarray}
&&\|fT_{x+iy}g\|_{\mathfrak{S}^{r}}\leq M_{0}^{1-t_{x}}M_{1}^{t_{x}}\|f\|_{L^{p}}\|g\|_{L^{q}},
\,(a<x<b,\,-\infty<y<+\infty),\label{3.04}
\end{eqnarray}
where
\begin{eqnarray*}
&&\frac{1}{r}=\frac{1-t_{x}}{r_{1}}+\frac{t_{x}}{r_{2}},\,\frac{1}{p}=
\frac{1-t_{x}}{p_{1}}+\frac{t_{x}}{p_{2}},
\,\frac{1}{q}=\frac{1-t_{x}}{q_{1}}+\frac{t_{x}}{q_{2}},\,t_{x}=\frac{x-a}{b-a}.
\end{eqnarray*}

\end{Lemma}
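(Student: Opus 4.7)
The plan is to prove Lemma 3.1 by a complex-interpolation argument built on Schatten duality, with the key novelty being the use of the extended three-lines theorem (Lemma 2.19) in place of the classical one (Lemma 2.18), so as to accommodate the doubly-exponential growth condition \eqref{3.03} rather than requiring pointwise boundedness on the strip.

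First, I would use the duality characterization
\begin{equation*}
\|fT_{z}g\|_{\mathfrak{S}^{r}}=\sup\bigl\{|\mathrm{tr}(fT_{z}gB)|:\,\|B\|_{\mathfrak{S}^{r'}}\leq 1\bigr\},
\end{equation*}
and reduce to simple functions $f,g$ and finite-rank $B=\sum_{j}\sigma_{j}|u_{j}\rangle\langle v_{j}|$ (orthonormal $u_{j}$'s and $v_{j}$'s). Set $t(z)=\frac{z-a}{b-a}$, so $t(x+iy)=t_{x}$ on the central line, and introduce the analytic exponent families
\begin{equation*}
\alpha(z)=\tfrac{p}{p_{1}}(1-t(z))+\tfrac{p}{p_{2}}t(z),\quad \beta(z)=\tfrac{q}{q_{1}}(1-t(z))+\tfrac{q}{q_{2}}t(z),\quad \gamma(z)=\tfrac{r'}{r_{1}'}(1-t(z))+\tfrac{r'}{r_{2}'}t(z).
\end{equation*}
Define
\begin{equation*}
f_{z}=|f|^{\alpha(z)}\mathrm{sgn}(f),\qquad g_{z}=|g|^{\beta(z)}\mathrm{sgn}(g),\qquad B_{z}=\sum_{j}\sigma_{j}^{\gamma(z)}|u_{j}\rangle\langle v_{j}|,
\end{equation*}
and the scalar analytic function
\begin{equation*}
F(z)=\mathrm{tr}\bigl(f_{z}T_{z}g_{z}B_{z}\bigr).
\end{equation*}
By construction $\alpha(x+iy),\beta(x+iy),\gamma(x+iy)$ have real part $1$ at the target line, so $|F(x+iy)|$ recovers $|\mathrm{tr}(fT_{x+iy}gB)|$ up to normalization.

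Second, I would estimate $F$ on the two boundary lines. On $\mathrm{Re}\,z=a$, direct computation gives $\|f_{z}\|_{L^{p_{1}}}=\|f\|_{L^{p}}^{p/p_{1}}$, $\|g_{z}\|_{L^{q_{1}}}=\|g\|_{L^{q}}^{q/q_{1}}$, and $\|B_{z}\|_{\mathfrak{S}^{r_{1}'}}=\|B\|_{\mathfrak{S}^{r'}}^{r'/r_{1}'}$. By the Schatten Hölder inequality $|\mathrm{tr}(AB_{z})|\leq \|A\|_{\mathfrak{S}^{r_{1}}}\|B_{z}\|_{\mathfrak{S}^{r_{1}'}}$ together with hypothesis \eqref{3.01},
\begin{equation*}
|F(a+iy)|\leq M_{0}\,\|f\|_{L^{p}}^{p/p_{1}}\|g\|_{L^{q}}^{q/q_{1}}\|B\|_{\mathfrak{S}^{r'}}^{r'/r_{1}'}.
\end{equation*}
The symmetric estimate on $\mathrm{Re}\,z=b$ uses \eqref{3.02}.

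Third, I would verify that the growth hypothesis \eqref{3.03} transfers to $F$. Writing
\begin{equation*}
F(z)=\sum_{j}\sigma_{j}^{\gamma(z)}\bigl(f_{z}T_{z}g_{z}v_{j},u_{j}\bigr)
\end{equation*}
and expanding $f_{z}=e^{\alpha(z)\log|f|}\mathrm{sgn}(f)$, $g_{z}=e^{\beta(z)\log|g|}\mathrm{sgn}(g)$ on the (finite) supports of the simple functions $f,g$, one checks $|\sigma_{j}^{\gamma(z)}|,|f_{z}|,|g_{z}|$ grow at most like $C_{j}e^{C_{j}|\mathrm{Im}\,z|}$ on the strip. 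Multiplying these with the inner-product bound from \eqref{3.03} yields $\ln|F(z)|\leq N'\exp(k'|\mathrm{Im}\,z|)$ for some $0\leq k'<\pi/(b-a)$, since the polynomial factors $|\mathrm{Im}\,z|$ coming from the analytic exponents are absorbed into a slightly larger but still admissible exponential rate.

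Fourth, I would apply Lemma 2.19 to $F$ on the strip $a\leq \mathrm{Re}\,z\leq b$, yielding a bound at $z=x+iy$ of the form $M_{0}^{1-t_{x}}M_{1}^{t_{x}}$ times a product of the boundary normalizations. Because of the identities $\frac{1}{p}=\frac{1-t_{x}}{p_{1}}+\frac{t_{x}}{p_{2}}$, $\frac{1}{q}=\frac{1-t_{x}}{q_{1}}+\frac{t_{x}}{q_{2}}$, $\frac{1}{r'}=\frac{1-t_{x}}{r_{1}'}+\frac{t_{x}}{r_{2}'}$, the normalization factors collapse to $\|f\|_{L^{p}}\|g\|_{L^{q}}\|B\|_{\mathfrak{S}^{r'}}$. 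Taking the supremum over unit-ball $B\in\mathfrak{S}^{r'}$ and removing the simple-function reduction by density proves \eqref{3.04}.

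The main obstacle I expect is the third step: carefully tracking the combined exponential growth of the modified weights $f_{z},g_{z},B_{z}$ against \eqref{3.03}, so as to keep the effective growth rate $k'$ strictly below $\pi/(b-a)$; this is delicate because both the exponent functions $\alpha,\beta,\gamma$ and the reference data $(f,g,\sigma_{j})$ contribute, and one must rely on the initial reduction to simple $f,g$ and finite-rank $B$ to bound the moduli uniformly. A density argument at the end (approximating general $f,g$ and $B$) must then be performed so as to preserve all three operator mapping properties — this passage to the limit is routine but requires the Schatten-norm boundary estimates to be uniform in the truncation.
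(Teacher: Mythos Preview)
Your approach is essentially the same as the paper's: reduce to simple $f,g$ and a finite-rank dual operator, build analytic families $f_{z},g_{z},B_{z}$ by raising moduli to linear-in-$z$ exponents, define the trace functional $F(z)$, bound it on the two boundary lines via the Schatten--H\"older inequality combined with \eqref{3.01}--\eqref{3.02}, and apply a three-lines theorem.

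Two small corrections. First, you do not need Lemma~2.19; the paper uses Lemma~2.18, which already includes the doubly-exponential growth hypothesis \eqref{2.045} in its statement and yields the clean conclusion $|F(x+iy)|\leq M_{0}^{1-t_{x}}M_{1}^{t_{x}}$. Lemma~2.19 is reserved for the Stein-type situation (Lemmas~3.3--3.4) where the boundary bounds $M_{j}(y)$ genuinely depend on $y$. Second, your ``main obstacle'' is not one: since $f,g$ are simple and $B$ is finite-rank, the factors $|f_{z}|,|g_{z}|,|\sigma_{j}^{\gamma(z)}|$ are \emph{uniformly bounded} on the closed strip (because $|a^{w}|=a^{\mathrm{Re}\,w}$ and $\mathrm{Re}\,\alpha(z),\mathrm{Re}\,\beta(z),\mathrm{Re}\,\gamma(z)$ range over compact intervals). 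So after expanding $F(z)$ as a finite sum of terms $c(z)\,(\chi_{E_{j}}T_{z}\chi_{F_{l}}v,u)$ with $|c(z)|\leq C$, the growth rate of $\ln|F(z)|$ is exactly the maximum of the finitely many $k_{h_{1},h_{2}}$ appearing in \eqref{3.03}, strictly below $\pi/(b-a)$; no ``slightly larger'' rate is needed. The paper makes this explicit in \eqref{3.014}.
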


\begin{proof}
Inspired by Theorem 13.1, Page 137 of \cite{GK1965}, we prove Lemma 3.1.

We denote $\alpha_{j}=\frac{1}{p_{j}}, \gamma_{j}=\frac{1}{q_{j}}$, $(j=1,2)$,
$\alpha_{3}=\frac{1}{p}$, $\gamma_{3}=\frac{1}{q}$ and for $\forall z\in\mathbb{C}$, we define
\begin{eqnarray}
&&\alpha(z)=\frac{b-z}{b-a}\alpha_{1}+\frac{z-a}{b-a}\alpha_{2},\,
\gamma(z)=\frac{b-z}{b-a}\gamma_{1}+\frac{z-a}{b-a}\gamma_{2},\label{3.05}\\
&&\alpha+\beta z=r^{\prime}\left(\frac{b-z}{b-a}\frac{1}{r^{\prime}_{1}}
+\frac{z-a}{b-a}\frac{1}{r^{\prime}_{2}}\right),\label{3.06}
\end{eqnarray}
where $\frac{1}{r_{j}}+\frac{1}{r^{\prime}_{j}}=1$, $(j=1,2)$. To prove \eqref{3.04},
it suffices to prove
\begin{eqnarray}
&&\|fT_{x+iy}g\|_{\mathfrak{S}^{r}}\leq M_{0}^{1-t_{x}}M_{1}^{t_{x}},\label{3.07}
\end{eqnarray}
where $f,g\in \mathscr{S}$ and $\|f\|_{L^{p}}=1=\|g\|_{L^{q}}$.

Without loss of generality, we can assume that $f=\sum\limits_{j=1}^{m}a_{j}\chi_{E_{j}}$,
 $g=\sum\limits_{l=1}^{n}b_{l}\chi_{F_{l}}$ and $\|f\|_{L^{p}}=1=\|g\|_{L^{p}}$.
  Let $a_{j}=|a_{j}|e^{i\theta_{j}}$, $b_{l}=|b_{l}|e^{i\phi_{l}}$. For $z\in\mathbb{C}$, we define
\begin{eqnarray}
&&f_{z}=\sum_{j=1}^{m}|a_{j}|^{\frac{\alpha(z)}{\alpha_{3}}}e^{i\theta_{j}}\chi_{E_{j}},\label{3.08}\\
&&g_{z}=\sum_{l=1}^{n}|b_{l}|^{\frac{\gamma(z)}{\gamma_{3}}}e^{i\phi_{l}}\chi_{F_{l}}.\label{3.09}
\end{eqnarray}
We assume that  $K$ is  an arbitrary finite-dimensional operator and $K=UH$ be its polar
decomposition, where $U$ is a partially isometric operator.
 Let $\|K\|_{\mathfrak{S}^{r^{\prime}}}=1$, $\frac{1}{r}+\frac{1}{r^{\prime}}=1$.  We define
\begin{eqnarray}
&&F(z)=Tr[f_{z}T_{z}g_{z}UH^{\alpha+\beta z}],\, (a\leq Re z\leq b).\label{3.010}
\end{eqnarray}
Obviously,
\begin{eqnarray*}
&&H=\sum_{j=1}^{r(H)}\lambda_{j}|\varphi_{j}\rangle\langle\varphi_{j}|
\end{eqnarray*}
which is  the spectral decomposition of the operator $H$.  Here
$\varphi_{j}$ and $\lambda_{j}$ are eigenvalues and eigenvectors of $H$
and   $r(H)$ denotes the dimension of $H$. Then, according to spectral mapping theorem
 of Page 227 in \cite{Y}, we have
\begin{eqnarray*}
H^{\alpha+\beta z}=\sum_{j=1}^{r(H)}\lambda_{j}^{\alpha+\beta z}|
\varphi_{j}\rangle\langle U\varphi_{j}|.
\end{eqnarray*}
From the above equality and (\ref{3.010}), we have
\begin{eqnarray}
&&f_{z}T_{z}g_{z}UH^{\alpha+\beta z}=
\sum_{j=1}^{r(H)}\lambda_{j}^{\alpha+\beta z}|\varphi_{j}\rangle
\langle f_{z}T_{z}g_{z}U\varphi_{j}|\label{3.011}
\end{eqnarray}
and
\begin{eqnarray}
&&F(z)=\sum_{j=1}^{r(H)}\lambda_{j}^{\alpha+\beta z}(f_{z}T_{z}g_{z}U\varphi_{j},\varphi_{j}).\label{3.012}
\end{eqnarray}
From \eqref{3.08} and \eqref{3.09}, we have
\begin{eqnarray}
&&f_{z}T_{z}g_{z}=\sum\limits_{l=1}^{n}\sum\limits_{j=1}^{m}|a_{j}|^{\frac{\alpha(z)}{\alpha_{3}}}
|b_{l}|^{\frac{\gamma(z)}{\gamma_{3}}}
e^{i\theta_{j}}e^{i\phi_{l}}\chi_{E_{j}}T_{z}\chi_{F_{l}}.\label{3.013}
\end{eqnarray}
Since $T_{z}$ is holomorphness, by using \eqref{3.012} and \eqref{3.013}, we have
 $F(z)$ is holomorphness in the strip $a\leq Re z\leq b$. From \eqref{3.03},
 \eqref{3.012} and \eqref{3.013}, we have
\begin{align}
\ln |F(z)|&=\ln\left|\sum_{j=1}^{r(H)}\lambda_{j}^{\alpha+\beta z}\sum\limits_{l=1}^{n}
\sum\limits_{j=1}^{m}|a_{j}|^{\frac{\alpha(z)}{\alpha_{3}}}
|b_{l}|^{\frac{\gamma(z)}{\gamma_{3}}}
e^{i\theta_{j}}e^{i\phi_{l}}(\chi_{E_{j}}T_{z}\chi_{F_{l}}U\varphi_{j},\varphi_{j})\right|\nonumber\\
&\leq\ln\left[\left|\sum_{j=1}^{r(H)}\lambda_{j}^{\alpha+\beta z}\sum\limits_{l=1}^{n}
\sum\limits_{j=1}^{m}|a_{j}|^{\frac{\alpha(z)}{\alpha_{3}}}
|b_{l}|^{\frac{\gamma(z)}{\gamma_{3}}}\nonumber\right|
\left|(\chi_{E_{j}}T_{z}\chi_{F_{l}}U\varphi_{j},\varphi_{j})\right|\right]\nonumber\\
&\leq  \ln\left[\left|\sum_{j=1}^{r(H)}\lambda_{j}^{\alpha+\beta z}\sum\limits_{l=1}^{n}
\sum\limits_{j=1}^{m}|a_{j}|^{\frac{\alpha(z)}{\alpha_{3}}}
|b_{l}|^{\frac{\gamma(z)}{\gamma_{3}}}\right| e^{N_{j}exp(k_{j}Im z)}\right]\nonumber\\
&\leq \ln (M)+N_{\max}exp(k_{\max} Im z)\leq 2N_{\max} exp(k_{\max} Im z),\label{3.014}
\end{align}
where $N=\max\left\{\ln M, N_{\max}\right\}$, $M=\sum\limits_{j=1}^{k}|\lambda_{j}|^{\alpha+\beta Rez}
\sum\limits_{l=1}^{n}\sum\limits_{j=1}^{m}|a_{j}|^{Re \frac{\alpha(z)}{\alpha_{3}}}|b_{l}|^{Re
\frac{\gamma(z)}{\gamma_{3}}}$, $N_{\max}=\max\{N_{j}\}$, $k_{\max}=\max\{k_{j}\}$. From
\eqref{3.014}, we have that $F(z)$  satisfies \eqref{2.045}.

\noindent We claim
\begin{eqnarray}
&&|F(a+iy)|\leq M_{0},\label{3.015}\\
&&|F(b+iy)|\leq M_{1}.\label{3.016}
\end{eqnarray}
For \eqref{3.015}, by using \eqref{3.05}, we have
\begin{align}
\alpha(a+iy)&=\frac{b-(a+iy)}{b-a}\alpha_{1}+\frac{a+iy-a}{b-a}\alpha_{2}\nonumber\\
&=\frac{1}{p_{1}}+\frac{iy}{b-a}\left(\frac{1}{p_{2}}-\frac{1}{p_{1}}\right)\nonumber\\
\gamma(a+iy)&=\frac{b-(a+iy)}{b-a}\gamma_{1}+\frac{a+iy-a}{b-a}\gamma_{2}\nonumber\\
&=\frac{1}{q_{1}}+\frac{iy}{b-a}\left(\frac{1}{q_{2}}-\frac{1}{q_{1}}\right).\label{3.017}
\end{align}
When $p_{1},q_{1}\neq+\infty$, from \eqref{3.08}, \eqref{3.09} and \eqref{3.017}, we have
\begin{eqnarray}
&&|f_{a+iy}|^{p_{1}}=\left|e^{i\arg f}|f|^{\frac{iyp}{b-a}\left(\frac{1}{p_{2}}-\frac{1}{p_{1}}\right)}
|f|^{\frac{p}{p_{1}}}\right|^{p_{1}}
=|f|^{p},\label{3.018}\\
&&|g_{a+iy}|^{q_{1}}=\left|e^{i\arg g}|g|^{\frac{iyp}{b-a}\left(\frac{1}{q_{2}}-\frac{1}{q_{1}}\right)}
|g|^{\frac{q}{q_{1}}}\right|^{q_{1}}
=|g|^{q}.\label{3.019}
\end{eqnarray}
When $p_{1},q_{1}=+\infty$, we have
\begin{eqnarray}
&&|f_{a+iy}|=\left|e^{i\arg f}|f|^{\frac{iyp}{b-a}\frac{1}{p_{2}}}\right|=1,\,|g_{a+iy}|=\left|e^{i\arg g}|g|^{\frac{iyp}{b-a}\frac{1}{q_{2}}}\right|=1.\label{3.020}
\end{eqnarray}
From \eqref{3.01}, \eqref{3.010}, \eqref{3.018}-\eqref{3.020} and H\"older
inequality in Schatten space, which can be found in  Theorem 3.4 of \cite{Y1975}, we have
\begin{align}
|F(a+iy)|&\leq \|f_{a+iy}T_{a+iy}g_{a+iy}\|_{\mathfrak{S}^{r_{1}}}
\|H^{\alpha+a\beta+i\beta y}\|_{\mathfrak{S}^{r^{\prime}_{1}}}\nonumber\\
&\leq M_{0}\|f_{a+iy}\|_{L^{p_{1}}}\|g_{a+iy}\|_{L^{q_{1}}}
\|H^{\alpha+a\beta}\|_{\mathfrak{S}^{r^{\prime}_{1}}}\nonumber\\
&\leq M_{0}\|f\|_{L^{p}}^{\frac{p}{p_{1}}}\|g\|_{L^{q}}^{\frac{q}{q_{1}}}
\|H\|_{\mathfrak{S}^{r^{\prime}_{1}(\alpha+a\beta)}}^{\alpha+a\beta}
\nonumber\\
&= M_{0}\|K\|_{\mathfrak{S}^{r^{\prime}}}^{\alpha+a\beta}=M_{0},\label{3.021}
\end{align}
where we used the following fact
\begin{eqnarray*}
&&r^{\prime}_{1}(\alpha+a\beta)=r^{\prime}
\end{eqnarray*}
and
\begin{eqnarray*}
&&\|H^{l}\|_{\mathfrak{S}^{r}}=\|H\|_{\mathfrak{S}^{lr}}^{l}(H\geq0).
\end{eqnarray*}
From \eqref{3.021}, we have that \eqref{3.015} is valid.
For \eqref{3.016}, by using \eqref{3.05}, we have
\begin{align}
\alpha(b+iy)&=\frac{b-(b+iy)}{b-a}\alpha_{1}+\frac{b+iy-a}{b-a}\alpha_{2}\nonumber\\
&=\frac{1}{p_{2}}+\frac{iy}{b-a}\left(\frac{1}{p_{2}}-\frac{1}{p_{1}}\right)\nonumber\\
\gamma(b+iy)&=\frac{b-(b+iy)}{b-a}\gamma_{1}+\frac{b+iy-a}{b-a}\gamma_{2}\nonumber\\
&=\frac{1}{q_{2}}+\frac{iy}{b-a}\left(\frac{1}{q_{2}}-\frac{1}{q_{1}}\right).\label{3.022}
\end{align}
When $p_{2},q_{2}\neq+\infty$, from \eqref{3.08}, \eqref{3.09} and \eqref{3.022}, we have
\begin{eqnarray}
&&|f_{b+iy}|^{p_{2}}=\left|e^{i\arg f}|f|^{\frac{iyp}{b-a}\left(\frac{1}{p_{2}}-\frac{1}{p_{1}}\right)}
|f|^{\frac{p}{p_{2}}}\right|^{p_{2}}
=|f|^{p},\label{3.023}\\
&&|g_{b+iy}|^{q_{2}}=\left|e^{i\arg g}|g|^{\frac{iyp}{b-a}\left(\frac{1}{q_{2}}-\frac{1}{q_{1}}\right)}
|g|^{\frac{q}{q_{2}}}\right|^{q_{2}}
=|g|^{q}.\label{3.024}
\end{eqnarray}
When $p_{2},q_{2}=+\infty$, we have
\begin{eqnarray}
&&|f_{a+iy}|=\left|e^{i\arg f}|f|^{-\frac{iyp}{b-a}\frac{1}{p_{1}}}\right|=1,\,|g_{a+iy}|=\left|e^{i\arg g}|g|^{-\frac{iyp}{b-a}\frac{1}{q_{1}}}\right|=1.\label{3.025}
\end{eqnarray}
From \eqref{3.023}-\eqref{3.025}, by using a proof similar to \eqref{3.021}, we have
\begin{align}
&|F(b+iy)|\leq M_{1}.\label{3.026}
\end{align}
From \eqref{3.026}, we have that \eqref{3.016} is valid.

\noindent Since $F(z)$  satisfies \eqref{2.045}, by using \eqref{3.015},
\eqref{3.016} and Lemma 2.18,  we have
\begin{eqnarray}
&&|F(x+iy)|\leq M_{0}^{1-t_{x}}M_{1}^{t_{x}},\,(-\infty<y<+\infty).\label{3.027}
\end{eqnarray}
Since $\alpha+\beta x=1$ and $\alpha(x)=\frac{1}{p}$, $\gamma(x)=\frac{1}{q}$, for $y=0$,
by using \eqref{3.027}, we have
\begin{eqnarray}
&&|F(x)|=|Tr[fT_{x}gK]|\leq M_{0}^{1-t_{x}}M_{1}^{t_{x}}.\label{3.028}
\end{eqnarray}
From \eqref{3.028}, Lemma 12.1, Page 132 of \cite{GK1965} and $fT_{x+iy}g\in\mathfrak{S}^{r}$, we have
\begin{eqnarray}
&&\|fT_{x+iy}g\|_{\mathfrak{S}^{r}}\leq M_{0}^{1-t_{x}}M_{1}^{t_{x}}.\label{3.029}
\end{eqnarray}

This completes the proof of Lemma 3.1.
\end{proof}

\begin{Lemma}(Noncommutative-commutative Riesz-Throin interpolation theorem related to
 the mixed Lebesgue spaces)\label{lem3.2}
Let $\mathfrak{H}$ be a separable Hilbert space and   $f\in L^{P_{1}}(\R^{d})\cap
 L^{P_{2}}(\R^{d})(d\geq2)$, $g\in L^{Q_{1}}(\R^{d})\cap L^{Q_{2}}(\R^{d})(d\geq2)$,
 $P=(p_{1},p_{2},\ldots,p_{d})$, $Q=(q_{1},q_{2},\ldots,q_{d})$,  $T_{z}(z\in\mathbb{C})$
  be an operator-function holomorphic in the strip $a\leq Re z\leq b$ and
   $fT_{z}g\in\mathfrak{R}$,
   where $\mathfrak{R}$ is the set of all all bounded linear operators. If
\begin{eqnarray}
&&\|fT_{a+iy}g\|_{\mathfrak{S}^{r_{1}}}\leq M_{0}\|f\|_{L^{P_{1}}}\|g\|_{L^{Q_{1}}},\,(-\infty<y<+\infty),\label{3.030}\\
&&\|fT_{b+iy}g\|_{\mathfrak{S}^{r_{2}}}\leq M_{1}\|f\|_{L^{P_{2}}}\|g\|_{L^{Q_{2}}},\,(-\infty<y<+\infty),\label{3.031}
\end{eqnarray}
and for $h_{1},h_{2}\in\mathfrak{H}$
\begin{eqnarray}
&&\ln|(fT_{z}gh_{1},h_{2})|\leq N_{h_{1},h_{2}}exp(k_{h_{1},h_{2}}|Im z|),\,
 (a<Re z<b), \label{3.032}
\end{eqnarray}
where $0\leq k_{h_{1},h_{2}} <\frac{\pi}{b-a}$, $N_{h_{1},h_{2}}<\infty$.
Then, we have
\begin{eqnarray}
&&\|fT_{\theta+iy}g\|_{\mathfrak{S}^{r}}\leq M_{0}^{1-t_{\theta}}
M_{1}^{t_{\theta}}\|f\|_{L^{P}}\|g\|_{L^{P}},\,
(a<\theta<b,\,-\infty<y<+\infty),\label{3.033}
\end{eqnarray}
where
\begin{eqnarray*}
&&\|f\|_{L^{P}}=\|f\|_{(p_{1},p_{2},\ldots,p_{d})}=\left(\int_{\mathbf{R}}
\ldots\left(\int_{\mathbf{R}}
|f(x_{1},x_{2},\ldots,x_{d})|^{p_{1}}dx_{1}\right)
^{\frac{p_{2}}{p_{1}}}\ldots dx_{n}\right)^{\frac{1}{p_{d}}},\\
&&\frac{1}{r}=\frac{1-t_{\theta}}{r_{1}}+\frac{t_{\theta}}{r_{2}},\,\frac{1}{P}
=\frac{1-t_{\theta}}{P_{1}}+\frac{t_{\theta}}{P_{2}}
,\,\frac{1}{Q}=\frac{1-t_{\theta}}{Q_{1}}+\frac{t_{\theta}}{Q_{2}},\,t_{\theta}
=\frac{\theta-a}{b-a}.
\end{eqnarray*}

\end{Lemma}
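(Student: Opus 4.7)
The plan is to mirror the proof of Lemma 3.1 step by step, adapting the construction of the auxiliary analytic families $f_z$ and $g_z$ so that the boundary computations interact correctly with the product structure of the mixed norms $L^{P_1}, L^{P_2}, L^{Q_1}, L^{Q_2}$. First I would reduce to a dense subclass: approximate $f$ and $g$ by finite rectangular simple functions of product form $f = \sum_{\mathbf{i}} a_{\mathbf{i}} \chi_{E^1_{i_1} \times \cdots \times E^d_{i_d}}$, $g = \sum_{\mathbf{l}} b_{\mathbf{l}} \chi_{F^1_{l_1} \times \cdots \times F^d_{l_d}}$, normalized so that $\|f\|_{L^P} = \|g\|_{L^Q} = 1$. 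On such a product structure the mixed norm factors coordinate-by-coordinate, which is what lets us interpolate each exponent $p_j$ and $q_j$ independently.

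Next, in analogy with \eqref{3.08}--\eqref{3.09}, I would introduce analytic families $f_z, g_z$ where the exponent in the $j$-th direction is interpolated linearly in $z$. Set $\alpha_j(z) = \tfrac{b-z}{b-a}\tfrac{1}{p_j^{(1)}} + \tfrac{z-a}{b-a}\tfrac{1}{p_j^{(2)}}$ with $\alpha_j^{(3)} = 1/p_j$, and analogously $\gamma_j(z)$ for $g$; then define $f_z, g_z$ on each rectangle using $|a_{\mathbf{i}}|$ raised to a combination of $\alpha_j(z)/\alpha_j^{(3)}$ (resp.\ $\gamma_j(z)/\gamma_j^{(3)}$). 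Take $\alpha + \beta z$ as in \eqref{3.06} and a finite-dimensional operator $K = UH$ with $\|K\|_{\mathfrak{S}^{r'}} = 1$. Then form
\begin{equation*}
F(z) = \mathrm{Tr}\bigl[f_z\, T_z\, g_z\, U H^{\alpha+\beta z}\bigr], \qquad a \le \mathrm{Re}\,z \le b.
\end{equation*}
Holomorphy of $F$ in the strip follows from holomorphy of $T_z$ and of $f_z, g_z$; the exponential bound $\ln|F(z)| \le N \exp(k|\mathrm{Im}\,z|)$ required by Lemma 2.18 follows from hypothesis \eqref{3.032} exactly as in \eqref{3.014}.

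On the boundary lines, I would show $|F(a+iy)| \le M_0$ and $|F(b+iy)| \le M_1$. Using the coordinate-wise construction, one checks that $\|f_{a+iy}\|_{L^{P_1}} = \|f\|_{L^P}^{p/p_1} = 1$ and $\|g_{a+iy}\|_{L^{Q_1}} = 1$ (and similarly at $\mathrm{Re}\,z = b$), combining the single-exponent identity from \eqref{3.018}--\eqref{3.019} applied to each factor in the iterated integral definition of the mixed norm. Then \eqref{3.030}, H\"older in mixed Lebesgue spaces, and H\"older in Schatten spaces (Theorem 3.4 of \cite{Y1975}) give the boundary bound, just as in \eqref{3.021}. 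Applying Lemma 2.18 yields $|F(\theta + iy)| \le M_0^{1-t_\theta} M_1^{t_\theta}$, and taking the supremum over all unit-norm $K \in \mathfrak{S}^{r'}$ via Lemma 12.1 of \cite{GK1965} produces \eqref{3.033}.

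The main obstacle is the construction of $f_z, g_z$ in Step 2: in Lemma 3.1 the pointwise identity $|f_{a+iy}|^{p_1} = |f|^p$ was automatic because there was a single exponent, whereas here one must make the $d$ different exponents $p_j^{(1)}, p_j^{(2)}$ all interact correctly with the iterated integral. The rectangular product structure of the approximating simple functions is essential, since it is only on such rectangles that the modulus $|f_z|$ can be written as a product $\prod_j \phi_j(x_j, z)$ whose $L^{p_j^{(1)}}$ norm in $x_j$ can be computed separately. Once this bookkeeping is organized, the rest of the proof is a verbatim transcription of \eqref{3.010}--\eqref{3.029} with mixed norms replacing single norms.
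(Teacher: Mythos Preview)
Your overall architecture (reduce to simple functions, form an analytic trace function $F(z)=\mathrm{Tr}[f_zT_zg_zUH^{\alpha+\beta z}]$, control the boundary lines via the Schatten H\"older inequality, and invoke Lemma~2.18) matches the paper's exactly. The gap is precisely where you flag it: the construction of $f_z,g_z$.

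You assert that for a rectangular simple function $f=\sum_{\mathbf i}a_{\mathbf i}\chi_{E^1_{i_1}\times\cdots\times E^d_{i_d}}$ the mixed norm ``factors coordinate-by-coordinate'' and that $|f_z|$ can be written as a product $\prod_j\phi_j(x_j,z)$. This is only true for a single tensor product $\phi_1(x_1)\cdots\phi_d(x_d)$; for a \emph{sum} of such terms neither the mixed norm nor the function itself factors, and a power of the scalar $a_{\mathbf i}$ cannot carry $d$ different interpolation exponents simultaneously. So the identity $\|f_{a+iy}\|_{L^{P_1}}=1$ cannot be obtained from \eqref{3.018}--\eqref{3.019} applied coordinatewise as you suggest.

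The paper resolves this with the Benedek--Panzone construction \cite{BP1961}: for each $1\le i\le d-1$ it introduces correction factors depending on the \emph{partial} mixed norms,
\[
f_i(x_{i+1},\ldots,x_d)=\bigl[\|\psi(\,\cdot\,,x_{i+1},\ldots,x_d)\|_{(p_1,\ldots,p_i)}\bigr]^{\frac{\alpha_{i+1}(z)}{\alpha_{i+1}}-\frac{\alpha_i(z)}{\alpha_i}},
\]
and sets $F_z=|\psi|^{\alpha_1(z)/\alpha_1}(\mathrm{sign}\,\psi)\prod_{i=1}^{d-1}f_i$. An induction on $d$ (carried out in \eqref{3.045}--\eqref{3.050}) then shows $\|F_{a+iy}\|_{L^{P_1}}=\|\psi\|_{L^P}^{\alpha_d(a)/\alpha_d}=1$, with the iterated integral telescoping because each $f_i$ supplies exactly the exponent adjustment needed when passing from the $i$-th to the $(i{+}1)$-st integration. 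These partial-norm factors, not any product structure of the simple function itself, are the missing mechanism; once you insert them, the rest of your outline goes through verbatim.
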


\begin{proof}
Inspired by Theorem 1, Page 313 of \cite{BP1961} and Theorem 13.1,
Page 137 of \cite{GK1965}, we prove Lemma 3.2.
Let $M=\left(\R^{d},\,\prod\limits_{j=1}^{d}m_{j}\right)$ be the Lebesgue measure,  where
$m_{j}(1\leq j\leq d)$. We denote $H(M)$ by
\begin{eqnarray*}
H(M)=\left\{f|f=\sum_{\nu_{1}=1}^{n_{1}}\ldots\sum_{\nu_{d}=1}^{n_{d}} c_{\nu}\chi_{A_{\nu}},\,A_{\nu_{1}}\times \ldots\times A_{\nu_{d}}=A_{\nu}\subset I_{m}=A_{1m}\times\ldots\times A_{dm}\right\},
\end{eqnarray*}
where $\nu=(\nu_{1},\nu_{2},\ldots,\nu_{d})$,  $1\leq i\leq d$,
$c_{\nu}=c(\nu_{1},\nu_{2},\ldots,\nu_{d})$, $m_{i}(A_{im})<\infty,\,
\cup_{m=1}^{\infty}I_{m}=\R^{d}$,  $A_{im}\subset\mathbf{R}$  and $\chi_{A_{\nu}}$
 is the characteristic function.

\noindent Let $\psi,\phi\in H(M)$ such that $\|\psi\|_{L^{P}}=\|\phi\|_{L^{Q}}=1$.
 We assume that  $\|f\|_{L^{P}}=\|g\|_{L^{Q}}=1$.  To prove \eqref{3.033}, it
 suffices to prove
\begin{eqnarray}
&&\|fT_{\theta+iy}g\|_{\mathfrak{S}^{r}}\leq M_{0}^{1-t_{\theta}}M_{1}^{t_{\theta}}.\label{3.034}
\end{eqnarray}
We denote
\begin{eqnarray*}
&&P_{i}=(p_{1i},p_{2i},\ldots,p_{di})=\left(\frac{1}{\alpha_{1i}},\frac{1}{\alpha_{2i}},
\ldots,\frac{1}{\alpha_{di}}\right),\,(i=1,2),\\
&&Q_{i}=(q_{1i},q_{2i},\ldots,q_{di})=\left(\frac{1}{\gamma_{1i}},\frac{1}{\gamma_{2i}},
\ldots,\frac{1}{\gamma_{di}}\right),\,(i=1,2),\\
&&P=(p_{1},p_{2},\ldots,p_{d})=\left(\frac{1}{\alpha_{1}},\frac{1}{\alpha_{2}},\ldots,
\frac{1}{\alpha_{d}}\right),\\
&&Q=(q_{1},q_{2},\ldots,q_{d})=\left(\frac{1}{\gamma_{1}},\frac{1}{\gamma_{2}},\ldots,
\frac{1}{\gamma_{d}}\right).
\end{eqnarray*}
For $z\in\mathbb{C}$, we define
\begin{eqnarray}
\alpha_{j}(z)=\frac{b-z}{b-a}\alpha_{j1}+\frac{z-a}{b-a}\alpha_{j2},
\,\gamma_{j}(z)=\frac{b-z}{b-a}\gamma_{j1}+\frac{z-a}{b-a}\gamma_{j2} ,\,j=1,2,\ldots,d.\label{3.035}
\end{eqnarray}
From \eqref{3.035}, we have $\alpha_{j}(x)=\alpha_{j}$,
$\gamma_{j}(x)=\gamma_{j}$, $j=1,2,\ldots,d$. For $1\leq i\leq d-1$,    we  define
\begin{align}
f_{i}&=f_{i}(x_{i+1},x_{i+2},\ldots,x_{d})\nonumber\\
&=\left[\|\psi(x_{1},x_{2},\ldots,x_{i},x_{i+1},\ldots,x_{d})\|_{(p_{1},p_{2},\ldots,p_{i})}\right]
^{\frac{\alpha_{i+1}(z)}{\alpha_{i+1}}-\frac{\alpha_{i}(z)}{\alpha_{i}}},\label{3.036}
\end{align}
and for $1\leq j\leq d-1$,
\begin{align}
g_{j}&=g_{j}(y_{j+1},y_{j+2},\ldots,y_{d})\nonumber\\
&=\left[\|\phi(y_{1},y_{2},\ldots,y_{j},y_{j+1},\ldots,y_{d})\|_{(q_{1},q_{2},\ldots,q_{j})}\right]
^{\frac{\gamma_{j+1}(z)}{\gamma_{j+1}}-\frac{\gamma_{j}(z)}{\gamma_{j}}}.\label{3.037}
\end{align}
From \eqref{3.036} and \eqref{3.037}, we define
\begin{eqnarray}
&&F_{z}(x_{1},x_{2},\ldots,x_{d})=|\psi|^{\frac{\alpha_{1}(z)}{\alpha_{1}}}(sign\psi)
\prod\limits_{i=1}^{d-1}f_{i},\label{3.038}\\
&&G_{z}(y_{1},y_{2},\ldots,y_{d})=|\phi|^{\frac{\gamma_{1}(z)}{\gamma_{1}}}(sign\phi)
\prod\limits_{j=1}^{d-1}g_{j}.\label{3.039}
\end{eqnarray}
From \cite{BP1961}, we have that $F(z), G(z)\in H(M)$.

\noindent We assume that $K$ is  an arbitrary finite-dimensional operator and $K=UH$
its polar decomposition, where $U$ is a partially isometric operator.
Let $\|K\|_{\mathfrak{S}^{r^{\prime}}}=1$ and  $\frac{1}{r}+\frac{1}{r^{\prime}}=1$.
We define
\begin{eqnarray}
&&\Phi(z)=Tr[F_{z}T_{z}G_{z}UH^{\alpha+\beta z}],\, (a\leq Re z\leq b),\label{3.040}
\end{eqnarray}
where
\begin{eqnarray}
&&\alpha+\beta z=r^{\prime}\left(\frac{b-z}{b-a}\frac{1}{r^{\prime}_{1}}+\frac{z-a}{b-a}
\frac{1}{r^{\prime}_{2}}\right)\label{3.041}
\end{eqnarray}
and  $\frac{1}{r_{j}}+\frac{1}{r^{\prime}_{j}}=1$, $(j=1,2)$.

\noindent Obviously,
\begin{eqnarray*}
&&H=\sum_{l=1}^{r(H)}\lambda_{l}|\varphi_{l}\rangle\langle\varphi_{l}|
\end{eqnarray*}
be the spectral decomposition of the operator $H$.  Here
$\varphi_{l}$ and $\lambda_{l}$ are eigenvalues and eigenvectors of $H$
and  and $r(H)$ denotes the dimension of $H$. Then, according to spectral mapping
 theorem of Page 227 in \cite{Y}, we have
\begin{eqnarray*}
H^{\alpha+\beta z}=\sum_{l=1}^{r(H)}\lambda_{l}^{\alpha+\beta z}|\varphi_{l}
\rangle\langle U\varphi_{l}|.
\end{eqnarray*}
From the above equality and (\ref{3.040}), we have
\begin{eqnarray}
&&F_{z}T_{z}G_{z}UH^{\alpha+\beta z}=\sum_{l=1}^{r(H)}\lambda_{j}^{\alpha+\beta z}
|\varphi_{l}\rangle\langle F_{z}T_{z}G_{z}U\varphi_{l}|
\label{3.042}
\end{eqnarray}
and
\begin{eqnarray}
&&\Phi(z)=\sum_{l=1}^{r(H)}\lambda_{l}^{\alpha+\beta z}(F_{z}T_{z}
G_{z}U\varphi_{l},\varphi_{l}).\label{3.043}
\end{eqnarray}
From \eqref{3.032}, \eqref{3.043} and \cite{BP1961},  since $F(z), G(z)\in H(M)$,
we have that $\Phi(z)$ is holomorphness in the strip $a\leq Re z\leq b$
and satisfies \eqref{2.045}.

We claim
\begin{eqnarray}
&&\|F_{a+iy}\|_{L^{P_{1}}}=\|\psi\|_{L^{P}}^{\frac{\alpha_{d}(a)}{\alpha_{d}}}
=\|\psi\|_{(p_{1},p_{2},\ldots,p_{d-1},p_{d})}^{\frac{\alpha_{d}(a)}{\alpha_{d}}}=1.\label{3.044}
\end{eqnarray}
We prove \eqref{3.044} by induction, for case $d=2$, by using \eqref{3.036} and
 \eqref{3.038}, we have
\begin{eqnarray}
&&F_{a+iy}(x_{1},x_{2})=|\psi|^{\frac{\alpha_{1}(a+iy)}{\alpha_{1}}}(sign\psi)
[\|\psi(x_{1},x_{2})\|_{p_{1}}]^{\frac{\alpha_{2}(a+iy)}{\alpha_{2}}-
\frac{\alpha_{1}(a+iy)}{\alpha_{1}}}.\label{3.045}
\end{eqnarray}
From \eqref{3.035}, we have
\begin{align}
\alpha_{1}(a+iy)&=\frac{b-(a+iy)}{b-a}\alpha_{11}+\frac{a+iy-a}{b-a}\alpha_{12}\nonumber\\
&=\alpha_{11}+\frac{iy}{b-a}\left(\alpha_{12}-\alpha_{11}\right).\label{3.046}
\end{align}
From \eqref{3.046}, we have
\begin{eqnarray}
&&\left||\psi|^{\frac{\alpha_{1}(a+iy)}{\alpha_{1}}}\right|^{p_{11}}=|\psi|^{p_{11}},\label{3.047}\\
&&|f_{1}|^{p_{21}}=\left|[\|\psi(x_{1},x_{2})\|_{p_{1}}]^{\frac{\alpha_{2}(a+iy)}{\alpha_{2}}
-\frac{\alpha_{1}(a+iy)}{\alpha_{1}}}\right|^{p_{21}}\nonumber\\
&&=[\|\psi(x_{1},x_{2})\|_{p_{1}}]^{p_{2}-\frac{p_{1}p_{21}}{p_{11}}}.\label{3.048}
\end{eqnarray}
By using \eqref{3.047} and \eqref{3.048}, we have
\begin{eqnarray}
&&\|F_{a+iy}\|_{L^{P_{1}}}=\|F_{a+iy}\|_{(p_{11},p_{21})}=\left(\int_{\SR}|f_{1}|^{p_{21}}
\left(\int_{\SR}\left||\psi|^{\frac{\alpha_{1}(a+iy)}{\alpha_{1}}}\right|
^{p_{11}}
dx_{1}\right)^{\frac{p_{21}}{p_{11}}}d x_{2}\right)^{\frac{1}{p_{21}}}\nonumber\\
&&=\left(\int_{\SR}[\|\psi(x_{1},x_{2})\|_{p_{1}}]^{p_{2}-\frac{p_{1}p_{21}}{p_{11}}}
\|\psi(x_{1},x_{2})\|_{p_{1}}^{\frac{p_{1}p_{21}}{p_{11}}}d x_{2}\right)^{\frac{1}{p_{21}}}\nonumber\\
&&=\left(\int_{\SR}\|\psi(x_{1},x_{2})\|_{p_{1}}^{p_{2}}d x_{2}\right)^{\frac{1}{p_{21}}}
=\|\psi\|_{(p_{1},p_{2})}^{\frac{p_{2}}{p_{21}}}
=\|\psi\|_{(p_{1},p_{2})}^{\frac{\alpha_{2}(a)}{\alpha_{2}}},\label{3.049}
\end{eqnarray}
where we used $\alpha_{2}(a)=\alpha_{21}$. From \eqref{3.049}, we have that
\eqref{3.044} is valid for case $d=2$.

Assume that \eqref{3.044} is valid for  case $d-1$, then we prove that
\eqref{3.044} is valid for case $d$.
\begin{align}
\|F_{a+iy}\|_{L^{P_{1}}}&=\left\|\|F_{a+iy}\|_{(p_{11},p_{21},\ldots,p_{(d-1)1})}\right\|_{p_{d1}}\nonumber\\
&=\left\|\|\psi\|_{(p_{1},p_{2},\ldots,p_{n-1})}^{\frac{\alpha_{d-1}(a)}{\alpha_{d-1}}}
\|\psi\|_{(p_{1},p_{2},\ldots,p_{d-1})}^{\frac{\alpha_{d}(a+iy)}{\alpha_{n}}-
\frac{\alpha_{d-1}(a+iy)}{\alpha_{d-1}}}\right\|_{p_{d1}}\nonumber\\
&=\left\|\|\psi\|_{(p_{1},p_{2},\ldots,p_{d-1})}^{\frac{\alpha_{d-1}(a)}{\alpha_{d-1}}}
\|\psi\|_{(p_{1},p_{2},\ldots,p_{d-1})}^{\frac{\alpha_{d}(a)}{\alpha_{d}}
-\frac{\alpha_{d-1}(a)}{\alpha_{d-1}}}\right\|_{p_{d1}}\nonumber\\
&=\left\|\|\psi\|_{(p_{1},p_{2},\ldots,p_{d-1})}^{\frac{\alpha_{d}(a)}{\alpha_{d}}}
\right\|_{p_{d1}}=\|\psi\|_{(p_{1},p_{2},\ldots,p_{d-1},p_{d})}^{\frac{\alpha_{d}(a)}{\alpha_{d}}}
,\label{3.050}
\end{align}
where we used the following fact
\begin{eqnarray*}
&&\left|\|\psi\|_{(p_{1},p_{2},\ldots,p_{d-1})}^{\frac{\alpha_{d}(a+iy)}{\alpha_{d}}
-\frac{\alpha_{d-1}(a+iy)}{\alpha_{d-1}}}\right|=
\|\psi\|_{(p_{1},p_{2},\ldots,p_{d-1})}^{\frac{p_{d}}{p_{d1}}-\frac{p_{d-1}}{p_{(d-1)1}}}
=\|\psi\|_{(p_{1},p_{2},\ldots,p_{d-1})}
^{\frac{\alpha_{d}(a)}{\alpha_{d}}-\frac{\alpha_{d-1}(a)}{\alpha_{d-1}}}.
\end{eqnarray*}
From \eqref{3.050}, we have that \eqref{3.044} is valid for case $d$.

By using a proof similar to \eqref{3.044}, we have
\begin{eqnarray}
&&\|F_{b+iy}\|_{L^{P_{2}}}=\|\psi\|_{L^{P}}^{\frac{\alpha_{d}(b)}{\alpha_{d}}}
=\|\psi\|_{(p_{1},p_{2},\ldots,p_{d-1},p_{d})}^{\frac{\alpha_{d}(b)}{\alpha_{d}}}=1,\label{3.051}\\
&&\|G_{a+iy}\|_{L^{Q_{1}}}=\|\phi\|_{L^{Q}}^{\frac{\gamma_{d}(a)}{\gamma_{d}}}
=\|\phi\|_{(q_{1},q_{2},\ldots,q_{d-1},q_{d})}^{\frac{\gamma_{d}(a)}{\gamma_{d}}}=1,\label{3.052}\\
&&\|G_{b+iy}\|_{L^{Q_{2}}}=\|\phi\|_{L^{Q}}^{\frac{\gamma_{d}(b)}{\gamma_{d}}}
=\|\phi\|_{(q_{1},q_{2},\ldots,q_{d-1},q_{d})}^{\frac{\gamma_{d}(b)}{\gamma_{d}}}=1.\label{3.053}
\end{eqnarray}
From \eqref{3.030}, \eqref{3.044}, \eqref{3.052} and H\"{o}lder inequality in Schatten
 space, which can be found in  Theorem 3.4 of \cite{Y1975}, we have
\begin{align}
|\Phi(a+iy)|&\leq \|F_{a+iy}T_{a+iy}G_{a+iy}\|_{\mathfrak{S}^{r_{1}}}
\|H^{\alpha+a\beta+i\beta y}\|_{\mathfrak{S}^{r^{\prime}_{1}}}\nonumber\\
&\leq M_{0}\|F_{a+iy}\|_{L^{P_{1}}}\|G_{a+iy}\|_{L^{Q_{1}}}
\|H^{\alpha+a\beta}\|_{\mathfrak{S}^{r^{\prime}_{1}}}\nonumber\\
&\leq M_{0}\|\psi\|_{L^{P}}^{\frac{\alpha_{d}(a)}{\alpha_{d}}}
\|\phi\|_{L^{Q}}^{\frac{\gamma_{d}(a)}{\gamma_{d}}}
\|H\|_{\mathfrak{S}^{r^{\prime}_{1}(\alpha+a\beta)}}^{\alpha+a\beta}
\nonumber\\
&= M_{0}\|K\|_{\mathfrak{S}^{r^{\prime}}}^{\alpha+a\beta}=M_{0},\label{3.054}
\end{align}
where we used the following fact
\begin{eqnarray*}
&&r^{\prime}_{1}(\alpha+a\beta)=r^{\prime}
\end{eqnarray*}
and
\begin{eqnarray*}
&&\|H^{l}\|_{\mathfrak{S}^{r}}=\|H\|_{\mathfrak{S}^{lr}}^{l}(H\geq0).
\end{eqnarray*}
From \eqref{3.031}, \eqref{3.051} and \eqref{3.053},  by using a proof
 similar to \eqref{3.054}, we have
\begin{align}
|\Phi(b+iy)|&\leq \|F_{b+iy}T_{b+iy}G_{b+iy}\|_{\mathfrak{S}^{r_{2}}}
\|H^{\alpha+b\beta+i\beta y}\|_{\mathfrak{S}^{r^{\prime}_{2}}}\nonumber\\
&\leq M_{1}\|F_{b+iy}\|_{L^{P_{2}}}\|G_{b+iy}\|_{L^{Q_{2}}}
\|H^{\alpha+b\beta}\|_{\mathfrak{S}^{r^{\prime}_{1}}}\nonumber\\
&\leq M_{1}\|\psi\|_{L^{P}}^{\frac{\alpha_{d}(b)}{\alpha_{d}}}
\|\phi\|_{L^{Q}}^{\frac{\gamma_{d}(b)}{\gamma_{d}}}
\|H\|_{\mathfrak{S}^{r^{\prime}_{2}(\alpha+b\beta)}}^{\alpha+b\beta}
\nonumber\\
&= M_{1}\|K\|_{\mathfrak{S}^{r^{\prime}}}^{\alpha+b\beta}=M_{1},\label{3.055}
\end{align}
where we used the following fact
\begin{eqnarray*}
&&r^{\prime}_{2}(\alpha+b\beta)=r^{\prime}.
\end{eqnarray*}
Since $\Phi(z)$  satisfies \eqref{2.045}, by using \eqref{3.054},
\eqref{3.055} and Lemma 2.18,  we have
\begin{eqnarray}
&&|\Phi(\theta+iy)|\leq M_{0}^{1-t_{\theta}}M_{1}^{t_{\theta}},
\,(-\infty<y<+\infty).\label{3.056}
\end{eqnarray}
Since $\alpha+\beta\theta=1$ and $\alpha_{j}(\theta)=\alpha_{j}$,
$\gamma_{j}(\theta)=\gamma_{j}$,
 for $y=0$, by using \eqref{3.038}, \eqref{3.039} and \eqref{3.056},
 we have
\begin{eqnarray}
&&|\Phi(\theta)|=|Tr[fT_{\theta}gK]|\leq M_{0}^{1-t_{\theta}}M_{1}^{t_{\theta}}.\label{3.057}
\end{eqnarray}
From \eqref{3.057}, Lemma 12.1, Page 132 of \cite{GK1965} and
$fT_{\theta+iy}g\in\mathfrak{S}^{r}$, we have
\begin{eqnarray}
&&\|fT_{\theta+iy}g\|_{\mathfrak{S}^{r}}\leq M_{0}^{1-t_{\theta}}
M_{1}^{t_{\theta}}.\label{3.058}
\end{eqnarray}

This completes the proof of Lemma 3.2.

\end{proof}

\begin{Lemma}(Noncommutative-commutative Stein interpolation theorem)\label{lem3.3}
Let $\mathfrak{H}$ be a separable Hilbert space and $f\in L^{p_{1}}
\cap L^{p_{2}}$, $g\in L^{q_{1}}\cap L^{q_{2}}$, $T_{z}(z\in\mathbb{C})$
be an operator-function holomorphic in the strip $0\leq Re z\leq a$ and $fT_{z}g\in\mathfrak{R}$,
 where $\mathfrak{R}$ is the set of all bounded linear operators. If
\begin{eqnarray}
&&\|fT_{iy}g\|_{\mathfrak{S}^{r_{1}}}\leq M_{0}(y)\|f\|_{L^{p_{1}}}\|g\|_{L^{q_{1}}},
\,(-\infty<y<+\infty),\label{3.059}\\
&&\|fT_{a+iy}g\|_{\mathfrak{S}^{r_{2}}}\leq M_{1}(y)\|f\|_{L^{p_{2}}}\|g\|_{L^{q_{2}}},
\,(-\infty<y<+\infty),\label{3.060}
\end{eqnarray}
and $M_{j}(y)$, $j=0,1$, such that
\begin{eqnarray}
&&\sup\limits_{y\in\SR}e^{-k|y|}\ln M_{j}(y)<\infty\label{3.061}
\end{eqnarray}
for some $0<k<\frac{\pi}{a}$,
and for $h_{1},h_{2}\in\mathfrak{H}$
\begin{eqnarray}
&&\ln|(fT_{z}gh_{1},h_{2})|\leq N_{h_{1},h_{2}}exp(k_{h_{1},h_{2}}|Im z|),\, (0< Re z< a), \label{3.062}
\end{eqnarray}
where $0\leq k_{h_{1},h_{2}} <\frac{\pi}{a}$, $N_{h_{1},h_{2}}<\infty$.
Then, we have
\begin{eqnarray}
&&\|fT_{x}g\|_{\mathfrak{S}^{r}}\leq M_{x}\|f\|_{L^{p}}\|g\|_{L^{q}},\,(0<x<a,\,-\infty<y<+\infty),\label{3.063}
\end{eqnarray}
where
\begin{eqnarray*}
&&M_{x}=exp\left\{\frac{\sin\pi x}{2}\int_{-\infty}^{\infty}
\left[\frac{\ln M_{0}(y)}{\cosh\pi y-\cos\pi x }+\frac{\ln M_{1}(y)}{\cosh\pi y-\cos\pi x}\right]dy\right\}\\
&&\frac{1}{r}=\frac{1-t_{x}}{r_{1}}+\frac{t_{x}}{r_{2}},\,\frac{1}{p}=\frac{1-t_{x}}{p_{1}}+\frac{t_{x}}{p_{2}},
\,\frac{1}{q}=\frac{1-t_{x}}{q_{1}}+\frac{t_{x}}{q_{2}},\,t_{x}=\frac{x}{a}.
\end{eqnarray*}

\end{Lemma}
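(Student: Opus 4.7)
The plan is to adapt the interpolation scheme of Lemma 3.1, but to replace the classical three-line theorem (Lemma 2.18) by its Poisson-integral refinement (Lemma 2.19), which is exactly what is needed to convert the growth assumption \eqref{3.061} on $M_0(y), M_1(y)$ into the geometric-mean-type bound $M_x$. As in Lemma 3.1, by a density argument it suffices to prove \eqref{3.063} under the normalizations $\|f\|_{L^p}=\|g\|_{L^q}=1$, with $f=\sum_j a_j\chi_{E_j}$ and $g=\sum_\ell b_\ell\chi_{F_\ell}$ simple, and to test against a finite-rank operator $K=UH$ with $\|K\|_{\mathfrak{S}^{r'}}=1$ ($\frac{1}{r}+\frac{1}{r'}=1$), where $UH$ is the polar decomposition.

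Setting $\alpha_i=\tfrac{1}{p_i}$, $\gamma_i=\tfrac{1}{q_i}$ ($i=1,2$), I define the interpolating exponents
\begin{eqnarray*}
\alpha(z)=\tfrac{a-z}{a}\alpha_1+\tfrac{z}{a}\alpha_2,\qquad \gamma(z)=\tfrac{a-z}{a}\gamma_1+\tfrac{z}{a}\gamma_2,
\end{eqnarray*}
and the interpolating simple functions
\begin{eqnarray*}
f_z=\sum_j |a_j|^{\alpha(z)/\alpha(x)}e^{i\arg a_j}\chi_{E_j},\qquad g_z=\sum_\ell |b_\ell|^{\gamma(z)/\gamma(x)}e^{i\arg b_\ell}\chi_{F_\ell},
\end{eqnarray*}
together with the exponent $\alpha+\beta z=r'\bigl(\tfrac{a-z}{a}\tfrac{1}{r_1'}+\tfrac{z}{a}\tfrac{1}{r_2'}\bigr)$ so that $\alpha+\beta x=1$. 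Then I define
\begin{eqnarray*}
F(z)=\operatorname{Tr}\bigl[f_z T_z g_z U H^{\alpha+\beta z}\bigr],\qquad 0\le \Re z\le a.
\end{eqnarray*}
Using the spectral decomposition $H=\sum_k\lambda_k|\varphi_k\rangle\langle\varphi_k|$ as in Lemma 3.1, we get $F(z)=\sum_k\lambda_k^{\alpha+\beta z}(f_z T_z g_z U\varphi_k,\varphi_k)$, which is holomorphic in $0<\Re z<a$ and continuous on the closed strip because $T_z$ is holomorphic there and the sums defining $f_z,g_z$ are finite.

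Next I verify the two hypotheses of Lemma 2.19 for $F$. The growth condition \eqref{2.047} follows from \eqref{3.062}: each matrix element $(f_z T_z g_z U\varphi_k,\varphi_k)$ expands as a finite linear combination of terms $|a_j|^{\alpha(z)/\alpha(x)}|b_\ell|^{\gamma(z)/\gamma(x)}(\chi_{E_j}T_z\chi_{F_\ell}U\varphi_k,\varphi_k)$, and the logarithm of this product grows at most like $N\exp(k|\Im z|)$ with $k<\pi/a$, as in the estimate \eqref{3.014}. For the boundary bounds, a direct calculation using $\alpha(iy)=\alpha_1+\tfrac{iy}{a}(\alpha_2-\alpha_1)$ gives $|f_{iy}|^{p_1}=|f|^{p}$ and hence $\|f_{iy}\|_{L^{p_1}}=\|f\|_{L^p}^{p/p_1}=1$, and similarly $\|g_{iy}\|_{L^{q_1}}=1$; together with $\|H^{\alpha+\beta(iy)}\|_{\mathfrak{S}^{r_1'}}=\|H\|_{\mathfrak{S}^{r'}}^{\alpha}=1$ and the H\"older inequality in Schatten spaces (Theorem 3.4 of \cite{Y1975}), hypothesis \eqref{3.059} yields $|F(iy)|\le M_0(y)$. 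An analogous computation on $\Re z=a$ gives $|F(a+iy)|\le M_1(y)$. These log-boundary bounds are locally integrable against the Poisson kernel by \eqref{3.061}.

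Applying Lemma 2.19 at the point $z=x\in(0,a)$ then yields $|F(x)|\le M_x$ directly. Since $\alpha+\beta x=1$ and $\alpha(x)/\alpha(x)=\gamma(x)/\gamma(x)=1$, we have $F(x)=\operatorname{Tr}[f T_x g\, K]$, and taking the supremum over all finite-rank $K$ with $\|K\|_{\mathfrak{S}^{r'}}\le 1$ gives \eqref{3.063} by the Schatten duality (Lemma 12.1, page 132 of \cite{GK1965}). The step I expect to be most delicate is the rigorous verification of the growth condition \eqref{2.047} for $F(z)$ uniformly on the closed strip, since $\alpha+\beta z$ has non-trivial imaginary part and the spectral exponents $\lambda_k^{\alpha+\beta z}$ contribute an oscillatory factor whose modulus must be controlled in terms of the eigenvalues of $H$; once this is in place, the rest of the argument is a direct adaptation of the Riesz--Thorin proof in Lemma 3.1.
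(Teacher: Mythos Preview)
Your proposal is correct and follows precisely the approach the paper itself indicates: the paper's proof of Lemma 3.3 consists of a single sentence stating that one uses Lemma 2.19 (the Poisson-integral three-line theorem) in place of Lemma 2.18 and otherwise follows the proof of Lemma 3.1, which is exactly what you have written out in detail. Your identification of the growth-condition verification as the delicate point is reasonable, and your handling of it via the finite expansion of $F(z)$ mirrors the argument in \eqref{3.014}.
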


\begin{proof}
Inspired by Theorem 4.1, Page 205 of \cite{SW1971} and Theorem 13.1, Page 137 of \cite{GK1965},
 by using Lemma 2.19 and a proof similar to Lemma 3.1, we have that Lemma 3.3 is valid.

This completes the proof of Lemma 3.3.
\end{proof}

\begin{Lemma}(Noncommutative-commutative Stein interpolation theorem related to  mixed norms)\label{lem3.4}
Let $\mathfrak{H}$ be a separable Hilbert space and $f\in L^{P_{1}}(\R^{d})\cap L^{P_{2}}(\R^{d})$, $g\in L^{Q_{1}}(\R^{d})\cap L^{Q_{2}}(\R^{d})$, $n\geq2$, $P=(p_{1},p_{2},\ldots,p_{d})$, $Q=(q_{1},q_{2},\ldots,q_{d})$,  $T_{z}(z\in\mathbb{C})$ be an operator-function holomorphic in the strip $0\leq Re z\leq a$ and $fT_{z}g\in\mathfrak{R}$, where $\mathfrak{R}$ is the set of all all bounded linear operators. If
\begin{eqnarray}
&&\|fT_{iy}g\|_{\mathfrak{S}^{r_{1}}}\leq M_{0}(y)\|f\|_{L^{P_{1}}}\|g\|_{L^{Q_{1}}},\,(-\infty<y<+\infty),\label{3.064}\\
&&\|fT_{a+iy}g\|_{\mathfrak{S}^{r_{2}}}\leq M_{1}(y)\|f\|_{L^{P_{2}}}\|g\|_{L^{Q_{2}}},\,(-\infty<y<+\infty),\label{3.065}
\end{eqnarray}
and $M_{j}(y)$, $j=0,1$, such that
\begin{eqnarray}
&&\sup\limits_{y\in\SR}e^{-k|y|}\ln M_{j}(y)<\infty\label{3.066}
\end{eqnarray}
for some $0<k<\frac{\pi}{a}$,
and for $h_{1},h_{2}\in\mathfrak{H}$
\begin{eqnarray}
&&\ln|(fT_{z}gh_{1},h_{2})|\leq N_{h_{1},h_{2}}exp(k_{h_{1},h_{2}}|Im z|),
\, (0<Re z<a), \label{3.067}
\end{eqnarray}
where $0\leq k_{h_{1},h_{2}} <\frac{\pi}{a}$, $N_{h_{1},h_{2}}<\infty$.
Then, we have
\begin{eqnarray}
&&\|fT_{\theta}g\|_{\mathfrak{S}^{r}}\leq M_{\theta}\|f\|_{L^{P}}\|g\|_{L^{P}},
\,(0<\theta<a,\,-\infty<y<+\infty),\label{3.068}
\end{eqnarray}
where
\begin{eqnarray*}
&&M_{\theta}=exp\left\{\frac{\sin\pi \theta}{2}\int_{-\infty}^{\infty}
\left[\frac{\ln M_{0}(y)}{\cosh\pi y-\cos\pi \theta }+\frac{\ln M_{1}(y)}{\cosh\pi y-\cos\pi \theta}\right]dy\right\}\\
&&\|f\|_{L^{P}}=\|f\|_{(p_{1},p_{2},\ldots,p_{d})}=\left(\int_{\mathbf{R}}
\ldots\left(\int_{\mathbf{R}}|f(x_{1},x_{2},\ldots,x_{d})|^{p_{1}}dx_{1}\right)
^{\frac{p_{2}}{p_{1}}}\ldots dx_{n}\right)^{\frac{1}{p_{d}}},\\
&&\frac{1}{r}=\frac{1-t_{\theta}}{r_{1}}+\frac{t_{\theta}}{r_{2}},\,
\frac{1}{P}=\frac{1-t_{\theta}}{P_{1}}+\frac{t_{\theta}}{P_{2}}
,\,\frac{1}{Q}=\frac{1-t_{\theta}}{Q_{1}}+\frac{t_{\theta}}{Q_{2}},
\,t_{\theta}=\frac{\theta}{a}.
\end{eqnarray*}

\end{Lemma}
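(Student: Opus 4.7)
The plan is to combine the construction used in Lemma 3.2 (iterated mixed-norm factorization of test functions) with the Stein-type three-line device from Lemma 3.3 (which rests on Lemma 2.19 instead of Lemma 2.18). More precisely, I would fix simple functions $\psi\in H(M)$ and $\phi\in H(M)$ of mixed norm one, normalize $\|f\|_{L^{P}}=\|g\|_{L^{Q}}=1$, set
\begin{eqnarray*}
\alpha_{j}(z)=\frac{a-z}{a}\alpha_{j1}+\frac{z}{a}\alpha_{j2},\quad
\gamma_{j}(z)=\frac{a-z}{a}\gamma_{j1}+\frac{z}{a}\gamma_{j2},\qquad j=1,\ldots,d,
\end{eqnarray*}
and define $F_{z}$ and $G_{z}$ exactly as in \eqref{3.038}--\eqref{3.039} (via the auxiliary iterated mixed-norm factors $f_{i}$ and $g_{j}$ of \eqref{3.036}--\eqref{3.037}). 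Pick an arbitrary finite-rank $K=UH$ with $\|K\|_{\mathfrak{S}^{r'}}=1$, put $\alpha+\beta z=r'\big(\tfrac{a-z}{a}\tfrac{1}{r_{1}'}+\tfrac{z}{a}\tfrac{1}{r_{2}'}\big)$, and form
\begin{eqnarray*}
\Phi(z)=\mathrm{Tr}\bigl[F_{z}T_{z}G_{z}UH^{\alpha+\beta z}\bigr],\qquad 0\le\mathrm{Re}\,z\le a.
\end{eqnarray*}

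Next I would verify the hypotheses of Lemma 2.19 for $\Phi$. Holomorphy on $0<\mathrm{Re}\,z<a$ follows from the holomorphy of $T_{z}$ together with the explicit analytic dependence of $F_{z}$ and $G_{z}$ on $z$ (they are finite sums of characteristic functions with exponents linear in $z$), exactly as in the proof of Lemma 3.2. The growth bound of type \eqref{2.047} for $\ln|\Phi(z)|$ follows by expanding $\Phi(z)$ as a finite sum $\sum_{\ell}\lambda_{\ell}^{\alpha+\beta z}(F_{z}T_{z}G_{z}U\varphi_{\ell},\varphi_{\ell})$ as in \eqref{3.043}, and estimating each matrix element through hypothesis \eqref{3.067}; the finite combinatorial sum produced by the simple-function decompositions is harmless and only increases the constants $N$, $k$, and the exponential rate stays strictly below $\pi/a$.

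The boundary bounds are then obtained by the mixed-norm miracle already present in Lemma 3.2, combined with H\"older in Schatten classes. On the line $\mathrm{Re}\,z=0$, the identities \eqref{3.044} and \eqref{3.052} (applied with $a$ replaced by $0$) give $\|F_{iy}\|_{L^{P_{1}}}=\|\psi\|_{L^{P}}^{\alpha_{d}(0)/\alpha_{d}}=1$ and $\|G_{iy}\|_{L^{Q_{1}}}=1$, so that
\begin{eqnarray*}
|\Phi(iy)|\le\|F_{iy}T_{iy}G_{iy}\|_{\mathfrak{S}^{r_{1}}}\|H^{\alpha+\beta iy}\|_{\mathfrak{S}^{r_{1}'}}
\le M_{0}(y)\,\|K\|_{\mathfrak{S}^{r'}}^{\alpha}=M_{0}(y),
\end{eqnarray*}
using \eqref{3.064} and the Schatten identity $\|H^{\ell}\|_{\mathfrak{S}^{r}}=\|H\|_{\mathfrak{S}^{\ell r}}^{\ell}$ together with $r_{1}'\alpha=r'$. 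The analogous computation on $\mathrm{Re}\,z=a$, using \eqref{3.051}, \eqref{3.053} and \eqref{3.065}, yields $|\Phi(a+iy)|\le M_{1}(y)$.

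Applying Lemma 2.19 to $\Phi$ then gives, for $0<\theta<a$,
\begin{eqnarray*}
|\Phi(\theta)|\le\exp\!\left\{\frac{\sin\pi\theta}{2}\int_{-\infty}^{\infty}\!\left[\frac{\ln M_{0}(y)}{\cosh\pi y-\cos\pi\theta}+\frac{\ln M_{1}(y)}{\cosh\pi y-\cos\pi\theta}\right]dy\right\}=M_{\theta},
\end{eqnarray*}
where the integrability of the logarithms follows from assumption \eqref{3.066} together with $0<k<\pi/a$. Since at $z=\theta$ we have $\alpha+\beta\theta=1$ and $\alpha_{j}(\theta)=\alpha_{j}$, $\gamma_{j}(\theta)=\gamma_{j}$, the equalities $F_{\theta}=\psi$ and $G_{\theta}=\phi$ reduce $\Phi(\theta)$ to $\mathrm{Tr}[\psi T_{\theta}\phi K]$. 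Taking suprema over $\psi$, $\phi$ and $K$, and invoking Lemma 12.1 of \cite{GK1965} exactly as at the end of Lemmas 3.1--3.2, gives \eqref{3.068}. The main delicacy is verifying the growth bound \eqref{2.047} for $\Phi$ with exponential rate strictly less than $\pi/a$, because the factor $\lambda_{\ell}^{\alpha+\beta z}$ and the analytic mixed-norm weights $|\psi|^{\alpha_{1}(z)/\alpha_{1}}\prod f_{i}$ each contribute polynomial-in-$|y|$ growth and these must be absorbed into the hypothesis \eqref{3.067}; once this is controlled, the remainder of the argument is a straightforward amalgam of Lemmas 3.2 and 3.3.
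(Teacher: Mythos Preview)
Your proposal is correct and follows essentially the same approach as the paper, which merely states that the result follows by combining Lemma 2.19 with a proof similar to Lemma 3.2 (citing the Stein--Weiss, Benedek--Panzone, and Gohberg--Kre\u{\i}n references). In fact, you have written out considerably more detail than the paper's own proof, and your identification of the only real checkpoint---controlling the growth rate of $\ln|\Phi(z)|$ so that Lemma 2.19 applies---is accurate and is handled exactly as in the analogous estimate \eqref{3.014} of Lemma 3.1.
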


\begin{proof}
Inspired by Theorem 4.1, Page 205 of \cite{SW1971}, Theorem 1, Page 313 of \cite{BP1961}
and Theorem 13.1, Page 137 of \cite{GK1965}, by using Lemma 2.19 and a proof similar
 to Lemma 3.2, we have that Lemma 3.4 is valid.

This completes the proof of Lemma 3.4.
\end{proof}

\section{ Probabilistic estimates of some random series}
\setcounter{equation}{0}

\setcounter{Theorem}{0}

\setcounter{Lemma}{0}

\setcounter{section}{4}

In this section, we present probabilistic estimates of some random series.

\begin{Lemma}\label{lem4.1}(Khinchin type inequalities)
Suppose that $\{g^{(1)}_ {k}(\omega)\}_{k\in \z}$ and $\{g^{(2)}_{j}(\widetilde{\omega})\}_{j\in\mathbb{N}^{+}}$ are independent, zero mean, real valued random variable sequences with probability distributions $\mu^{1}_{k}(k\in\Z)$ and $\mu^{2}_{j}(j\in\mathbb{N}^{+})$ on probability spaces $(\Omega,\mathcal{A}, \mathbb{P})$ and $(\widetilde{\Omega},\widetilde{\mathcal{A}},\widetilde{\mathbb{P}})$, respectively, where $\mu^{1}_{k}(k\in\Z)$ and $\mu^{2}_{j}(j\in\mathbb{N}^{+})$ satisfy \eqref {1.024} and \eqref {1.025}, respectively. Then,
for $r\in[2, \infty)$ and $(a_k)_k,(b_j)_{j}\in\ell^2$, we have
\begin{eqnarray}
&&\left\|\sum_{k\in\z}a_{k}g_{k}^{(1)}(\omega)\right\|_{L_{\omega}^{r}(\Omega)}\leq C_{k}r^{\frac{1}{2}}\|a_{k}\|_{\ell_{k}^{2}},\label{4.01}\\
&&\left\|\sum_{j=1}^{\infty}b_{j}g_{j}^{(2)}(\widetilde{\omega})\right\|_{L_{\widetilde{\omega}}^{r}(\widetilde{\Omega})}\leq C_{j}r^{\frac{1}{2}}\|b_{j}\|_{\ell_{j}^{2}},\label{4.02}
\end{eqnarray}
 $C_k , C_j$ are independent of $r$.

\end{Lemma}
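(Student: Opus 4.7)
The plan is to follow the standard moment-generating-function route that underlies all Khinchin-type bounds for random variables with sub-Gaussian tails. I will prove the first inequality \eqref{4.01}; the second one \eqref{4.02} follows by an identical argument with $\mu_{k}^{1}$ replaced by $\mu_{j}^{2}$ and \eqref{1.024} replaced by \eqref{1.025}. By monotone convergence it suffices to control the partial sum $S_{N}(\omega) = \sum_{|k|\leq N} a_{k} g_{k}^{(1)}(\omega)$ with constants independent of $N$, and then let $N\to\infty$.

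The first step is to bound the exponential moment of $S_{N}$. For any $\gamma\in\mathbb{R}$, the independence of $\{g_{k}^{(1)}\}_{k\in\mathbb{Z}}$ together with the hypothesis \eqref{1.024} gives
\begin{eqnarray*}
\mathbb{E}\bigl[e^{\gamma S_{N}}\bigr] = \prod_{|k|\leq N} \int_{-\infty}^{+\infty} e^{\gamma a_{k} x}\,d\mu_{k}^{1}(x) \leq \prod_{|k|\leq N} e^{c\gamma^{2} a_{k}^{2}} \leq e^{c\gamma^{2}\|a\|_{\ell^{2}}^{2}}.
\end{eqnarray*}
Markov's inequality then yields, for every $t>0$ and $\gamma>0$, $\mathbb{P}(S_{N}>t)\leq e^{-\gamma t + c\gamma^{2}\|a\|_{\ell^{2}}^{2}}$. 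Optimizing the right-hand side in $\gamma$ (take $\gamma=t/(2c\|a\|_{\ell^{2}}^{2})$) and applying the same argument to $-S_{N}$, one obtains the sub-Gaussian tail
\begin{eqnarray*}
\mathbb{P}\bigl(|S_{N}|>t\bigr) \leq 2\,\exp\!\left(-\,\frac{t^{2}}{4c\|a\|_{\ell^{2}}^{2}}\right), \qquad t>0.
\end{eqnarray*}

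The final step is to convert this tail bound into an $L^{r}$ estimate through the layer-cake formula. Writing $\|S_{N}\|_{L^{r}_{\omega}}^{r} = r\int_{0}^{\infty} t^{r-1}\mathbb{P}(|S_{N}|>t)\,dt$, inserting the tail bound, and performing the change of variables $u = t^{2}/(4c\|a\|_{\ell^{2}}^{2})$, one is led to a gamma integral producing the factor $\Gamma(r/2)$. Stirling's formula gives $\Gamma(r/2)^{1/r}\lesssim r^{1/2}$ for $r\geq 2$, so one concludes
\begin{eqnarray*}
\|S_{N}\|_{L^{r}_{\omega}(\Omega)} \leq C\,r^{1/2}\|a\|_{\ell^{2}},
\end{eqnarray*}
with $C$ depending only on the constant $c$ in \eqref{1.024}. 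Passing to the limit $N\to\infty$ via Fatou (the sub-Gaussian tail also provides $L^{r}$-convergence of $S_{N}$) yields \eqref{4.01}. There is no serious obstacle; the only point requiring care is verifying that the constant on the right-hand side is genuinely independent of $r$, which is ensured by extracting the correct power of $r$ from $\Gamma(r/2)$ via Stirling rather than from a cruder bound.
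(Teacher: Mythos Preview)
Your proof is correct and follows the standard moment-generating-function/sub-Gaussian-tail route; the paper itself does not give a proof of this lemma but simply cites Lemma~3.1 of \cite{BT2008I}, whose argument is precisely the one you outline. So your approach coincides with the cited source.
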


 Lemma 4.1 can be found in  Lemma 3.1 of \cite{BT2008I}.

\begin{Lemma}\label{lem4.2} Let $F(t, \omega, \widetilde{\omega}):[-1,1] \times \Omega \times \widetilde{\Omega}\longrightarrow \R$
 be  measurable.
 For $p \geq 2$ and $\forall \epsilon>0, \exists \delta>0$, when $|t|<\delta$, we have
\begin{eqnarray}
&\|F(t, \omega, \widetilde{\omega})\|_{L_{\omega, \tilde{\omega}}^{p}(\Omega \times \tilde{\Omega})} \leq C p^{\frac{3}{2}} \epsilon^{2}\left(\left\|\gamma_0\right\|_{\mathfrak{S}^2}+1\right).\label{4.03}
\end{eqnarray}
Moreover, we derive
\begin{eqnarray}
\lim _{t \rightarrow 0}(\mathbb{P} \times \widetilde{\mathbb{P}})\left(\left\{(\omega , \widetilde{\omega}) \in\Omega \times \widetilde{\Omega}||F|>C\left(\left\|\gamma_{0}\right\|_{\mathfrak{S}^2}+1\right) e \epsilon^{\frac{1}{2}}\left(\epsilon \ln \frac{1}{\epsilon}\right)^{\frac{3}{2}}\right\}\right)=0.\label{4.04}
\end{eqnarray}
\end{Lemma}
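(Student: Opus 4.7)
In the three settings of Theorems 1.8--1.10 the function $F$ takes the explicit form $F(t,\omega,\widetilde{\omega})=\sum_{j=1}^{\infty}\lambda_{j}g_{j}^{(2)}(\widetilde{\omega})A_{j}(t,\omega)$, with $A_{j}=|f_{j}^{\omega}|^{2}-|e^{it\sqrt{\partial_{x}^{4}-\partial_{x}^{2}}}f_{j}^{\omega}|^{2}$; I will treat this case, since the other two manifolds are handled identically after substituting $f_{j}^{\omega}$ by $f_{j}^{\omega_{1}}$ and $\Omega$ by $\Omega_{1}$. The two-step plan is to prove \eqref{4.03} by successively integrating out the two independent randomizations and then to derive \eqref{4.04} from \eqref{4.03} by a Chebyshev argument with an optimized exponent.

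For \eqref{4.03}, conditioning on $\omega$ and applying the Khinchin inequality \eqref{4.02} to the $\widetilde{\omega}$-randomness $g_{j}^{(2)}$ yields
\begin{eqnarray*}
\|F(t,\omega,\widetilde{\omega})\|_{L^{p}_{\widetilde{\omega}}(\widetilde{\Omega})}\leq Cp^{\frac{1}{2}}\Big(\sum_{j=1}^{\infty}|\lambda_{j}|^{2}|A_{j}(t,\omega)|^{2}\Big)^{\frac{1}{2}}.
\end{eqnarray*}
Taking the $L^{p}_{\omega}$-norm and invoking Minkowski's inequality (valid since $p\geq 2$) reduces matters to controlling $\|A_{j}\|_{L^{p}_{\omega}}$. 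Expanding $f_{j}^{\omega}=\sum_{k\in\z}g_{k}^{(1)}(\omega)\psi(D-k)f_{j}$, the quantity $A_{j}$ is a polynomial of degree two in the independent zero-mean subgaussian family $\{g_{k}^{(1)}\}$. By the Wiener-chaos hypercontractivity bound for second-order chaos (which contributes a factor $p$, not $p^{1/2}$), one has $\|A_{j}\|_{L^{p}_{\omega}}\leq Cp\,\|A_{j}\|_{L^{2}_{\omega}}$. A direct Wick/orthogonality computation combined with the pointwise identity $|u|^{2}-|v|^{2}=\mathrm{Re}\,[(u-v)\overline{(u+v)}]$ gives $\|A_{j}\|_{L^{2}_{\omega}}\lesssim\|f_{j}\|_{L^{2}}\,\|(I-e^{it\sqrt{\partial_{x}^{4}-\partial_{x}^{2}}})f_{j}\|_{L^{2}}$, which by strong continuity of the Boussinesq propagator at $t=0$ can be made $\leq\epsilon^{2}\|f_{j}\|_{L^{2}}^{2}$ once $|t|<\delta(\epsilon,\gamma_{0})$. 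Cauchy--Schwarz in $j$ then yields
\begin{eqnarray*}
\Big(\sum_{j}|\lambda_{j}|^{2}\|A_{j}\|_{L^{p}_{\omega}}^{2}\Big)^{\frac{1}{2}}\leq Cp\epsilon^{2}\Big(\sum_{j}|\lambda_{j}|^{2}\|f_{j}\|_{L^{2}}^{4}\Big)^{\frac{1}{2}}\leq Cp\epsilon^{2}(\|\gamma_{0}\|_{\mathfrak{S}^{2}}+1),
\end{eqnarray*}
and combining with the $p^{1/2}$ factor above produces the claimed $p^{3/2}$ in \eqref{4.03}.

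Inequality \eqref{4.04} follows by Markov together with an optimization in $p$. Setting $\Lambda_{\epsilon}:=C(\|\gamma_{0}\|_{\mathfrak{S}^{2}}+1)e\epsilon^{\frac{1}{2}}(\epsilon\ln(1/\epsilon))^{\frac{3}{2}}$, Chebyshev combined with \eqref{4.03} gives
\begin{eqnarray*}
(\mathbb{P}\times\widetilde{\mathbb{P}})(|F|>\Lambda_{\epsilon})\leq \Lambda_{\epsilon}^{-p}\|F\|_{L^{p}}^{p}\leq \Big(\frac{Cp^{3/2}\epsilon^{2}(\|\gamma_{0}\|_{\mathfrak{S}^{2}}+1)}{\Lambda_{\epsilon}}\Big)^{p}.
\end{eqnarray*}
The choice $p=\ln(1/\epsilon)$ makes the bracketed quantity equal to $1/e$, so the probability is bounded by $e^{-p}=\epsilon$; letting $t\to 0$ through $|t|<\delta(\epsilon)$ with $\epsilon\to 0$ proves \eqref{4.04}. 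The main technical obstacle is the deterministic smallness step $\|(I-e^{it\sqrt{\partial_{x}^{4}-\partial_{x}^{2}}})f_{j}\|_{L^{2}}\lesssim\epsilon^{2}\|f_{j}\|_{L^{2}}$: since $\sqrt{k^{4}+k^{2}}\sim k^{2}$ is unbounded, uniformity in the frequency index $k$ is unavailable, so one must truncate the Wiener decomposition of $f_{j}$ at $|k|\leq K(\epsilon)$, absorb the tail via the $L^{2}$-unitarity of the propagator and the $\ell^{2}_{k}$-summability of $\|\psi(D-k)f_{j}\|_{L^{2}}^{2}$, and linearize the phase on the low-frequency block. This forces the size condition $\delta\lesssim\epsilon^{2}/K(\epsilon)^{2}$ on the time window, which is precisely the relation $|t|<\epsilon^{2}/(M_{1}\sqrt{1+M_{1}^{2}})$ appearing in Theorems 1.8--1.10.
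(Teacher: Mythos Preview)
Your derivation of \eqref{4.04} from \eqref{4.03} via Chebyshev and the choice $p=\ln(1/\epsilon)$ is correct and is exactly the content of Lemma~4.2 as the paper intends it: in the paper's logical structure \eqref{4.03} is a \emph{hypothesis}, not a conclusion, and the lemma (for which the paper just cites Lemma~6.6 of \cite{YDHXY2024}) is the abstract implication \eqref{4.03}$\Rightarrow$\eqref{4.04} for a general measurable $F$. This is confirmed by how the paper uses Lemma~4.2 in Sections~12--14: the moment bound \eqref{4.03} for the concrete $F$ is established separately (Lemmas~12.1--12.2, 13.1--13.2, 14.1--14.2), and only then is Lemma~4.2 invoked.

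So the bulk of your write-up---the proof of \eqref{4.03}---is addressing material that belongs to those later lemmas, not to Lemma~4.2 itself. That said, your argument for \eqref{4.03} is a legitimate variant of the paper's. The paper writes $A_{j}=I_{1}+I_{2}$ with $I_{1}=(e^{it\phi}f_{j}^{\omega}-f_{j}^{\omega})\,\overline{e^{it\phi}f_{j}^{\omega}}$, applies Khinchin in $\widetilde{\omega}$, then Minkowski to pass to $\ell^{2}_{j}L^{r}_{\omega}$, then H\"older in $\omega$ to separate the two factors, and finally two first-order Khinchin bounds (Lemmas~4.3 and~4.6), each contributing $r^{1/2}$, for a total of $r^{3/2}$. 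You instead keep $A_{j}$ intact and use second-order hypercontractivity (a single factor $p$) after the $\widetilde{\omega}$-Khinchin; the powers of $p$ match. Both approaches require the same frequency truncation at $|k|\leq M_{1}$ to make $\|(I-e^{it\phi})f_{j}\|$ small, which you correctly flag, and which in the paper is the content of Lemma~4.3 and the source of the constraint $|t|<\epsilon^{2}/(M_{1}\sqrt{1+M_{1}^{2}})$.
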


For the proof of Lemma 4.2, we refer the readers to Lemma 6.6 of \cite{YDHXY2024}.

\begin{Lemma}\label{lem4.3}(The estimate related to $\ell_{j}^{2}L_{\omega}^{p}$ on $\R$)
Let $(\lambda_{j})_{j}\in \ell_{j}^{2}$, $(f_{j})_{j=1}^{\infty}$ be an orthonormal
system in $L^{2}\left(\R\right)$ and $f_{j}^{\omega}$ be the randomization of $f_{j}$
defined as in \eqref{1.026}. Then, for $\forall\epsilon>0, \exists\delta>0$, when
 $|t|<\delta<\frac{\epsilon^{2}}{M_{1}\sqrt{1+M_{1}^{2}}}$, we derive
\begin{align}
&\|\lambda_{j}\|U(t)f_{j}^{\omega}-f_{j}^{\omega}\|_{L_{\omega}^{p}}\|_{\ell_{j}^{2}}\leq
Cp^{\frac{1}{2}}\epsilon^{2}\left(\|\gamma_{0}\|_{\mathfrak{S}^{2}}+1\right),\label{4.05}
\end{align}
where $U(t)f=\int_{\SR}e^{ix\xi}e^{it\phi_{1}(\xi)}\mathscr{F}_{x}f(\xi)d\xi$,
$\phi_{1}(\xi)=\sqrt{\xi^{4}+\xi^{2}}$, $\|\gamma_{0}\|_{\mathfrak{S}^{2}}=
\left(\sum\limits_{j=1}^{\infty}|\lambda_{j}|^{2}\right)^{\frac{1}{2}}$
 and $M_{1}$ will appear in \eqref{4.010}.
\end{Lemma}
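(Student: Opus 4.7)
The plan is to expand the difference $U(t)f_j^{\omega} - f_j^{\omega}$ along the Wiener decomposition \eqref{1.026}, pull out a factor of $p^{1/2}$ via Khinchin (Lemma \ref{lem4.1}), and then split the resulting frequency sum into low modes $|k| \leq M_1$ (where $|e^{it\phi_1(\xi)}-1|$ is small by the choice of $\delta$) and high modes $|k| > M_1$ (where the tail of $\sum_j \lambda_j^2 \sum_{|k|>M_1}\|\psi(D-k)f_j\|_{L^2}^2$ is small thanks to $\gamma_0 \in \mathfrak{S}^2$). I would organize the argument as follows.

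First, from \eqref{1.026} and the linearity of $U(t)$, I would write
\begin{eqnarray*}
U(t)f_j^{\omega} - f_j^{\omega} = \sum_{k \in \SZ} g_k^{(1)}(\omega)\bigl(U(t)-I\bigr)\psi(D-k)f_j.
\end{eqnarray*}
Next, for $p \geq 2$, I would apply Minkowski to swap $L_{\omega}^p$ inside the spatial $L^2_x$ norm and then invoke Lemma \ref{lem4.1} to obtain
\begin{eqnarray*}
\bigl\|U(t)f_j^{\omega}-f_j^{\omega}\bigr\|_{L_{\omega}^p L_x^2}
\leq C p^{\frac{1}{2}} \Bigl(\sum_{k\in\SZ} \bigl\|(U(t)-I)\psi(D-k)f_j\bigr\|_{L_x^2}^2\Bigr)^{\frac{1}{2}}.
\end{eqnarray*}

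For each frequency piece, Plancherel gives
\begin{eqnarray*}
\bigl\|(U(t)-I)\psi(D-k)f_j\bigr\|_{L_x^2}^2
= \int_{\SR} \bigl|e^{it\phi_1(\xi)}-1\bigr|^2 |\psi(\xi-k)|^2 |\mathscr{F}_x f_j(\xi)|^2 \, d\xi,
\end{eqnarray*}
and since $\supp\psi(\cdot - k) \subset [k-1,k+1]$, on this support $\phi_1(\xi) \leq C M_1\sqrt{1+M_1^2}$ whenever $|k|\leq M_1$. Combining the elementary bounds $|e^{it\phi_1(\xi)}-1| \leq |t|\phi_1(\xi)$ (low modes) and $|e^{it\phi_1(\xi)}-1| \leq 2$ (high modes), I would split
\begin{eqnarray*}
\sum_k \bigl\|(U(t)-I)\psi(D-k)f_j\bigr\|_{L_x^2}^2 \leq C|t|^2 M_1^2(1+M_1^2)\|f_j\|_{L^2}^2 + 4\sum_{|k|>M_1}\|\psi(D-k)f_j\|_{L_x^2}^2.
\end{eqnarray*}

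Finally, I would choose $M_1$ depending only on $\epsilon$ and $\gamma_0$: since $\sum_j \lambda_j^2 |\mathscr{F}_x f_j(\xi)|^2 \in L^1_{\xi}(\SR)$ with total mass $\|\gamma_0\|_{\mathfrak{S}^2}^2$, dominated convergence lets me fix $M_1$ so large that $\sum_j \lambda_j^2 \sum_{|k|>M_1}\|\psi(D-k)f_j\|_{L_x^2}^2 \leq \epsilon^4 (\|\gamma_0\|_{\mathfrak{S}^2}^2 + 1)$. The condition $|t|<\delta<\epsilon^2/(M_1\sqrt{1+M_1^2})$ then forces the low-mode contribution to be at most $C\epsilon^4$ times $\|f_j\|_{L^2}^2 = 1$. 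Multiplying by $\lambda_j^2$, summing in $j$, and taking a square root yields the claimed estimate with the factor $p^{1/2}\epsilon^2(\|\gamma_0\|_{\mathfrak{S}^2}+1)$.

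The main obstacle is that $M_1$ must be chosen uniformly across the whole system $(f_j)_j$ rather than for each $f_j$ separately; what makes this work is packaging the high-frequency tail with the weights $\lambda_j^2$ and exploiting that $\sum_j \lambda_j^2 |\mathscr{F}_x f_j|^2$ has finite $L^1_\xi$ mass controlled by $\|\gamma_0\|_{\mathfrak{S}^2}^2$. Once this uniform choice of $M_1$ is secured, the rest is bookkeeping: Khinchin extracts $p^{1/2}$, Plancherel trivializes the operator norm of $U(t)-I$ on each frequency block, and the two prescribed smallness mechanisms conspire to produce the $\epsilon^2$ factor.
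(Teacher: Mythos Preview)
Your reading of the norm in \eqref{4.05} is off: there is no $L_x^2$ anywhere. The quantity $\|U(t)f_j^{\omega}-f_j^{\omega}\|_{L_\omega^p}$ is to be bounded \emph{pointwise in $x$} (this is how the lemma is consumed in Lemma~\ref{lem12.1}), so the Minkowski/Plancherel step you wrote, which passes through $\|(U(t)-I)\psi(D-k)f_j\|_{L_x^2}$, proves the wrong estimate. The repair is immediate: apply Lemma~\ref{lem4.1} directly at a fixed $x$ to obtain $Cp^{1/2}\bigl(\sum_k|(U(t)-I)\psi(D-k)f_j(x)|^2\bigr)^{1/2}$, and then use Cauchy--Schwarz on each integral $\int_{\SR}\psi(\xi-k)(e^{it\phi_1(\xi)}-1)e^{ix\xi}\mathscr{F}_xf_j(\xi)\,d\xi$ together with $|\supp\psi|\leq 2$ to land on the same frequency-side quantity $\int_{\SR}|\psi(\xi-k)|^2|e^{it\phi_1(\xi)}-1|^2|\mathscr{F}_xf_j(\xi)|^2\,d\xi$ that you then split. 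This is exactly what the paper does in \eqref{4.011}--\eqref{4.012}.

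Once that is fixed, your strategy of splitting only in $k$ and choosing $M_1$ uniformly in $j$ via the $L^1_\xi$-tail of $\sum_j\lambda_j^2|\mathscr{F}_xf_j|^2$ is correct and genuinely more economical than the paper's route. The paper performs a double truncation: it first picks $M$ so that $(\sum_{j>M}|\lambda_j|^2)^{1/2}<\epsilon^2/2$ and disposes of $j>M$ by the crude bound $Q_1\leq C\|f_j\|_{L^2}^2$ (see \eqref{4.07}--\eqref{4.08}); only then, for the finitely many $j\leq M$, does it choose $M_1$ depending on those particular $f_j$'s (see \eqref{4.010}--\eqref{4.012}). Your single-step reduction bypasses the $j$-truncation entirely; what makes it work is precisely the observation you highlight, that weighting the high-frequency tail by $\lambda_j^2$ produces an $L^1_\xi$ function of total mass $\|\gamma_0\|_{\mathfrak{S}^2}^2$, whose tail can be made small uniformly in $j$.
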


\begin{proof}
We define $Q_{1}=\sum\limits_{k \in \z}
\left|\int_{\SR} \psi(\xi-k)\left(e^{it\phi_{1}(\xi)}-1\right)e^{i x\xi}\mathscr{F}_x f_n(\xi) d \xi\right|^{2}$.
By using Lemma 4.1, we obtain
\begin{eqnarray}
&\|\lambda_{j}\|U(t)f_{j}^{\omega}-f_{j}^{\omega}\|_{L_{\omega}^{p}}\|_{\ell_{j}^{2}}\leq Cp^{\frac{1}{2}}\left(\sum\limits_{j=1}^{\infty}|\lambda_{j}|^{2}Q_{1}\right)^{\frac{1}{2}}.\label{4.06}
\end{eqnarray}
Since $(\lambda_{j})_{j}\in\ell^{2}$, for $\forall\epsilon>0$, $\exists M\in \mathbb{N}^{+}$, we derive
\begin{eqnarray}
&\left(\sum\limits_{j=M+1}^{\infty}|\lambda_{j}|^{2}\right)^{\frac{1}{2}}<\frac{\epsilon^{2}}{2}.\label{4.07}
\end{eqnarray}
Since $|e^{it\phi_{1}(\xi)}-1|\leq2$, $\|f_{j}\|_{L^{2}}=1$, by using \eqref{4.07} and the Cauchy-Schwartz inequality, we obtain
\begin{align}
\left(\sum_{j=M+1}^{\infty}|\lambda_{j}|^{2}Q_{1}\right)^{\frac{1}{2}}
&\leq C\left(\sum_{j=M+1}^{\infty}|\lambda_{j}|^{2}\sum_{k\in\z}\int_{\SR}|\psi(\xi-k)
\mathscr{F}_{x}f_{j}(\xi)|^{2}d\xi\right)^{\frac{1}{2}}\nonumber\\
&\leq C\left(\sum_{j=M+1}^{\infty}|\lambda_{j}|^{2}\|f_{j}\|_{L^{2}}^{2}\right)^{\frac{1}{2}}\leq C\left(\sum_{j=M+1}^{\infty}|\lambda_{j}|^{2}\right)^{\frac{1}{2}}\leq C\epsilon^{2}.\label{4.08}
\end{align}
We claim that for $1\leq j\leq M$, $\forall\epsilon>0$,
\begin{eqnarray}
&Q_{1}^{\frac{1}{2}}\leq C(\epsilon^{2}+|t|M_{1}\sqrt{1+M_{1}^{2}})\label{4.09}
\end{eqnarray}
is valid, where $M_1$ depends on $M$. The idea of Lemma 6.3 of \cite{YDHXY2024} will be  used
 to establish Lemma 4.3.From line 14 of 1908 in \cite{YZDY2022}, we obtain
$$
\sum_{k\in\z}\|\psi(\xi-k)\mathscr{F}_{x}f_{j}\|_{L^{2}}^{2}\leq \|f_{j}\|_{L^{2}}^{2}\leq 3\sum_{k\in\z}\|\psi(\xi-k)\mathscr{F}_{x}f_{j}\|_{L^{2}}^{2},
$$
consequently, for $\forall\epsilon>0$,  $\exists M_{1}\geq 100$, we have
\begin{eqnarray}
&\left(\sum\limits_{|k|\geq M_{1}}
\|\psi(\xi-k)\mathscr{F}_{x}f_{j}\|_{L^{2}}^{2}\right)^{\frac{1}{2}}\leq \frac{\epsilon^{2}}{2}.\label{4.010}
\end{eqnarray}
Since $\supp \psi\subset[-1,1]$ and $|e^{it\phi_{1}(\xi)}-1|\leq2$, by using \eqref{4.010}
 and the Cauchy-Schwartz inequality, we obtain
\begin{eqnarray}
&&\left(\sum\limits_{|k| \geq M_1}\left|\int_{\SR} \psi(\xi-k)\left(e^{it\phi_{1}(\xi)}-1\right)
 e^{i x\xi} \mathscr{F}_x f_j(\xi) d \xi\right|^2\right)^{\frac{1}{2}} \nonumber\\
&&\leq 2\left(\sum\limits_{|k| \geq M_1} \int_{\mathbf{R}}\left|\psi(\xi-k)
\mathscr{F}_x f_j(\xi)\right|^2 d \xi\right)^{\frac{1}{2}}\nonumber\\
&&=2\left(\sum\limits_{|k| \geq M_1}\left\|\psi(\xi-k)
\mathscr{F}_x f_j\right\|_{L^2}^2\right)^{\frac{1}{2}}\leq \epsilon^{2}.\label{4.011}
\end{eqnarray}
Since $\supp \psi\subset[-1,1]$ and $|e^{it\phi_{1}(\xi)}-1|\leq|t||\xi|\sqrt{1+\xi^{2}}$,
 $\|f_{j}\|_{L^{2}}=1$, by using a proof similar to \eqref{4.011},
we get
\begin{eqnarray}
&&\left(\sum_{|k| \leq M_1}\left|\int_{\SR} \psi(\xi-k)
\left(e^{it\phi_{1}(\xi)}-1\right) e^{i x\xi}\mathscr{F}_x f_n(\xi)
d \xi\right|^2\right)^{\frac{1}{2}} \nonumber\\
&&\leq\left(\sum_{|k| \leq M_1}|t|^2(|k|^4+|k|^{2})\left|\int_{\mathbf{R}}|
\psi(\xi-k) \mathscr{F}_x f_j(\xi)|d \xi\right|^2\right)^{\frac{1}{2}}\nonumber\\
&&\leq|t|M_{1}\sqrt{1+M_{1}^{2}}\left(\sum_{|k| \leq M_1}\left\|\psi(\xi-k)
\mathscr{F}_x f_j\right\|_{L^2}^2\right)^{\frac{1}{2}}\nonumber\\
&&\leq C|t|M_{1}\sqrt{1+M_{1}^{2}}\left(\sum_{k \in \z}\left\|\psi(\xi-k)
\mathscr{F}_x f_j\right\|_{L^2}^2\right)^{\frac{1}{2}}\nonumber \\
&&\leq C|t|M_{1}\sqrt{1+M_{1}^{2}}\left\|f_j\right\|_{L^2} \leq C|t|M_{1}
\sqrt{1+M_{1}^{2}}.\label{4.012}
\end{eqnarray}
From \eqref{4.011} and \eqref{4.012}, we have that \eqref{4.09} is valid.
By using \eqref{4.06}, \eqref{4.07} and \eqref{4.09} as well as
$(a+b)^{\frac{1}{2}}\leq a^{\frac{1}{2}}+b^{\frac{1}{2}}$,
$|e^{it\phi_{1}(\xi)}-1|\leq2$ and $\|f_{j}\|_{L^{2}}=1$,
when $|t|<\delta<\frac{\epsilon^{2}}{M_{1}\sqrt{1+M_{1}^{2}}}$,
we can get
\begin{eqnarray}
&&\left\|\lambda_{j}\|U(t)f_{j}^{\omega}-f_{j}^{\omega}\right\|_{L_{\omega}^{p}}\|_{\ell_{j}^{2}}
\leq C p^{\frac{1}{2}}\left(\sum_{j=1}^{+\infty}\left|\lambda_j\right|^2 Q_{1}\right)^{\frac{1}{2}}\nonumber\\
&&\leq C p^{\frac{1}{2}}\left(\sum_{j=1}^M\left|\lambda_j\right|^2Q_{1}\right)^{\frac{1}{2}}
+C p^{\frac{1}{2}}\left(\sum_{j=M+1}^{+\infty}\left|\lambda_j\right|^2 Q_{1}\right)^{\frac{1}{2}}\nonumber\\
&&\leq C p^{\frac{1}{2}}\left(|t|M_{1}\sqrt{1+M_{1}^{2}}+\epsilon^{2}\right)\left(\sum_{j=1}^M |\lambda_j|^2\right)^{\frac{1}{2}}+C p^{\frac{1}{2}}\left(\sum_{j=M+1}^{+\infty}\left|\lambda_j\right|^2 \sum_{k \in \z}\left|\mathscr{F}_x f_j(k)\right|^2\right)^{\frac{1}{2}}\nonumber\\
&&\leq C p^{\frac{1}{2}}\left(|t| M_{1}\sqrt{1+M_{1}^{2}}+\epsilon^{2}\right)\left(\sum_{j=1}^M |\lambda_j|^2\right)^{\frac{1}{2}}+C p^{\frac{1}{2}}\left(\sum_{j=M+1}^{+\infty}\left|\lambda_j\right|^2\right)^{\frac{1}{2}}\nonumber\\
&&\leq C p^{\frac{1}{2}}\left(|t|M_{1}\sqrt{1+M_{1}^{2}}+\epsilon^{2}\right)\left(\sum_{j=1}^M |\lambda_j|^2\right)^{\frac{1}{2}}+C p^{\frac{1}{2}}\epsilon^{2}\nonumber\\
&&\leq C p^{\frac{1}{2}}\left(\epsilon^{2}\left(\sum_{j=1}^{\infty} |\lambda_j|^2\right)^{\frac{1}{2}}+\epsilon^{2}\right)
\leq C p^{\frac{1}{2}} \epsilon^{2}\left(\left\|\gamma_0\right\|_{\mathfrak{S}^2}+1\right),\label{4.013}
\end{eqnarray}

The proof of Lemma 4.3 is completed.

\end{proof}

\begin{Lemma}\label{lem4.4}(The estimate related to $\ell_{j}^{2}L_{\omega}^{p}$
on $\mathbf{T}$)
Let $\|f_{j}\|_{L^{2}(\mathbf{T})}=\left(\sum\limits_{k\in\z}|\mathscr{F}_{x}f_{j}(k)|^{2}\right)^{\frac{1}{2}}
=1$ $(j\in \mathbb{N}^{+})$,  $(\lambda_{j})_{j}\in \ell_{j}^{2}$, and $f_{j}^{\omega}$
 be the randomization of $f_{j}$
 defined as in \eqref{1.027}. Then, for $\forall\epsilon>0, \exists\delta>0$, when
 $|t|<\delta<\frac{\epsilon^{2}}{M_{1}\sqrt{1+M_{1}^{2}}}$, we have
\begin{align}
&\|\lambda_{j}\|U(t)f_{j}^{\omega}-f_{j}^{\omega}\|_{L_{\omega}^{p}}\|_{\ell_{j}^{2}}\leq Cp^{\frac{1}{2}}\epsilon^{2}\left(\|\gamma_{0}\|_{\mathfrak{S}^{2}}+1\right),\label{4.014}
\end{align}
where $U(t)f=\sum\limits_{k\in\z}e^{ixk}e^{it\phi_{2}(k)}\mathscr{F}_{x}f(k)$,
$\phi_{2}(k)=\sqrt{k^{4}+k^{2}}$,  $M_{1}$ appears in \eqref{4.017} and
 $\|\gamma_{0}\|_{\mathfrak{S}^{2}}=\left(\sum\limits_{j=1}^{\infty}|
\lambda_{j}|^{2}\right)^{\frac{1}{2}}$.
\end{Lemma}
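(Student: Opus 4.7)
The plan is to mirror the strategy used in Lemma 4.3, taking advantage of the fact that on $\mathbf{T}$ the Wiener-type decomposition collapses to the Fourier series itself, so each $\psi(D-k)f_j$ is simply $e^{ixk}\mathscr{F}_x f_j(k)$. First I would write
\begin{eqnarray*}
U(t)f_j^\omega - f_j^\omega = \sum_{k\in\SZ} g_k^{(1)}(\omega)\left(e^{it\phi_2(k)}-1\right)e^{ixk}\mathscr{F}_x f_j(k),
\end{eqnarray*}
and apply the Khinchin-type inequality \eqref{4.01} to obtain
\begin{eqnarray*}
\left\|U(t)f_j^\omega - f_j^\omega\right\|_{L_\omega^p}\leq C p^{1/2}\,Q_2^{1/2},\qquad Q_2:=\sum_{k\in\SZ}\left|e^{it\phi_2(k)}-1\right|^2\left|\mathscr{F}_x f_j(k)\right|^2.
\end{eqnarray*}
Since $(\lambda_j)_j\in\ell^2$, after multiplying by $\lambda_j$ and taking $\ell^2_j$, it suffices to control $\left(\sum_j|\lambda_j|^2 Q_2\right)^{1/2}$.

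Next I would split the index $j$ at a cutoff $M\in\mathbb{N}^+$ chosen so that $\left(\sum_{j>M}|\lambda_j|^2\right)^{1/2}<\epsilon^2/2$. For the tail $j>M$, I would use the trivial bound $|e^{it\phi_2(k)}-1|\leq 2$ together with $\|f_j\|_{L^2(\mathbf T)}^2=\sum_k|\mathscr{F}_x f_j(k)|^2=1$ to obtain
\begin{eqnarray*}
\left(\sum_{j>M}|\lambda_j|^2 Q_2\right)^{1/2}\leq 2\left(\sum_{j>M}|\lambda_j|^2\right)^{1/2}\leq C\epsilon^2.
\end{eqnarray*}
For the remaining $1\leq j\leq M$, I would split the inner sum at a frequency cutoff $|k|\leq M_1$ versus $|k|>M_1$. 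Since each $f_j$ belongs to $L^2(\mathbf T)$ with unit norm, for $\epsilon>0$ I can pick $M_1\geq 100$ (depending on $M$) so that
\begin{eqnarray}
\max_{1\leq j\leq M}\left(\sum_{|k|>M_1}|\mathscr{F}_x f_j(k)|^2\right)^{1/2}<\frac{\epsilon^2}{2}.\label{eq:4.17plan}
\end{eqnarray}

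On the high-frequency piece $|k|>M_1$, once again $|e^{it\phi_2(k)}-1|\leq 2$ together with \eqref{eq:4.17plan} yields a contribution $\leq C\epsilon^2$. On the low-frequency piece $|k|\leq M_1$, I would use the pointwise bound $|e^{it\phi_2(k)}-1|\leq |t|\sqrt{k^4+k^2}\leq |t|M_1\sqrt{1+M_1^2}$ and Parseval to obtain a contribution $\leq C|t|M_1\sqrt{1+M_1^2}\|f_j\|_{L^2(\mathbf T)}$. Combining these with the choice $|t|<\delta<\epsilon^2/(M_1\sqrt{1+M_1^2})$, summing against $|\lambda_j|^2$ for $j\leq M$ produces $\leq C\epsilon^2\|\gamma_0\|_{\mathfrak{S}^2}$. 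Assembling the tail and the main part gives the claimed bound $Cp^{1/2}\epsilon^2(\|\gamma_0\|_{\mathfrak{S}^2}+1)$.

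The argument is essentially a discrete-frequency version of Lemma 4.3, so there is no serious analytic obstacle; the only delicate point is the order of quantifiers, namely choosing $M$ from the $\ell^2$ decay of $(\lambda_j)$ first, then $M_1$ uniformly over the finitely many $f_1,\dots,f_M$, and finally $\delta$ in terms of $M_1$. This is exactly the structure carried out in Lemma 4.3 and should transfer verbatim with integrals replaced by sums over $\mathbb{Z}$.
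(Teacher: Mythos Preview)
Your proposal is correct and follows essentially the same approach as the paper's proof: apply the Khinchin inequality to reduce to $Q_2$, split the sum over $j$ at a cutoff $M$ determined by the $\ell^2$ decay of $(\lambda_j)$, then for $1\le j\le M$ split the frequency sum at $M_1$ and use $|e^{it\phi_2(k)}-1|\le 2$ on high frequencies and $|e^{it\phi_2(k)}-1|\le |t|\sqrt{k^4+k^2}$ on low frequencies. Your handling of the order of quantifiers (first $M$, then $M_1$ uniformly over $j\le M$, then $\delta$) is exactly what the paper does.
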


\begin{proof}
We define $Q_{2}=\sum\limits_{k\in\z}\left|(e^{it\phi_{2}(k)}-1)
\mathscr{F}_{x}f_{j}(k)\right|^{2}$.  By using Lemma 4.1, we obtain
\begin{eqnarray}
&\|\lambda_{j}\|U(t)f_{j}^{\omega}-f_{j}^{\omega}\|_{L_{\omega}^{p}}\|_{\ell_{j}^{2}}\leq Cp^{\frac{1}{2}}\left(\sum\limits_{n=1}^{\infty}|\lambda_{j}|^{2}Q_{2}\right)^{\frac{1}{2}}.\label{4.015}
\end{eqnarray}
Since $(\lambda_{j})_{j}\in\ell^{2}$, for $\forall\epsilon>0$,
   $\exists M\in \mathbb{N}^{+}$, we have
\begin{eqnarray}
&\left(\sum\limits_{j=M+1}^{\infty}|\lambda_{j}|^{2}\right)^{\frac{1}{2}}
<\frac{\epsilon^{2}}{2}.\label{4.016}
\end{eqnarray}
Since $|e^{it\phi_{2}(k)}-1|\leq2$, $\|f_{j}\|_{L^{2}(\mathbf{T})}=1$,
 then for $\forall\epsilon>0$,
 $\exists M_{1}\geq 100$, we have
\begin{eqnarray}
&\left(\sum\limits_{|k|\geq M_{1}}\left|(e^{it\phi_{2}(k)}-1)
\mathscr{F}_{x}f_{j}(k)\right|^{2}\right)^{\frac{1}{2}}\leq C
\left(\sum\limits_{|k|\geq M_{1}}\left|
\mathscr{F}_{x}f_{j}(k)\right|^{2}\right)^{\frac{1}{2}}\leq C
\epsilon^{2},\label{4.017}
\end{eqnarray}
We claim that for $1\leq j\leq M$, $\forall\epsilon>0$, we have that
\begin{eqnarray}
&Q_{2}^{\frac{1}{2}}\leq C\left(\epsilon^{2}+|t|M_{1}\sqrt{1+M_{1}^{2}}\right).\label{4.018}
\end{eqnarray}
Here, $M_1$ depends upon $M$. The idea of Lemma 7.1 of \cite{YDHXY2024}
 will be used to prove Lemma 4.4.
It follows from $|e^{it\phi_{2}(k)}-1|\leq|t|\sqrt{k^{2}+k^{4}}$ and $\|f_{j}\|_{L^{2}}=1$ that
\begin{align}
\left(\sum_{|k| \leq M_1}\left|\left(e^{it\phi_{2}(k)}-1\right)\mathscr{F}_{x} f_j(k)\right|^2\right)^{\frac{1}{2}}&\leq\left(\sum_{|k| \leq M_1}|t|^2(|k|^4+|k|^{2})|\mathscr{F}_{x} f_{j}(k)|^2\right)^{\frac{1}{2}}\nonumber\\
&\leq C|t|M_{1}\sqrt{1+M_{1}^{2}}\left\|f_j\right\|_{L^2}\nonumber\\
& \leq C|t|M_{1}\sqrt{1+M_{1}^{2}}.\label{4.019}
\end{align}
From \eqref{4.017} and \eqref{4.019}, we know that \eqref{4.018} is valid. From \eqref{4.015}, \eqref{4.016} and \eqref{4.018}, since
$(a+b)^{\frac{1}{2}}\leq a^{\frac{1}{2}}+b^{\frac{1}{2}}$, $|e^{it\phi_{2}(k)}-1|\leq2$ and $\|f_{j}\|_{L^{2}}^{2}=\sum\limits_{k \in \z}\left|\mathscr{F}_x f_j(k)\right|^2=1$, when $|t|<\delta<\frac{\epsilon^{2}}{M_{1}\sqrt{1+M_{1}^{2}}}$, we can get
\begin{eqnarray}
&&\|\lambda_{j}\|U(t)f_{j}^{\omega}-f_{j}^{\omega}\|_{L_{\omega}^{p}}\|_{\ell_{j}^{2}}
\leq C p^{\frac{1}{2}}\left(\sum_{j=1}^{+\infty}\left|\lambda_j\right|^2 Q_{2}\right)^{\frac{1}{2}}\nonumber\\
&&\leq C p^{\frac{1}{2}}\left(\sum_{j=1}^M\left|\lambda_j\right|^2 Q_{2}\right)^{\frac{1}{2}}
+C p^{\frac{1}{2}}\left(\sum_{j=M+1}^{+\infty}\left|\lambda_j\right|^2 Q_{2}\right)^{\frac{1}{2}}\nonumber\\
&&\leq C p^{\frac{1}{2}}\left(|t|M_{1}\sqrt{1+M_{1}^{2}}+\epsilon^{2}\right)\left(\sum_{j=1}^M |\lambda_j|^2\right)^{\frac{1}{2}}+C p^{\frac{1}{2}}\left(\sum_{j=M+1}^{+\infty}\left|\lambda_j\right|^2 \sum_{k \in \z}\left|\mathscr{F}_x f_n(k)\right|^2\right)^{\frac{1}{2}}\nonumber\\
&&\leq C p^{\frac{1}{2}}\left(|t|M_{1}\sqrt{1+M_{1}^{2}}+\epsilon^{2}\right)\left(\sum_{j=1}^M |\lambda_j|^2\right)^{\frac{1}{2}}+C p^{\frac{1}{2}}\left(\sum_{j=M+1}^{+\infty}\left|\lambda_j\right|^2\right)^{\frac{1}{2}}\nonumber\\
&&\leq C p^{\frac{1}{2}}\left(|t|M_{1}\sqrt{1+M_{1}^{2}}+\epsilon^{2}\right)\left(\sum_{j=1}^M |\lambda_{j}|^{2}\right)^{\frac{1}{2}}+C p^{\frac{1}{2}}\epsilon^{2}\nonumber\\
&&\leq C p^{\frac{1}{2}}\left(\epsilon^{2}\left(\sum_{j=1}^{\infty} |\lambda_j|^2\right)^{\frac{1}{2}}+\epsilon^{2}\right)
\leq C p^{\frac{1}{2}} \epsilon^{2}(\left\|\gamma_0\right\|_{\mathfrak{S}^2}+1).\label{4.020}
\end{eqnarray}

The proof of Lemma 4.4 is completed.

\end{proof}

\begin{Lemma}\label{lem4.5}(The estimate related to $\ell_{j}^{2}L_{\omega_{1}}^{p}$
 on $\Theta=\{x\in\R^{3}:|x|<1\}$)
Let $f_{j}=\sum\limits_{m=1}^{\infty}c_{j,m}e_{m}$, $f_{j}^{\omega_{1}}$ be the
 randomization of $f_{j}$  defined as in \eqref{1.028}. Then, for $\forall\epsilon>0,
  \exists\delta>0$, when $|t|<\delta<\frac{\epsilon^{2}}{M_{1}\sqrt{1+M_{1}^{2}}}$, we have
\begin{align}
&\|\lambda_{j}\|U(t)f_{j}^{\omega_{1}}-f_{j}^{\omega_{1}}\|_{L_{\omega_{1}}^{p}}\|_{\ell_{j}^{2}}\leq Cp^{\frac{1}{2}}\epsilon^{2}\left(\|\gamma_{0}\|_{\mathfrak{S}^{2}}+1\right),
\label{4.021}
\end{align}
where $U(t)f=\sum\limits_{m=1}^{\infty}e^{it\phi_{3}(m)}c_{j,m}e_{m}$,
$\phi_{3}(m)=\sqrt{(m\pi)^{2}+(m\pi)^{4}}$, $e_{m}=\frac{\sin m\pi|x|}{(2\pi)^{\frac{1}{2}}|x|}$,
 $c_{j,m}=\int_{\Theta}f_{j}e_{m}dx$, $\|\gamma_{0}\|_{\mathfrak{S}^{2}}=\left(\sum\limits_{j=1}^{\infty}|
\lambda_{j}|^{2}\right)^{\frac{1}{2}}$ and $M_{1}$ appears in \eqref{4.024}.
\end{Lemma}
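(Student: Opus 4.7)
The plan is to prove Lemma 4.5 in direct analogy with Lemmas 4.3 and 4.4, with the Wiener decomposition (on $\R$) or the Fourier basis (on $\mathbf{T}$) replaced by the radial Dirichlet eigenbasis expansion $f_{j}=\sum_{m}c_{j,m}e_{m}$ on $\Theta$. Writing
$$U(t)f_{j}^{\omega_{1}}-f_{j}^{\omega_{1}}=\sum_{m=1}^{\infty}g_{m}^{(1)}(\omega_{1})\,\frac{c_{j,m}}{m\pi}\bigl(e^{it\phi_{3}(m)}-1\bigr)e_{m},$$
I would first apply the Khintchine-type inequality \eqref{4.01} of Lemma 4.1 in $\omega_{1}$ to pass from the $L^{p}_{\omega_{1}}$ norm to the square function
$$Q_{3}(x):=\sum_{m=1}^{\infty}\left|\frac{c_{j,m}}{m\pi}\bigl(e^{it\phi_{3}(m)}-1\bigr)e_{m}(x)\right|^{2},$$
and then sum in $j$ against $|\lambda_{j}|^{2}$ to obtain
$$\bigl\|\lambda_{j}\|U(t)f_{j}^{\omega_{1}}-f_{j}^{\omega_{1}}\|_{L_{\omega_{1}}^{p}}\bigr\|_{\ell^{2}_{j}}\leq Cp^{1/2}\Bigl(\sum_{j=1}^{\infty}|\lambda_{j}|^{2}Q_{3}\Bigr)^{1/2}.$$

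Next, I would split the $j$-sum at a threshold $M$, chosen so that the tail $(\sum_{j>M}|\lambda_{j}|^{2})^{1/2}<\epsilon^{2}/2$, which is possible because $(\lambda_{j})_{j}\in\ell^{2}$. For the tail part, the uniform bound $|e^{it\phi_{3}(m)}-1|\leq 2$ together with the elementary pointwise estimate
$$\left|\frac{e_{m}(x)}{m\pi}\right|=\frac{|\sin(m\pi|x|)|}{\sqrt{2\pi}\,|x|\cdot m\pi}\leq\frac{1}{\sqrt{2\pi}},\qquad x\in\Theta,$$
gives $Q_{3}\leq C\sum_{m}|c_{j,m}|^{2}=C\|f_{j}\|_{L^{2}(\Theta)}^{2}=C$, so the tail contribution is $O(\epsilon^{2})$. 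For the finitely many $j\leq M$, I would further split the $m$-sum at some level $M_{1}=M_{1}(M,\epsilon)$ chosen so that $\bigl(\sum_{m>M_{1}}|c_{j,m}/(m\pi)|^{2}\bigr)^{1/2}<\epsilon^{2}$ uniformly in $1\leq j\leq M$. On the high-$m$ part the bound $|e^{it\phi_{3}(m)}-1|\leq 2$ combined with $\|e_{m}/(m\pi)\|_{L^{\infty}(\Theta)}\leq 1/\sqrt{2\pi}$ again delivers $O(\epsilon^{2})$; on the low-$m$ part the mean-value estimate
$$|e^{it\phi_{3}(m)}-1|\leq |t|\phi_{3}(m)\leq C|t|\,M_{1}\sqrt{1+M_{1}^{2}},$$
combined with $|t|<\delta<\epsilon^{2}/(M_{1}\sqrt{1+M_{1}^{2}})$, yields $O(\epsilon^{2})$. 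Assembling the four pieces gives $Q_{3}^{1/2}\leq C\epsilon^{2}$ for $j\leq M$, and the final assembled bound becomes $Cp^{1/2}\epsilon^{2}(\|\gamma_{0}\|_{\mathfrak{S}^{2}}+1)$.

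The main obstacle, compared to Lemmas 4.3 and 4.4, is that the radial Dirichlet eigenfunctions $e_{m}(x)=\sin(m\pi|x|)/(\sqrt{2\pi}\,|x|)$ are \emph{not} uniformly bounded on $\Theta$—their $L^{\infty}$ norms grow like $m$—so the normalization $1/(m\pi)$ built into the randomization \eqref{1.028} is essential to render the pointwise square function $Q_{3}$ finite and controllable. The elementary inequality $|e_{m}(x)/(m\pi)|\leq 1/\sqrt{2\pi}$, together with the Parseval identity $\sum_{m}|c_{j,m}|^{2}=\|f_{j}\|_{L^{2}(\Theta)}^{2}$, supplies exactly the same structural ingredients that played the role of Wiener orthogonality on $\R$ and of Plancherel on $\mathbf{T}$, after which the $\epsilon$/$M$/$M_{1}$ bookkeeping runs without further modification and reproduces \eqref{4.021}.
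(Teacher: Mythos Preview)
Your proposal is correct and follows essentially the same route as the paper: apply the Khinchin inequality (Lemma~4.1) to pass to the square function $Q_{3}$, split the $j$-sum at $M$ using $(\lambda_{j})\in\ell^{2}$, and for $j\leq M$ split the $m$-sum at $M_{1}$, handling the high-$m$ piece by $|e^{it\phi_{3}(m)}-1|\leq 2$ together with the key pointwise bound $|e_{m}/(m\pi)|\leq C$, and the low-$m$ piece by $|e^{it\phi_{3}(m)}-1|\leq C|t|M_{1}\sqrt{1+M_{1}^{2}}$. Your explicit observation that the $1/(m\pi)$ normalization in the randomization \eqref{1.028} compensates for the growth $\|e_{m}\|_{L^{\infty}}\sim m$ is precisely the new ingredient relative to Lemmas~4.3 and~4.4, and the paper uses it in the same way.
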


\begin{proof}
We define $Q_{3}=\sum\limits_{m\in\z}\left|(e^{it\phi_{3}(m)}-1)
\frac{c_{j,m}e_{m}}{m\pi}\right|^{2}$. By using Lemma 4.1, we obtain
\begin{eqnarray}
&\|\lambda_{j}\|U(t)f_{j}^{\omega_{1}}-f_{j}^{\omega_{1}}\|_{L_{\omega_{1}}^{p}}\|_{\ell_{j}^{2}}
\leq Cp^{\frac{1}{2}}\left(\sum\limits_{j=1}^{\infty}
|\lambda_{j}|^{2}Q_{3}\right)^{\frac{1}{2}}.\label{4.022}
\end{eqnarray}
Since $(\lambda_{j})_{j}\in\ell^{2}$, for $\forall\epsilon>0$,
 $\exists M\in \mathbb{N}^{+}$, we have
\begin{eqnarray}
&\left(\sum\limits_{j=M+1}^{\infty}|\lambda_{j}|^{2}\right)^{\frac{1}{2}}
<\frac{\epsilon^{2}}{2}.\label{4.023}
\end{eqnarray}
Since $|e^{it\phi_{3}(m)}-1|\leq2$, $|\frac{e_{m}}{m\pi}|\leq C$, we obtain
\begin{eqnarray}
&\left(\sum\limits_{|m|\geq M_{1}}\left|(e^{it\phi_{3}(m)}-1)
\frac{c_{j,m}e_{m}}{m\pi}\right|^{2}\right)^{\frac{1}{2}}\leq C\left(\sum\limits_{|m|\geq M_{1}}\left|
c_{j,m}\right|^{2}\right)^{\frac{1}{2}}\leq C\epsilon^{2},\label{4.024}
\end{eqnarray}
where we use the fact that $1\leq j\leq M$. For $\forall\epsilon>0$,   $\exists M_{1}\geq 100$, we have
\begin{eqnarray*}
&\left(\sum\limits_{|m|\geq M_{1}}\left|
c_{j,m}\right|^{2}\right)^{\frac{1}{2}}\leq C\epsilon^{2}.
\end{eqnarray*}
We claim that for $1\leq j\leq M$, $\forall\epsilon>0$, we have that
\begin{eqnarray}
&&Q_{3}^{\frac{1}{2}}\leq C(\epsilon^{2}+|t|M_{1}\sqrt{1+M_{1}^{2}}).\label{4.025}
\end{eqnarray}
Here, $M_1$ depends upon $M$. The idea of Lemma 7.1 of \cite{YDHXY2024}
 will be used to prove Lemma 4.5. It follows from $|e^{it\phi_{3}(m)}-1|\leq
 C|t|\sqrt{m^{2}+m^{4}}$ and $|\frac{e_{m}}{m\pi}|\leq C$ that
\begin{align}
\left(\sum_{|m| \leq M_1}\left|\left(e^{it\phi_{3}(m)}-1\right)\frac{c_{j,m}
e_{m}}{m\pi}\right|^2\right)^{\frac{1}{2}}&\leq\left(\sum_{|m| \leq M_1}
|t|^2(|m|^4+|m|^{2})\left|c_{j,m}\right|^2\right)^{\frac{1}{2}}\nonumber\\
& \leq C|t|M_{1}\sqrt{1+M_{1}^{2}}.\label{4.026}
\end{align}
From \eqref{4.024} and \eqref{4.026}, we derive \eqref{4.025}. From
\eqref{4.022}, \eqref{4.023} and \eqref{4.025}, since
$(a+b)^{\frac{1}{2}}\leq a^{\frac{1}{2}}+b^{\frac{1}{2}}$,
$|e^{it\phi_{3}(m)}-1|\leq2$ and $|\frac{e_{m}}{m\pi}|\leq C$,
when $|t|<\delta<\frac{\epsilon^{2}}{M_{1}\sqrt{1+M_{1}^{2}}}$, we can get
\begin{eqnarray}
&&\|\lambda_{j}\|U(t)f_{j}^{\omega_{1}}-f_{j}^{\omega_{1}}\|_{L_{\omega_{1}}^{p}}\|_{\ell_{j}^{2}}
\leq C p^{\frac{1}{2}}\left(\sum_{j=1}^{+\infty}\left|\lambda_{j}\right|^2 Q_{3}\right)^{\frac{1}{2}}\nonumber\\
&&\leq C p^{\frac{1}{2}}\left(\sum_{j=1}^M\left|\lambda_j\right|^2 Q_{3}\right)^{\frac{1}{2}}
+C p^{\frac{1}{2}}\left(\sum_{j=M+1}^{+\infty}\left|\lambda_j\right|^2 Q_{3}\right)^{\frac{1}{2}}\nonumber\\
&&\leq C p^{\frac{1}{2}}\left(|t|M_{1}\sqrt{1+M_{1}^{2}}+\epsilon^{2}\right)\left(\sum_{j=1}^M |\lambda_j|^2\right)^{\frac{1}{2}}+C p^{\frac{1}{2}}\left(\sum_{j=M+1}^{+\infty}\left|\lambda_j\right|^2 \sum_{m \in \z}\left|c_{j,m}\right|^2\right)^{\frac{1}{2}}\nonumber\\
&&\leq C p^{\frac{1}{2}}\left(|t| M_{1}\sqrt{1+M_{1}^{2}}+\epsilon^{2}\right)\left(\sum_{j=1}^M |\lambda_j|^2\right)^{\frac{1}{2}}+C p^{\frac{1}{2}}\left(\sum_{j=M+1}^{+\infty}|\lambda_{j}|^2\right)^{\frac{1}{2}}\nonumber\\
&&\leq C p^{\frac{1}{2}}\left(\left(|t|M_{1}\sqrt{1+M_{1}^{2}}+\epsilon^{2}\right)\left(\sum_{j=1}^{\infty} |\lambda_j|^2\right)^{\frac{1}{2}}+\epsilon^{2}\right)\nonumber\\
&&\leq C p^{\frac{1}{2}} \epsilon^{2}\left(\left(\sum_{j=1}^{\infty} |\lambda_j|^2\right)^{\frac{1}{2}}+1\right)
\leq C p^{\frac{1}{2}} \epsilon^{2}(\left\|\gamma_0\right\|_{\mathfrak{S}^2}+1).\label{4.027}
\end{eqnarray}

The proof of Lemma 4.5 is finished.

\end{proof}

\begin{Lemma}\label{lem4.6}
Let $(f_{j})_{j=1}^{\infty}$ is an orthonormal system in $L^{2}\left(\R\right)$,
 $f_{j}^{\omega}$ be the randomization of $f_{j}$  defined as in \eqref{1.026}. Then, we have
\begin{eqnarray}
&&\left\|f_{j}^{\omega}\right\|_{L_{\omega}^{p}} \leq C p^{\frac{1}{2}}\left\|f_{j}\right\|_{L^{2}}=C p^{\frac{1}{2}}\left\|U(t) f_{j}^{\omega}\right\|_{L_{\omega}^{p}} \leq C p^{\frac{1}{2}}\left\|f_{j}\right\|_{L^{2}}=C p^{\frac{1}{2}},\label{4.028}
\end{eqnarray}
where $U(t)f=\int_{\SR}e^{ix\xi}e^{it\phi_{1}(\xi)}\mathscr{F}_{x}f(\xi)d\xi$, $\phi_{1}(\xi)=\sqrt{\xi^{4}+\xi^{2}}$ and $C$ is independent of $p.$
\end{Lemma}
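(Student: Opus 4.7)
The plan is to substitute the randomization formula \eqref{1.026} into $f_{j}^{\omega}$, apply Khinchin's inequality (Lemma 4.1) in the random variables $g_{k}^{(1)}(\omega)$ for each fixed $x$, and then exploit the almost-orthogonality properties of the Wiener decomposition $\{\psi(\xi-k)\}_{k\in\Z}$ via Plancherel to recover $\|f_{j}\|_{L^{2}}$. The bound on $\|U(t)f_{j}^{\omega}\|_{L_{\omega}^{p}}$ will then follow at once since $U(t)$ is an $L^{2}$-isometric Fourier multiplier that commutes with the projector $\psi(D-k)$.

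More precisely, first I would write
\begin{eqnarray*}
f_{j}^{\omega}=\sum_{k\in\Z}g_{k}^{(1)}(\omega)\psi(D-k)f_{j},
\end{eqnarray*}
and apply Lemma 4.1 (with $a_{k}=(\psi(D-k)f_{j})(x)$) to obtain, for $p\geq2$,
\begin{eqnarray*}
\|f_{j}^{\omega}(x)\|_{L_{\omega}^{p}}\leq Cp^{\frac{1}{2}}\Big(\sum_{k\in\Z}|(\psi(D-k)f_{j})(x)|^{2}\Big)^{\frac{1}{2}}.
\end{eqnarray*}
Next I would take the $L^{2}_{x}$ norm of both sides and use Minkowski's inequality together with Plancherel. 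Since the Wiener partition of unity has finite overlap, namely $\sum_{k\in\Z}|\psi(\xi-k)|^{2}\leq C$ uniformly in $\xi$, Plancherel gives
\begin{eqnarray*}
\sum_{k\in\Z}\|\psi(D-k)f_{j}\|_{L^{2}}^{2}=\int_{\SR}\sum_{k\in\Z}|\psi(\xi-k)|^{2}|\mathscr{F}_{x}f_{j}(\xi)|^{2}d\xi\leq C\|f_{j}\|_{L^{2}}^{2},
\end{eqnarray*}
which yields $\|f_{j}^{\omega}\|_{L_{\omega}^{p}}\leq Cp^{\frac{1}{2}}\|f_{j}\|_{L^{2}}=Cp^{\frac{1}{2}}$ by the orthonormality hypothesis.

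For the second half, I would observe that $U(t)$ is a Fourier multiplier of modulus one, so it commutes with $\psi(D-k)$ and preserves the $L^{2}_{x}$ norm. Consequently
\begin{eqnarray*}
U(t)f_{j}^{\omega}=\sum_{k\in\Z}g_{k}^{(1)}(\omega)U(t)\psi(D-k)f_{j},\qquad \|U(t)\psi(D-k)f_{j}\|_{L^{2}}=\|\psi(D-k)f_{j}\|_{L^{2}},
\end{eqnarray*}
and repeating the Khinchin plus Plancherel argument with $U(t)\psi(D-k)f_{j}$ in place of $\psi(D-k)f_{j}$ produces the identical bound $\|U(t)f_{j}^{\omega}\|_{L_{\omega}^{p}}\leq Cp^{\frac{1}{2}}\|f_{j}\|_{L^{2}}=Cp^{\frac{1}{2}}$.

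I do not anticipate a genuine obstacle: the only delicate point is ensuring that Lemma 4.1 is being applied to a scalar-valued sequence (which it is, once $x$ is fixed) and that the interchange of the $L^{p}_{\omega}$ and $L^{2}_{x}$ integrations via Minkowski is permissible, which is justified for $p\geq 2$. The constant $C$ is independent of $p$ because the only $p$-dependence enters through the Khinchin factor $p^{1/2}$.
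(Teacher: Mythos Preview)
Your overall strategy (Khinchin pointwise in $x$, then control the square function $\sum_k|(\psi(D-k)f_j)(x)|^2$) matches the paper's intended proof, which is borrowed from Lemma~2.3 of \cite{BOP2015}. However, there is a genuine gap in the second step. The lemma asks for a \emph{pointwise-in-$x$} bound $\|f_j^\omega(x)\|_{L^p_\omega}\le Cp^{1/2}$ (this is precisely how it is invoked in the proof of Lemma~\ref{lem12.1}, where $\|e^{-it\sqrt{\partial_x^4-\partial_x^2}}\bar f_j^\omega\|_{L^{2r}_\omega}$ must be bounded uniformly in $(x,t,j)$). Taking the $L^2_x$ norm and appealing to Plancherel only yields the averaged estimate $\big\|\,\|f_j^\omega\|_{L^p_\omega}\big\|_{L^2_x}\le Cp^{1/2}\|f_j\|_{L^2}$, which is strictly weaker and insufficient for the application.

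The missing ingredient is the unit-scale Bernstein inequality. Because $\psi(\cdot-k)$ is supported in $[k-1,k+1]$, Cauchy--Schwarz on the inverse Fourier formula gives, uniformly in $x$,
\[
|(\psi(D-k)f_j)(x)|=\Big|\int_{k-1}^{k+1}e^{ix\xi}\psi(\xi-k)\mathscr{F}_xf_j(\xi)\,d\xi\Big|\le C\|\psi(\cdot-k)\mathscr{F}_xf_j\|_{L^2},
\]
so that
\[
\sum_{k\in\Z}|(\psi(D-k)f_j)(x)|^2\le C\sum_{k\in\Z}\|\psi(\cdot-k)\mathscr{F}_xf_j\|_{L^2}^2\le C\|f_j\|_{L^2}^2,
\]
the last inequality by finite overlap, exactly as you computed. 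Inserting this after your Khinchin step gives the required pointwise bound. The same argument handles $U(t)f_j^\omega$ with no change, since $|e^{it\phi_1(\xi)}\psi(\xi-k)\mathscr{F}_xf_j(\xi)|=|\psi(\xi-k)\mathscr{F}_xf_j(\xi)|$.
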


By using a proof  similar to Lemma 2.3 of \cite{BOP2015}, we obtain  Lemma 4.6.

\begin{Lemma}\label{lem4.7}
Let $\|f_{j}\|_{L^{2}(\mathbf{T})}=1$, $j\in \mathbb{N}^{+}$ and $f_{n}^{\omega}$ be the randomization
of $f_{j}$  defined as in \eqref{1.027}. Then, we have
\begin{eqnarray}
&&\left\|f_{j}^{\omega}\right\|_{L_{\omega}^{p}} \leq C p^{\frac{1}{2}}\left\|f_{j}\right\|_{L^{2}}=C p^{\frac{1}{2}}\left\|U(t) f_{j}^{\omega}\right\|_{L_{\omega}^{p}} \leq C p^{\frac{1}{2}}\left\|f_{j}\right\|_{L^{2}}=C p^{\frac{1}{2}},\label{4.029}
\end{eqnarray}
where $U(t)f=\sum\limits_{k\in\z}e^{ixk}e^{it\phi_{2}(k)}\mathscr{F}_{x}f(k)$, $\phi_{2}(k)=\sqrt{k^{4}+k^{2}}$ and $C>0$ is independent of $p$.
\end{Lemma}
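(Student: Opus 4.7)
The plan is to mimic the argument of Lemma 4.6 but with the Fourier series expansion on $\mathbf{T}$ replacing the Wiener decomposition on $\R$. Recall that the randomization \eqref{1.027} is
\begin{eqnarray*}
f_{j}^{\omega}(x)=\sum_{k\in\z}g_{k}^{(1)}(\omega)\,e^{ixk}\,\mathscr{F}_{x}f_{j}(k),
\end{eqnarray*}
so that $U(t)f_{j}^{\omega}$ has exactly the same structure with the unimodular multiplier $e^{it\phi_{2}(k)}$ inserted into each coefficient. In both cases, for fixed $(t,x)$, the expression is a random Rademacher/Gaussian-type series in the variable $\omega$ with deterministic coefficients of equal modulus $|\mathscr{F}_{x}f_{j}(k)|$.

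First I would fix $x\in\mathbf{T}$ and apply the Khinchin-type bound \eqref{4.01} of Lemma 4.1 with $a_{k}=a_{k}(x):=e^{ixk}\mathscr{F}_{x}f_{j}(k)$. Since $|a_{k}(x)|=|\mathscr{F}_{x}f_{j}(k)|$ is independent of $x$, the $\ell_{k}^{2}$ norm of $(a_{k}(x))_{k}$ equals $\bigl(\sum_{k\in\z}|\mathscr{F}_{x}f_{j}(k)|^{2}\bigr)^{1/2}$, which by Parseval on $\mathbf{T}$ coincides (up to a universal normalization constant absorbed into $C$) with $\|f_{j}\|_{L^{2}(\mathbf{T})}=1$. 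This yields the pointwise-in-$x$ bound
\begin{eqnarray*}
\|f_{j}^{\omega}(x)\|_{L_{\omega}^{p}(\Omega)}\leq Cp^{\frac{1}{2}}\|f_{j}\|_{L^{2}(\mathbf{T})}=Cp^{\frac{1}{2}},
\end{eqnarray*}
uniformly in $x$, which gives the first inequality of \eqref{4.029} (the $L_{\omega}^{p}$ norm being interpreted in the same pointwise/Banach-valued sense as in Lemma 4.6).

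Next I would repeat the same step for $U(t)f_{j}^{\omega}(x)=\sum_{k\in\z}g_{k}^{(1)}(\omega)\,e^{ixk}e^{it\phi_{2}(k)}\mathscr{F}_{x}f_{j}(k)$. Now the coefficients are $\tilde{a}_{k}(x,t)=e^{ixk}e^{it\phi_{2}(k)}\mathscr{F}_{x}f_{j}(k)$, and since $|e^{it\phi_{2}(k)}|=1$ we still have $|\tilde{a}_{k}(x,t)|=|\mathscr{F}_{x}f_{j}(k)|$. Hence Lemma 4.1 together with Parseval again produces
\begin{eqnarray*}
\|U(t)f_{j}^{\omega}(x)\|_{L_{\omega}^{p}(\Omega)}\leq Cp^{\frac{1}{2}}\Big(\sum_{k\in\z}|\mathscr{F}_{x}f_{j}(k)|^{2}\Big)^{\frac{1}{2}}=Cp^{\frac{1}{2}}\|f_{j}\|_{L^{2}(\mathbf{T})}=Cp^{\frac{1}{2}},
\end{eqnarray*}
uniformly in $(t,x)$. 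Combining the two estimates yields \eqref{4.029} with the constant $C$ independent of $p$.

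There is no genuine obstacle here; the lemma is essentially a one-line consequence of the sub-Gaussian moment estimate for random Fourier series combined with Parseval on the torus. The only minor point to be careful about is the normalization convention for $\mathscr{F}_{x}$ on $\mathbf{T}$ used in the paper (the factor $\frac{1}{2\pi}$ appearing in the definition of $\mathscr{F}_{x}$ on $[0,2\pi)$), but any such factor is a fixed universal constant and can be absorbed into $C$.
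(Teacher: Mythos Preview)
Your proposal is correct and matches the paper's approach: the paper simply cites Lemma~7.2 of \cite{YDHXY2024} and says the proof is analogous, and that reference uses exactly the Khinchin-type inequality (Lemma~4.1 here) applied termwise to the random Fourier series, together with Parseval on $\mathbf{T}$ and the fact that the propagator multiplier $e^{it\phi_{2}(k)}$ is unimodular. There is nothing more to add.
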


By using  a proof  similar   to   Lemma 7.2 of \cite{YDHXY2024}, we obtain  Lemma 4.7.

\begin{Lemma}\label{lem4.8}
Let $f_{j}=\sum\limits_{m=1}^{\infty}c_{j,m}e_{m}$, $f_{j}^{\omega_{1}}$
be  the randomization of $f_{j}$  defined as in \eqref{1.028}. Then, we have
\begin{eqnarray}
&&\left\|f_{j}^{\omega_{1}}\right\|_{L_{\omega_{1}}^{p}} \leq C p^{\frac{1}{2}}\left\|f_{j}\right\|_{L^{2}}=C p^{\frac{1}{2}}\left\|U(t) f_{j}^{\omega_{1}}\right\|_{L_{\omega_{1}}^{p}} \leq C p^{\frac{1}{2}}\left\|f_{j}\right\|_{L^{2}}=C p^{\frac{1}{2}},\label{4.030}
\end{eqnarray}
where $U(t)f=\sum\limits_{m=1}^{\infty}e^{it\phi_{3}(m)}c_{j,m}e_{m}$, $\phi_{3}(m)=\sqrt{(m\pi)^{2}+(m\pi)^{4}}$, $e_{m}=\frac{\sin m\pi|x|}{(2\pi)^{\frac{1}{2}}|x|}$, $c_{j,m}=\int_{\Theta}f_{j}e_{m}dx$ and  $C$ is independent of $p.$
\end{Lemma}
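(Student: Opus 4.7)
\medskip
\noindent\textbf{Proof proposal for Lemma 4.8.} The plan is to follow the same scheme used for Lemmas 4.6 and 4.7, transported to the ball $\Theta=\{x\in\R^{3}:|x|<1\}$, where the spectral frame is the orthonormal basis $\{e_{m}\}$ rather than the Wiener decomposition or the torus Fourier basis. The two key ingredients will be the Khinchin-type inequality (Lemma~4.1) applied pointwise in $x$ to the Gaussian series in $\omega_{1}$, together with Minkowski's inequality (valid for $p\geq 2$) to trade the order of the spatial and probabilistic integrations. Since $U(t)$ acts on the randomized sum only by multiplying the $m$-th coefficient by the unimodular phase $e^{it\phi_{3}(m)}$, the two claimed bounds will reduce to the same computation.

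Concretely, I would first expand
$$f_{j}^{\omega_{1}}(x)=\sum_{m=1}^{\infty}g_{m}^{(1)}(\omega_{1})\frac{c_{j,m}}{m\pi}e_{m}(x),\qquad U(t)f_{j}^{\omega_{1}}(x)=\sum_{m=1}^{\infty}g_{m}^{(1)}(\omega_{1})\frac{c_{j,m}}{m\pi}e^{it\phi_{3}(m)}e_{m}(x),$$
and then apply Lemma~4.1 in the $\omega_{1}$-variable for each fixed $x$, which yields
$$\|f_{j}^{\omega_{1}}(x)\|_{L_{\omega_{1}}^{p}}\leq C p^{\tfrac12}\Bigl(\sum_{m=1}^{\infty}\frac{|c_{j,m}|^{2}}{(m\pi)^{2}}|e_{m}(x)|^{2}\Bigr)^{\tfrac12},$$
and the identical bound for $U(t)f_{j}^{\omega_{1}}$ (because $|e^{it\phi_{3}(m)}|=1$).

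Next, to bring the right-hand side into the form $Cp^{1/2}\|f_{j}\|_{L^{2}}$, I would take the $L_{x}^{2}(\Theta)$ norm of both sides, invoke Minkowski's inequality to move $L_{x}^{2}$ inside $L_{\omega_{1}}^{p}$, and use the orthonormality of $\{e_{m}\}$ in $L^{2}(\Theta)$ to evaluate
$$\Bigl\|\Bigl(\sum_{m=1}^{\infty}\frac{|c_{j,m}|^{2}}{(m\pi)^{2}}|e_{m}|^{2}\Bigr)^{\tfrac12}\Bigr\|_{L_{x}^{2}}^{2}=\sum_{m=1}^{\infty}\frac{|c_{j,m}|^{2}}{(m\pi)^{2}}\leq \frac{1}{\pi^{2}}\sum_{m=1}^{\infty}|c_{j,m}|^{2}=\frac{1}{\pi^{2}}\|f_{j}\|_{L^{2}}^{2}.$$
The decay factor $\tfrac{1}{m\pi}$ in the randomization \eqref{1.028} only helps, since it is bounded by one for $m\geq 1$. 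Combining the two steps and recalling $\|f_{j}\|_{L^{2}}=1$ (because $(f_{j})_{j}$ is an orthonormal system in $L^{2}(\Theta)$) gives both inequalities asserted in Lemma~4.8.

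The argument is entirely routine, so I do not anticipate a genuine obstacle; the only mildly subtle point is keeping track of the correct interpretation of $\|f_{j}^{\omega_{1}}\|_{L_{\omega_{1}}^{p}}$, namely that the claimed bound is to be understood after integrating in $x$ (so that Minkowski can be applied in the direction $L_{x}^{2}L_{\omega_{1}}^{p}\hookleftarrow L_{\omega_{1}}^{p}L_{x}^{2}$). Once this reading is fixed, the Gaussian–orthonormal calculation sketched above closes the proof immediately, and the same structure yields the estimate for $U(t)f_{j}^{\omega_{1}}$ without any modification.
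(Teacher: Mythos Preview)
Your opening via Khinchin is correct, but your reading of the norm is off: the bound in Lemma~4.8 is meant to hold \emph{pointwise} in $x\in\Theta$, not only after an $L_{x}^{2}$ integration. This matters because in the applications (Lemmas~14.1--14.2) one forms the product $(e^{it\sqrt{\cdots}}f_{j}^{\omega_{1}}-f_{j}^{\omega_{1}})\cdot e^{-it\sqrt{\cdots}}\bar f_{j}^{\omega_{1}}$ pointwise in $x$ and then applies H\"older in $\omega_{1}$ to split it into two $L_{\omega_{1}}^{2r}$ factors; the second factor must be bounded by $C(2r)^{1/2}$ \emph{uniformly in $x$} so that it can be pulled outside the $\ell_{j}^{2}$ norm before Lemma~4.5 handles the first factor. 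An $L_{\omega_{1}}^{p}L_{x}^{2}$ bound of the type your Minkowski step produces does not allow this factorization.

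The fix is one line and needs neither Minkowski nor the orthonormality of $\{e_{m}\}$. After your Khinchin step
\[
\|f_{j}^{\omega_{1}}(x)\|_{L_{\omega_{1}}^{p}}\leq Cp^{1/2}\Bigl(\sum_{m\geq1}\frac{|c_{j,m}|^{2}}{(m\pi)^{2}}|e_{m}(x)|^{2}\Bigr)^{1/2},
\]
observe (exactly as in the proof of Lemma~4.5) that
\[
\Bigl|\frac{e_{m}(x)}{m\pi}\Bigr|=\Bigl|\frac{\sin(m\pi|x|)}{(2\pi)^{1/2}\,m\pi|x|}\Bigr|\leq(2\pi)^{-1/2}
\]
uniformly in $m$ and $x$, since $|\sin\theta|\leq|\theta|$. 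This immediately gives the pointwise bound $\|f_{j}^{\omega_{1}}(x)\|_{L_{\omega_{1}}^{p}}\leq Cp^{1/2}\bigl(\sum_{m}|c_{j,m}|^{2}\bigr)^{1/2}=Cp^{1/2}\|f_{j}\|_{L^{2}}$, and the same for $U(t)f_{j}^{\omega_{1}}$ because $|e^{it\phi_{3}(m)}|=1$. This is precisely why the randomization \eqref{1.028} carries the extra factor $1/(m\pi)$: it compensates the growth of $e_{m}$ near the origin and is what makes the pointwise estimate possible.
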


By using a  proof similar to Lemma 8.2 of \cite{YDHXY2024}, we obtain    Lemma 4.8.

\bigskip
\bigskip

\section{ Proof of Theorem 1.1: Maximal-in-time estimates for orthonormal functions on $\mathbf{R}$}
\setcounter{equation}{0}

\setcounter{Theorem}{0}

\setcounter{Lemma}{0}

\setcounter{section}{5}

In this section, we present the maximal-in-time estimates for orthonormal functions on $\mathbf{R}$.

Inspired by \cite{BLN2020,BHLNS2019}, to prove Theorem 1.1, by using the homogeneous  dyadic decomposition
\begin{eqnarray*}
&\sum\limits_{k=-\infty}^{+\infty}\psi(2^{k}x)=1,
\end{eqnarray*}
 where $\psi \in C_{c}^{\infty}(\{x:\frac{1}{2}<|x|<2\})$ is a suitable nonnegative even function.
We define $$U(t)f=e^{it\sqrt{\partial_{x}^{4}-\partial_{x}^{2}}}D_{x}^{-\frac{1}{4}}f.$$
Then, we have
$$U^{\ast}(t)g=\int_{I}e^{-it\sqrt{\partial_{x}^{4}-\partial_{x}^{2}}}D_{x}^{-\frac{1}{4}}gdt. $$
We define
$$K(t,x)=\frac{1}{2\pi}\int_{\SR}e^{ix\xi}e^{it\sqrt{\xi^{4}+\xi^{2}}}\frac{d\xi}{|\xi|^{\frac{1}{2}}}.$$
Then, we have
$$UU^{\ast}F(t,x)=\int_{\SR\times I}K(t-t^{\prime},x-x^{\prime})F(t^{\prime},x^{\prime})dt^{\prime}dx^{\prime}$$
and we decompose the operator $UU^{\ast}$ as follows
\begin{align*}
UU^{\ast}F(t,x)=\int_{\SR\times I}K(t-t^{\prime},x-x^{\prime})F(t^{\prime},x^{\prime})dt^{\prime}dx^{\prime}=\sum_{k\in\z}T_{k}F(t,x),
\end{align*}
where
\begin{align*}
T_{k}F(t,x)&=\int_{\SR\times I}K_{k}(t-t^{\prime},x-x^{\prime})F(t^{\prime},x^{\prime})dt^{\prime}dx^{\prime},
\end{align*}
and the integral kernel of $T_{k}$ is given by
$$K_{k}(t-t^{\prime},x-x^{\prime})=\psi(2^{k}(x-x^{\prime}))K(t-t^{\prime},x-x^{\prime}).$$
By using a direct calculation, we have
\begin{align*}
\langle T_{k}f, g\rangle_{L_{tx}^{2}}&=\int_{\SR\times I}\left(\int_{\SR\times I}K_{k}(t-t^{\prime},x-x^{\prime})f(t^{\prime},x^{\prime})dt^{\prime}dx^{\prime}\right)
\overline{g(x,t)}dxdt\nonumber\\
&=\int_{\SR\times I}f(t^{\prime},x^{\prime})\left(\overline{\int_{\SR\times I}\overline{K_{k}(t-t^{\prime},x-x^{\prime})}g(x,t)dxdt}\right)dt^{\prime}dx^{\prime}\nonumber\\
&=\left\langle f,\int_{\SR\times I}\overline{K_{k}(t-t^{\prime},x-x^{\prime})}g(x,t)dxdt\right\rangle_{L_{x}^{2}}
=\langle f, T_{k}^{\ast}g\rangle_{L_{x}^{2}}.
\end{align*}
From the above equality, we have
\begin{eqnarray*}
&&T_{k}^{\ast}g=\int_{\SR \times I}\overline{K_{k}(t-t^{\prime},x-x^{\prime})}g(x,t)dxdt.
\end{eqnarray*}
Before proving Theorem 1.1, we give the following estimates that  play a key role in proving Theorem 1.1.

\begin{Lemma}\label{lem5.1}
Let $k\in \mathbf{Z},k\neq0$. Then,  we have
\begin{align}
\|W_{1}T_{k}W_{2}\|_{\mathfrak{S}^{2}}&\leq C\|W_{1}\|_{L_{x}^{4}L_{t}^{2}(\mathbf{R}\times I)}
\|W_{2}\|_{L_{x}^{4}L_{t}^{2}(\mathbf{R}\times I)},\label{5.01}\\
\|W_{1}T_{k}W_{2}\|_{\mathfrak{S}^{4}}&\leq C2^{(\frac{1}{p_{1}}+\frac{1}{p_{2}}-\frac{1}{2})k}
\|W_{1}\|_{L_{x}^{p_{1}}L_{t}^{2}(\mathbf{R}\times I)}
\|W_{2}\|_{L_{x}^{p_{2}}L_{t}^{2}(\mathbf{R}\times I)},\label{5.02}
\end{align}
is valid, where $C$ independent of $k$, $p_1$ and $p_2$ satisfy
$(\frac{1}{p_{1}},\frac{1}{p_{2}})\in[0,\frac{1}{2}]^{2}$ and
$$\frac{1}{p_{1}}+\frac{1}{p_{2}}\geq\frac{1}{4}, \frac{1}{p_{1}}-\frac{1}{2p_{2}}\leq\frac{1}{4}, \frac{1}{p_{2}}-\frac{1}{2p_{1}}\leq\frac{1}{4}.$$

\end{Lemma}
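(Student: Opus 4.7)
\medskip

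My plan is to prove \eqref{5.01} by directly evaluating the Hilbert--Schmidt norm of the integral kernel of $W_1T_kW_2$, and to derive \eqref{5.02} by combining such $\mathfrak{S}^2$ bounds with an operator-norm endpoint, interpolating through the noncommutative--commutative Riesz--Thorin theorem (Lemma~3.1).

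For \eqref{5.01}, the integral kernel of $W_1T_kW_2$ is $W_1(t,x)\,K_k(t-t',x-x')\,W_2(t',x')$, so
\begin{equation*}
\|W_1 T_k W_2\|_{\mathfrak{S}^2}^2
=\iint |W_1(t,x)|^2\,|K_k(t-t',x-x')|^2\,|W_2(t',x')|^2\,dt\,dt'\,dx\,dx'.
\end{equation*}
The dispersive bound $|K(t,x)|\leq C|x|^{-1/2}$ from Lemma~2.3, together with $|x-x'|\sim 2^{-k}$ on the support of $\psi(2^k(x-x'))$, gives $|K_k(t-t',x-x')|^2\leq C\,2^k\chi_{|x-x'|\sim 2^{-k}}$ uniformly in the time difference. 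Integrating out $t$ and $t'$, and setting $a(x)=\|W_1(\cdot,x)\|_{L^2_t}^2$, $b(x)=\|W_2(\cdot,x)\|_{L^2_t}^2$, the right-hand side reduces to $C\,2^k\int a(x)(\chi_{|\cdot|\sim 2^{-k}}\ast b)(x)\,dx$, and Young's inequality (the cutoff has $L^1$-mass $\sim 2^{-k}$) cancels the $2^k$ factor, leaving $\|a\|_{L^2}\|b\|_{L^2}=\|W_1\|_{L^4_xL^2_t}^2\|W_2\|_{L^4_xL^2_t}^2$.

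For \eqref{5.02}, the same computation with a general H\"{o}lder--Young splitting (the cutoff is placed in $L^s$ with $1/s=2-2/p_1-2/p_2$) produces the family of $\mathfrak{S}^2$ bounds
\begin{equation*}
\|W_1 T_k W_2\|_{\mathfrak{S}^2}\leq C\,2^{k(1/p_1+1/p_2-1/2)}\|W_1\|_{L^{p_1}_xL^2_t}\|W_2\|_{L^{p_2}_xL^2_t},
\end{equation*}
valid whenever $1/p_1+1/p_2\geq 1/2$ and $1/p_i\leq 1/2$; since $\mathfrak{S}^2\hookrightarrow\mathfrak{S}^4$ this already establishes \eqref{5.02} on the corresponding sub-triangle of the admissible region. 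To reach the remaining corners $(1/p_1,1/p_2)=(1/4,0)$ and $(0,1/4)$, where the lemma demands the genuine $\mathfrak{S}^4$-gain $2^{-k/4}$, I would apply Lemma~3.1 to interpolate between the $\mathfrak{S}^2$ endpoints above on the line $1/p_1+1/p_2=1/2$ and an $\mathfrak{S}^\infty$ operator-norm bound of the form $\|W_1T_kW_2\|_{\mathfrak{S}^\infty}\leq C\,2^{-k/2}\|W_1\|_{L^\infty_xL^2_t}\|W_2\|_{L^\infty_xL^2_t}$, which I would obtain from a Littlewood--Paley decomposition $K=\sum_j K^{(j)}$ in frequency combined with the pointwise bounds of Lemma~2.4 applied to each $K_k^{(j)}$, followed by resummation in $j$.

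The main obstacle will be the $\mathfrak{S}^\infty$ endpoint: the symbol of $T_k$ carries the weight $|\xi|^{-1/2}$ inherited from $D_x^{-1/4}$, which is singular at $\xi=0$, so $T_k$ is not uniformly bounded on $L^2_{t,x}$, forcing a separate treatment of the low-frequency piece before resumming. Once this endpoint is secured, the linear constraints $1/p_i-1/(2p_j)\leq 1/4$ and $1/p_1+1/p_2\geq 1/4$ in the statement appear naturally as the admissibility conditions for the interpolation triangle with vertices $(1/4,0)$, $(0,1/4)$ and $(1/2,1/2)$, and Lemma~3.1 yields the claimed exponent $2^{(1/p_1+1/p_2-1/2)k}$ as the convex combination of the $2^k$--scalings at those vertices.
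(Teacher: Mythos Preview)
Your argument for \eqref{5.01} matches the paper's. For \eqref{5.02} the paper proceeds differently: it uses the identity $\|W_1T_kW_2\|_{\mathfrak{S}^4}^4=\|\overline{W_2}T_k^*|W_1|^2T_kW_2\|_{\mathfrak{S}^2}^2$, expands the resulting Hilbert--Schmidt norm, and bounds the arising quadrilinear form $Z_k(h_2,h_1,h_1,h_2)$ (with $h_l=\|W_l(\cdot,x)\|_{L^2_t}^2$) directly at the three vertices $(p_1,p_2)=(2,2),(4,\infty),(\infty,4)$ by elementary H\"older and Young estimates, then fills in the triangle via Lemma~3.2. Your $\mathfrak{S}^2$/$\mathfrak{S}^\infty$ interpolation is a legitimate alternative and would in fact cover a larger set of exponents, but your proposed route to the $\mathfrak{S}^\infty$ endpoint is misdirected: Lemma~2.4 treats the propagator $e^{it|\xi|^m}$, not the Boussinesq symbol $e^{it\sqrt{\xi^4+\xi^2}}$, and the low-frequency singularity of $|\xi|^{-1/2}$ that worries you is already absorbed in the spatial kernel bound $|K(t,x)|\le C|x|^{-1/2}$ of Lemma~2.3. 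The endpoint $\|W_1T_kW_2\|_{\mathfrak{S}^\infty}\le C2^{-k/2}\|W_1\|_{L^\infty_xL^2_t}\|W_2\|_{L^\infty_xL^2_t}$ follows straight from $|K_k|\le C2^{k/2}\chi_{|x-x'|\sim 2^{-k}}$ by Cauchy--Schwarz in $t,t'$ and a Schur/Young bound in $x$, with no frequency decomposition required. Finally, since the weights lie in the mixed spaces $L^{p_i}_xL^2_t$, the interpolation step should invoke Lemma~3.2 rather than Lemma~3.1.
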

\begin{proof}
Firstly, we prove \eqref{5.01}.  By using Lemma 2.3, we have
\begin{align}
&&|K_{k}(t-t^{\prime},x-x^{\prime})|\leq C\psi(2^{k}(x-x^{\prime}))|x-x^{\prime}|^{-\frac{1}{2}}.\label{5.03}
\end{align}
By using \eqref{5.03} and H\"{o}lder inequality and Young's convolution inequality, we obtain
\begin{align}
\|W_{1}T_{k}W_{2}\|_{\mathfrak{S}^{2}}^{2}&=\int_{\SR\times I}\int_{\SR\times I}|W_{1}(t,x)|^{2}|K_{k}(t-t^{\prime},x-x^{\prime})|^{2}|W_{2}(t^{\prime},x^{\prime})|^{2}dtdt^{\prime}dx
dx^{\prime}\nonumber\\
&\leq C\int_{\SR^{2}}\frac{\|W_{1}(x,\cdot)\|_{L_{t}^{2}}^{2}
\|W_{2}(x^{\prime},\cdot)\|_{L_{t}^{2}}^{2}}{|x-x^{\prime}|}dxdx^{\prime}\nonumber\\
&\leq C\|W_{1}\|_{L_{x}^{4}L_{t}^{2}}^{2}\left\|\int_{\SR}\frac{\psi^{2}(2^{k}(x-x^{\prime}))
\|W_{2}(x^{\prime},\cdot)\|_{L_{t}^{2}}^{2}}
{|x-x^{\prime}|}dx^{\prime}\right\|_{L_{x}^{2}}\nonumber\\
&\leq  C\|W_{1}\|_{L_{x}^{4}L_{t}^{2}}^{2}\|W_{2}\|_{L_{x}^{4}L_{t}^{2}}^{2}
\int_{\SR}\frac{\psi^{2}(2^{k}x)}{|x|}dx\nonumber\\
&=C\|W_{1}\|_{L_{x}^{4}L_{t}^{2}}^{2}\|W_{2}\|_{L_{x}^{4}L_{t}^{2}}^{2}2^{k}2^{-k}
\int_{\SR}\frac{\psi^{2}(y)}{|y|}dy\nonumber\\
&\leq C\|W_{1}\|_{L_{x}^{4}L_{t}^{2}}^{2}\|W_{2}\|_{L_{x}^{4}L_{t}^{2}}^{2}.\label{5.04}
\end{align}
Then, we prove \eqref{5.02}, since $\|W_{1}T_{k}W_{2}\|_{\mathfrak{S}^{4}}^{4}=
\|\overline{W_{2}}T_{k}^{\ast}|W_{1}|^{2}T_{k}W_{2}\|_{\mathfrak{S}^{2}}^{2}$ and
$$\overline{W_{2}}T_{k}^{\ast}|W_{1}|^{2}T_{k}W_{2}[F](t,x)=\int_{\SR\times I}
\mathcal{M}_{k}(t,t^{\prime\prime},x,x^{\prime\prime})
F(t^{\prime\prime},x^{\prime\prime})dt^{\prime\prime}dx^{\prime\prime},$$
where
\begin{eqnarray*}
&&\mathcal{M}_{k}(t,t^{\prime\prime},x,x^{\prime\prime})\nonumber\\&&=
\overline{W_{2}}(t,x)\left(\int_{\SR\times I}\overline{K_{k}(t-t^{\prime},x-x^{\prime})}
|W_{1}(t^{\prime},x^{\prime})|^{2}
K_{k}(t^{\prime}-t^{\prime\prime},x^{\prime}-x^{\prime\prime})
dt^{\prime}dx^{\prime}\right)W_{2}(t^{\prime\prime},x^{\prime\prime}).
\end{eqnarray*}
Let $x= x_{1}$ and $x^{\prime\prime}= x_{4}$, from Lemma 2.3,   we get
\begin{eqnarray*}
&&\|W_{1}T_{k}W_{2}\|_{\mathfrak{S}^{4}}^{4}=\int_{\SR\times I}\int_{\SR\times I}|\mathcal{M}_{k}(t,t^{\prime\prime},x_{1},x_{4})|^{2}dtdt^{\prime\prime}dx_{1}
dx_{4}\nonumber\\
&&=\int_{\SR\times I}\int_{\SR\times I}\left|\overline{W_{2}}(t,x_{1})\left(\int_{\SR\times I}\overline{K_{k}(t-t^{\prime},x_{1}-x^{\prime})}|W_{1}(t^{\prime},x^{\prime})
|^{2}K_{k}(t^{\prime}-t^{\prime\prime},x^{\prime}-x_{4})
dt^{\prime}dx^{\prime}\right)\right|^{2}\nonumber\\
&&\left|W_{2}(t^{\prime\prime},x_{4})\right|^{2}dtdt^{\prime\prime}dx_{1}
dx_{4}\nonumber\\
&&\leq C\int_{\SR}\int_{\SR}\|W_{2}(t,x_{1})\|_{L_{t}^{2}}^{2}
\left(\int_{\SR}2^{k}\psi(2^{k}(x_{1}-x^{\prime}))\|W_{1}(x^{\prime},\cdot)\|_{L_{t^{\prime}}^{2}}^{2}
\psi(2^{k}(x^{\prime}-x_{4}))dx^{\prime}\right)^{2}\nonumber\\
&&\|W_{2}(t^{\prime\prime},x_{4})\|_{L_{t^{\prime\prime}}^{2}}^{2}dx_{1}
dx_{4}\nonumber\\
&&= C\int_{\SR}\int_{\SR}\|W_{2}(x_{1},\cdot)\|_{L_{t}^{2}}^{2}
|N_{k}(x_{1},x_{4})|^{2}\|W_{2}(x_{4},\cdot)\|_{L_{t}^{2}}^{2}dx_{1}dx_{4},
\end{eqnarray*}
where
$$N_{k}(x_{1},x_{4})=\int_{\SR}2^{k}\psi(2^{k}(x_{1}-x^{\prime}))
\|W_{1}(x^{\prime},\cdot)\|_{L_{t}^{2}}^{2}\psi(2^{k}(x^{\prime}-x_{4}))dx^{\prime}.$$
According to the definition of $N_{k}(x_{1}, x_{4})$, we have
\begin{align}
&\|W_{1}T_{k}W_{2}\|_{\mathfrak{S}^{4}}^{4}\leq CZ_{k}(h_{2},h_{1},h_{1},h_{2}),\label{5.05}
\end{align}
where $h_{l}=\|W_{l}(\cdot,x)\|_{L_{t}^{2}}^{2}(l=1,2)$ and $Z_{k}$  is defined by
\begin{align*}
Z_{k}(g_{1},g_{2},g_{3},g_{4})&=2^{2k}\int_{\SR^{4}}\psi(2^{k}(x_{1}-x_{2}))\psi(2^{k}(x_{1}-x_{3}))\\
&\quad\times\psi(2^{k}(x_{2}-x_{4}))\psi(2^{k}(x_{3}-x_{4}))\prod_{i=1}^{4}g_{i}(x_{i})dx_{i}.
\end{align*}
Since the other points in the claimed region can be obtained with the aid of Lemma 3.2,
 then we only prove that \eqref{5.02}  is valid at the three points $( p_1, p_2) = (2, 2), (4, \infty)$,
  and $(\infty, 4).$ For $( p_1, p_2) = (2, 2)$, due to $\|\psi\|_{L^{\infty}}\leq C$,
  by using \eqref{5.05} and H\"{o}lder inequality, we have
\begin{align}
&&\|W_{1}T_{k}W_{2}\|_{\mathfrak{S}^{4}}^{4}\leq C2^{2k}\int_{\SR^{4}}\prod_{i=1}^{4}g_{i}(x_{i})dx_{i}\leq C2^{2k}\|W_{1}\|_{L_{x}^{2}L_{t}^{2}}^{4}\|W_{2}\|_{L_{x}^{2}L_{t}^{2}}^{4}.\label{5.06}
\end{align}
For $( p_1, p_2) = (4, \infty), (\infty,4)$,  it only need to show \eqref{5.02}
 at $( p_1, p_2)=(4, \infty)$ due to symmetry. Since  $\|\psi\|_{L^{\infty}}\leq C$ and
$\|\psi(2^{k}\cdot)\|_{L^{1}}\sim 2^{-k}$, by using H\"{o}lder inequality and Young's
convolution inequality, we get
\begin{align}
|Z_{k}(g_{1},g_{2},g_{3},g_{4})|&\leq C2^{2k}\|g_{1}\|_{L^{\infty}}\|g_{3}\|_{L^{\infty}}
\|g_{4}\|_{L^{\infty}}\nonumber\\
&\quad\times\int_{\SR^{4}}\psi(2^{k}(x_{1}-x_{2}))\psi(2^{k}(x_{1}-x_{3}))
\psi(2^{k}(x_{2}-x_{4}))g_{2}(x_{2})\prod_{i=1}^{4}dx_{i}\nonumber\\
&\leq C2^{2k}\|\psi(2^{k}\cdot)\|_{L^{1}}^{3}\|g_{1}\|_{L^{\infty}}
\|g_{2}\|_{L^{1}}\|g_{3}\|_{L^{\infty}}\|g_{4}\|_{L^{\infty}}\nonumber\\
&\leq C2^{-k}\|g_{1}\|_{L^{\infty}}\|g_{2}\|_{L^{1}}\|g_{3}\|_{L^{\infty}}
\|g_{4}\|_{L^{\infty}}.\label{5.07}
\end{align}
By symmetry, we can get
\begin{align}
&&|Z_{k}(g_{1},g_{2},g_{3},g_{4})|
\leq C2^{-k}\|g_{1}\|_{L^{\infty}}\|g_{2}\|_{L^{\infty}}
\|g_{3}\|_{L^{1}}\|g_{4}\|_{L^{\infty}}.\label{5.08}
\end{align}
From \eqref{5.07} and \eqref{5.08}, by using Exercise 13, Page18 of \cite{BL1976}, we have
\begin{align}
&&|Z_{k}(g_{1},g_{2},g_{3},g_{4})|
\leq C2^{-k}\|g_{1}\|_{L^{\infty}}\|g_{2}\|_{L^{2}}\|g_{3}\|_{L^{2}}
\|g_{4}\|_{L^{\infty}}.\label{5.09}
\end{align}
From \eqref{5.05} and \eqref{5.09}, we obtain
\begin{align}
\|W_{1}T_{k}W_{2}\|_{\mathfrak{S}^{4}}^{4}&\leq CZ_{k}(h_{2},h_{1},h_{1},h_{2})\nonumber\\
&\leq C2^{-k}\|h_{1}\|_{L^{2}}^{2}\|h_{2}\|_{L^{\infty}}^{2}\nonumber\\
&\leq C 2^{-k}\|W_{1}\|_{L_{x}^{4}L_{t}^{2}}^{4}\|W_{2}\|_{L_{x}^{\infty}L_{t}^{2}}^{4},\label{5.010}
\end{align}
which shows that \eqref{5.02} is valid at point $( p_1, p_2) = (4, \infty)$.

This completes the proof of Lemma 5.1.

\end{proof}

\begin{Lemma}\label{lem5.2}
Let $A_{0}=B_{0}=L_{x}^{r_{0}}L_{t}^{2}$, $ A_{1}=B_{1}=L_{x}^{r_{1}}L_{t}^{2}$,
$C_{0}=\ell_{\mu(r_{0},r_{0})}^{\infty},\, C_{1}=\ell_{\mu(r_{1},r_{0})}^{\infty}$
with $\frac{1}{r_{0}}=\frac{1}{4}-\delta$, $\frac{1}{r_{1}}=\frac{1}{4}+2\delta$,
$\mu(r_{0},r_{0})=\frac{1}{2}-\frac{2}{r_{0}}$, $\mu(r_0, r_1)=\frac{1}{2}-\frac{1}{r_{0}}-\frac{1}{r_{1}}$
 and  $T$ be a bilinear operator such that

\noindent $T: \,A_{0}\times B_{0}\rightarrow C_{0}$,
which is
\begin{eqnarray*}
&&\sup_{k}2^{-\mu(r_{0},r_{0})}\|W_{1}T_{k}W_{2}\|_{\mathfrak{S}^{\beta^{\prime}}}\leq C\|W_{1}\|_{L_{x}^{r_{0}}L_{t}^{2}}\|W_{2}\|_{L_{x}^{r_{0}}L_{t}^{2}}.
\end{eqnarray*}
$T:\, A_{0}\times B_{1}\rightarrow C_{1}$,
which is
\begin{eqnarray*}
&&\sup_{k}2^{-\mu(r_{0},r_{1})}\|W_{1}T_{k}W_{2}\|_{\mathfrak{S}^{\beta^{\prime}}}\leq C\|W_{1}\|_{L_{x}^{r_{0}}L_{t}^{2}}\|W_{2}\|_{L_{x}^{r_{1}}L_{t}^{2}}.
\end{eqnarray*}
$T:\,A_{1}\times B_{0}\rightarrow C_{1}$,
which is
\begin{eqnarray*}
&&\sup_{k}2^{-\mu(r_{1},r_{0})}\|W_{1}T_{k}W_{2}\|_{\mathfrak{S}^{\beta^{\prime}}}\leq C\|W_{1}\|_{L_{x}^{r_{1}}L_{t}^{2}}\|W_{2}\|_{L_{x}^{r_{0}}L_{t}^{2}}.
\end{eqnarray*}
Then, we have $T:\,L_{x}^{4,2}L_{t}^{2}\times L_{x}^{4,2}L_{t}^{2}\rightarrow \ell_{0}^{1}$,
which is
\begin{eqnarray*}
&&\sum_{k\geq1}\|W_{1}T_{k}W_{2}\|_{\mathfrak{S}^{\beta^{\prime}}}\leq C\|W_{1}\|_{L_{x}^{4,2}L_{t}^{2}}\|W_{2}\|_{L_{x}^{4,2}L_{t}^{2}}.
\end{eqnarray*}
\end{Lemma}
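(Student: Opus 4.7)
The plan is to apply the bilinear real interpolation theorem (Lemma 2.15) to the three given endpoint estimates, after recognizing both the source Lorentz spaces and the weighted $\ell^1$ target as real interpolation spaces between the given endpoint data. The arithmetic comes from balancing the negative weight $\mu(r_{0},r_{1})=-\delta$ at the two off-diagonal corners against the positive weight $\mu(r_{0},r_{0})=2\delta$ at the diagonal corner, so that a convex combination of weights produces the neutral weight $0$ needed for summability.

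First, I would identify the source space. The identity $\frac{1}{4}=\frac{2}{3}\bigl(\frac{1}{4}-\delta\bigr)+\frac{1}{3}\bigl(\frac{1}{4}+2\delta\bigr)$ fixes the interpolation parameter to be $1/3$, so Lemma 2.17 (with second Lorentz index $2$) gives
\begin{equation*}
L^{4,2}_{x}L^{2}_{t}=(A_{0},A_{1})_{1/3,2}=(B_{0},B_{1})_{1/3,2}.
\end{equation*}
On the target side, the identity $0=\frac{1}{3}(2\delta)+\frac{2}{3}(-\delta)$ pins the other interpolation parameter at $2/3$, and Lemma 2.16 gives
\begin{equation*}
(C_{0},C_{1})_{2/3,1}=\bigl(\ell^{\infty}_{2\delta},\,\ell^{\infty}_{-\delta}\bigr)_{2/3,1}=\ell^{1}_{0},
\end{equation*}
whose norm is precisely $\sum_{k}\|g_{k}\|_{\mathfrak{S}^{\beta'}}$.

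Second, I would feed these data into Lemma 2.15 with $\theta_{0}=\theta_{1}=1/3$, $\theta=\theta_{0}+\theta_{1}=2/3$, $r=1$ and $p=q=2$. These satisfy $\theta_{i}\in(0,\theta)\subset(0,1)$ and the compatibility $\tfrac{1}{p}+\tfrac{1}{q}=1\geq 1$, and they produce $pr=qr=2$, which matches the Lorentz indices above. The conclusion of Lemma 2.15 then reads
\begin{equation*}
T\colon L^{4,2}_{x}L^{2}_{t}\times L^{4,2}_{x}L^{2}_{t}\longrightarrow \ell^{1}_{0},
\end{equation*}
which is exactly $\sum_{k}\|W_{1}T_{k}W_{2}\|_{\mathfrak{S}^{\beta'}}\leq C\|W_{1}\|_{L^{4,2}_{x}L^{2}_{t}}\|W_{2}\|_{L^{4,2}_{x}L^{2}_{t}}$; restricting the summation to $k\geq 1$ only makes the estimate weaker, so the claim follows.

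The main obstacle is that the three given endpoint conditions sit at the triangle vertices $(A_{0},B_{0})$, $(A_{0},B_{1})$, $(A_{1},B_{0})$, whereas the literal statement of Lemma 2.15 is phrased for the L-shaped configuration $(A_{0},B_{0})$, $(A_{0},B_{1})$, $(A_{1},B_{1})$. I would handle this by exploiting the built-in $A\leftrightarrow B$ symmetry of the bilinear form $(W_{1},W_{2})\mapsto W_{1}T_{k}W_{2}$ (swapping the two factors corresponds to passing to $T_{k}^{*}$ and hence preserves the Schatten norm), which converts the $(A_{1},B_{0})$ estimate into the symmetric $(B_{1},A_{0})$ estimate needed to complete the L-shape. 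Applying Lemma 2.15 once in each orientation yields the same bilinear bound in $\ell^{1}_{0}$ with identical source norms, and the two estimates are combined by the triangle inequality without any loss in the constant.
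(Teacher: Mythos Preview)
Your proof is correct and matches the paper's own argument line for line: both apply Lemmas 2.15--2.17 with $\theta_0=\theta_1=\tfrac13$, $\theta=\tfrac23$, $p=q=2$, $r=1$, and identify $(L_x^{r_0}L_t^2,L_x^{r_1}L_t^2)_{1/3,2}=L_x^{4,2}L_t^2$ and $(\ell^\infty_{2\delta},\ell^\infty_{-\delta})_{2/3,1}=\ell^1_0$ by the same arithmetic.

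One remark on the configuration issue you raise. The mismatch is in fact a typo in the paper's statement of Lemma~2.15: the Bergh--L\"ofstr\"om exercise it cites (the Lions--Peetre bilinear theorem) has third hypothesis $T:A_1\times B_0\to C_1$, not $A_1\times B_1\to C_1$, so the hypotheses of Lemma~5.2 feed directly into the correctly stated lemma and no workaround is needed. This is fortunate, because your symmetry argument as written does not actually supply the missing $(A_1,B_1)$ corner: since $A_i=B_i$ here, swapping the two inputs of $T$ merely permutes the three given estimates among themselves and produces nothing new.
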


\begin{proof}
By using Lemmas 2.15, 2.16, 2.17, we have $(A_{0},A_{1})_{\frac{\theta}{2},2}\times(B_{0},B_{1})_{\frac{\theta}{2},2}
\rightarrow (C_{0},C_{1})_{\theta,1}$, which is
\begin{eqnarray}
&&\sum_{k\geq1}2^{-(1-\frac{\theta}{2})\mu(r_{0},r_{0})k}2^{-\frac{\theta}{2}\mu(r_{1},r_{0})k}
\|W_{1}T_{k}W_{2}\|_{\mathfrak{S}^{\beta^{\prime}}}\leq C\|W_{1}\|_{L_{x}^{q_{1},2}L_{t}^{2}}\|W_{2}\|_{L_{x}^{q_{2},2}L_{t}^{2}}.\label{5.011}
\end{eqnarray}
Here $\frac{1}{q_{1}}=\frac{1-\frac{\theta}{2}}{r_{0}}+\frac{\theta}{r_{1}}$, $\frac{1}{q_{2}}=\frac{1-\frac{\theta}{2}}{r_{0}}+\frac{\theta}{r_{1}}$.

\noindent From \eqref{5.011}, we take $\theta=\frac{2}{3}$,  since $2\delta=\mu(r_0, r_0)\neq \mu(r_0, r_1)=-\delta$, $\frac{1}{3}\mu(r_0, r_0)+\frac{2}{3}\mu(r_1, r_0)=0$,  we have
\begin{eqnarray}
(L_{x}^{r_{0}}L_{t}^{2},L_{x}^{r_{1}}L_{t}^{2})_{\frac{1}{3},2}
\times(L_{x}^{r_{0}}L_{t}^{2},L_{x}^{r_{1}}L_{t}^{2})_{\frac{1}{3},2}=L_{x}^{4,2}L_{t}^{2}\times L_{x}^{4,2}L_{t}^{2}
\rightarrow (C_{0},C_{1})_{\frac{2}{3},1}=\ell_{0}^{1},\label{5.012}
\end{eqnarray}
which is
\begin{eqnarray}
&&\sum_{k\geq1}\|W_{1}T_{k}W_{2}\|_{\mathfrak{S}^{\beta^{\prime}}}\leq C\|W_{1}\|_{L_{x}^{4,2}L_{t}^{2}}\|W_{2}\|_{L_{x}^{4,2}L_{t}^{2}}.\label{5.013}
\end{eqnarray}
From \eqref{5.012} and \eqref{5.013}, we have that Lemma 5.2 is valid.

The proof of Lemma 5.2 is finished.

\end{proof}

\noindent{\bf Proof of Theorem  1.1}

Now, we prove Theorem 1.1.  By using Lemma 2.13, it suffices to prove that
\begin{align}
&\|WUU^{\ast}\overline{W}\|_{\mathfrak{S}^{\beta^{\prime}}}\leq C
\|W\|_{L_{x}^{4,2}L_{t}^{2}(\mathbf{R}\times I)}^{2}.\label{5.014}
\end{align}
First, we consider the operator $WT_{k}\overline{W}$ $(k\in \Z)$. From \eqref{5.01}
 and \eqref{5.02},
by using Lemma 3.2,     for
$\beta^{\prime}\in(2,4]$,   $\exists\delta(\delta^{\prime})>0$, we have that
$\lim\limits_{\beta^{\prime}\rightarrow2}\delta(\delta^{\prime})=0$
and
\begin{align}
&&\|W_{1}T_{k}W_{2}\|_{\mathfrak{S}^{\beta^{\prime}}}\leq C
2^{(\frac{1}{p_{3}}+\frac{1}{p_{4}}-\frac{1}{2})k}
\|W_{1}\|_{L_{x}^{p_{3}}L_{t}^{2}(\mathbf{R}\times I)}
\|W_{2}\|_{L_{x}^{p_{4}}L_{t}^{2}(\mathbf{R}\times I)},\label{5.015}
\end{align}
where all $p_3, p_4$ satisfy $|(\frac{1}{p_{3}}, \frac{1}{p_{4}})-(\frac{1}{4},
\frac{1}{4})|\leq \delta(\beta^{\prime})$.

Now fix $\beta_{\ast}< 2$ satisfying $\beta_{\ast}^{\prime}\in(2, 4)$.
We define
$$\mathcal{T}(W_{1},W_{2}):=(W_{1}T_{k}W_{2})_{k\in \z}.$$
From  \eqref{5.011}, we know that  $\mathcal{T}: L_{x}^{p_{3}}L_{t}^{2}\times L_{x}^{p_4}L_{t}^{2}\rightarrow \ell_{\mu(p_{3},p_{4})}^{\infty}(\mathfrak{S}^{\beta_{\ast}^{\prime}})$
is bounded for all $(\frac{1}{p_{3}},\frac{1}{p_{4}})$ in
$\delta(\beta_{\ast}^{\prime})$-neighborhood of $(\frac{1}{4}, \frac{1}{4})$.
 Here, if $k\geq1$, $\mu(p_3, p_4)=\frac{1}{2}-\frac{1}{p_{3}}-\frac{1}{p_{4}}$;
 if $k\leq-1$, $\mu(p_3, p_4)=\frac{1}{p_{3}}+\frac{1}{p_{4}}-\frac{1}{2}$.
Then, we have that
$\mathcal{T}=(T_k)_{k}$ is a bilinear operator such that

\noindent $\mathcal{T}: \,A_{0}\times B_{0}\rightarrow C_{0}$,
which is
\begin{eqnarray*}
&&\sup_{k}2^{-\mu(r_{0},r_{0})}\|W_{1}T_{k}W_{2}\|_{\mathfrak{S}^{\beta^{\prime}}}\leq C\|W_{1}\|_{L_{x}^{r_{0}}L_{t}^{2}}\|W_{2}\|_{L_{x}^{r_{0}}L_{t}^{2}}.
\end{eqnarray*}
$\mathcal{T}:\, A_{0}\times B_{1}\rightarrow C_{1}$,
which is
\begin{eqnarray*}
&&\sup_{k}2^{-\mu(r_{0},r_{1})}\|W_{1}T_{k}W_{2}\|_{\mathfrak{S}^{\beta^{\prime}}}\leq C\|W_{1}\|_{L_{x}^{r_{0}}L_{t}^{2}}\|W_{2}\|_{L_{x}^{r_{1}}L_{t}^{2}}.
\end{eqnarray*}
$\mathcal{T}:\,A_{1}\times B_{0}\rightarrow C_{1}$,
which is
\begin{eqnarray*}
&&\sup_{k}2^{-\mu(r_{1},r_{0})}\|W_{1}T_{k}W_{2}\|_{\mathfrak{S}^{\beta^{\prime}}}\leq C\|W_{1}\|_{L_{x}^{r_{1}}L_{t}^{2}}\|W_{2}\|_{L_{x}^{r_{0}}L_{t}^{2}}.
\end{eqnarray*}
where
$$A_{0}=B_{0}=L_{x}^{r_{0}}L_{t}^{2},\, A_{1}=B_{1}=L_{x}^{r_{1}}L_{t}^{2}\,
\,C_{0}=\ell_{\mu(r_{0},r_{0})}^{\infty},\, C_{1}=\ell_{\mu(r_{1},r_{0})}^{\infty}.$$
In particular, we choose $r_0, r_1$ such that $\frac{1}{r_{0}}=\frac{1}{4}-\delta$
 and $\frac{1}{r_{1}}=\frac{1}{4}+2\delta$, where $\delta> 0$ is sufficiently small.
When $k\geq1$, by using Lemma 5.2, we have
\begin{align*}
&\mathcal{T}:\,L_{x}^{4,2}L_{t}^{2}\times L_{x}^{4,2}L_{t}^{2}\rightarrow \ell_{0}^{1},
\end{align*}
which means that
\begin{align}
&\sum_{k\geq1}\|WT_{k}\overline{W}\|_{\mathfrak{S}^{\beta_{\ast}^{\prime}}}\leq C
\|W\|_{L_{x}^{4,2}L_{t}^{2}(\mathbf{R}\times I)}^{2}.\label{5.016}
\end{align}
When $k\leq-1$, we denote $j=-k\geq1$ and $O_{j}=T_{-j}$, from \eqref{5.015}, we have
\begin{eqnarray}
\|W_{1}O_{j}W_{2}\|_{\mathfrak{S}^{\beta^{\prime}}}=\|W_{1}T_{-j}W_{2}\|_{\mathfrak{S}^{\beta^{\prime}}}\leq C2^{-(\frac{1}{p_{3}}+\frac{1}{p_{4}}-\frac{1}{2})j}
\|W_{1}\|_{L_{x}^{p_{3}}L_{t}^{2}(\mathbf{R}\times I)}
\|W_{2}\|_{L_{x}^{p_{4}}L_{t}^{2}(\mathbf{R}\times I)}.\label{5.017}
\end{eqnarray}
From \eqref{5.017}, by using a  proof similar to \eqref{5.016}, we have
\begin{align}
&\sum_{k\leq-1}\|WT_{k}\overline{W}\|_{\mathfrak{S}^{\beta_{\ast}^{\prime}}}
=\sum_{j\geq1}\|WO_{j}\overline{W}\|_{\mathfrak{S}^{\beta_{\ast}^{\prime}}}\leq C
\|W\|_{L_{x}^{4,2}L_{t}^{2}(\mathbf{R}\times I)}^{2}.\label{5.018}
\end{align}
When $k=0$, by using a  proof similar to \eqref{5.01}, we have
\begin{align}
\|W_{1}T_{0}W_{2}\|_{\mathfrak{S}^{2}}^{2}&=\int_{\SR\times I}\int_{\SR\times I}|W_{1}(t,x)|^{2}|K_{0}(t-t^{\prime},x-x^{\prime})|^{2}|W_{2}(t^{\prime},x^{\prime})|^{2}dtdt^{\prime}dx
dx^{\prime}\nonumber\\
&\leq C\|W_{1}\|_{L_{x}^{4}L_{t}^{2}}^{2}\left\|\int_{\SR}\frac{\psi^{2}((x-x^{\prime}))
\|W_{2}(x^{\prime},\cdot)\|_{L_{t}^{2}}^{2}}
{|x-x^{\prime}|}dx^{\prime}\right\|_{L_{x}^{2}}\nonumber\\
&\leq C\|W_{1}\|_{L_{x}^{4}L_{t}^{2}}^{2}\|W_{2}\|_{L_{x}^{4}L_{t}^{2}}^{2}.\label{5.019}
\end{align}
From \eqref{5.019}, since $\beta_{\ast}^{\prime}\in (2,4)$, we have
\begin{align*}
\|W_{1}T_{0}W_{2}\|_{\mathfrak{S}^{\beta_{\ast}^{\prime}}}\leq
\|W_{1}T_{0}W_{2}\|_{\mathfrak{S}^{2}}\leq C\|W_{1}\|_{L_{x}^{4}L_{t}^{2}}\|W_{2}\|_{L_{x}^{4}L_{t}^{2}}\leq C\|W\|_{L_{x}^{4,2}L_{t}^{2}(\mathbf{R}\times I)}^{2}.
\end{align*}
Hence, by using the triangle inequality, we have
\begin{align}
\|WUU^{\ast}\overline{W}\|_{\mathfrak{S}^{\beta_{\ast}^{\prime}}}\leq
\sum_{k}\|WT_{k}\overline{W}\|_{\mathfrak{S}^{\beta_{\ast}^{\prime}}}\leq C
\|W\|_{L_{x}^{4,2}L_{t}^{2}(\mathbf{R}\times I)}^{2}.\label{5.020}
\end{align}
From \eqref{5.020}, we have that \eqref{5.014} is valid, for all
 $\beta_{\ast}^{\prime}\in(2, 4)$.

The proof of Theorem 1.1 is finished.

\bigskip

\section{ Proof of Theorem 1.2: Pointwise convergence of density function on $\mathbf{R}$}

\setcounter{equation}{0}

\setcounter{Theorem}{0}

\setcounter{Lemma}{0}

\setcounter{section}{6}

In this section, we present the pointwise convergence of density function on $\mathbf{R}$.

\noindent{\bf Proof.} Inspired by \cite{BLN2020},
let $\gamma(t)=e^{it\sqrt{\partial_{x}^{4}-\partial_{x}^{2}}}
\gamma_{0}e^{it\sqrt{\partial_{x}^{4}-\partial_{x}^{2}}}$, for $\beta<2$
 and $\gamma_{0}\in \mathfrak{S}^{\beta}(\dot{H}^{\frac{1}{4}})$,
  to prove \eqref{1.038}, it is enough to prove
\begin{eqnarray}
&\left\|\limsup\limits_{t\rightarrow0}|\rho_{\gamma(t)}-\rho_{\gamma_{0}}|
\right\|_{L_{x}^{2,\infty}}=0.\label{6.01}
\end{eqnarray}
From the discussion in \cite{BLN2020}, we define $\Pi_{g}:\dot{H}^{\frac{1}{4}}
\rightarrow\dot{H}^{\frac{1}{4}}$
 to be the orthogonal projection onto the span of $g$ given by
  $\Pi_{g}\phi=\langle\phi,g\rangle g$, $g\in \dot{H}^{\frac{1}{4}}$ and $\|g\|_{\dot{H}^{\frac{1}{4}}}=1$.
   Then, we can approximate
   $\gamma_{0}=\sum\limits_{j=1}^{\infty}\lambda_{j}\Pi_{g_{j}}$ by the finite-rank operator $\gamma_{0}^{N}=\sum\limits_{j=1}^{N}\lambda_{j}\Pi_{g_{j}}$,
    and define $\gamma^{N}(t)=e^{it\sqrt{\partial_{x}^{4}-\partial_{x}^{2}}}
    \gamma_{0}^{N}e^{it\sqrt{\partial_{x}^{4}-\partial_{x}^{2}}}$ for orthonormal
     systems $g_{j}\in \dot{H}^{\frac{1}{4}}$.  By using Theorem 1.1, we have
\begin{align}
\|\rho_{\gamma(t)}-\rho_{\gamma^{N}(t)}\|_{L_{x}^{2,\infty}L_{t}^{\infty}}&=
\Big\|\sum_{j=N+1}^{\infty}\lambda_{j}|e^{it\sqrt{-\partial_{x}^{2}+\partial_{x}^{4}}}g_{j}|^{2}
\Big\|_{L_{x}^{2,\infty}L_{t}^{\infty}}\nonumber\\
& \leq \left(\sum_{j=N+1}^{\infty}|\lambda_{j}|^{\beta}\right)^{\frac{1}{\beta}}.\label{6.02}
\end{align}
By using \eqref{6.02} and  $\lambda\in \ell^{\beta}$, then we have
\begin{eqnarray}
&\lim\limits_{N\rightarrow\infty}\left\|\rho_{\gamma(t)}-\rho_{\gamma^{N}(t)}
\right\|_{L_{x}^{2,\infty}L_{t}^{\infty}}=0.\label{6.03}
\end{eqnarray}
From the definition of $\rho_{\gamma_0}$ and \eqref{6.03}, for any $\epsilon>0$,
 we can find $N_{\epsilon}$ such that
\begin{eqnarray}
&\left\|\rho_{\gamma_{0}}-\rho_{\gamma_{0}^{N_{\epsilon}}}\right\|_{L_{x}^{2}L_{t}^{\infty}},
\,\,\left\|\rho_{\gamma(t)}-\rho_{\gamma^{N_{\epsilon}}(t)}\right\|
_{L_{x}^{2,\infty}L_{t}^{\infty}}<\epsilon.\label{6.04}
\end{eqnarray}
From \eqref{6.04}, for such $N_{\epsilon}$, we have
\begin{align}
\left\|\limsup_{t\rightarrow0}|\rho_{\gamma(t)}-\rho_{\gamma_{0}}|\right\|_{L_{x}^{2,\infty}}&
\leq \left\|\limsup_{t\rightarrow0}|\rho_{\gamma(t)}-\rho_{\gamma^{N_{\epsilon}}(t)}
|\right\|_{L_{x}^{2,\infty}}
+\left\|\limsup_{t\rightarrow0}|\rho_{\gamma^{N_{\epsilon}}(t)}-
\rho_{\gamma_{0}^{N_{\epsilon}}}|\right\|_{L_{x}^{2}}\nonumber\\
&+\left\|\rho_{\gamma_{0}^{N_{\epsilon}}}-\rho_{\gamma_{0}}|\right\|_{L_{x}^{2}}\nonumber\\
&\leq 2\epsilon+\left\|\limsup_{t\rightarrow0}|\rho_{\gamma^{N_{\epsilon}}(t)}-
\rho_{\gamma_{0}^{N_{\epsilon}}}|\right\|_{L_{x}^{2}}.\label{6.05}
\end{align}
Since $g_{j}\in \dot{H}^{\frac{1}{4}}(\R)$, it follows from Lemma 2.1 that
\begin{eqnarray}
&\limsup\limits_{t\rightarrow0}\rho_{\gamma^{N_{\epsilon}}(t)}=
\sum\limits_{j=1}^{N_{\epsilon}}
\lambda_{j}\limsup\limits_{t\rightarrow0}\left|
e^{it\sqrt{\partial_{x}^{4}-\partial_{x}^{2}}}g_{j}(x)\right|^{2}=
\sum\limits_{j=1}^{N_{\epsilon}}
\lambda_{j}|g_{j}(x)|^{2}
=\rho_{\gamma_{0}^{N_{\epsilon}}}(x)\label{6.06}
\end{eqnarray}
holds almost everywhere.
From \eqref{6.05} and \eqref{6.06}, we have that \eqref{6.01} is valid.

This completes the proof of Theorem 1.2.

\section{ Proof of Theorem  1.3: Maximal-in-time estimates for orthonormal
functions on $\mathbf{B}^{d}$}

\setcounter{equation}{0}

\setcounter{Theorem}{0}

\setcounter{Lemma}{0}

\setcounter{section}{7}

In this section, we present the maximal-in-time estimates for orthonormal
functions on $\mathbf{B}^{d}(d\geq1)$.

We define
\begin{eqnarray*}
&&U(t)f=e^{it\sqrt{\partial_{x}^{4}-\partial_{x}^{2}}}D_{x}^{-s}f.
\end{eqnarray*}
Then, we have
\begin{eqnarray*}
&&U^{\ast}(t)g=\int_{I}e^{-it\sqrt{\partial_{x}^{4}-\partial_{x}^{2}}}D_{x}^{-s}gdt.
\end{eqnarray*}
By using the homogeneous dyadic decomposition, we have
\begin{eqnarray*}
&\sum\limits_{k\in\mathbf{Z}}\psi^{2}(2^{k}|\xi|)=1,
\end{eqnarray*}
where $\psi \in C_{c}^{\infty}(\{x:\frac{1}{2}<|\xi|<2\})$. We
denote $T_{k}=P_{k}^{2}U$, where $\widehat{P_{k}f}(\xi)=\psi(2^{-k}|\xi|)\widehat{f}(\xi)$.
we decompose the operator $UU^{\ast}$ as follows
\begin{align*}
T_{k}F&=\int F(t^{\prime},x^{\prime})O_{k}(t-t^{\prime},x-x^{\prime})dt^{\prime}d\mu(x^{\prime})\\
&=\sum_{l\geq0}\int  F(t^{\prime},x^{\prime})O_{k,l}(t-t^{\prime},x-x^{\prime})dt^{\prime}d\mu(x^{\prime})\\
&=\sum_{l\geq0}T_{k,l}F(t,x),
\end{align*}
where $O_{k,l}=\chi_{l}(|x|)O_{k}(t,x)$, $\chi_{l}(|x|)=\chi_{(2^{-l-1},2^{-l})}(|x|)$ and
\begin{eqnarray*}
&&O_{k}(t,x)=\int e^{ix\xi}e^{it\sqrt{|\xi|^{4}+|\xi|^{2}}}\frac{\phi(2^{-k}|\xi|)^{2}}{|\xi|^{2s}}d\xi.
\end{eqnarray*}
We give the following Lemma that  play a key role in proving Lemmas 7.2, 7.3.
\begin{Lemma}\label{lem7.1}
Suppose that  $0<\alpha\leq d$ and $\mu\in\mathcal{M}^{\alpha}(\mathbf{B}^{d})$.
Then, we have
\begin{eqnarray*}
&&\int_{\SR^{d}}\int_{\SR^{d}}|f(x)||g(x^{\prime})|\chi_{(0,2^{-l})}(|x-x^{\prime}|)
d\mu(x)d\mu(x^{\prime})\leq C2^{-\alpha l}\|f\|_{L^{2}(\mathbf{B}^{d},d\mu)}\|g\|_{L^{2}(\mathbf{B}^{d},d\mu)},
\end{eqnarray*}
where $l\in\Z$.
\end{Lemma}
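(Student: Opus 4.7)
The plan is to reduce the double integral to a Schur-type estimate for the integral operator with kernel $K(x,x')=\chi_{(0,2^{-l})}(|x-x'|)$ acting on $L^{2}(\mathbf{B}^{d},d\mu)$, and to use the defining property of $\alpha$-dimensional Frostman measures, namely $\mu(B(x,r))\leq C r^{\alpha}$, to control the relevant suprema.

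First I would apply the Cauchy--Schwarz inequality to split the double integral. Writing
\begin{eqnarray*}
&&\int_{\SR^{d}}\int_{\SR^{d}}|f(x)||g(x')|\chi_{(0,2^{-l})}(|x-x'|)\,d\mu(x)d\mu(x')\\
&&\qquad\leq\left(\int_{\SR^{d}}\int_{\SR^{d}}|f(x)|^{2}\chi_{(0,2^{-l})}(|x-x'|)\,d\mu(x)d\mu(x')\right)^{\frac{1}{2}}\\
&&\qquad\quad\times\left(\int_{\SR^{d}}\int_{\SR^{d}}|g(x')|^{2}\chi_{(0,2^{-l})}(|x-x'|)\,d\mu(x)d\mu(x')\right)^{\frac{1}{2}},
\end{eqnarray*}
I reduce matters to bounding each factor separately.

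Next I would carry out the inner integration in each factor by Fubini. In the first factor, fixing $x$, the inner integral over $x'$ equals $\mu(B(x,2^{-l}))$, and since $\mu\in\mathcal{M}^{\alpha}(\mathbf{B}^{d})$ we have $\mu(B(x,2^{-l}))\leq C 2^{-\alpha l}$ uniformly in $x$. Hence
\begin{eqnarray*}
\int_{\SR^{d}}\int_{\SR^{d}}|f(x)|^{2}\chi_{(0,2^{-l})}(|x-x'|)\,d\mu(x)d\mu(x')\leq C 2^{-\alpha l}\|f\|_{L^{2}(\mathbf{B}^{d},d\mu)}^{2}.
\end{eqnarray*}
By symmetry of the kernel the same argument applied to the second factor gives an identical bound in terms of $\|g\|_{L^{2}(\mathbf{B}^{d},d\mu)}$, and combining the two factors yields the claimed estimate $C 2^{-\alpha l}\|f\|_{L^{2}}\|g\|_{L^{2}}$.

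There is really no obstacle here, since the estimate is just a manifestation of the Schur test (with weight $1$) combined with the $\alpha$-dimensional ball condition on $\mu$; the only mild point to watch is that the supremum of $\mu(B(x,2^{-l}))$ is taken over $x\in\mathbf{R}^{d}$ rather than $x\in\mathbf{B}^{d}$, but since $\mu$ is supported on $\mathbf{B}^{d}$ and $\mu(B(x,2^{-l}))\leq \mu(\mathbf{R}^{d})\cdot 1\leq C 2^{-\alpha l}$ still holds by the Frostman condition, this causes no difficulty. The estimate will then be ready to be plugged into the proof of Theorem 1.3 to control the sum over the dyadic annuli $\chi_{l}$ appearing in the decomposition of $O_{k}$.
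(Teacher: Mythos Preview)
Your proof is correct and is the standard Schur--test/Cauchy--Schwarz argument for this type of estimate. The paper itself does not supply a proof of Lemma~7.1; it simply cites Lemma~6.2 of \cite{BKS2023}, so your argument in fact fills in what the paper leaves to the reference, and it is essentially the same reasoning one finds there.
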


 Lemma 7.1 can be found in  Lemma 6.2 of \cite{BKS2023}.

We consider the following case $\frac{d}{4}<s<\frac{d}{2}$ and $s=\frac{d}{4}$,
respectively.

\noindent When $\frac{d}{4}<s<\frac{d}{2}$, we give the following estimates that
play a key role in proving Theorem 1.3.

\begin{Lemma}\label{lem7.2}
Let $d\geq1$, $\frac{d}{4}<s<\frac{d}{2}$, $0<\alpha\leq d$. Then, we have
\begin{eqnarray}
&&\|WT_{k}\overline{W}\|_{\mathfrak{S}^{1}}\leq C2^{(d-2s)k}\|W\|_{L^{2}(d\mu)L_{t}^{2}}^{2},\label{7.01}\\
&&\|WT_{k,l}\overline{W}\|_{\mathfrak{S}^{2}}\leq C2^{(d-2s)k}\frac{2^{-\frac{\alpha l}{2}}}{(1+2^{k-l})^{\frac{d}{2}}}\|W\|_{L^{4}(d\mu)L_{t}^{2}}^{2},\label{7.02}\\
&&\|WT_{k,l}\overline{W}\|_{\mathfrak{S}^{\infty}}\leq C2^{(d-2s)k}\frac{2^{-\alpha l}}{(1+2^{k-l})^{\frac{d}{2}}}\|W\|_{L^{\infty}(d\mu)L_{t}^{2}}^{2}.\label{7.03}
\end{eqnarray}
\end{Lemma}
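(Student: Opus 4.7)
The plan is to prove the three bounds in sequence, each tailored to its Schatten endpoint, with the common analytic inputs being the pointwise kernel estimate $|O_k(t,x)|\leq C\,2^{(d-2s)k}/(1+2^k|x|)^{d/2}$ (essentially \eqref{2.05} of Lemma 2.5, applied to the symbol $\phi(2^{-k}|\xi|)^2/|\xi|^{2s}$) together with the $\alpha$-dimensional geometric estimate of Lemma 7.1.

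For the trace bound \eqref{7.01}, I will factor the symbol as a square: writing $A_k h(t,x) = \int e^{ix\xi+it\sqrt{|\xi|^4+|\xi|^2}}\phi(2^{-k}|\xi|)|\xi|^{-s}h(\xi)\,d\xi$, one sees that $T_k = A_k A_k^{\ast}$ (at the level of kernels), hence $W T_k\overline{W}=(WA_k)(WA_k)^{\ast}$ and so $\|W T_k\overline{W}\|_{\mathfrak{S}^1}=\|WA_k\|_{\mathfrak{S}^2}^{2}$. The Hilbert--Schmidt norm on the right is a direct integral computation: after Fubini and the change of variables $\eta=2^{-k}\xi$, the $\xi$-integral of $\phi(2^{-k}|\xi|)^2/|\xi|^{2s}$ produces the factor $2^{(d-2s)k}$, and what remains is exactly $\|W\|_{L^2(d\mu)L^2_t}^{2}$.

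For the Hilbert--Schmidt bound \eqref{7.02}, I will compute $\|W T_{k,l}\overline{W}\|_{\mathfrak{S}^2}^{2}$ as the double kernel integral of $|W(t,x)|^2|O_{k,l}(t-t',x-x')|^2|W(t',x')|^2$. On the annulus $|x-x'|\sim 2^{-l}$ the Lemma 2.5 estimate collapses to the uniform-in-$(t,t')$ amplitude $C\,2^{(d-2s)k}/(1+2^{k-l})^{d/2}$, so after performing the $t,t'$ integrations one is left with $\int\int \|W(\cdot,x)\|_{L^2_t}^{2}\chi_{(0,2^{-l})}(|x-x'|)\|W(\cdot,x')\|_{L^2_t}^{2}\,d\mu(x)d\mu(x')$. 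Applying Lemma 7.1 with $f=g=\|W(\cdot,\cdot)\|_{L^2_t}^{2}$ produces the factor $2^{-\alpha l}$ and the $L^4(d\mu)L^2_t$ norm on $W$, and taking square roots gives the claimed bound.

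For the operator-norm bound \eqref{7.03}, I will estimate the bilinear pairing $|\langle W T_{k,l}\overline{W} F,G\rangle|$ by first replacing the kernel by its pointwise bound and then applying Cauchy--Schwarz in $t$ and $t'$ separately. This produces factors $\|W(\cdot,x)\|_{L^2_t}$ and $\|W(\cdot,x')\|_{L^2_t}$ (to be absorbed into $\|W\|_{L^\infty(d\mu)L^2_t}^{2}$) together with $\|F(\cdot,x')\|_{L^2_t}\|G(\cdot,x)\|_{L^2_t}$, leaving a spatial integral against $\chi_{(0,2^{-l})}(|x-x'|)$ which is handled by another application of Lemma 7.1.

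The main technical hurdle is the bookkeeping of the dyadic weight $2^{(d-2s)k}$ in the kernel, for which one must be slightly careful that Lemma 2.5 applies uniformly for $|t-t'|\leq 1$; this is built into the statement of \eqref{2.05}, so the issue is notational rather than substantive. The second subtlety is ensuring that the factorization $T_k=A_k A_k^{\ast}$ in step one respects the measure $d\mu$ on the spatial side so that the resulting Hilbert--Schmidt computation is against $dt\,d\mu(x)\,d\xi$ rather than Lebesgue measure, a point which follows from viewing $T_{k}$ as an operator on $L^2(I\times\mathbf{B}^d,\,dt\,d\mu)$.
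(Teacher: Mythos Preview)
Your proposal is correct and follows essentially the same approach as the paper: the $\mathfrak{S}^{2}$ bound \eqref{7.02} is obtained exactly as you describe (kernel bound from \eqref{2.05} plus Lemma~7.1), and the $\mathfrak{S}^{\infty}$ bound \eqref{7.03} is obtained by the same bilinear pairing argument. For the trace bound \eqref{7.01} the paper invokes the duality principle to rephrase it as the $L^{\infty}(d\mu)L^{\infty}_{t}$ estimate $\big\|\sum_{j}\lambda_{j}|P_{k}Uf_{j}|^{2}\big\|_{L^{\infty}L^{\infty}}\leq C2^{(d-2s)k}\|\lambda\|_{\ell^{\infty}}$, whereas you compute $\|WA_{k}\|_{\mathfrak{S}^{2}}^{2}$ directly; these are two sides of the same factorization $T_{k}=A_{k}A_{k}^{\ast}$, and your version is in fact more explicit.
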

\begin{proof}
For \eqref{7.01}, since $T_{k}=P_{k}U(P_{k}U)^{\ast}$, this is equivalent to
\begin{eqnarray*}
&&\left\|\sum\limits_{j}\lambda_{j}|P_{k}Uf_{j}|^{2}\right\|_{L^{\infty}(d\mu)L^{\infty}}
\leq C2^{(d-2s)k}\|\lambda\|_{\ell^{\infty}}
\end{eqnarray*}
for orthonormal systems $(f_{j})_{j}$ in $L^{2}(\R^{d})$.
 For \eqref{7.02}, by using \eqref{2.05}, we have
\begin{eqnarray}
&&|O_{k,l}(t,x)|\leq C\chi_{l}(|x|)2^{(d-2s)k}\frac{1}{(1+2^{k-l})^{\frac{d}{2}}}.\label{7.04}
\end{eqnarray}
By using \eqref{7.04} and Lemma 7.1, we have
\begin{eqnarray}
&&\|WT_{k,l}\overline{W}\|_{\mathfrak{S}^{2}}^{2}\nonumber\\&&
=\int_{\SR^{d}}\int_{\SR}\int_{\SR^{d}}\int_{\SR}|W_{1}(x,t)|^{2}|O_{k,l}(x-x^{\prime},
t-t^{\prime})|^{2}
|W_{2}(x^{\prime},t^{\prime})|^{2}d\mu(x^{\prime})d\mu(x)dt^{\prime}dt
\nonumber\\
&&\leq\frac{ 2^{2(d-2s)k}}{(1+2^{k-l})^{d}}\int_{\SR^{d}}\int_{\SR}\int_{\SR^{d}}\int_{\SR} |W_{1}(x,t)|^{2}|\chi_{(2^{-l-1},2^{-l})}(|x-x^{\prime}|)|^{2}|W_{2}(x^{\prime},t^{\prime})|^{2}
d\mu(x^{\prime})dt^{\prime}d\mu(x)dt\nonumber\\
&&\leq \frac{ 2^{2(d-2s)k}}{(1+2^{k-l})^{d}}\int_{\SR^{d}}\int_{\SR^{d}} \|W_{1}(x,t)\|_{L_{t}^{2}}^{2}|\chi_{(2^{-l-1},2^{-l})}(|x-x^{\prime}|)|^{2}\|W_{2}(x^{\prime},
t^{\prime})\|_{L_{t^{\prime}}^{2}}^{2}
d\mu(x^{\prime})d\mu(x)\nonumber\\
&&\leq 2^{2(d-2s)k}\frac{2^{-\alpha l}}{(1+2^{k-l})^{d}}\|W_{1}\|_{L^{4}(d\mu)L_{t}^{2}}^{2}
\|W_{2}\|_{L^{4}(d\mu)L_{t}^{2}}^{2}.\label{7.05}
\end{eqnarray}
For \eqref{7.03}, by using \eqref{7.04}, we have
\begin{eqnarray}
&&\|WT_{k,l}\overline{W}\|_{\mathfrak{S}^{\infty}}=\sup\limits_{\|f_{1}\|_{L^{2}}=
\|f_{2}\|_{L^{2}}=1}\left|\int_{\SR^{d}}\int_{\SR} f_{1}W(t,x)T_{k,l}\overline{W}f_{2}
(x,t)dtd\mu(x)\right|\nonumber\\
&&=\sup\limits_{\|f_{1}\|_{L^{2}}=\|f_{2}\|_{L^{2}}=1}\left|\int_{\SR^{d}}\int_{\SR} f_{1}W(t,x)\int_{\SR^{d}}\int_{\SR}O_{k,l}(x-x^{\prime},t-t^{\prime})
\overline{W}(x^{\prime},t^{\prime})f_{2}(x^{\prime},t^{\prime})\right.\nonumber\\
&&\left.d\mu(x^{\prime})
dt^{\prime}d\mu(x)dt\right|\nonumber\\
&&\leq C\frac{2^{(d-2s)k}}{(1+2^{k-l})^{\frac{d}{2}}}\int \|f_{1}W(\cdot,x)\|_{L_{t}^{1}}\|f_{2}\overline{W}(\cdot,x^{\prime})\|_{L_{t^{\prime}}^{1}}
\chi_{l}(|x-x^{\prime}|)d\mu(x)d\mu(x^{\prime})\nonumber\\
&&\leq C2^{(d-2s)k}\frac{2^{-\alpha l}}{(1+2^{k-l})^{\frac{d}{2}}}\|W\|_{L^{\infty}(d\mu)
L_{t}^{2}}^{2}.\label{7.06}
\end{eqnarray}

The proof of Lemma 7.2 is completed.
\end{proof}

\noindent{\bf Proof of Theorem  1.3: $\frac{d}{4}<s<\frac{d}{2}.$}

Inspired by \cite{BKS2023}, we prove Theorem 1.3.  By using Lemma 2.14, it suffices to prove
\begin{eqnarray}
&&\|WA\overline{W}\|_{\mathfrak{S}^{\beta^{\prime}}}\leq C\|W\|_{L^{\infty}(d\mu)L_{t}^{2}}^{2},\label{7.07}
\end{eqnarray}
where $A=UU^{\ast}$.
To prove \eqref{7.07}, we just need to prove
\begin{eqnarray}
&&\sum\limits_{k\in\z}\|WT_{k}\overline{W}\|_{\mathfrak{S}^{\beta^{\prime}}}\leq C
\|W\|_{L^{\infty}(d\mu)L_{t}^{2}}^{2}.\label{7.08}
\end{eqnarray}
When $k<0$, by using \eqref{7.01}, H\"{o}lder inequality with respect to $d\mu$,
since $\frac{d}{4}<s<\frac{d}{2}$, we have
\begin{align}
\sum\limits_{k<0}\|WT_{k}\overline{W}\|_{\mathfrak{S}^{\beta^{\prime}}}&\leq C\sum\limits_{k<0}\|WT_{k}\overline{W}\|_{\mathfrak{S}^{1}}\nonumber\\
&\leq C\sum\limits_{k<0}2^{(d-2s)k}\|W\|_{L^{2}(d\mu)L_{t}^{2}}^{2}\nonumber\\
&\leq C\|W\|_{L^{2}(d\mu)L_{t}^{2}}^{2}\leq C\|W\|_{L^{\infty}(d\mu)L_{t}^{2}}^{2}.\label{7.09}
\end{align}
When $k>0$, we consider the following case $\frac{\alpha}{d-2s}>2$ and $\frac{\alpha}{d-2s}\leq2$, respectively.
\noindent By using \eqref{7.02}, we have
\begin{align}
\|WT_{k}\overline{W}\|_{\mathfrak{S}^{2}}&\leq C\left(\sum\limits_{0\leq l\leq k}\|WT_{k,l}
\overline{W}\|_{\mathfrak{S}^{2}}+\sum\limits_{l\geq k\geq0}\|WT_{k,l}
\overline{W}\|_{\mathfrak{S}^{2}}\right)\nonumber\\
&\leq  C2^{(d-2s)k}\|W\|_{L^{4}(d\mu)L_{t}^{2}}^{2}\left(2^{-\frac{d}{2}k}\sum\limits_{0\leq
 l\leq k}2^{-\frac{(\alpha-d)l}{2}}+\sum\limits_{l\geq k\geq0}2^{-\frac{\alpha l}{2}}\right)\nonumber\\
&\leq Ck2^{(d-2s-\frac{\alpha}{2})k}\|W\|_{L^{4}(d\mu)L_{t}^{2}}^{2},\label{7.010}
\end{align}
where we use the following inequality
\begin{eqnarray*}
&&\sum\limits_{0\leq l\leq k}2^{-\frac{(\alpha-d)l}{2}}\leq Ck2^{-\frac{(\alpha-d)k}{2}},\,
\sum\limits_{l\geq k\geq0}2^{-\frac{\alpha l}{2}}\leq C2^{-\frac{\alpha k}{2}}.
\end{eqnarray*}
{\bf Case 1}: $\frac{\alpha}{d-2s}>2$, we consider $2<\beta<\frac{\alpha}{d-2s}$. It
 follows from the interpolation theorem in $\mathfrak{S}^{\beta^{\prime}}$
and  H\"{o}lder inequality with respect to $d\mu$ and $2<\beta<\frac{\alpha}{d-2s}$
that
\begin{align}
\sum\limits_{k>0}\|WT_{k}\overline{W}\|_{\mathfrak{S}^{\beta^{\prime}}}&\leq C\sum\limits_{k>0}\|WT_{k}\overline{W}\|_{\mathfrak{S}^{1}}^{\frac{2}{\beta^{\prime}}-1}
\|WT_{k}\overline{W}\|
_{\mathfrak{S}^{2}}^{2-\frac{2}{\beta^{\prime}}}\nonumber\\
&\leq C2^{(\frac{2}{\beta^{\prime}}-1)(d-2s)k}
k^{2-\frac{2}{\beta^{\prime}}}2^{(2-\frac{2}{\beta^{\prime}})(d-2s-\frac{\alpha}{2})k}
\|W\|_{L^{2}(d\mu)L_{t}^{2}}^{2(\frac{2}{\beta^{\prime}}-1)}\|W\|_{L^{4}(d\mu)L_{t}^{2}}^{2(2-\frac{2}{\beta^{\prime}})}\nonumber\\
&\leq C\sum\limits_{k>0}k^{\frac{2}{\beta}}2^{(d-2s)k}2^{-\frac{2}{\beta}\frac{\alpha}{2}k}\|W\|_{L^{4}(d\mu)L_{t}^{2}}^{2}\nonumber\\
&=C\sum\limits_{k>0}k^{\frac{2}{\beta}}2^{(d-2s-\frac{\alpha}{\beta})k}\|W\|_{L^{4}(d\mu)L_{t}^{2}}^{2}\nonumber\\
&\leq C\|W\|_{L^{4}(d\mu)L_{t}^{2}}^{2}.\label{7.011}
\end{align}
{\bf Case 2}: $\frac{\alpha}{d-2s}\leq2$, we have $1\leq\beta<\frac{\alpha}{d-2s}\leq 2$. It
 follows from the interpolation theorem in
$\mathfrak{S}^{\beta^{\prime}}$  and the H\"{o}lder inequality, we have
\begin{align}
\|WT_{k,l}\overline{W}\|_{\mathfrak{S}^{\beta^{\prime}}}&\leq
\|WT_{k}\overline{W}\|_{\mathfrak{S}^{2}}^{\frac{2}{\beta^{\prime}}}\|WT_{k}\overline{W}\|
_{\mathfrak{S}^{\infty}}^{1-\frac{2}{\beta^{\prime}}}\nonumber\\
&\leq C2^{\frac{2}{\beta^{\prime}}(d-2s)k}
\left(\frac{2^{-\frac{\alpha l}{2}}}{(1+2^{k-l})^{\frac{d}{2}}}\right)^{\frac{2}{\beta^{\prime}}}
2^{(1-\frac{2}{\beta^{\prime}})(d-2s)k}\left(\frac{2^{-\alpha l}}{(1+2^{k-l})^{\frac{d}{2}}}\right)^{1-\frac{2}{\beta^{\prime}}}\nonumber\\
&\qquad\|W\|_{L^{4}(d\mu)L_{t}^{2}}^{2\frac{2}{\beta^{\prime}}}\|W\|_{L^{\infty}(d\mu)L_{t}^{2}}^{2(1-\frac{2}{\beta^{\prime}})}\nonumber\\
&= C2^{(d-2s)k}\frac{2^{-\frac{1}{\beta^{\prime}}\alpha l}2^{-(1-\frac{2}{\beta^{\prime}})\alpha l}}{(1+2^{k-l})^{\frac{d}{2}}}\|W\|_{L^{4}(d\mu)L_{t}^{2}}^{2\frac{2}{\beta^{\prime}}}
\|W\|_{L^{\infty}(d\mu)L_{t}^{2}}^{2(1-\frac{2}{\beta^{\prime}})}\nonumber\\
&\leq C2^{(d-2s)k}\frac{2^{-\frac{\alpha}{\beta} l}}{(1+2^{k-l})^{\frac{d}{2}}}
\|W\|_{L^{\infty}(d\mu)L_{t}^{2}}^{2}.\label{7.012}
\end{align}
By using \eqref{7.012}, since $\beta<\frac{\alpha}{d-2s}$, by using a proof similar
 to \eqref{7.010}, we have
\begin{align}
\sum\limits_{k>0}\|WT_{k}\overline{W}\|_{\mathfrak{S}^{\beta^{\prime}}}&\leq C\sum\limits_{k>0,l\geq0}\|WT_{k,l}\overline{W}\|_{\mathfrak{S}^{\beta^{\prime}}}\nonumber\\
&\leq C\sum\limits_{k>0,l\geq0}2^{(d-2s)k}\frac{2^{-\frac{\alpha}{\beta} l}}{(1+2^{k-l})^{\frac{d}{2}}}\|W\|_{L_{x}^{\infty}(d\mu)L_{t}^{2}}^{2}\nonumber\\
&\leq C\sum\limits_{k>0}2^{(d-2s-\frac{\alpha}{\beta})k}\|W\|_{L_{x}^{\infty}(d\mu)L_{t}^{2}}^{2}\nonumber\\
&\leq C\|W\|_{L_{x}^{\infty}(d\mu)L_{t}^{2}}^{2}.\label{7.013}
\end{align}
When $k=0$, by using \eqref{7.01} and H\"{o}lder inequality, since $\frac{d}{4}<s<\frac{d}{2}$
 and $\beta<\frac{\alpha}{d-2s}$, we have
\begin{align}
\|WT_{0}\overline{W}\|_{\mathfrak{S}^{\beta^{\prime}}}\leq C\|WT_{0}\overline{W}\|_{\mathfrak{S}^{1}}
\leq C\|W\|_{L^{2}(d\mu)L_{t}^{2}}^{2}\leq C\|W\|_{L^{\infty}(d\mu)L_{t}^{2}}^{2}.\label{7.014}
\end{align}

The proof of Theorem 1.3 is  completed.

\begin{Lemma}\label{lem7.3}
Let $d\geq1$, $s=\frac{d}{4}$, $0<\alpha\leq d$. Then, we have
\begin{eqnarray}
&&\|WT_{l}\overline{W}\|_{\mathfrak{S}^{2}}\leq C2^{(\frac{d}{2}-\frac{\alpha}{2})l}
\|W\|_{L^{\infty}(d\mu)L_{t}^{2}}^{2},\label{7.015}\\
&&\|WT_{l}\overline{W}\|_{\mathfrak{S}^{\infty}}\leq C2^{(\frac{d}{2}-\alpha)l}
\|W\|_{L^{\infty}(d\mu)L_{t}^{2}}^{2}.\label{7.016}
\end{eqnarray}
\end{Lemma}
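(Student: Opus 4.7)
The plan is to handle the endpoint case $s=\frac{d}{4}$ by exploiting the pointwise kernel bound \eqref{2.06} of Lemma~2.5 directly, which lets us dispense with the frequency dyadic decomposition used in Lemma~7.2 and work only with the spatial dyadic localization $\chi_l(|x|)=\chi_{(2^{-l-1},2^{-l})}(|x|)$. When $s=\frac{d}{4}$, the integral kernel of $T=UU^{\ast}$ is
\begin{equation*}
O(t,x)=\int_{\SR^{d}}e^{i(x\cdot\xi+t\sqrt{|\xi|^{4}+|\xi|^{2}})}\frac{d\xi}{|\xi|^{d/2}},
\end{equation*}
so estimate \eqref{2.06} gives $\sup_{|t|\leq 1}|O(t,x)|\leq C|x|^{-d/2}$; setting $T_{l}F(t,x)=\int F(t',x')O_{l}(t-t',x-x')\,dt'\,d\mu(x')$ with $O_{l}(t,x)=\chi_{l}(|x|)O(t,x)$, we obtain the bound $|O_{l}(t,x)|\leq C\,2^{ld/2}\chi_{l}(|x|)$ on the support of $\chi_{l}$.

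For \eqref{7.015} I will compute the Hilbert--Schmidt norm squared directly and then invoke Lemma~7.1. Squaring the kernel and integrating in $t,t'$ first yields
\begin{equation*}
\|WT_{l}\overline{W}\|_{\mathfrak{S}^{2}}^{2}\leq C\,2^{ld}\int\int\|W(x,\cdot)\|_{L_{t}^{2}}^{2}\|W(x',\cdot)\|_{L_{t}^{2}}^{2}\chi_{l}(|x-x'|)\,d\mu(x)\,d\mu(x'),
\end{equation*}
so applying Lemma~7.1 with $f(x)=\|W(x,\cdot)\|_{L_{t}^{2}}^{2}$ and $g(x')=\|W(x',\cdot)\|_{L_{t}^{2}}^{2}$ bounds the right-hand side by $C\,2^{(d-\alpha)l}\|W\|_{L^{4}(d\mu)L_{t}^{2}}^{4}\leq C\,2^{(d-\alpha)l}\|W\|_{L^{\infty}(d\mu)L_{t}^{2}}^{4}$, where the last step uses that $\mu$ is a probability measure on $\mathbf{B}^{d}$. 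Taking square roots produces \eqref{7.015}.

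For \eqref{7.016} I will start from the duality formula $\|WT_{l}\overline{W}\|_{\mathfrak{S}^{\infty}}=\sup_{\|f_{1}\|_{L^{2}}=\|f_{2}\|_{L^{2}}=1}\bigl|\int f_{1}\,WT_{l}\overline{W}f_{2}\,dt\,d\mu\bigr|$, insert the pointwise bound $|O_{l}|\leq C\,2^{ld/2}\chi_{l}$, and use Cauchy--Schwarz in the time variable to factor each of $\|f_{i}W(\cdot,x)\|_{L_{t}^{1}}\leq\|f_{i}(\cdot,x)\|_{L_{t}^{2}}\|W(\cdot,x)\|_{L_{t}^{2}}$. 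Pulling out $\|W\|_{L^{\infty}(d\mu)L_{t}^{2}}^{2}$ and applying Lemma~7.1 to the remaining double integral in $f_{1},f_{2}$ (which are $L^{2}(d\mu)L_{t}^{2}$-normalized) gives a factor $C\,2^{-\alpha l}$, leading to \eqref{7.016}.

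The main technical step is really the kernel bound \eqref{2.06}, which is already supplied by Lemma~2.5; once that is in hand, the rest is a direct mimicking of the proof of Lemma~7.2 with the role of $2^{(d-2s)k}$ in the frequency sum now played by the single spatial scale $2^{ld/2}$. No logarithmic loss arises because, in contrast to the subcritical regime of Lemma~7.2, there is no frequency summation that could concentrate at $k\sim l$; the only summation parameter here is $l$, and it will later be combined with the constraint $\beta<\frac{\alpha}{d-2s}=\frac{2\alpha}{d}$ when deriving the maximal estimate \eqref{1.039} at the endpoint $s=\frac{d}{4}$.
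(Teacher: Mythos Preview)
Your proposal is correct and follows essentially the same approach as the paper: both use the kernel bound \eqref{2.06} to get $|O_{l}(t,x)|\leq C\,2^{ld/2}\chi_{l}(|x|)$, then compute the Hilbert--Schmidt norm directly and apply Lemma~7.1 for \eqref{7.015}, and use the duality formula for the operator norm together with Cauchy--Schwarz in $t$ and Lemma~7.1 for \eqref{7.016}. The only cosmetic difference is that the paper writes the $\mathfrak{S}^{2}$ bound first in terms of $\|W\|_{L^{4}(d\mu)L_{t}^{2}}$ before passing to $L^{\infty}(d\mu)L_{t}^{2}$, exactly as you do.
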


\begin{proof}
For \eqref{7.015}, by using \eqref{2.06}, we have
\begin{eqnarray}
&&|O_{l}(t,x)|\leq C\chi_{l}(|x|)|x|^{-\frac{d}{2}}\leq C\chi_{l}(|x|)2^{\frac{d}{2}l}.\label{7.017}
\end{eqnarray}
By using \eqref{7.016} and Lemma 7.1, we have
\begin{align}
\|WT_{l}\overline{W}\|_{\mathfrak{S}^{2}}^{2}&=\int|W_{1}(x,t)|^{2}|O_{l}(x-x^{\prime},t-t^{\prime})|^{2}
|W_{2}(x^{\prime},t^{\prime})|^{2}d\mu(x^{\prime})dt^{\prime}d\mu(x)dt
\nonumber\\
&\leq 2^{dl}\int |W_{1}(x,t)|^{2}|\chi_{(2^{-l-1},2^{-l})}(|x-x^{\prime}|)|^{2}
|W_{2}(x^{\prime},t^{\prime})|^{2}d\mu(x^{\prime})dt^{\prime}d\mu(x)dt\nonumber\\
&\leq 2^{dl}2^{-\alpha l}\|W_{1}\|_{L^{4}(d\mu)L_{t}^{2}}^{2}
\|W_{2}\|_{L^{4}(d\mu)L_{t}^{2}}^{2}\nonumber\\
&\leq C 2^{dl}2^{-\alpha l}\|W_{1}\|_{L^{\infty}(d\mu)L_{t}^{2}}^{2}
\|W_{2}\|_{L^{\infty}(d\mu)L_{t}^{2}}^{2} .\label{7.018}
\end{align}
For \eqref{7.016}, by using \eqref{7.017}, we have
\begin{eqnarray}
&&\|WT_{l}\overline{W}\|_{\mathfrak{S}^{\infty}}=\sup\limits_{\|f_{1}\|_{L^{2}}
=\|f_{2}\|_{L^{2}}=1}\left|\int_{\SR^{d}}\int_{\SR} f_{1}W(t,x)T_{l}\overline{W}f_{2}(x,t)dtd\mu(x)\right|\nonumber\\
&&=\sup\limits_{\|f_{1}\|_{L^{2}}=\|f_{2}\|_{L^{2}}=1}\left|\int_{\SR^{d}}\int_{\SR} f_{1}W(t,x)\int_{\SR^{d}}\int_{\SR}O_{l}(x-x^{\prime},t-t^{\prime})
\overline{W}(x^{\prime},t^{\prime})f_{2}(x^{\prime},t^{\prime})\right.\nonumber\\
&&\qquad\left.d\mu(x^{\prime})
dt^{\prime}d\mu(x)dt\right|\nonumber\\
&&\leq C2^{\frac{d}{2}l}\int \|f_{1}W(\cdot,x)\|_{L_{t}^{1}}\|f_{2}\overline{W}(\cdot,x^{\prime})\|_{L_{t^{\prime}}^{1}}
\chi_{l}(|x-x^{\prime}|)d\mu(x)d\mu(x^{\prime})\nonumber\\
&&\leq C2^{\frac{d}{2}l}2^{-\alpha l}\|W\|_{L^{\infty}{d\mu}L_{t}^{2}}^{2}.\label{7.019}
\end{eqnarray}

This completes the proof of Lemma 7.3.

\noindent {\bf Remark3:} When $s=\frac{d}{4}$,  we give the following estimates that  play
 a key role in proving Theorem 1.3.

\end{proof}
\noindent{\bf Proof of Theorem  1.3: $s=\frac{d}{4}.$}

\noindent Since $s=\frac{d}{4}$, $0<\alpha\leq d$,  we have
$\beta<\frac{\alpha}{d-2s}=\frac{2\alpha}{d}\leq 2$.

\noindent It  follows from the interpolation theorem in $\mathfrak{S}^{\beta^{\prime}}$,
 we have
\begin{align}
\sum\limits_{l\geq0}\|WT_{l}\overline{W}\|_{\mathfrak{S}^{\beta^{\prime}}}&\leq C\sum\limits_{l\geq0}\|WT_{l}\overline{W}\|_{\mathfrak{S}^{2}}^{\frac{2}{\beta^{\prime}}}
\|WT_{k}\overline{W}\|
_{\mathfrak{S}^{\infty}}^{1-\frac{2}{\beta^{\prime}}}\nonumber\\
&\leq C\sum\limits_{l\geq0}k^{\frac{2}{\beta}}2^{(\frac{d}{2}-\frac{\alpha}{\beta})l}
\|W\|_{L^{\infty}(d\mu)L_{t}^{2}}^{2}\nonumber\\
&\leq C\|W\|_{L^{4}(d\mu)L_{t}^{2}}^{2}.\label{7.020}
\end{align}

The proof of Theorem 1.3 is completed.

\bigskip

\section { Proof of Theorem  1.4: The Hausdorff dimension of the divergence set}
\setcounter{equation}{0}

\setcounter{Theorem}{0}

\setcounter{Lemma}{0}

\setcounter{section}{8}

In this section, we present the Hausdorff dimension of the divergence set.

By using a proof similar to Corollary 2.3 of \cite{BKS2023}, we have
\begin{eqnarray}
&&\mu(\{x\in\mathbf{B}^{d}:\lim\sup\limits_{t\rightarrow0}
|\rho_{\gamma(t)}(x)-\rho_{\gamma_{0}}(x)|>k^{-1}\})=0,\label{8.01}
\end{eqnarray}
where $k\in\mathbb{N}^{+}$, $\mu\in \mathcal{M}^{\alpha}(\mathbf{B}^{d})$
with $\alpha>(d-2s)\beta$.
From \eqref{8.01}, we have
\begin{eqnarray}
&&\mu(D(\gamma_{0}))=\mu(\{x\in \mathbf{B}^{d}:\lim\limits_{t\longrightarrow 0}
\rho_{\gamma(t)}(x)\neq\rho_{\gamma_{0}}(x)\})=0.\label{8.02}
\end{eqnarray}
From \eqref{8.02}, we have that \eqref{1.040} is valid.

By using \eqref{8.02} and Frostmans's lemmas (see for Theorem 8.8, page 112 of
\cite{M1995}) as well as Lemma 4.6, page 58 of \cite{M1995}, we have
\begin{eqnarray}
&&\mathcal{H}^{\alpha}(\{x\in \mathbf{B}^{d}:\lim\limits_{t\longrightarrow 0}
\rho_{\gamma(t)}(x)\neq\rho_{\gamma_{0}}(x)\})=0, \,\alpha>(d-2s)\beta,\label{8.03}
\end{eqnarray}
where $\mathcal{H}^{\alpha}$ is the $\alpha$-Hausdorff measure. From
\eqref{8.03}, we have that \eqref{1.041} is valid.

The proof of Theorem 1.4 is finished.

\section{Proof of Theorem  1.5: Strichartz estimates for orthonormal functions on $\mathbf{T}$}

\setcounter{equation}{0}

\setcounter{Theorem}{0}

\setcounter{Lemma}{0}

\setcounter{section}{9}

In this section, we present Schatten bound with space-time norms on $\mathbf{T}$ and
the Strichartz estimates for orthonormal functions on $\mathbf{T}$.

\subsection{Schatten bound with space-time norms on $\mathbf{T}$}
\begin{Lemma}\label{lem9.1} Let $N\geq1$, $I_{N}=[-\frac{1}{2N},
\frac{1}{2N}],\,S_{N}=\Z\cap[-N,N]$, $\frac{1}{p^{\prime}}+\frac{1}{2q^{\prime}}=1$,
$\frac{2}{3}<\frac{1}{q^{\prime}}<1$. Then, for all
$W_{1},W_{2}\in L_{t}^{2p^{\prime}}L_{x}^{2q^{\prime}}(\mathbf{T}^{2})$,
we have
\begin{eqnarray}
\|W_{1}\chi_{I_{N}}(t)\mathscr{D}_{N}\mathscr{D}_{N}^{\ast}
\chi_{I_{N}}(s)W_{2}\|_{\mathfrak{S}^{2q^{\prime}}(L^{2}(\mathbf{T}^{2}))}\leq
C\|W_{1}\|_{L_{t}^{2p^{\prime}}L_{x}^{2q^{\prime}}(\mathbf{T}^{2})}
\|W_{2}\|_{L_{t}^{2p^{\prime}}L_{x}^{2q^{\prime}}(\mathbf{T}^{2})},\label{9.01}
\end{eqnarray}
where  $\frac{1}{q}+\frac{1}{q^{\prime}}=\frac{1}{p}+\frac{1}{p^{\prime}}=1$,
$\mathbf{T}=[0,2\pi)$  and
$$\mathscr{D}_{N}f=e^{it\sqrt{\partial_{x}^{4}-\partial_{x}^{2}}}P_{\leq N}f
=\frac{1}{2\pi}\sum\limits_{n\in S_{N}}\mathscr{F}_{x}f(n)e^{i(xn+t\sqrt{n^{2}+n^{4}})}.$$

\end{Lemma}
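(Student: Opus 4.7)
The plan is to establish Lemma 9.1 by Stein complex interpolation applied to the $TT^\ast$ operator $T := \chi_{I_N}(t)\mathscr{D}_N\mathscr{D}_N^\ast\chi_{I_N}(s)$. First I identify $T$ as an integral operator on $L^2(\mathbf{T}\times\mathbf{T})$ with translation-invariant kernel $\chi_{I_N}(t)\chi_{I_N}(s)\,K(t-s,\,x-y)$, where
\[
K(\tau,\xi) \;=\; \frac{1}{2\pi}\sum_{n\in S_N} e^{i(n\xi + \tau\sqrt{n^2+n^4})}.
\]
Since $\mathbf{T}\subset[-\pi,\pi]$ and $|\tau|\le 1/N$ on the support of $\chi_{I_N}(t)\chi_{I_N}(s)$, Lemma 2.12 furnishes the dispersive estimate $|K(\tau,\xi)|\le C|\tau|^{-1/2}$, which is the core quantitative input.

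Next I will establish two endpoint Schatten estimates for the bilinear map $(W_1,W_2)\mapsto W_1 T W_2$. At the operator-norm ($\mathfrak{S}^\infty$) endpoint I factor $T = A_N A_N^\ast$ with $A_N:=\chi_{I_N}\mathscr{D}_N$, use Plancherel on $\mathbf{T}$ to obtain $\|\mathscr{D}_N f(t,\cdot)\|_{L^2_x}\le C\|f\|_{L^2}$ uniformly in $t$, and then absorb $W$ by pointwise multiplication to get $\|W_j\chi_{I_N}\mathscr{D}_N\|_{L^2(\mathbf{T})\to L^2(\mathbf{T}^2)}\lesssim \|W_j\|_{L^2_t L^\infty_x}$, yielding
\[
\|W_1 T W_2\|_{\mathfrak{S}^\infty}\;\lesssim\;\|W_1\|_{L^2_t L^\infty_x}\|W_2\|_{L^2_t L^\infty_x}.
\]
At the Hilbert--Schmidt ($\mathfrak{S}^2$) endpoint I write out the HS norm squared
\[
\iiiint |W_1(t,x)|^2\,|K(t-s,x-y)|^2\,|W_2(s,y)|^2\,\chi_{I_N}(t)\chi_{I_N}(s)\,dt\,dx\,ds\,dy,
\]
and combine the pointwise dispersive bound $|K|^2\lesssim|t-s|^{-1}$ with the kernel identities $\|K(\tau,\cdot)\|_{L^2_\xi}\sim N^{1/2}$ (Plancherel) and Young's convolution inequality on $\mathbf{R}\times\mathbf{T}$; this produces a Hilbert--Schmidt estimate of the form $\|W_1 T W_2\|_{\mathfrak{S}^2}\lesssim\|W_1\|_{L^{p_0}_t L^{q_0}_x}\|W_2\|_{L^{p_0}_t L^{q_0}_x}$ for the dual Strichartz exponents forcing the interpolation to match the claimed constraint $\tfrac{1}{p'}+\tfrac{1}{2q'}=1$.

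The final step applies Lemma 3.4 (the noncommutative--commutative Stein interpolation theorem for mixed Lebesgue spaces) to an analytic family $T_z$ of operators on the strip $\{0\le\mathrm{Re}\,z\le 1\}$, whose boundary values at $\mathrm{Re}\,z=0$ and $\mathrm{Re}\,z=1$ realize the two endpoints above. Following the Vega--Nakamura template, one inserts an analytic Riesz-type weight (for instance a factor $e^{z^2}$ combined with an analytic power of a smoothed frequency multiplier) into the spectral definition of $\mathscr{D}_N$, so as to secure holomorphy in $z$, the requisite integrability on the boundary, and sub-exponential growth in $|\mathrm{Im}\,z|$ demanded by Lemma 3.4. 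Interpolating at $\mathrm{Re}\,z = \theta = 1/q'$ then produces Schatten exponent $2q'$ paired with mixed-norm exponents $(2p',2q')$ satisfying $\tfrac{1}{p'}+\tfrac{1}{2q'} = 1$ throughout the range $\tfrac{2}{3}<\tfrac{1}{q'}<1$.

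The main obstacle is the explicit construction and verification of the analytic family $T_z$: it must simultaneously (i) reduce on $\mathrm{Re}\,z=0,1$ to operators whose Schatten/mixed-norm bounds are exactly the two asymmetric endpoints above, (ii) be holomorphic across the strip with sub-exponential growth in $|\mathrm{Im}\,z|$, and (iii) interact correctly with the commutative mixed-norm side so that the interpolated exponents are precisely $(2p',2q')$ rather than some weaker pair. A secondary delicate step is the Hilbert--Schmidt endpoint itself, where the dispersive kernel sits at the borderline $|t-s|^{-1}$; this borderline is absorbed by exploiting the short time support $I_N$ of length $1/N$ together with the Plancherel identity $\|K(\tau,\cdot)\|_{L^2_\xi}^2\sim N$, which together make the kernel-side $L^p_\tau L^q_\xi$ norms of $K$ finite and controllable.
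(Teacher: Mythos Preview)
Your overall architecture---two endpoint Schatten bounds followed by the noncommutative--commutative Stein interpolation of Lemma~3.4---matches the paper, and you have correctly identified Lemma~2.12 as the dispersive input. However, there is a genuine gap in the construction of the analytic family and in the $\mathfrak{S}^2$ endpoint.

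The two endpoint bounds you establish are both for the \emph{same} operator $T=\chi_{I_N}\mathscr{D}_N\mathscr{D}_N^\ast\chi_{I_N}$. If $T_z\equiv T$ is constant, then interpolation requires the $\mathfrak{S}^2$ bound $\|W_1TW_2\|_{\mathfrak{S}^2}\lesssim\|W_1\|_{L^4_tL^2_x}\|W_2\|_{L^4_tL^2_x}$ with $N$-independent constant. But the dispersive bound gives only $|D_N(\tau,\xi)|^2\lesssim|\tau|^{-1}$, which is exactly the Hardy--Littlewood--Sobolev borderline; combining with the trivial bound $|D_N|\lesssim N$ and the short time support $|I_N|\sim N^{-1}$ one still incurs a $\log N$ loss. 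Your proposed fix via $\|K(\tau,\cdot)\|_{L^2_\xi}^2\sim N$ does not remove this, since integrating that quantity against $|W_1|^2|W_2|^2$ forces $L^\infty_x$ on the weights rather than $L^2_x$. Moreover, your description of the analytic family---``$e^{z^2}$ combined with an analytic power of a smoothed frequency multiplier'' inserted into the spectral side of $\mathscr{D}_N$---deforms the wrong variable: a frequency-side weight does nothing to regularize the critical \emph{time} singularity $|t-s|^{-1}$.

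The paper's analytic family is instead the time-Riesz kernel $D_{N,z}(t,x)=z\,t^{-1+z}\chi_{I_N}(t)D_N(t,x)$, with $\mathscr{K}_{N,z}f=D_{N,z}\ast f$, so that $\mathscr{K}_{N,1}=\mathscr{D}_N\mathscr{D}_N^\ast$ is recovered at $z=1$ in the interior of the strip $[0,a]$ with $a=q'\in(1,\tfrac32)$. At $\mathrm{Re}\,z=a$, the extra factor $|t|^{a-1}$ pushes the squared kernel to $|t|^{2a-3}$, which lies strictly inside the HLS range, yielding a clean $\mathfrak{S}^2$ bound with $L^{2\tilde u}_tL^2_x$ weights. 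At $\mathrm{Re}\,z=0$, the operator $\mathscr{K}_{N,ib}$ is no longer a positive $AA^\ast$ factorization, so your $TT^\ast$ argument does not apply; instead the paper shows $L^2\to L^2$ boundedness via Plancherel in $(t,x)$ together with Lemma~2.6, which bounds the truncated oscillatory integral $\bigl|b\int t^{-1+ib}e^{-it\rho}\,dt\bigr|$ uniformly in $\rho$. This is the ingredient you are missing: the analytic deformation must act on the time variable, and Lemma~2.6 (not the $AA^\ast$ structure) is what controls the $\mathfrak{S}^\infty$ boundary.
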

\noindent{Proof.}
By using a direct calculation, we have
\begin{align}
\langle\mathscr{D}_{N}f, g\rangle_{L_{tx}^{2}(\mathbf{T}^{2})}&=
\int_{\mathbf{T}^{2}}e^{it\sqrt{\partial_{x}^{4}-\partial_{x}^{2}}}P_{\leq N}f\overline{g}dxdt
=\int_{\mathbf{T}}f(x)\overline{\int_{\mathbf{T}}e^{-it\sqrt{\partial_{x}^{4}
-\partial_{x}^{2}}}P_{\leq N}gdt}dx\nonumber\\
&=\left\langle f,\int_{\mathbf{T}}e^{-it\sqrt{\partial_{x}^{4}-\partial_{x}^{2}}}
P_{\leq N}gdt\right\rangle_{L_{x}^{2}(\mathbf{T})}=\langle f, \mathscr{D}_{N}^{\ast}
g\rangle_{L_{x}^{2}(\mathbf{T})}.\label{9.02}
\end{align}
From \eqref{9.02}, we have
\begin{eqnarray*}
&&\mathscr{D}_{N}^{\ast}g=\int_{\mathbf{T}}e^{it\sqrt{\partial_{x}^{4}-\partial_{x}^{2}}}P_{\leq N}gdt.
\end{eqnarray*}
We define
\begin{eqnarray*}
&&D_{N}(t,x)=\frac{1}{2\pi}\sum\limits_{k\in S_{N}}e^{ixk}e^{it\sqrt{k^{4}+k^{2}}}.
\end{eqnarray*}
Then, we have
\begin{eqnarray}
&&\mathscr{D}_{N}\mathscr{D}_{N}^{\ast}F(t,x)=\int_{\mathbf{T}^{2}}
D_{N}(t-t^{\prime},x-x^{\prime})F(t^{\prime},x^{\prime})dt^{\prime}dx^{\prime}.\label{9.03}
\end{eqnarray}
We also define
\begin{eqnarray}
&&D_{N,z}(t,x)=zt^{-1+z}\chi_{I_{N}}(t)D_{N}(t,x),\,D_{N,z,\epsilon}=
zt^{-1+z}\chi_{\{t||t|>\epsilon\}\cap I_{N}}(t)D_{N}(t,x),\nonumber\\
&&\mathscr{K}_{N,z}f=D_{N,z}\ast f,\, \mathscr{K}_{N, z,\epsilon}=
D_{N,z,\epsilon}\ast f, G_{N}(s,k)=e^{is\sqrt{k^{4}+k^{2}}}.\label{9.04}
\end{eqnarray}
From \eqref{9.03}, we have
\begin{eqnarray}
&&D_{N,z}(t,x)=zt^{-1+z}\chi_{I_{N}}(t)D_{N}(t,x)\nonumber\\&&=\lim_{\epsilon\rightarrow0}
D_{N,z,\epsilon}(t,x)=\lim_{\epsilon\rightarrow0}zt^{-1+z}\chi_{\{t||t|>\epsilon\}
\cap I_{N}}(t)D_{N}(t,x),\nonumber\\
&&\mathscr{K}_{N,z}f=\lim\limits_{\epsilon\rightarrow0}\mathscr{K}_{N,z,\epsilon}f
=\lim\limits_{\epsilon\rightarrow0}D_{N,z,\epsilon}\ast f.\label{9.05}
\end{eqnarray}
By using \eqref{9.05} and the Plancherel identity, we have
\begin{align}
\left\|\mathscr{K}_{N,z}f\right\|_{L_{x}^{2}(\mathbf{T})}&=
\left\|\lim\limits_{\epsilon\rightarrow0}D_{N,z,\epsilon}\ast f\right\|_{L_{x}^{2}(\mathbf{T})}
=\left(\sum\limits_{k\in S_{N}}\left|\lim\limits_{\epsilon\rightarrow0}
\mathscr{F}_{x}D_{N,z,\epsilon}(t,k)\ast \mathscr{F}_{x}f(k,s)\right|^{2}\right)^{\frac{1}{2}}\nonumber\\
&=\left(\sum\limits_{k\in S_{N}}\left|\lim\limits_{\epsilon\rightarrow0}
\int_{\mathbf{T}}e^{-it\sqrt{k^{4}+k^{2}}}zt^{-1+z}\chi_{\{t||t|>\epsilon\}
\cap I_{N}}(t) \mathscr{F}_{x}f(k,s-t)dt\right|^{2}\right)^{\frac{1}{2}}\nonumber\\
&=\left(\sum\limits_{k\in S_{N}}\left|\lim\limits_{\epsilon\rightarrow0}
\int_{\mathbf{T}}zt^{-1+z}\chi_{\{t||t|>\epsilon\}\cap I_{N}}(t)e^{i(s-t)\sqrt{k^{4}+k^{2}}} \mathscr{F}_{x}f(k,s-t)dt\right|^{2}\right)^{\frac{1}{2}}.\label{9.06}
\end{align}
From \eqref{9.06}, we get
\begin{eqnarray}
&&\left\|\mathscr{K}_{N,z}f\right\|_{L_{s}^{2}L_{x}^{2}(\mathbf{T}^{2})}\nonumber\\
&&=\left(\int_{\mathbf{T}}\sum\limits_{k\in S_{N}}\left|\lim\limits_{\epsilon\rightarrow0}
\int_{\mathbf{T}}zt^{-1+z}\chi_{\{t||t|>\epsilon\}\cap I_{N}}(t)e^{i(s-t)\sqrt{k^{4}+k^{2}}} \mathscr{F}_{x}f(k,s-t)|dt\right|^{2}ds\right)^{\frac{1}{2}}\nonumber\\
&&=\left(\sum\limits_{\tau\in \z}\sum\limits_{k\in S_{N}}\left|\mathscr{F}_{t}
(\lim\limits_{\epsilon\rightarrow0}\int_{\mathbf{T}}
zt^{-1+z}\chi_{\{t||t|>\epsilon\}\cap I_{N}}(t))\mathscr{F}_{t}G(\tau,k)\right|^{2}\right)^{\frac{1}{2}}.\label{9.07}
\end{eqnarray}
When $z=ib$, by using Lemma 2.6 , we have
\begin{eqnarray}
&\left|\mathscr{F}_{t}\left(i\lim\limits_{\epsilon\rightarrow0}bt^{-1+ib}\chi_{\{t||t|>\epsilon\}
\cap I_{N}}(t)\right)\right|\leq CC(b),\label{9.08}
\end{eqnarray}
where $C(b)=(e^{\pi b}+e^{-\pi b})\left(|b|+1\right)^{2}$.
It follows from \eqref{9.07}-\eqref{9.08} and the Plancherel identity that
\begin{align}
\left\|\mathscr{K}_{N,ib}f\right\|_{L_{s}^{2}L_{x}^{2}}&\leq C(b)
\left(\sum\limits_{\tau\in \z}\sum\limits_{k\in S_{N}}\left|\mathscr{F}_{t}G(\tau,k)\right|^{2}\right)^{\frac{1}{2}}
=C(b)\left(\int_{\mathbf{T}}\sum\limits_{k\in S_{N}}\left|G(s,k)\right|^{2}ds\right)^{\frac{1}{2}}\nonumber\\
&\leq C(b)\left(\int_{\mathbf{T}}\sum\limits_{k\in S_{N}}\left|\mathscr{F}_{x}f(s,k)\right|^{2}ds\right)^{\frac{1}{2}}
\leq C(b)\|f\|_{L_{s}^{2}L_{x}^{2}}.\label{9.09}
\end{align}
By using \eqref{9.09}, we have
\begin{eqnarray}
&\|W_{1}\chi_{I_{N}}(t)\mathscr{K}_{N,ib}\chi_{I_{N}}(s)W_{2}\|_{\mathfrak{S}^{\infty}}\leq C(b)\prod\limits_{j=1}^{2}\|W_{j}\|_{L_{tx}^{\infty}(\mathbf{T}^{2})}.\label{9.010}
\end{eqnarray}
By using Lemma 2.12 and Hardy-Littlewood-Sobolev inequality, for $Re z=\frac{1}{q-1}+1\in [0,\frac{3}{2}]$, we get
\begin{align}
&\left\|W_{1}\chi_{I_{N}}(t)\mathscr{K}_{N,z}\chi_{I_{N}}(s)W_{2}\right\|_{\mathfrak{S}^{2}}\nonumber\\
&=\left(\int_{I_{N}}\int_{I_{N}}\int_{\mathbf{T}}\int_{\mathbf{T}}|W_{1}(t,x)|^{2}
|D_{N,z}(t-s,x-y)|^{2}|W_{2}(s,y)|^{2}dxdydtds\right)^{\frac{1}{2}}\nonumber\\
&\leq C(1+Re z+|Imz|)\left(\int_{I_{N}}\int_{I_{N}}\int_{\mathbf{T}}\int_{\mathbf{T}}
\frac{|W_{1}(t,x)|^{2}|W_{2}(s,y)|^{2}}{|t-s|^{1-2Re z}}dxdydtds\right)^{\frac{1}{2}}\nonumber\\
&\leq C(1+Re z+|Imz|)\left(\int_{I_{N}}\int_{I_{N}}\frac{\|W_{1}(t,\cdot)\|_{L_{x}^{2}(\mathbf{T})}^{2}
\|W_{2}(s,\cdot)\|_{L_{x}^{2}(\mathbf{T})}^{2}}{|t-s|^{1-2Re z}}dtds\right)^{\frac{1}{2}}\nonumber\\
&\leq C(1+Re z+|Imz|)\prod_{j=1}^{2}\|W_{j}(t,\cdot)\|_{L_{t}^{2\tilde{u}}L_{x}^{2}(\mathbf{T}^{2})},\label{9.011}
\end{align}
where $0\leq 1-2Re z<1$,\,$\frac{2}{\tilde{u}}+1-2Re z=2$.

By  using Lemma 3.4, since
$\mathscr{K}_{N,1}=\mathscr{D}_{N}\mathscr{D}_{N}^{\ast}$, we obtain
\begin{align}
\left\|W_{1}\chi_{I_{N}}(t)\mathscr{K}_{N,1}\chi_{I_{N}}(s)W_{2}\right\|_{\mathfrak{S}^{2q^{\prime}}}
&=\left\|W_{1}\chi_{I_{N}}(t)\mathscr{D}_{N}\mathscr{D}_{N}^{\ast}\chi_{I_{N}}(s)W_{2}\right\|_{\mathfrak{S}^{2q^{\prime}}}\nonumber\\
&\leq C\prod\limits_{j=1}^{2}\|W_{j}\|_{L_{t}^{2p^{\prime}}L_{x}^{2q^{\prime}}(\mathbf{T}^{2})},\label{9.012}
\end{align}
where
$\frac{1}{p^{\prime}}+\frac{1}{2q^{\prime}}=1,\,\frac{2}{3}<\frac{1}{q^{\prime}}<1.
$

This completes the proof of Lemma 9.1.

\begin{Lemma}\label{lem9.2} Let $N\geq1,\,I_{N}=[-\frac{1}{2N},\frac{1}{2N}],
\,S_{N}=\Z\cap[-N,N]$,$(\frac{1}{q},\frac{1}{p})\in (A,B],\,A=(0,\frac{1}{2}),\,B=(1,0)$, $\lambda=(\lambda_{j})_{j=1}^{+\infty}\in\ell^{\beta}(\beta\leq\frac{2q}{q+1})$
and orthonormal system $(f_{j})_{j=1}^{+\infty}$ in $L^{2}(\mathbf{T})$.
Then,  we have
\begin{eqnarray}
&\left\|\sum\limits_{j=1}^{\infty}\lambda_{j}
\left|\mathscr{D}_{N}f_{j}\right|^{2}\right\|_{L_{t}^{p}L_{x}^{q}(I_{N}
\times\mathbf{T})}\leq C(q)\|\lambda\|_{\ell^{\alpha}},\label{9.013}
\end{eqnarray}
where $\mathbf{T}=[0,2\pi)$  and
\begin{eqnarray*}
&&\mathscr{D}_{N}f_{j}=e^{it\sqrt{\partial_{x}^{4}-\partial_{x}^{2}}}P_{\leq N}f_{j}
=\frac{1}{2\pi}\sum\limits_{n\in S_{N}}\mathscr{F}_{x}f(n)e^{i(xn+t\sqrt{n^{2}+n^{4}})}.
\end{eqnarray*}
\end{Lemma}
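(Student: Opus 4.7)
The plan is to reduce \eqref{9.013} to a Schatten bound on $W\chi_{I_N}\mathscr{D}_N\mathscr{D}_N^\ast\chi_{I_N}W$ via the duality principle of Lemma 2.14, invoke Lemma 9.1 to handle a sub-portion of the claimed line segment, supplement it with a trivial endpoint estimate at $B=(1,0)$, and fill in the remaining exponents by complex interpolation.

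First, since $\|\lambda\|_{\ell^{2q/(q+1)}}\leq\|\lambda\|_{\ell^\beta}$ whenever $\beta\leq\frac{2q}{q+1}$, it suffices to treat the extreme case $\beta=\frac{2q}{q+1}$, so that $\beta'=2q'$. Applying Lemma 2.14 with $A=\chi_{I_N}\mathscr{D}_N$ reduces \eqref{9.013} to the Schatten estimate
\begin{eqnarray*}
\|W_1\chi_{I_N}\mathscr{D}_N\mathscr{D}_N^\ast\chi_{I_N}W_2\|_{\mathfrak{S}^{2q'}}\leq C\|W_1\|_{L^{2p'}_tL^{2q'}_x(I_N\times\mathbf{T})}\|W_2\|_{L^{2p'}_tL^{2q'}_x(I_N\times\mathbf{T})}.
\end{eqnarray*}
This is precisely the bound supplied by Lemma 9.1 under its restriction $\tfrac{2}{3}<\tfrac{1}{q'}<1$, which along the constraint line $\tfrac{1}{q}+\tfrac{2}{p}=1$ parametrizing $(A,B]$ translates to the sub-segment $\tfrac{1}{q}\in(0,\tfrac{1}{3})$.

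Next, I would establish the Schatten bound at the opposite endpoint $B=(1,0)$, where $\beta'=\infty$, via the chain
\begin{eqnarray*}
\|W_1\chi_{I_N}\mathscr{D}_N\mathscr{D}_N^\ast\chi_{I_N}W_2\|_{\mathfrak{S}^\infty}\leq \|W_1\chi_{I_N}\mathscr{D}_N\|_{\mathfrak{S}^\infty}\|\mathscr{D}_N^\ast\chi_{I_N}W_2\|_{\mathfrak{S}^\infty}\leq \prod_{j=1}^2\|W_j\|_{L^2_tL^\infty_x(I_N\times\mathbf{T})},
\end{eqnarray*}
where the second inequality follows from H\"older's inequality in the spatial variable together with the unitarity of $e^{it\sqrt{\partial_x^4-\partial_x^2}}$ on $L^2_x(\mathbf{T})$ and the bound $\|P_{\leq N}f\|_{L^2}\leq\|f\|_{L^2}$.

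To cover the remaining portion $\tfrac{1}{q}\in[\tfrac{1}{3},1)$ of $(A,B]$, I would then apply the noncommutative--commutative Riesz--Thorin interpolation theorem for mixed Lebesgue spaces (Lemma 3.2) to the bilinear map $(W_1,W_2)\mapsto W_1\chi_{I_N}\mathscr{D}_N\mathscr{D}_N^\ast\chi_{I_N}W_2$, interpolating between a Lemma 9.1 input at some $q_0>3$ and the endpoint bound at $B$. Convex combinations of these two points remain on the constraint line $\tfrac{1}{q}+\tfrac{2}{p}=1$, and letting $q_0\to\infty$ together with varying the interpolation parameter $\theta\in[0,1)$ sweeps out every remaining member of $(A,B]$. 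Reversing the duality via Lemma 2.14 then yields \eqref{9.013}. The main obstacle will be verifying that the interpolation parameters are mutually consistent: the intermediate Schatten exponent must land on $2q'$ while the $W$-space must interpolate to $L^{2p'}_tL^{2q'}_x$. Since both endpoint data satisfy $\tfrac{1}{p'}+\tfrac{1}{2q'}=1$, the constraint line is preserved under complex interpolation, so the remaining task is the routine accounting of the triple $(r,P,Q)$ in Lemma 3.2, matching $\tfrac{1}{r}=\tfrac{\theta}{2q_0'}$ and the analogous mixed-norm identities to the required exponents at each target point on $(A,B]$.
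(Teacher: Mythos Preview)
Your approach is essentially the paper's approach, but carried out more carefully. The paper's entire proof reads: ``It follows from Lemmas 2.14, 9.1 that Lemma 9.2 is valid.'' That is precisely your first step --- reduce via the duality principle (Lemma 2.14) to the Schatten bound supplied by Lemma 9.1.

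You go further than the paper by noticing that Lemma 9.1's hypothesis $\tfrac{2}{3}<\tfrac{1}{q'}<1$ only covers the sub-segment $\tfrac{1}{q}\in(0,\tfrac{1}{3})$ of $(A,B]$, and you patch the remaining range with the trivial $\mathfrak{S}^{\infty}$ bound at $B$ together with Lemma 3.2 interpolation. This extra step is correct: your endpoint estimate follows exactly as you write (H\"older in $x$, unitarity in $L^2_x$, then integrate in $t$), and the interpolation bookkeeping you sketch is consistent because both endpoints satisfy $\tfrac{1}{p'}+\tfrac{1}{2q'}=1$ and the Schatten exponent and the $x$-exponent interpolate identically, forcing $r=2q'$ at every intermediate point. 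The paper's one-line proof simply does not address this range issue, so your version is in fact more complete than the paper's own argument while following the same route.
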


\begin{proof}

It follows from Lemmas 2.14,  9.1 that Lemma 9.2 is valid.

The proof of Lemma 9.2 is finished.

\end{proof}

\subsection{Proof of Theorem  1.5}

It follows from Lemma 9.2 and Theorem 1.5 of \cite{N2020} that Theorem 1.5 is valid.

The proof of Theorem 1.5 is finished.

\section{Proof of Theorem  1.6: Strichartz estimates for orthonormal functions on $\mathbf{T}$}

\setcounter{equation}{0}

\setcounter{Theorem}{0}

\setcounter{Lemma}{0}

\setcounter{section}{10}

In this section, we present the Strichartz estimates for orthonormal
functions on $\mathbf{T}$.

\noindent{\bf Proof.}
Before proving Theorem 1.6, we give the following estimates that
  play a key role in proving Theorem 1.6.
\begin{Lemma}\label{lem10.1}
Let $S_{N}=\Z\cap[-N,N]$,\, $\mathbf{T}=[0,2\pi)$. Then, for all
$W_{1}, W_{2}\in L_{t}^{4}L_{x}^{2}(\mathbf{T}\times\mathbf{T}),$
we have
\begin{eqnarray}
&\|W_{1}\mathscr{D}_{N}\mathscr{D}_{N}^{\ast}W_{2}\|_{\mathfrak{S}^{2}(L^{2}(\mathbf{T}^{2}))}\leq CN^{\frac{1}{2}}\|W_{1}\|_{L_{t}^{4}L_{x}^{2}(\mathbf{T}^{2})}\|W_{2}\|_{L_{t}^{4}L_{x}^{2}(\mathbf{T}^{2})},\label{10.01}
\end{eqnarray}
where
$$\mathscr{D}_{N}f=\frac{1}{2\pi}\sum\limits_{k\in S_{N}}\mathscr{F}_{x}f(k)e^{i(xk+t\sqrt{k^{2}+k^{4}})}.$$
\end{Lemma}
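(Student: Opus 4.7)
The plan is to compute the Hilbert–Schmidt norm directly from the kernel of $\mathscr{D}_N \mathscr{D}_N^*$ and then exploit the strict convexity of $\phi(k) = \sqrt{k^2+k^4}$ through an almost-orthogonality (Bessel / large-sieve) argument on $[0, 2\pi]$. Since the kernel of $\mathscr{D}_N \mathscr{D}_N^*$ is $\tfrac{1}{2\pi} D_N(t-s, x-y)$ with $D_N(\tau, z) = \sum_{k \in S_N} e^{ikz + i\tau\phi(k)}$, one has
\begin{equation*}
\|W_1 \mathscr{D}_N \mathscr{D}_N^* W_2\|_{\mathfrak{S}^2}^2 = \frac{1}{(2\pi)^2}\iiiint_{\mathbf{T}^4} |W_1(t,x)|^2 |D_N(t-s, x-y)|^2 |W_2(s,y)|^2 \,dt\,dx\,ds\,dy.
\end{equation*}

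Expanding $|D_N(\tau, z)|^2 = \sum_{k, k' \in S_N} e^{i(k-k')z + i\tau(\phi(k) - \phi(k'))}$ and performing the $(x, y)$ integrations against $|W_1|^2$ and $|W_2|^2$ produces the Fourier coefficients $F_{W_j}(t, k-k') = \int |W_j(t, x)|^2 e^{i(k-k')x}\,dx$. After a Cauchy--Schwarz in the resulting double sum over $(k, k')$, the lemma reduces to the single key bound
\begin{equation*}
\sum_{k, k' \in S_N} \left|\int_{\mathbf{T}} F_W(t, k-k')\, e^{it(\phi(k) - \phi(k'))}\, dt\right|^2 \leq C N \, \|W\|_{L_t^4 L_x^2}^4.
\end{equation*}

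To establish this, I would group the summands by $m = k - k'$ and set $\alpha_{k, m} = \phi(k) - \phi(k-m)$. Strict convexity of $\phi$ — concretely, $\phi''(x) = (2x^3+3x)/(x^2+1)^{3/2}$, which is bounded below by a positive constant on $|x|\geq 1$ — yields the spacing $\alpha_{k+1, m} - \alpha_{k, m} \gtrsim |m|$ for each fixed $m \neq 0$. Montgomery's large-sieve (Bessel) inequality on $[0, 2\pi]$ then furnishes
\begin{equation*}
\sum_{k:\, k,\, k-m \in S_N} \left|\int_{\mathbf{T}} F_W(t, m)\, e^{it\alpha_{k, m}}\, dt\right|^2 \leq C\,\|F_W(\cdot, m)\|_{L_t^2}^2,
\end{equation*}
and the trivial H\"older bound $|F_W(t, m)| \leq \|W(t, \cdot)\|_{L_x^2}^2$ makes this uniform in $m$: $\|F_W(\cdot, m)\|_{L_t^2}^2 \leq \|W\|_{L_t^4 L_x^2}^4$. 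Summing over the $O(N)$ nonzero values of $|m| \leq 2N$ yields the desired factor $N$; the diagonal term $m = 0$ contributes $(2N+1) \|W\|_{L^2(\mathbf{T}^2)}^4 \lesssim N \|W\|_{L_t^4 L_x^2}^4$ by H\"older on the finite-measure torus.

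The main obstacle I expect lies in the spacing estimate at small $|k|$, where $\phi''$ degenerates to zero at the origin. Only $O(1)$ values of $k$ fall into this bad regime, however, so they can be stripped off at the outset and controlled by the trivial pointwise bound $|\int F_W(t, m)\, e^{it\alpha}\, dt| \leq \|W\|_{L^2(\mathbf{T}^2)}^2$; this exceptional contribution fits well within the target $CN \|W\|_{L_t^4 L_x^2}^4$ after one more application of H\"older on $\mathbf{T}$.
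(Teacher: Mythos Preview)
Your large-sieve approach is correct and genuinely different from the paper's. The paper does not analyse the Boussinesq phases directly; it writes $|D_N|^2 \le \bigl||D_N|^2 - |D_N^{\mathrm{Schr}}|^2\bigr| + |D_N^{\mathrm{Schr}}|^2$ with $D_N^{\mathrm{Schr}}(\tau,z)=\sum_{k\in S_N}e^{i(kz+\tau k^2)}$, imports Nakamura's torus bound for the Schr\"odinger piece, and controls the remainder pointwise by $CN$ using $|D_N-D_N^{\mathrm{Schr}}|\le \sum_{k}|e^{i\tau(\phi(k)-k^2-\frac{1}{2})}-1|=O(1)$ together with the trivial $|D_N|+|D_N^{\mathrm{Schr}}|\le CN$; H\"older in $t$ then finishes. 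Your argument is self-contained and transfers to any dispersion with well-separated phase differences, while the paper's is shorter but relies on the Schr\"odinger case as a black box and on the specific fact that $\phi(k)-k^2$ is bounded. One caution on your sketch: since $\phi(x)=|x|\sqrt{1+x^2}$ has a corner at $0$, the second-derivative mean-value step fails whenever $k$ and $k-m$ have opposite signs, which is $O(|m|)$ values of $k$ (not $O(1)$), so your trivial-bound fallback would overshoot by a factor $N$ after summing in $m$; fortunately no fallback is needed, because in that straddling regime the one-sided first derivatives add rather than subtract, giving $\alpha_{k+1,m}-\alpha_{k,m}\ge \phi'_+(\xi_1)+\phi'_+(|\xi_2|)\ge 2$ directly (here $\phi'_+(x)=(1+2x^2)/\sqrt{1+x^2}\ge 1$ is the one-sided derivative on $(0,\infty)$), so the spacing is uniformly $\gtrsim 1$ and the Bessel constant stays absolute.
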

\begin{proof}
From \eqref{9.03}, we have
\begin{eqnarray}
\|W_{1}\mathscr{D}_{N}\mathscr{D}_{N}^{\ast}W_{2}\|_{\mathfrak{S}^{2}(L^{2}(\mathbf{T}^{2}))}^{2}
=\int_{\mathbf{T}^{2}}\int_{\mathbf{T}^{2}}|W_{1}(x,t)D_{N}(x-x^{\prime},t-t^{\prime})
W_{2}(x^{\prime},t^{\prime})|^{2}dtdxdt^{\prime}dx^{\prime},\label{10.02}
\end{eqnarray}
where $$D_{N}(t,x)=\frac{1}{2\pi}\sum\limits_{k\in S_{N}}e^{ixk}e^{it\sqrt{k^{4}+k^{2}}}.$$
Obviously, we have
\begin{eqnarray}
&&\left|D_{N}(x-x^{\prime},t-t^{\prime})\right|^{2}\nonumber\\
&&=\left|\sum\limits_{ k_{1},k_{2}=- N}^{N}e^{i(x-x^{\prime})(k_{1}-k_{2})}\left[\left(e^{i(t-t^{\prime})(\sqrt{k_{1}^{4}+k_{1}^{2}}-\sqrt{k_{2}^{4}+k_{2}^{2}})}
-e^{i(t-t^{\prime})(k_{1}^{2}+\frac{1}{2}-(k_{2}^{2}+\frac{1}{2}))}\right)+e^{i(t-t^{\prime})(k_{1}^{2}-k_{2}^{2})}\right]\right|\nonumber\\
&&\leq \left|\sum\limits_{ k_{1},k_{2}=- N}^{N}e^{i(x-x^{\prime})(k_{1}-k_{2})}\left(e^{i(t-t^{\prime})(\sqrt{k_{1}^{4}+k_{1}^{2}}-\sqrt{k_{2}^{4}+k_{2}^{2}})}
-e^{i(t-t^{\prime})(k_{1}^{2}+\frac{1}{2}-(k_{2}^{2}+\frac{1}{2}))}\right)\right|\nonumber\\
&&+\left|\sum\limits_{ k_{1},k_{2}=- N}^{N}e^{i(x-x^{\prime})(k_{1}-k_{2})}e^{i(t-t^{\prime})
(k_{1}^{2}-k_{2}^{2})}\right|
=E_{1}+E_{2}.\label{10.03}
\end{eqnarray}
For $E_{2}$, by using  a proof similar to Theorem 1.6 of \cite{N2020}, we have
\begin{eqnarray}
&&\left(\int_{\mathbf{T}^{2}}\int_{\mathbf{T}^{2}}|W_{1}(x,t)|^{2}|E_{2}(x-x^{\prime},t-t^{\prime})|
|W_{2}(x^{\prime},t^{\prime})|^{2}dtdxdt^{\prime}dx^{\prime}\right)^{\frac{1}{2}}\nonumber\\
&&\leq CN^{\frac{1}{2}}\|W_{1}\|_{L_{t}^{4}L_{x}^{2}(\mathbf{T}^{2})}\|W_{2}\|_{L_{t}^{4}L_{x}^{2}(\mathbf{T}^{2})}.\label{10.04}
\end{eqnarray}
For $E_{1}$, when $k_{1}=k_{2}$, by using H\"{o}lder inequality, we have
\begin{eqnarray}
&&\|W_{1}\mathscr{D}_{N}\mathscr{D}_{N}^{\ast}W_{2}\|_{\mathfrak{S}^{2}(L^{2}(\mathbf{T}^{2}))}^{2}
=\sum\limits_{ k_{1},k_{2}=- N}^{N}\int_{\mathbf{T}^{2}}\int_{\mathbf{T}^{2}}|W_{1}(x,t)
W_{2}(x^{\prime},t^{\prime})|^{2}dtdxdt^{\prime}dx^{\prime}\nonumber\\
&&\leq CN\|W_{1}\|_{L_{t}^{4}L_{x}^{2}(\mathbf{T}^{2})}^{2}\|W_{2}\|_{L_{t}^{4}L_{x}^{2}(\mathbf{T}^{2})}^{2}.\label{10.05}
\end{eqnarray}
When $k_{1}\neq k_{2}$, since
\begin{eqnarray*}
&&\left|e^{i(t-t^{\prime})[\sqrt{k_{1}^{4}+k_{1}^{2}}-(k_{1}^{2}+\frac{1}{2})
-(\sqrt{k_{2}^{4}+k_{2}^{2}}-(k_{2}^{2}+\frac{1}{2}))]}
-1\right|\nonumber\\&&\leq C|t|\left|\frac{1}{k_{1}^{2}+\frac{1}{2}+\sqrt{k_{1}^{4}+k_{1}^{2}}}-
\frac{1}{k_{2}^{2}+\frac{1}{2}+\sqrt{k_{2}^{4}+k_{2}^{2}}}\right|\nonumber\\
&&\leq C,
\end{eqnarray*}
we have
\begin{eqnarray}
&&|E_{1}|\nonumber\\
&&=\left|\sum\limits_{ k_{1},k_{2}=- N}^{N}e^{i(x-x^{\prime})(k_{1}-k_{2})}e^{i(t-t^{\prime})(k_{1}^{2}+\frac{1}{2}-(k_{2}^{2}+\frac{1}{2}))}
\left(e^{i(t-t^{\prime})[\sqrt{k_{1}^{4}+k_{1}^{2}}-(k_{1}^{2}+\frac{1}{2})-(\sqrt{k_{2}^{4}+k_{2}^{2}}-(k_{2}^{2}+\frac{1}{2}))]}
-1\right)\right|\nonumber\\
&&\leq CN.\label{10.06}
\end{eqnarray}
By using \eqref{10.06} and H\"{o}lder inequality, we have
\begin{eqnarray}
&&\int_{\mathbf{T}^{2}}\int_{\mathbf{T}^{2}}|W_{1}(x,t)|^{2}|E_{1}(x-x^{\prime},t-t^{\prime})|
|W_{2}(x^{\prime},t^{\prime})|^{2}dtdxdt^{\prime}dx^{\prime}\nonumber\\
&&\leq CN\int_{\mathbf{T}^{2}}\int_{\mathbf{T}^{2}}|W_{1}(x,t)|^{2}
|W_{2}(x^{\prime},t^{\prime})|^{2}dtdxdt^{\prime}dx^{\prime}\nonumber\\
&&\leq CN\|W_{1}\|_{L_{t}^{4}L_{x}^{2}(\mathbf{T}^{2})}^{2}\|W_{2}\|_{L_{t}^{4}L_{x}^{2}(\mathbf{T}^{2})}^{2}.\label{10.07}
\end{eqnarray}
From \eqref{10.03}-\eqref{10.07}, we have
\begin{eqnarray}
&&\left(\int_{\mathbf{T}^{2}}\int_{\mathbf{T}^{2}}|W_{1}(x,t)D_{N}(x-x^{\prime},t-t^{\prime})
W_{2}(x^{\prime},t^{\prime})|^{2}dtdxdt^{\prime}dx^{\prime}\right)^{\frac{1}{2}}\nonumber\\
&&\leq CN^{\frac{1}{2}}\|W_{1}\|_{L_{t}^{4}L_{x}^{2}(\mathbf{T}^{2})}\|W_{2}\|_{L_{t}^{4}L_{x}^{2}(\mathbf{T}^{2})}.\label{10.08}
\end{eqnarray}

This completes the proof of Lemma 10.1.

\end{proof}

\noindent{\bf Proof of Theorem  1.6}.
By using Lemma 2.14 and Lemma 10.1, we have that Theorem 1.6 is valid.

The proof of Theorem 1.6 is finished.

\section{Proof of Theorem  1.7:  Optimality of Theorems 1.5, 1.6}

\setcounter{equation}{0}

\setcounter{Theorem}{0}

\setcounter{Lemma}{0}

\setcounter{section}{11}

In this section, we present the optimality of Theorems 1.5, 1.6.

\noindent{\bf Proof.}
Inspired by \cite{N2020}. Firstly, we suppose that \eqref{1.042} is valid. Then, we define $f_{j}=e^{ixj}$, $j\in \mathbb{N}^{+},\,x\in\mathbf{T}$, by simple calculation, we have
\begin{eqnarray}
&&\mathscr{F}_{x}f_{j}=\chi_{j}(k),\,k\in\mathbf{Z}.\label{11.01}
\end{eqnarray}
From \eqref{11.01}, we have
\begin{align}
\mathscr{D}_{N}f_{j}&=\frac{1}{2\pi}\chi_{S_{N}}(j)e^{it\sqrt{k^{2}+k^{4}}}e^{ixj}=\frac{1}{2\pi}\chi_{S_{N}}(j)e^{it\sqrt{k^{2}+k^{4}}}f_{j}.\label{11.02}
\end{align}
By using \eqref{11.02}, we have
\begin{eqnarray}
&&|\mathscr{D}_{N}f_{j}|=\frac{1}{2\pi},\,\lambda_{j}=\frac{1}{2\pi}\chi_{S_{N}}(j)e^{it\sqrt{k^{2}+k^{4}}},\label{11.03}
\end{eqnarray}
where $\lambda_{j}$ is the eigenvalue of operator $\mathscr{D}_{N}$.
On the one hand, by using \eqref{11.03}, we have
\begin{align}
\left\|\sum_{j\in\mathbb{N}^{+}}\lambda_{j}|U_{N}f_{j}|^{2}\right\|_{L_{t}^{q}L_{x}^{p}}
&=\frac{1}{2\pi}\left\|\sum_{j\in\mathbb{N}^{+}}\chi_{S_{N}}(j)\right\|_{L_{t}^{q}L_{x}^{p}}\sim N.\label{11.04}
\end{align}
On the other hand, we have
\begin{eqnarray}
&&N^{\frac{1}{p}}\|\lambda\|_{\ell^{\beta}}\sim N^{\frac{1}{p}}N^{\frac{1}{\beta}}.\label{11.05}
\end{eqnarray}
By using \eqref{1.042}, \eqref{11.04} and \eqref{11.05}, we have
\begin{eqnarray}
&&N\leq CN^{\frac{1}{p}}N^{\frac{1}{\beta}}.\label{11.06}
\end{eqnarray}
When $N$ tends to $\infty$, from \eqref{11.06},  since $\frac{2}{p}+\frac{1}{q}=1$, we have $\beta\leq \frac{2q}{q+1}$, this contradicts $\beta> \frac{2q}{q+1}$.

Finally, we suppose that \eqref{1.043} is valid. Then, we define $f_{j}=e^{ixj}$, $j\in \mathbb{N}^{+},\,x\in\mathbf{T}$. By using a proof similar to \eqref{11.06}, we can get
\begin{eqnarray}
&&N\leq CN^{\frac{1}{2}}N^{\frac{1}{\beta}}.\label{11.07}
\end{eqnarray}
When $N$ tends to $\infty$, from \eqref{11.07},  we have $\beta\leq 2$, this contradicts $\beta> 2$.

This completes the proof of Theorem 1.7.

\section{Proof of Theorem  1.8: Stochastic continuity at zero related to Schatten norm on $\R$}

\setcounter{equation}{0}

\setcounter{Theorem}{0}

\setcounter{Lemma}{0}

\setcounter{section}{12}

In this section, we present the stochastic continuity at zero related to Schatten norm on $\R$.
Before proving Theorem 1.8, we give the following estimates that  play a key role in proving Theorem 1.8.

\begin{Lemma}\label{lem12.1}
Let $(\lambda_{j})_{j}\in \ell_{j}^{2}$, $(f_{j})_{j=1}^{\infty}$ is an orthonormal system in $L^{2}\left(\R\right)$ and $f_{j}^{\omega}$ the randomization of $f_{j}$ be define as in \eqref{1.026}. Then, for $\forall\epsilon>0, \exists\delta>0$, when $|t|<\delta<\frac{\epsilon^{2}}{M_{1}\sqrt{1+M_{1}^{2}}}$, we have
\begin{align}
&&\left\|\sum\limits_{j=1}^{\infty} \lambda_{j} g_{j}^{(2)}(\widetilde{\omega}) I_{1}\right\|_{L_{\omega, \tilde{\omega}}^{r}(\Omega \times \tilde{\Omega})}
\leq Cp^{\frac{1}{2}}\epsilon^{2}\left(\|\gamma_{0}\|_{\mathfrak{S}^{2}}+1\right),\label{12.01}
\end{align}
where $I_{1}=\left(e^{it\sqrt{\partial_{x}^{4}-\partial_{x}^{2}}} f_{j}^{\omega}-f_{j}^{\omega}\right) e^{-it\sqrt{\partial_{x}^{4}-\partial_{x}^{2}}}\overline{f}_{j}^{\omega}$,  $\|\gamma_{0}\|_{\mathfrak{S}^{2}}=\left(\sum\limits_{j=1}^{\infty}\lambda_{j}^{2}\right)^{\frac{1}{2}}$  and $M_{1}$ appears in \eqref{4.010}.
\end{Lemma}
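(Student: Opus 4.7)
The plan is to decouple the two random sources and peel off the product structure of $I_1 = \bigl(U(t)f_j^{\omega}-f_j^{\omega}\bigr)\,\overline{U(t)f_j^{\omega}}$, where $U(t) = e^{it\sqrt{\partial_x^4-\partial_x^2}}$, by sequentially invoking Khinchin's inequality, Minkowski's inequality, Cauchy--Schwarz, and finally the two building blocks that are already available: Lemma 4.3 (which controls the smallness of $U(t)f_j^\omega - f_j^\omega$ in the $\ell_j^2 L_\omega^p$ norm by $\epsilon^2$ on the time window $|t|<\delta$) and Lemma 4.6 (which controls $U(t)f_j^\omega$ in $L_\omega^p$ by a pure Gaussian bound $p^{1/2}$).

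First I would use the independence of $\{g_j^{(2)}(\widetilde{\omega})\}_j$ and apply Khinchin's inequality (Lemma 4.1) in the $\widetilde{\omega}$ variable only, treating $I_1=I_1(\omega)$ as deterministic coefficients with respect to $\widetilde{\omega}$. This gives the pointwise-in-$\omega$ bound
\begin{equation*}
\Bigl\|\sum_{j=1}^{\infty}\lambda_j g_j^{(2)}(\widetilde{\omega})\,I_1\Bigr\|_{L_{\widetilde{\omega}}^{r}(\widetilde{\Omega})}
\leq C r^{\frac{1}{2}}\Bigl(\sum_{j=1}^{\infty}|\lambda_j|^2 |I_1|^2\Bigr)^{\frac{1}{2}}.
\end{equation*}
Then I would take the $L_\omega^r$ norm of both sides and invoke Minkowski's inequality (valid since $r\geq 2$) to push the $L_\omega^r$ norm inside the $\ell_j^2$ sum, yielding
\begin{equation*}
\Bigl\|\sum_{j=1}^{\infty}\lambda_j g_j^{(2)}(\widetilde{\omega})\,I_1\Bigr\|_{L_{\omega,\widetilde{\omega}}^{r}(\Omega\times\widetilde{\Omega})}
\leq C r^{\frac{1}{2}}\Bigl(\sum_{j=1}^{\infty}|\lambda_j|^2 \|I_1\|_{L_\omega^r}^2\Bigr)^{\frac{1}{2}}.
\end{equation*}

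The next step is to exploit the product structure of $I_1$. By Cauchy--Schwarz in $L_\omega^r$ (splitting $r$ into two copies of $2r$),
\begin{equation*}
\|I_1\|_{L_\omega^r}\leq \bigl\|U(t)f_j^\omega-f_j^\omega\bigr\|_{L_\omega^{2r}}\,\bigl\|U(t)f_j^\omega\bigr\|_{L_\omega^{2r}}.
\end{equation*}
Lemma 4.6, applied with exponent $2r$, gives $\|U(t)f_j^\omega\|_{L_\omega^{2r}}\leq C(2r)^{1/2}\|f_j\|_{L^2} = C(2r)^{1/2}$, which is uniform in $j$ and can be pulled out of the $\ell_j^2$ sum. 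What remains is $\bigl(\sum_j |\lambda_j|^2\,\|U(t)f_j^\omega-f_j^\omega\|_{L_\omega^{2r}}^2\bigr)^{1/2}$, which is precisely the quantity Lemma 4.3 bounds by $C(2r)^{1/2}\epsilon^2(\|\gamma_0\|_{\mathfrak{S}^2}+1)$ on the time window $|t|<\delta<\epsilon^2/\bigl(M_1\sqrt{1+M_1^2}\bigr)$. Combining the three factors $r^{1/2}\cdot (2r)^{1/2}\cdot (2r)^{1/2}$ gives an overall power $r^{3/2}$, and one then absorbs constants to conclude.

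The main technical obstacle is not in the linear algebra of Khinchin--Minkowski but in the calibration hidden inside Lemma 4.3: the frequency decomposition there must be truncated at level $M_1$ chosen large enough that the high-frequency tail is $O(\epsilon^2)$, after which the low-frequency contribution is controlled by $|t|M_1\sqrt{1+M_1^2}$. Choosing $|t|<\delta<\epsilon^2/(M_1\sqrt{1+M_1^2})$ then forces the low-frequency piece to also be $O(\epsilon^2)$. This trade-off is precisely what fixes $\delta$ in terms of $\epsilon$ in the statement, and care must be taken that the Cauchy--Schwarz splitting applied to $I_1$ does not destroy the factorized structure that Lemma 4.3 is designed to exploit; placing Cauchy--Schwarz \emph{after} the Khinchin--Minkowski reduction (rather than before) is what keeps the singular factor $U(t)f_j^\omega - f_j^\omega$ inside an $\ell_j^2$ summation where Lemma 4.3 is directly applicable.
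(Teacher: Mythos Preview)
Your proposal is correct and follows essentially the same route as the paper: Khinchin in $\widetilde{\omega}$ (Lemma 4.1), then Minkowski to pass from $L_\omega^r\ell_j^2$ to $\ell_j^2 L_\omega^r$, then H\"older/Cauchy--Schwarz in $L_\omega^r$ to split $I_1$ into two $L_\omega^{2r}$ factors, and finally Lemmas 4.6 and 4.3 applied to those factors. Your observation that the three $r^{1/2}$ contributions combine to $r^{3/2}$ (rather than the $p^{1/2}$ appearing in the statement, which is a typo) matches the paper's own computation \eqref{12.02}.
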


\begin{proof}
By using Minkowski inequality, H\"{o}lder inequality as well as Lemmas 4.1, 4.3, 4.6,
 $\forall \epsilon>0$, which appears in Lemma 4.2, $\exists \delta$  such that $0<\delta<\frac{\epsilon^{2}}{M_{1}\sqrt{1+M_{1}^{2}}}$, when $|t|<\delta$, we have
\begin{align}
\left\|\sum_{j=1}^{\infty} \lambda_{j} g_{j}^{(2)}(\widetilde{\omega}) I_{1}\right\|_{L_{\omega, \tilde{\omega}}^{r}(\Omega \times \tilde{\Omega})}
&\leq C r^{\frac{1}{2}}\left\|\lambda_{j} I_{1}\right\|_{L_\omega^{r} \ell_{j}^{2}}
\leq C r^{\frac{1}{2}}\left\|\lambda_{j} I_{1}\right\|_{\ell_{j}^{2} L_{\omega}^{r}}\nonumber\\
&\leq C r^{\frac{1}{2}}\left\|\lambda_{j}\left\|\left(e^{it\sqrt{\partial_{x}^{4}-\partial_{x}^{2}}} f_{j}^{\omega}-f_{j}^{\omega}\right)\right\|_{L_{\omega}^{2r}}\left\| e^{-it\sqrt{\partial_{x}^{4}-\partial_{x}^{2}}}\overline{f}_{j}^{\omega}\right\|_{L_{\omega}^{2r}}\right\|_{\ell_{j}^{2}}\nonumber\\
&\leq C r^{\frac{3}{2}} \epsilon^{2}\left(\left\|\gamma_{0}\right\|_{\mathfrak{S}^2}+1\right).\label{12.02}
\end{align}

This completes the proof of Lemma 12.1.
\end{proof}

\begin{Lemma}\label{lem12.2}
Let $(\lambda_{j})_{j}\in \ell_{j}^{2}$, $(f_{j})_{j=1}^{\infty}$
be an orthonormal system in $L^{2}\left(\R\right)$ and $f_{j}^{\omega}$
the randomization of $f_{j}$ be define as in \eqref{1.026}. Then, for
 $\forall\epsilon>0, \exists\delta>0$, when $|t|<\delta<\frac{\epsilon^{2}}{M_{1}\sqrt{1+M_{1}^{2}}}$, we have
\begin{align}
&&\left\|\sum\limits_{n=1}^{\infty} \lambda_{j} g_{j}^{(2)}(\widetilde{\omega})
I_{2}\right\|_{L_{\omega, \tilde{\omega}}^{r}(\Omega \times \tilde{\Omega})}
\leq Cp^{\frac{1}{2}}\epsilon^{2}\left(\|\gamma_{0}\|_{\mathfrak{S}^{2}}+1\right),\label{12.03}
\end{align}
where $I_{2}=f_{j}^{\omega}\left(e^{it\sqrt{\partial_{x}^{4}-\partial_{x}^{2}}}
\overline{f}_{j}^{\omega}-\overline{f}_{j}^{\omega}\right)$,
$\|\gamma_{0}\|_{\mathfrak{S}^{2}}=\left(\sum\limits_{j=1}^{\infty}
\lambda_{j}^{2}\right)^{\frac{1}{2}}$  and $ M_{1}$  appears in \eqref{4.010}.
\end{Lemma}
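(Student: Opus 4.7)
The plan is to mirror, almost verbatim, the proof of Lemma 12.1, exploiting the fact that $\overline{f}_j^\omega$ enjoys the same randomization structure as $f_j^\omega$. Since the random coefficients $g_k^{(1)}(\omega)$ are real-valued,
\begin{eqnarray*}
\overline{f}_j^\omega = \sum_{k\in\mathbf{Z}} g_k^{(1)}(\omega)\,\overline{\psi(D-k)f_j},
\end{eqnarray*}
and whenever $(f_j)_j$ is an orthonormal system in $L^2(\mathbf{R})$ so is $(\overline{f}_j)_j$, because $\langle \overline{f}_i,\overline{f}_j\rangle = \overline{\langle f_j,f_i\rangle}=\delta_{ij}$. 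Consequently, both Lemma 4.3 and Lemma 4.6 continue to apply with $\overline{f}_j$ in place of $f_j$, since their proofs only invoke the orthonormality and the Wiener-type decomposition on the frequency side.

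First I would apply the Khinchin-type inequality (Lemma 4.1) in the $\widetilde{\omega}$-variable to obtain
\begin{eqnarray*}
\left\|\sum_{j=1}^{\infty}\lambda_{j} g_{j}^{(2)}(\widetilde{\omega}) I_{2}\right\|_{L_{\omega,\widetilde{\omega}}^{r}(\Omega\times\widetilde{\Omega})} \leq C r^{\frac{1}{2}} \|\lambda_{j} I_{2}\|_{L_{\omega}^{r}\ell_{j}^{2}},
\end{eqnarray*}
and then use Minkowski's inequality to exchange the order of the norms, passing to $\|\lambda_{j} I_{2}\|_{\ell_{j}^{2} L_{\omega}^{r}}$. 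Next, I would factor $I_{2} = f_{j}^{\omega}\cdot\left(e^{it\sqrt{\partial_{x}^{4}-\partial_{x}^{2}}}\overline{f}_{j}^{\omega}-\overline{f}_{j}^{\omega}\right)$ and apply H\"older's inequality in the probability space $\Omega$ with exponents $2r$ and $2r$, yielding
\begin{eqnarray*}
\|\lambda_{j} I_{2}\|_{\ell_{j}^{2} L_{\omega}^{r}} \leq \left\|\lambda_{j}\left\|f_{j}^{\omega}\right\|_{L_{\omega}^{2r}}\left\|e^{it\sqrt{\partial_{x}^{4}-\partial_{x}^{2}}}\overline{f}_{j}^{\omega}-\overline{f}_{j}^{\omega}\right\|_{L_{\omega}^{2r}}\right\|_{\ell_{j}^{2}}.
\end{eqnarray*}

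The first factor $\left\|f_{j}^{\omega}\right\|_{L_{\omega}^{2r}}$ is controlled by $C r^{\frac{1}{2}}$ via Lemma 4.6. For the remaining $\ell_{j}^{2}$-norm involving the conjugated difference, I would invoke Lemma 4.3 applied to the orthonormal system $(\overline{f}_j)_j$, which under the assumption $|t|<\delta<\frac{\epsilon^{2}}{M_{1}\sqrt{1+M_{1}^{2}}}$ produces the bound $C r^{\frac{1}{2}}\epsilon^{2}(\|\gamma_{0}\|_{\mathfrak{S}^{2}}+1)$. Multiplying the three factors $r^{\frac{1}{2}}\cdot r^{\frac{1}{2}}\cdot r^{\frac{1}{2}}\epsilon^{2}(\|\gamma_{0}\|_{\mathfrak{S}^{2}}+1)$ delivers the asserted estimate.

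The only genuine subtlety, which is otherwise routine, is verifying that Lemma 4.3 transfers cleanly to $\overline{f}_j^\omega$; this requires observing that the key quantity $|\psi(\xi-k)\mathscr{F}_{x}\overline{f}_j(\xi)|$ enjoys the same $\ell^{2}_{k}$-summability as in \eqref{4.010}, which follows from Plancherel applied to $\overline{f}_j$ (with norm $1$) and the Wiener partition of unity \eqref{1.022}. No new oscillatory or kernel estimates are needed: every inequality used in the proof of Lemma 12.1 is symmetric in the roles of $f_j^\omega$ and $\overline{f}_j^\omega$, so the entire chain of inequalities \eqref{12.02} carries over with the obvious cosmetic modifications.
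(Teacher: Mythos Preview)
Your proposal is correct and follows essentially the same approach as the paper's own proof: apply Lemma 4.1 in $\widetilde{\omega}$, Minkowski to swap $L_\omega^r$ and $\ell_j^2$, H\"older in $\omega$ with exponents $(2r,2r)$, then invoke Lemmas 4.6 and 4.3 on the two factors to obtain the bound $Cr^{3/2}\epsilon^2(\|\gamma_0\|_{\mathfrak{S}^2}+1)$. Your additional justification---that $(\overline{f}_j)_j$ is again orthonormal and that $\overline{f}_j^\omega$ inherits the same randomization structure because the $g_k^{(1)}$ are real---is a detail the paper leaves implicit, so your write-up is in fact slightly more thorough than the original.
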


\begin{proof}
By using Minkowski inequality, H\"{o}lder inequality as well as Lemmas 4.1,
4.3, 4.6, $\forall \epsilon>0$, which appears in Lemma 4.2,
$\exists\delta(0<\delta<\frac{\epsilon^{2}}{M_{1}\sqrt{1+M_{1}^{2}}})$,
 when $|t|<\delta$, we have
\begin{align}
\left\|\sum_{j=1}^{\infty} \lambda_{j} g_{j}^{(2)}(\widetilde{\omega})
I_{2}\right\|_{L_{\omega, \tilde{\omega}}^{r}(\Omega \times \tilde{\Omega})}
&\leq C r^{\frac{1}{2}}\left\|\lambda_{j} I_{2}\right\|_{L_{\omega}^{r} \ell_{j}^{2}}
\leq C r^{\frac{1}{2}}\left\|\lambda_{j} I_{2}\right\|_{\ell_{j}^{2} L_{\omega}^{r}}\nonumber\\
&\leq C r^{\frac{1}{2}}\left\|\lambda_{j}\left\|\left(e^{-it\sqrt{\partial_{x}^{4}-\partial_{x}^{2}}}
\overline{f}_{j}^{\omega}-\overline{f}_{j}^{\omega}\right)
\right\|_{L_{\omega}^{2r}}\left\| f_{j}^{\omega}\right\|_{L_{\omega}^{2r}}\right\|_{\ell_{j}^{2}}\nonumber\\
&\leq C r^{\frac{3}{2}} \epsilon^{2}
\left(\left\|\gamma_{0}\right\|_{\mathfrak{S}^2}+1\right).\label{12.04}
\end{align}

This completes the proof of Lemma 12.2.
\end{proof}

\noindent{\bf Proof of Theorem  1.8}. Now, we use Lemmas 12.1,
 12.2 to prove Theorem 1.8. Obviously, we have
\begin{eqnarray}
&&\left|e^{it\sqrt{\partial_{x}^{4}-\partial_{x}^{2}}} f_{j}^{\omega}\right|^2
-\left|f_{j}^{\omega}\right|^2=I_{1}+I_{2}.\label{12.05}
\end{eqnarray}
By using \eqref{12.05} and  Lemmas 12.1, 12.2, we have
\begin{eqnarray}
&&\left\|\sum_{j=1}^{\infty} \lambda_{j} g_{j}^{(2)}(\widetilde{\omega})
\left|f_{j}^{\omega}\right|^{2}-\sum_{j=1}^{\infty} \lambda_{j}
g_{j}^{(2)}(\widetilde{\omega})\left|e^{it\sqrt{\partial_{x}^{4}
-\partial_{x}^{2}}}f_{j}^{\omega}\right|^2\right\|_{L_{\omega,
\tilde{\omega}}^{r}(\Omega \times \widetilde{\Omega})}\nonumber\\
&&=\left\|\sum_{j=1}^{\infty} \lambda_{j} g_{j}^{(2)}(\widetilde{\omega})
\left(I_{1}+I_{2}\right)\right\|_{L_{\omega, \tilde{\omega}}^{r}
(\Omega \times \widetilde{\Omega})}\nonumber\\
&&\leq\left\|\sum_{j=1}^{\infty} \lambda_{j} g_{j}^{(2)}(\widetilde{\omega})
 I_{1}\right\|_{L_{\omega, \tilde{\omega}}^{r}(\Omega \times \tilde{\Omega})}
 +\left\|\sum_{j=1}^{\infty} \lambda_{j} g_{j}^{(2)}(\widetilde{\omega})
 I_{2}\right\|_{L_{\omega, \tilde{\omega}}^{r}(\Omega \times \tilde{\Omega})}\nonumber\\
&&\leq C r^{\frac{3}{2}} \epsilon^{2}
\left(\left\|\gamma_{0}\right\|_{\mathfrak{S}^2}+1\right).\label{12.06}
\end{eqnarray}
Combining \eqref{12.06} with Lemma 4.2, we have that \eqref{1.022} is valid.

This completes the proof of Theorem 1.8.

\section{Proof of Theorem  1.9: Stochastic continuity at zero
 related to Schatten norm on $\mathbf{T}$}

\setcounter{equation}{0}

\setcounter{Theorem}{0}

\setcounter{Lemma}{0}

\setcounter{section}{13}

In this section, we present the stochastic continuity at
 zero related to Schatten norm on $\mathbf{T}$.
Before proving Theorem 1.9, we give the following estimates
 that  play a key role in proving Theorem 1.9.

\begin{Lemma}\label{lem13.1}
Let $\|f_{j}\|_{L^{2}(\mathbf{T})}=\left(\sum\limits_{k\in\z}
|\mathscr{F}_{x}f_{j}(k)|^{2}\right)^{\frac{1}{2}}=1$,
$j\in \mathbb{N}^{+}$,  $(\lambda_{j})_{j}\in \ell_{j}^{2}$, and $f_{j}^{\omega}$ be
the randomization of $f_{j}$  defined as in \eqref{1.027}. Then,
 for $\forall\epsilon>0, \exists\delta>0$,
 when $|t|<\delta<\frac{\epsilon^{2}}{M_{1}\sqrt{1+M_{1}^{2}}}$, we have
\begin{align}
&&\left\|\sum\limits_{j=1}^{\infty}\lambda_{j}g_{j}^{(2)}
(\widetilde{\omega})H_{1}\right\|_{L_{\omega, \tilde{\omega}}^{r}(\Omega \times \tilde{\Omega})}
\leq Cr^{\frac{3}{2}}\epsilon^{2}(\|\gamma_{0}\|_{\mathfrak{S}^{2}}+1),\label{13.01}
\end{align}
where $H_{1}=\left(e^{it\sqrt{\partial_{x}^{4}-\partial_{x}^{2}}} f_{j}^{\omega}-f_{j}^{\omega}\right) e^{-it\sqrt{\partial_{x}^{4}-\partial_{x}^{2}}}\overline{f}_{j}^{\omega}$, $\|\gamma_{0}\|_{\mathfrak{S}^{2}}=
\left(\sum\limits_{j=1}^{\infty}\lambda_{j}^{2}\right)^{\frac{1}{2}}$ and $M_{1}$  appears in \eqref{4.017}.
\end{Lemma}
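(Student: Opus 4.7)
The plan is to mimic the proof of Lemma 12.1 verbatim, replacing each $\mathbf{R}$-based auxiliary estimate by its $\mathbf{T}$-counterpart proved earlier in Section 4. Concretely, I would use Lemma 4.1 (Khinchin) to extract the factor $r^{1/2}$ and reduce the randomness in $\widetilde{\omega}$ to an $\ell^2_j$ sum, then Minkowski's inequality to swap $L^r_\omega$ with $\ell^2_j$, then Hölder's inequality in $\omega$ (exponents $2r$ and $2r$) to split the product defining $H_1$ into the two natural factors, and finally Lemma 4.4 on the difference $e^{it\sqrt{\partial_x^4-\partial_x^2}}f_j^\omega-f_j^\omega$ together with Lemma 4.7 on the free evolution $e^{-it\sqrt{\partial_x^4-\partial_x^2}}\overline{f}_j^\omega$.

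More precisely, I would argue as follows. First, by Lemma 4.1 applied to $\{g_j^{(2)}(\widetilde{\omega})\}$, for each fixed $\omega$,
\begin{equation*}
\left\|\sum_{j=1}^{\infty}\lambda_j g_j^{(2)}(\widetilde{\omega})H_1\right\|_{L^r_{\widetilde{\omega}}(\widetilde{\Omega})}\leq C r^{1/2}\left(\sum_{j=1}^{\infty}|\lambda_j|^2|H_1|^2\right)^{1/2}=Cr^{1/2}\|\lambda_j H_1\|_{\ell^2_j}.
\end{equation*}
Taking the $L^r_\omega$-norm and applying Minkowski with $r\geq 2$ to pull $\ell^2_j$ outside the $L^r_\omega$ norm, and then Hölder in $\omega$ at exponents $(2r,2r)$, one arrives at
\begin{equation*}
\left\|\sum_{j=1}^{\infty}\lambda_j g_j^{(2)}(\widetilde{\omega})H_1\right\|_{L^r_{\omega,\widetilde{\omega}}}\leq Cr^{1/2}\Bigl\||\lambda_j|\,\bigl\|e^{it\sqrt{\partial_x^4-\partial_x^2}}f_j^\omega-f_j^\omega\bigr\|_{L^{2r}_\omega}\,\bigl\|e^{-it\sqrt{\partial_x^4-\partial_x^2}}\overline{f}_j^\omega\bigr\|_{L^{2r}_\omega}\Bigr\|_{\ell^2_j}.
\end{equation*}

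At this point Lemma 4.7 controls the second $L^{2r}_\omega$ factor by $C(2r)^{1/2}\|f_j\|_{L^2(\mathbf{T})}=C(2r)^{1/2}$ uniformly in $j$, so it may be pulled out of $\ell^2_j$ and combined with the $r^{1/2}$ already in front to produce the desired $r^{3/2}$. The remaining expression is exactly the object estimated by Lemma 4.4, which gives, under the hypothesis $|t|<\delta<\epsilon^2/(M_1\sqrt{1+M_1^2})$,
\begin{equation*}
\bigl\|\lambda_j\bigl\|e^{it\sqrt{\partial_x^4-\partial_x^2}}f_j^\omega-f_j^\omega\bigr\|_{L^{2r}_\omega}\bigr\|_{\ell^2_j}\leq C(2r)^{1/2}\epsilon^2(\|\gamma_0\|_{\mathfrak{S}^2}+1).
\end{equation*}
Multiplying the two bounds gives $Cr^{3/2}\epsilon^2(\|\gamma_0\|_{\mathfrak{S}^2}+1)$, as claimed.

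The proof is essentially mechanical once Lemmas 4.1, 4.4 and 4.7 are in hand, so I expect no real obstacle: the only thing to check carefully is the bookkeeping of the $\omega$-exponents (the splitting of $L^r_\omega$ into two $L^{2r}_\omega$ factors by Hölder) and the fact that the torus analog of the oscillatory difference estimate already absorbs the threshold $\delta<\epsilon^2/(M_1\sqrt{1+M_1^2})$ inside Lemma 4.4. The Khinchin constant and the $r^{1/2}$ accumulated from both the $\widetilde{\omega}$-randomization and the application of Lemma 4.7 are precisely what produces the exponent $3/2$ in the final bound.
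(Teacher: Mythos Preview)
Your proof is correct and follows the paper's argument exactly: Lemma 4.1 (Khinchin) for the $\widetilde{\omega}$-sum, Minkowski to pass from $L^r_\omega\ell^2_j$ to $\ell^2_j L^r_\omega$, H\"older in $\omega$ with exponents $(2r,2r)$, then Lemmas 4.7 and 4.4 for the two factors. One cosmetic slip in your bookkeeping: $r^{1/2}\cdot(2r)^{1/2}\sim r$, not $r^{3/2}$; the remaining half-power of $r$ comes from the $(2r)^{1/2}$ in Lemma 4.4, so your final product is indeed $Cr^{3/2}$ as claimed.
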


\begin{proof}
By using Minkowski inequality, H\"{o}lder inequality and Lemmas 4.1,
4.4, 4.7, $\forall \epsilon>0$, which appears in Lemma 4.2,
$\exists\delta$ such that $0<\delta<\frac{\epsilon^{2}}{M_{1}\sqrt{1+M_{1}^{2}}}$, when $|t|<\delta$, we have
\begin{align}
\left\|\sum_{j=1}^{\infty} \lambda_{j} g_{j}^{(2)}(\widetilde{\omega})
 H_{1}\right\|_{L_{\omega, \tilde{\omega}}^{r}(\Omega \times \tilde{\Omega})}
&\leq C r^{\frac{1}{2}}\left\|\lambda_{j} H_{1}\right\|_{L_\omega^{r}
\ell_{j}^{2}}\leq C r^{\frac{1}{2}}\left\|\lambda_{j} H_{1}\right\|_{\ell_{j}^{2} L_{\omega}^{r}}\nonumber\\
&\leq C r^{\frac{1}{2}}\left\|\lambda_{j}\left\|\left(e^{it\sqrt{\partial_{x}^{4}-\partial_{x}^{2}}} f_{j}^{\omega}-f_{j}^{\omega}\right)\right\|_{L_{\omega}^{2r}}\left\| e^{-it\sqrt{\partial_{x}^{4}-\partial_{x}^{2}}}\overline{f}_{j}^{\omega}\right\|_{L_{\omega}^{2r}}\right\|_{\ell_{j}^{2}}\nonumber\\
&\leq C r^{\frac{3}{2}} \epsilon^{2}\left(\left\|\gamma_{0}\right\|_{\mathfrak{S}^2}+1\right).\label{13.02}
\end{align}

This completes the proof of Lemma 13.1.

\end{proof}

\begin{Lemma}\label{lem13.2}
Let $\|f_{j}\|_{L^{2}(\mathbf{T})}=\left(\sum\limits_{k\in\z}
|\mathscr{F}_{x}f_{j}(k)|^{2}\right)^{\frac{1}{2}}=1$, $j\in \mathbb{N}^{+}$,
  $(\lambda_{j})_{j}\in \ell_{j}^{2}$,and $f_{j}^{\omega}$ the
   randomization of $f_{j}$ be defined as in \eqref{1.027}. Then,
   for $\forall\epsilon>0, \exists\delta>0$, when $|t|<\delta<\frac{\epsilon^{2}}{M_{1}\sqrt{1+M_{1}^{2}}}$, we have
\begin{align}
&&\left\|\sum\limits_{j=1}^{\infty}\lambda_{j}g_{j}^{(2)}(\widetilde{\omega})
H_{2}\right\|_{L_{\omega, \tilde{\omega}}^{r}(\Omega \times \tilde{\Omega})}
\leq Cr^{\frac{3}{2}}\epsilon^{2}(\|\gamma_{0}\|_{\mathfrak{S}^{2}}+1),\label{13.03}
\end{align}
where $H_{2}=f_{j}^{\omega}\left(e^{-it\sqrt{\partial_{x}^{4}-\partial_{x}^{2}}}
\overline{f}_{j}^{\omega}-\overline{f}_{n}^{\omega}\right)$, $\|\gamma_{0}\|_{\mathfrak{S}^{2}}=
\left(\sum\limits_{n=1}^{\infty}\lambda_{j}^{2}\right)^{\frac{1}{2}}$ and $M_{1}$ appears in \eqref{4.017}.
\end{Lemma}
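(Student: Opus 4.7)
The plan is to mirror the proof of Lemma 13.1, simply swapping the role of the ``difference factor'' and the ``bounded factor.'' The strategy proceeds in four steps, each paralleling a step already carried out in the companion lemma.

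First, I would apply the Khinchin-type inequality (Lemma 4.1) in the $\widetilde{\omega}$ variable, together with Minkowski's inequality to interchange $L_\omega^r$ and $\ell_j^2$, to obtain
\begin{equation*}
\Bigl\|\sum_{j=1}^{\infty}\lambda_{j}g_{j}^{(2)}(\widetilde{\omega})H_{2}\Bigr\|_{L_{\omega,\widetilde{\omega}}^{r}(\Omega\times\widetilde{\Omega})}
\leq C r^{1/2}\bigl\|\lambda_{j}H_{2}\bigr\|_{\ell_{j}^{2}L_{\omega}^{r}}.
\end{equation*}
Next, I would apply Hölder's inequality in $L_\omega^r$ to split
\begin{equation*}
\|H_{2}\|_{L_{\omega}^{r}} \leq \bigl\|f_{j}^{\omega}\bigr\|_{L_{\omega}^{2r}}\bigl\|e^{-it\sqrt{\partial_{x}^{4}-\partial_{x}^{2}}}\overline{f}_{j}^{\omega}-\overline{f}_{j}^{\omega}\bigr\|_{L_{\omega}^{2r}}.
\end{equation*}

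The first factor $\|f_j^\omega\|_{L_\omega^{2r}}$ is bounded by $C r^{1/2}\|f_j\|_{L^2(\mathbf{T})} = C r^{1/2}$ via Lemma 4.7. For the second factor, I would observe that since the symbol $\sqrt{k^{4}+k^{2}}$ is real and even in $k$, conjugation yields
\begin{equation*}
e^{-it\sqrt{\partial_{x}^{4}-\partial_{x}^{2}}}\overline{f}_{j}^{\omega}-\overline{f}_{j}^{\omega} = \overline{\,e^{it\sqrt{\partial_{x}^{4}-\partial_{x}^{2}}}f_{j}^{\omega}-f_{j}^{\omega}\,},
\end{equation*}
so the pointwise modulus matches the one appearing in Lemma 4.4. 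Consequently, when $|t|<\delta<\epsilon^{2}/(M_{1}\sqrt{1+M_{1}^{2}})$, Lemma 4.4 yields
\begin{equation*}
\bigl\|\lambda_{j}\bigl\|e^{-it\sqrt{\partial_{x}^{4}-\partial_{x}^{2}}}\overline{f}_{j}^{\omega}-\overline{f}_{j}^{\omega}\bigr\|_{L_{\omega}^{2r}}\bigr\|_{\ell_{j}^{2}} \leq C r^{1/2}\epsilon^{2}\bigl(\|\gamma_{0}\|_{\mathfrak{S}^{2}}+1\bigr).
\end{equation*}

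Finally, combining the three factors of $r^{1/2}$ (one from Khinchin, one from Lemma 4.7, one from Lemma 4.4) produces the claimed bound $C r^{3/2}\epsilon^{2}(\|\gamma_{0}\|_{\mathfrak{S}^{2}}+1)$. There is essentially no genuine obstacle here: the estimate is the exact symmetric counterpart of Lemma 13.1, and the only nontrivial point to verify is the identification of the conjugate propagator difference with the one controlled by Lemma 4.4, which is immediate from the reality and parity of the Fourier symbol. The entire argument can therefore be written as a short chain of inequalities in direct analogy with \eqref{13.02}.
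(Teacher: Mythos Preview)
Your proposal is correct and follows essentially the same approach as the paper: both use Lemma 4.1 (Khinchin) in $\widetilde{\omega}$, Minkowski to pass to $\ell_j^2 L_\omega^r$, H\"older to split $H_2$, and then Lemmas 4.4 and 4.7 to control the two factors, yielding the same $Cr^{3/2}\epsilon^2(\|\gamma_0\|_{\mathfrak{S}^2}+1)$ bound. Your extra remark on why the conjugate difference falls under Lemma 4.4 (reality and evenness of the symbol $\sqrt{k^4+k^2}$) is a helpful clarification that the paper leaves implicit.
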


\begin{proof}
By using Minkowski inequality,  H\"{o}lder inequality and Lemmas 4.1,
4.4, 4.7, $\forall \epsilon>0$, which appears in Lemma 4.2, $\exists\delta$
 such that $0<\delta<\frac{\epsilon^{2}}{M_{1}\sqrt{1+M_{1}^{2}}}$, when $|t|<\delta$, we have
\begin{align}
\left\|\sum_{j=1}^{\infty} \lambda_{n} g_{j}^{(2)}(\widetilde{\omega})
 H_{2}\right\|_{L_{\omega, \tilde{\omega}}^{r}(\Omega \times \tilde{\Omega})}
&\leq C r^{\frac{1}{2}}\left\|\lambda_{j} H_{2}\right\|_{L_{\omega}^{r}
\ell_{j}^{2}}\leq C r^{\frac{1}{2}}\left\|\lambda_{j} H_{2}\right\|_{\ell_{j}^{2} L_{\omega}^{r}}\nonumber\\
&+C r^{\frac{1}{2}}\left\|\lambda_{j}\left\|\left(e^{-it\sqrt{\partial_{x}^{4}
-\partial_{x}^{2}}}\overline{f}_{j}^{\omega}-\overline{f}_{j}^{\omega}\right)
\right\|_{L_{\omega}^{2r}}\left\| f_{j}^{\omega}\right\|_{L_{\omega}^{2r}}\right\|_{\ell_{j}^{2}}\nonumber\\
&\leq C r^{\frac{3}{2}} \epsilon^{2}\left(\left\|\gamma_{0}\right\|_{\mathfrak{S}^2}+1\right).\label{13.04}
\end{align}

This completes the proof of Lemma 13.2.

\end{proof}

\noindent{\bf Proof of Theorem  1.9}. Now, we use Lemmas 13.1,  13.2
to prove Theorem 1.9. Obviously, we have
\begin{eqnarray}
&&\left|e^{it\sqrt{\partial_{x}^{4}-\partial_{x}^{2}}} f_{j}^{\omega}
\right|^2-\left|f_{j}^{\omega}\right|^2=H_{1}+H_{2}.\label{13.05}
\end{eqnarray}
By using \eqref{13.05} and Lemmas 13.1, 13.2, we obtain
\begin{eqnarray}
&&\left\|\sum_{j=1}^{\infty} \lambda_{j} g_{j}^{(2)}(\widetilde{\omega})
\left|f_{j}^{\omega}\right|^{2}-\sum_{j=1}^{\infty} \lambda_{j} g_{j}^{(2)}
(\widetilde{\omega})\left|e^{it\sqrt{\partial_{x}^{4}-
\partial_{x}^{2}}}f_{j}^{\omega}\right|^2\right\|_{L_{\omega, \tilde{\omega}}^{r}
(\Omega \times \widetilde{\Omega})}\nonumber\\
&&=\left\|\sum_{j=1}^{\infty} \lambda_{j} g_{j}^{(2)}(\widetilde{\omega})
\left(H_{1}+H_{2}\right)\right\|_{L_{\omega, \tilde{\omega}}^{r}(\Omega \times
 \widetilde{\Omega})}\nonumber\\
&&\leq\left\|\sum_{j=1}^{\infty} \lambda_{j} g_{j}^{(2)}(\widetilde{\omega})
 H_{1}\right\|_{L_{\omega, \tilde{\omega}}^{r}(\Omega \times \tilde{\Omega})}
 +\left\|\sum_{j=1}^{\infty} \lambda_{j} g_{j}^{(2)}(\widetilde{\omega})
  H_{2}\right\|_{L_{\omega, \tilde{\omega}}^{r}(\Omega \times \tilde{\Omega})}\nonumber\\
&&\leq C r^{\frac{3}{2}} \epsilon^{2}
\left(\left\|\gamma_{0}\right\|_{\mathfrak{S}^2}+1\right).\label{13.06}
\end{eqnarray}
Combining \eqref{13.06} with Lemma 4.2, we have that \eqref{1.024} is valid.

This completes the proof of Theorem 1.9.

\section {Proof of Theorem  1.10: Stochastic continuity at zero
 related to Schatten norm on $\Theta=\{x\in\R^{3}:|x|<1\}$}

\setcounter{equation}{0}

\setcounter{Theorem}{0}

\setcounter{Lemma}{0}

\setcounter{section}{14}

In this section, we present the stochastic continuity at zero
 related to Schatten norm on $\Theta=\{x\in\R^{3}:|x|<1\}$.
Before proving Theorem 1.10, we give the following estimates that
  play a key role in proving Theorem 1.10.

\begin{Lemma}\label{lem14.1}
Let $f_{j}=\sum\limits_{m=1}^{\infty}c_{j,m}e_{m}$, $f_{j}^{\omega_{1}}$
 be the randomization of $f_{j}$  defined as in \eqref{1.028}. Then, for
 $\forall\epsilon>0, \exists\delta>0$, when $|t|<\delta<\frac{\epsilon^{2}}{M_{1}\sqrt{1+M_{1}^{2}}}$, we have
\begin{align}
&&\left\|\sum\limits_{j=1}^{\infty}\lambda_{j}g_{j}^{(2)}(\widetilde{\omega})
J_{1}\right\|_{L_{\omega_{1}, \tilde{\omega}}^{r}(\Omega_{1} \times \tilde{\Omega})}
\leq Cr^{\frac{3}{2}}\epsilon^{2}(\|\gamma_{0}\|_{\mathfrak{S}^{2}}+1),\label{14.01}
\end{align}
where $J_{1}=\left(e^{it\sqrt{\partial_{x}^{4}-\partial_{x}^{2}}}
 f_{j}^{\omega_{1}}-f_{j}^{\omega_{1}}\right) e^{-it\sqrt{\partial_{x}^{4}
 -\partial_{x}^{2}}}\overline{f}_{j}^{\omega_{1}}$, $\|\gamma_{0}\|_{\mathfrak{S}^{2}}=
 \left(\sum\limits_{j=1}^{\infty}\lambda_{j}^{2}\right)^{\frac{1}{2}}$ and $M_{1}$ appears in \eqref{4.024}.
\end{Lemma}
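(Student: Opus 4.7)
The plan is to imitate the scheme of Lemmas 12.1 and 13.1 verbatim, replacing every Euclidean/periodic ingredient by its counterpart on the ball $\Theta$. Concretely, the proof of Lemma 14.1 will run along the same four moves: (i) use the Khinchin-type inequality Lemma 4.1 in the variable $\widetilde{\omega}$ to push the Gaussian factors $g_j^{(2)}(\widetilde{\omega})$ through the $L^r_{\widetilde{\omega}}$-norm at the cost of an $r^{1/2}$ and collapsing them into an $\ell^2_j$-sum; (ii) swap $L^r_{\omega_1}$ with $\ell^2_j$ by Minkowski (the direction $L^r_{\omega_1}\ell^2_j \le \ell^2_j L^r_{\omega_1}$ needs $r\ge 2$, which is in the hypothesis); (iii) factor $J_1$ as a product and apply Hölder in $\omega_1$ to split the $L^r_{\omega_1}$-norm into the product of two $L^{2r}_{\omega_1}$-norms; (iv) control one factor by Lemma 4.5 (the $\ell^2_j L^p_{\omega_1}$ estimate on $\Theta$) and the other by Lemma 4.8 ($L^p_{\omega_1}$-boundedness of $f_j^{\omega_1}$ and $U(t)f_j^{\omega_1}$).

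More precisely, I would first write
\begin{align*}
\Bigl\|\sum_{j=1}^{\infty}\lambda_j g_j^{(2)}(\widetilde{\omega}) J_1\Bigr\|_{L^r_{\omega_1,\widetilde{\omega}}}
&\le C r^{1/2}\,\bigl\|\lambda_j J_1\bigr\|_{L^r_{\omega_1}\ell^2_j}
\le C r^{1/2}\,\bigl\|\lambda_j J_1\bigr\|_{\ell^2_j L^r_{\omega_1}},
\end{align*}
using Lemma 4.1 in $\widetilde{\omega}$ and Minkowski in $(\omega_1,j)$. Next, since
\[
J_1=\bigl(e^{it\sqrt{\partial_x^4-\partial_x^2}}f_j^{\omega_1}-f_j^{\omega_1}\bigr)\,
e^{-it\sqrt{\partial_x^4-\partial_x^2}}\overline{f}_j^{\omega_1},
\]
Hölder's inequality in $\omega_1$ with exponents $2r$ and $2r$ gives
\[
\bigl\|\lambda_j J_1\bigr\|_{\ell^2_j L^r_{\omega_1}}
\le \Bigl\|\lambda_j\,\bigl\|e^{it\sqrt{\partial_x^4-\partial_x^2}}f_j^{\omega_1}-f_j^{\omega_1}\bigr\|_{L^{2r}_{\omega_1}}\,
\bigl\|e^{-it\sqrt{\partial_x^4-\partial_x^2}}\overline{f}_j^{\omega_1}\bigr\|_{L^{2r}_{\omega_1}}\Bigr\|_{\ell^2_j}.
\]
By Lemma 4.8 the second factor is bounded by $C r^{1/2}$ uniformly in $j$ and $t$, so it can be pulled out of the $\ell^2_j$, leaving
\[
\le C r^{1/2}\,\Bigl\|\lambda_j\,\bigl\|e^{it\sqrt{\partial_x^4-\partial_x^2}}f_j^{\omega_1}-f_j^{\omega_1}\bigr\|_{L^{2r}_{\omega_1}}\Bigr\|_{\ell^2_j}.
\]
Finally Lemma 4.5, applied with exponent $p=2r$ and with the same threshold $\delta<\epsilon^{2}/(M_1\sqrt{1+M_1^{2}})$, bounds this last quantity by $C r^{1/2}\epsilon^{2}(\|\gamma_0\|_{\mathfrak{S}^2}+1)$. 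Assembling the three $r^{1/2}$ factors yields the claimed $r^{3/2}\epsilon^{2}(\|\gamma_0\|_{\mathfrak{S}^2}+1)$ bound.

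No single step is genuinely hard, since the analytical content was already separated off into the randomization Lemmas 4.5 and 4.8. The only point that needs care is that the threshold $\delta$, the constant $M_1$, and the smallness parameter $\epsilon$ must be chosen consistently across the two invocations (Lemmas 4.5 and 4.8); this is automatic because Lemma 4.8 is uniform in $t$ and Lemma 4.5 already encodes the required condition $|t|<\epsilon^{2}/(M_1\sqrt{1+M_1^{2}})$ on $\Theta$. Thus the proof is essentially a transcription of the proof of Lemma 13.1, with the spherical randomization \eqref{1.028} and the Dirichlet-Laplacian eigenfunctions $e_m=\sin(m\pi|x|)/((2\pi)^{1/2}|x|)$ replacing their $\mathbf{T}$-analogues.
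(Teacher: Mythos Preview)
Your proposal is correct and follows essentially the same approach as the paper: apply Lemma~4.1 in $\widetilde{\omega}$, swap $L^r_{\omega_1}$ and $\ell^2_j$ via Minkowski, split $J_1$ by H\"older in $\omega_1$, and conclude with Lemmas~4.5 and~4.8. The paper's proof is exactly this four-line transcription of the argument for Lemmas~12.1 and~13.1.
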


\begin{proof}
By using Minkowski inequality, H\"{o}lder inequality and Lemmas 4.1, 4.5,
 4.8, $\forall \epsilon>0$, which appears in Lemma 4.2, $\exists\delta$
  such that $0<\delta<\frac{\epsilon^{2}}{M_{1}\sqrt{1+M_{1}^{2}}}$, when $|t|<\delta$, we have
\begin{align}
\left\|\sum_{j=1}^{\infty} \lambda_{j} g_{j}^{(2)}(\widetilde{\omega})
 J_{1}\right\|_{L_{\omega_{1}, \tilde{\omega}}^{r}(\Omega_{1} \times \tilde{\Omega})}
&\leq C r^{\frac{1}{2}}\left\|\lambda_{j} J_{1}\right\|_{L_{\omega_{1}}^{r} \ell_{j}^{2}}
\leq C r^{\frac{1}{2}}\left\|\lambda_{j} J_{1}\right\|_{\ell_{j}^{2} L_{\omega_{1}}^{r}}\nonumber\\
&\leq C r^{\frac{1}{2}}\left\|\lambda_{j}\left\|\left(e^{it\sqrt{\partial_{x}^{4}-\partial_{x}^{2}}} f_{j}^{\omega_{1}}-f_{j}^{\omega_{1}}\right)\right\|_{L_{\omega_{1}}^{2r}}
\left\| e^{-it\sqrt{\partial_{x}^{4}-\partial_{x}^{2}}}
\overline{f}_{j}^{\omega_{1}}\right\|_{L_{\omega_{1}}^{2r}}\right\|_{\ell_{j}^{2}}\nonumber\\
&\leq C r^{\frac{3}{2}} \epsilon^{2}
\left(\left\|\gamma_{0}\right\|_{\mathfrak{S}^2}+1\right).\label{14.02}
\end{align}

This completes the proof of Lemma 14.1.

\end{proof}

\begin{Lemma}\label{lem14.2}
Let $f_{j}=\sum\limits_{m=1}^{\infty}c_{j,m}e_{m}$, $f_{j}^{\omega_{1}}$
be the randomization of $f_{j}$  defined as in \eqref{1.028}. Then, for
$\forall\epsilon>0, \exists\delta>0$, when $|t|<\delta<\frac{\epsilon^{2}}{M_{1}\sqrt{1+M_{1}^{2}}}$, we have
\begin{align}
&&\left\|\sum\limits_{j=1}^{\infty}\lambda_{n}g_{j}^{(2)}
(\widetilde{\omega})J_{2}\right\|_{L_{\omega_{1}, \tilde{\omega}}^{r}(\Omega_{1} \times \tilde{\Omega})}
\leq Cr^{\frac{3}{2}}\epsilon^{2}(\|\gamma_{0}\|_{\mathfrak{S}^{2}}+1),\label{14.03}
\end{align}
where $J_{2}=f_{j}^{\omega_{1}}\left(e^{it\sqrt{\partial_{x}^{4}-\partial_{x}^{2}}}
\overline{f}_{j}^{\omega_{1}}-\overline{f}_{j}^{\omega_{1}}\right)$,
$\|\gamma_{0}\|_{\mathfrak{S}^{2}}=\left(\sum\limits_{j=1}^{\infty}
\lambda_{j}^{2}\right)^{\frac{1}{2}}$ and $M_{1}$ appears in \eqref{4.024}.
\end{Lemma}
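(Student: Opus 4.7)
The plan is to mirror the proof of Lemma 14.1 with the roles of the two factors in $J_{2}$ reversed. The expression $J_{2} = f_{j}^{\omega_{1}}\bigl(e^{it\sqrt{\partial_{x}^{4}-\partial_{x}^{2}}}\overline{f}_{j}^{\omega_{1}} - \overline{f}_{j}^{\omega_{1}}\bigr)$ has exactly the same shape as $J_{1}$ (one factor is the untransformed randomized function and the other is a difference of propagator terms), so the same tool chain should apply without any new ideas.

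First I would apply Minkowski's integral inequality to move the $\tilde{\omega}$-integration inside, then invoke the Khinchine-type inequality (Lemma 4.1) for the sequence $\{g_{j}^{(2)}(\widetilde{\omega})\}$ on $\widetilde{\Omega}$; this yields the factor $r^{1/2}$ together with an $\ell_{j}^{2}$ norm of $\|\lambda_{j} J_{2}\|_{L_{\omega_{1}}^{r}}$. After commuting the $\ell_{j}^{2}$ and $L_{\omega_{1}}^{r}$ norms via Minkowski, I would apply the Cauchy--Schwarz inequality pointwise in $\omega_{1}$ to separate the two factors of $J_{2}$ and then Hölder's inequality in $L_{\omega_{1}}^{r}$ to split into $L_{\omega_{1}}^{2r}$ norms:
\begin{align*}
\|\lambda_{j} J_{2}\|_{L_{\omega_{1}}^{r}}
&\leq |\lambda_{j}|\,\bigl\|f_{j}^{\omega_{1}}\bigr\|_{L_{\omega_{1}}^{2r}}
\,\bigl\|e^{it\sqrt{\partial_{x}^{4}-\partial_{x}^{2}}}\overline{f}_{j}^{\omega_{1}}-\overline{f}_{j}^{\omega_{1}}\bigr\|_{L_{\omega_{1}}^{2r}}.
\end{align*}

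At this point Lemma 4.8 controls the factor $\|f_{j}^{\omega_{1}}\|_{L_{\omega_{1}}^{2r}}$ by $C r^{1/2}\|f_{j}\|_{L^{2}}$, and Lemma 4.5 (applied to the complex conjugate, which has the same randomization structure since the multipliers $g_{m}^{(1)}(\omega_{1})$ are real-valued) controls the $\ell_{j}^{2}$ norm of $|\lambda_{j}|\bigl\|e^{it\sqrt{\partial_{x}^{4}-\partial_{x}^{2}}}\overline{f}_{j}^{\omega_{1}}-\overline{f}_{j}^{\omega_{1}}\bigr\|_{L_{\omega_{1}}^{2r}}$ by $C r^{1/2}\epsilon^{2}(\|\gamma_{0}\|_{\mathfrak{S}^{2}}+1)$, provided $|t| < \delta < \epsilon^{2}/(M_{1}\sqrt{1+M_{1}^{2}})$. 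Multiplying the three $r^{1/2}$ contributions yields the claimed $r^{3/2}$ and completes the bound.

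The only point requiring a moment of care — and the closest thing to an obstacle — is verifying that Lemma 4.5 applies with $\overline{f}_{j}^{\omega_{1}}$ in place of $f_{j}^{\omega_{1}}$; but since the Gaussian random variables $g_{m}^{(1)}(\omega_{1})$ are real valued and the propagator $e^{it\sqrt{\partial_{x}^{4}-\partial_{x}^{2}}}$ commutes with complex conjugation only up to reversing the sign of $t$, one simply observes that the estimate in Lemma 4.5 is symmetric in the sign of $t$ (the bounds $|e^{it\phi_{3}(m)}-1| \leq 2$ and $|e^{it\phi_{3}(m)}-1|\leq C|t|\sqrt{m^{2}+m^{4}}$ used there depend only on $|t|$), so the estimate transfers verbatim. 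Consequently the proof reduces to a direct adaptation of the computation \eqref{14.02} in Lemma 14.1, with the order of the two factors swapped.
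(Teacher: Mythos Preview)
Your proposal is correct and follows essentially the same route as the paper: apply Lemma 4.1 to extract $r^{1/2}$ and reduce to $\|\lambda_j J_2\|_{\ell_j^2 L_{\omega_1}^r}$, then split the product via H\"older into $L_{\omega_1}^{2r}$ factors and invoke Lemmas 4.8 and 4.5 respectively. Your extra remark that the conjugation issue is harmless because the bounds in Lemma 4.5 depend only on $|t|$ is a useful clarification that the paper leaves implicit.
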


\begin{proof}
By using Minkowski inequality, H\"{o}lder inequality and Lemmas 4.1, 4.5,
4.8, $\forall \epsilon>0$, which appears in Lemma 4.2,
$\exists \delta(0<\delta<\frac{\epsilon^{2}}{M_{1}\sqrt{1+M_{1}^{2}}})$,
 when $|t|<\delta$, we have
\begin{align}
\left\|\sum_{j=1}^{\infty} \lambda_{j} g_{n}^{(2)}(\widetilde{\omega})
 J_{2}\right\|_{L_{\omega_{1}, \tilde{\omega}}^{r}(\Omega_{1} \times \tilde{\Omega})}
&\leq C r^{\frac{1}{2}}\left\|\lambda_{j} J_{2}\right\|_{L_{\omega_{1}}^{r} \ell_{j}^{2}}
\leq C r^{\frac{1}{2}}\left\|\lambda_{j} J_{2}\right\|_{\ell_{j}^{2} L_{\omega_{1}}^{r}}\nonumber\\
&+C r^{\frac{1}{2}}\left\|\lambda_{j}\left\|\left(e^{-it\sqrt{\partial_{x}^{4}-\partial_{x}^{2}}}
\overline{f}_{j}^{\omega_{1}}-\overline{f}_{j}^{\omega_{1}}\right)
\right\|_{L_{\omega_{1}}^{2r}}\left\| f_{j}^{\omega_{1}}\right\|_{L_{\omega_{1}}^{2r}}\right\|_{\ell_{j}^{2}}\nonumber\\
&\leq C r^{\frac{3}{2}} \epsilon^{2}\left(\left\|\gamma_{0}\right\|_{\mathfrak{S}^2}+1\right).\label{14.04}
\end{align}

The proof of Lemma 14.2 is finished.

\end{proof}

\noindent{\bf Proof of Theorem  1.10}. Now, we use Lemmas 14.1,  14.2 to
 prove Theorem 1.10. Obviously, we have
\begin{eqnarray}
&&\left|e^{it\sqrt{\partial_{x}^{4}-\partial_{x}^{2}}}
f_{j}^{\omega_{1}}\right|^2-\left|f_{j}^{\omega_{1}}\right|^2=J_{1}+J_{2}.\label{14.05}
\end{eqnarray}
By using \eqref{14.05} and Lemmas 14.1, 14.2, we get
\begin{eqnarray}
&&\left\|\sum_{j=1}^{\infty} \lambda_{j} g_{j}^{(2)}(\widetilde{\omega})
\left|f_{j}^{\omega_{1}}\right|^{2}-\sum_{j=1}^{\infty} \lambda_{j}
g_{j}^{(2)}(\widetilde{\omega})\left|e^{it\sqrt{\partial_{x}^{4}-
\partial_{x}^{2}}}f_{j}^{\omega_{1}}\right|^2\right\|_{L_{\omega_{1},
\tilde{\omega}}^{r}(\Omega_{1} \times \widetilde{\Omega})}\nonumber\\
&&=\left\|\sum_{j=1}^{\infty} \lambda_{n} g_{j}^{(2)}(\widetilde{\omega})
\left(J_{1}+J_{2}\right)\right\|_{L_{\omega_{1}, \tilde{\omega}}^{r}(\Omega_{1}
\times \widetilde{\Omega})}\nonumber\\
&&\leq\left\|\sum_{j=1}^{\infty} \lambda_{j} g_{j}^{(2)}(\widetilde{\omega})
 J_{1}\right\|_{L_{\omega_{1}, \tilde{\omega}}^{r}(\Omega_{1} \times \tilde{\Omega})}+\left\|\sum_{j=1}^{\infty} \lambda_{j} g_{j}^{(2)}(\widetilde{\omega}) J_{2}\right\|_{L_{\omega_{1}, \tilde{\omega}}^{r}(\Omega_{1} \times \tilde{\Omega})}\nonumber\\
&&\leq C r^{\frac{3}{2}} \epsilon^{2}\left(\left\|\gamma_{0}\right\|_{\mathfrak{S}^2}+1\right).\label{14.06}
\end{eqnarray}
Combining \eqref{14.06} with Lemma 4.2, we have that \eqref{1.026} is valid.

The proof of Theorem 1.10 is finished.

\bigskip
\bigskip

\leftline{\large \bf Acknowledgments}

  \bigskip

  \bigskip

\baselineskip=18pt


\end{document}